\documentclass[a4paper, titlepage, oneside, 11pt]{amsart}
\usepackage{amsmath,amssymb,amsfonts,amstext,amscd,cite,geometry,graphicx,latexsym,mathptmx,xcolor,pstricks}
\usepackage[utf8]{inputenc}
\usepackage{newunicodechar}
\usepackage[all]{xy}

\newtheorem{lem}{Lemma}[section]
\newtheorem{prop}[lem]{Proposition}
\newtheorem{thm}[lem]{Theorem}
\newtheorem{cor}[lem]{Corollary}

\newtheorem{rem}[lem]{Remark}

\newcommand{\vertiii}[1]{{\left\vert\kern-0.25ex\left\vert\kern-0.25ex\left\vert #1
    \right\vert\kern-0.25ex\right\vert\kern-0.25ex\right\vert}}

\numberwithin{equation}{section}

\allowdisplaybreaks

\begin{document}

\title[Dynamics of the box-ball system with random initial conditions]{Dynamics of the box-ball system\\with random initial conditions\\via Pitman's transformation}

\author[D.~A.~Croydon]{David A. Croydon}
\address{Department of Advanced Mathematical Sciences, Graduate School of Informatics, Kyoto University, Sakyo-ku, Kyoto 606--8501, Japan}
\email{croydon@acs.i.kyoto-u.ac.jp}

\author[T.~Kato]{Tsuyoshi Kato}
\address{Department of Mathematics, Graduate School of Science, Kyoto University, Sakyo-ku, Kyoto 606--8502, Japan}
\email{tkato@math.kyoto-u.ac.jp}

\author[M.~Sasada]{Makiko Sasada}
\address{Graduate School of Mathematical Sciences, University of Tokyo, 3-8-1, Komaba, Meguro-ku, Tokyo, 153--8914, Japan}
\email{sasada@ms.u-tokyo.ac.jp}

\author[S.~Tsujimoto]{Satoshi Tsujimoto}
\address{Department of Applied Mathematics and Physics, Graduate School of Informatics, Kyoto University, Sakyo-ku, Kyoto 606--8501, Japan}
\email{tujimoto@i.kyoto-u.ac.jp}

\begin{abstract}
The box-ball system (BBS), introduced by Takahashi and Satsuma in 1990, is a cellular automaton that exhibits solitonic behaviour. In this article, we study the BBS when started from a random two-sided infinite particle configuration. For such a model, Ferrari et al.\ recently showed the invariance in distribution of Bernoulli product measures with density strictly less than $\frac{1}{2}$, and gave a soliton decomposition for invariant measures more generally. We study the BBS dynamics using the transformation of a nearest neighbour path encoding of the particle configuration given by `reflection in the past maximum', which was famously shown by Pitman to connect Brownian motion and a three-dimensional Bessel process. We use this to characterise the set of configurations for which the dynamics are well-defined and reversible for all times. We give simple sufficient conditions for random initial conditions to be invariant in distribution under the BBS dynamics, which we check in several natural examples, and also investigate the ergodicity of the relevant transformation. Furthermore, we analyse various probabilistic properties of the BBS that are commonly studied for interacting particle systems, such as the asymptotic behavior of the integrated current of particles and of a tagged particle. Finally, for Bernoulli product measures with parameter $p\uparrow\frac12$ (which may be considered the `high density' regime), the path encoding we consider has a natural scaling limit, which motivates the introduction of a new continuous version of the BBS that we believe will be of independent interest as a dynamical system.
\end{abstract}

\keywords{Box-ball system, integrated current, Pitman's transformation, simple random walk, solitons, tagged particle}

\subjclass[2010]{37B15 (primary), 60G50, 60J10, 60J65, 82B99 (secondary)}

\date{\today}

\maketitle

\setcounter{tocdepth}{3}
\tableofcontents
\newpage

\section{Introduction}

In 1990, Takahashi and Satsuma introduced a simple cellular automaton in which states could be decomposed into solitary waves that interact in the same manner as solitons \cite{takahashi1990}. This model has since been named the box-ball system (BBS), and it has been widely studied from an integrable systems viewpoint (see \cite{T, TT} for introductory surveys of the mathematics of BBSs, and \cite{IKT} for a review of some of the connections with integrable structures). In particular, strong links have been established between the BBS and the well-known Korteweg--de Vries (KdV) equation \cite{KdV}:
\begin{eqnarray*}
 \frac{\partial u}{\partial {t}}+ 6u\frac{\partial u}{\partial {x}}+ \frac{\partial^3 u}{\partial {x}^3}= 0,
\end{eqnarray*}
where $u=(u(x,t))_{x,t\in\mathbb{R}}$, which has been used to model shallow water waves, and is a fundamental example of an integrable system with an infinite number of degrees of freedom. As with many integrable systems, computational interests have motivated the introduction of a discretised version, in this case the discrete KdV equation:
\begin{equation}
\dfrac{1}{u_{n+1}^{(t+1)}} - \dfrac{1}{u_n^{(t)}}
= \delta\left(u_{n+1}^{(t)} - u_{n}^{(t+1)}\right),\label{discretekdv}
\end{equation}
where $n,t\in\mathbb{Z}$, $\delta\in\mathbb{R}$. The latter equation has been explored extensively as a rational dynamical system \cite{hirota}, and can be seen as the stepping stone between the KdV equation and the BBS. Indeed, it has recently been shown that by applying a dynamical scale transform called ultra-discretisation \cite{TTMS}, or Maslov de-quantisation \cite{Kbook, LM}, it is possible to transform such systems into automata whose governing equations incorporate tropical geometry. From the discrete KdV equation, for instance, such a procedure yields the so-called ultra-discrete KdV equation (see \eqref{origdef} below) that defines the BBS \cite{HT:ukdv, TTMS}. Thus, despite its simple definition as an automaton, the BBS holds considerable interest as a core dynamical system in mathematical physics.

The aim of this article is to explore the dynamics of the BBS when the initial conditions are random. As a central part of our study, we explain how the evolution of the BBS precisely corresponds to the operation of ‘reflection in the past maximum’ of a certain path encoding of the particle configuration, where we note the latter transformation was famously shown by Pitman to link Brownian motion and a three-dimensional Bessel process \cite{Pitman}. This allows us to extend the dynamics in a systematic way to two-sided infinite configurations, and connect the microscopic particle system with a macroscopic picture via a scaling limit. The applicability of Pitman's transformation to queuing systems and links with random polymers and certain integrable systems are now well-established in the probability literature (for example, see \cite{HW,OC,OCslides,OCY} and the references therein). One of the contributions of this article is to show that Pitman's transformation also provides a useful tool for analysing various properties of the BBS as a dynamical system, including reversibility, invariant measures and ergodicity. Moreover, it allows us to analyse properties of the BBS that are commonly studied for interacting particle systems, such as the asymptotic behavior of the integrated current of particles and of a tagged particle, as well as scaling limits.  In short, the article is at the intersection of three areas, bringing probabilistic techniques to shed new light on an important dynamical/integrable system.

Let us start by presenting Takahashi and Satsuma’s original definition of the BBS from \cite{takahashi1990}. First, we will denote by $(\eta_n)_{n\in\mathbb{Z}}\in \{0,1\}^{\mathbb{Z}}$ a particle configuration. Specifically, we write $\eta_n = 1$ if there is a particle at $n$, and $\eta_n = 0$ otherwise. For the moment, as in \cite{takahashi1990}, we suppose there is a finite number of particles, that is, $\sum_{n\in\mathbb{Z}}\eta_n<\infty$. Without loss of generality, we can further assume that each of these particles are sited on the positive axis, i.e.\ $\sum_{n\leq 0}\eta_n=0$. In this case, the evolution of the system is described by an operator $T: \{0,1\}^{\mathbb{Z}}\rightarrow\{0,1\}^{\mathbb{Z}}$ characterised by the so-called ultradiscrete KdV equation:
\begin{equation}\label{origdef}
(T\eta)_{n}=\min\left\{1-\eta_{n},\sum_{m=-\infty}^{n-1}\left(\eta_m - (T\eta)_m\right)\right\},
\end{equation}
where we suppose $(T\eta)_n=0$ for $n\leq 0$, so the sums in the above definition are well-defined. In words, we can view this action in terms of a particle ‘carrier’, which moves along $\mathbb{Z}$ from left to right (that is, from negative to positive), picking up a particle when it crosses one, and dropping off a particle when it is holding at least one particle and sees a space. The latter description motivates the introduction of a `carrier process' $W=(W_n)_{n\in\mathbb{Z}}$, where $W_n$ records the number of particles held by the carrier as it passes spatial location $n$. In particular, in this finite particle setting, we set $W_n=0$ for $n\leq 0$, and, for $n\geq1$,
\begin{equation}\label{wupdate}
W_n=\left\{\begin{array}{ll}
               W_{n-1}+1, & \mbox{if }\eta_n=1,\\
               W_{n-1}, & \mbox{if }\eta_n=0\mbox{ and }W_{n-1}=0,\\
               W_{n-1}-1, & \mbox{if }\eta_n=0\mbox{ and }W_{n-1}>0.
             \end{array}\right.
\end{equation}
With this, the definition of the BBS at \eqref{origdef} can be rewritten
\begin{equation}\label{updaterule}
(T\eta)_{n}=\min\left\{1-\eta_{n},W_{n-1}\right\}.
\end{equation}
We note that the dynamics of the BBS for a finite number of particles are well-defined for all time, meaning we can define $T^k\eta$ for any $k\geq 0$. Moreover, in the original paper \cite{takahashi1990}, it was observed that the dynamics are reversible, in that we can obtain $\eta$ from $T\eta$ by simply running the carrier backwards, i.e.\ from right to left\footnote{In this instance, we are using the term reversible in a dynamical systems sense. Later in the article, we will also use the term reversible in a stochastic processes sense when describing various Markov chains. Although the two meanings of reversible are distinct, how the term is meant to be interpreted should be clear from the context.}, so that, in fact, $T^k\eta$ is well-defined for any $k\in \mathbb{Z}$ (for this comment to be true, we drop the restriction that all the particles are to the right of the origin). Furthermore, Takahashi and Satsuma described how any configuration could be decomposed into a collection of `basic strings' of the form $(1,0)$, $(1,1,0,0)$, $(1,1,1,0,0,0)$, etc., which acted like solitons in that they were preserved by the action of the carrier, and travelled at a constant speed (depending on their length) when in isolation, but experienced interactions when they met. See Figure \ref{fig:bbs-2soliton} for a simple example of a two-soliton interaction in the BBS.

\newcommand{\BA}{\circle{10}}
\newcommand{\BB}{\circle*{10}}
\newcommand{\BC}{\circle{11}}
\newcommand{\BD}{\circle*{11}}
\makeatother
\begin{figure}[t]
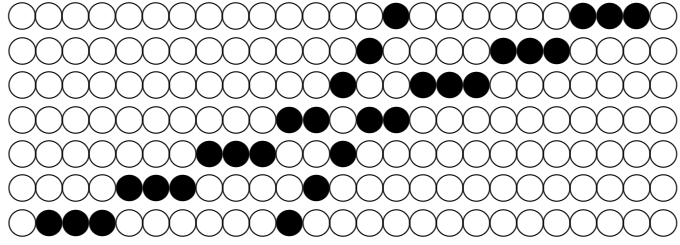

\begin{center}
\noindent
{
\hspace*{-1mm}\BA\BA\BA\BA\BA\BA\BA\BA \BA\BA\BA\BA\BA \BA\BB\BA\BA\BA \BA\BA\BA\BB\BB\BB\BA \\
\BA\BA\BA\BA\BA\BA\BA\BA \BA\BA\BA\BA\BA \BB\BA\BA\BA\BA \BB\BB\BB\BA\BA\BA \BA\\
\BA\BA\BA\BA\BA\BA\BA\BA \BA\BA\BA\BA\BB \BA\BA\BB\BB\BB \BA\BA\BA\BA\BA\BA \BA\\
\BA\BA\BA\BA\BA\BA\BA\BA \BA\BA\BB\BB\BA \BB\BB\BA\BA\BA \BA\BA\BA\BA\BA\BA\BA \\
\BA\BA\BA\BA\BA\BA\BA\BB \BB\BB\BA\BA\BB \BA\BA\BA\BA\BA \BA\BA\BA\BA\BA\BA \BA\\
\BA\BA\BA\BA\BB\BB\BB\BA \BA\BA\BA\BB\BA \BA\BA\BA\BA\BA \BA\BA\BA\BA\BA\BA \BA\\
\BA\BB\BB\BB\BA\BA\BA\BA \BA\BA\BB\BA\BA \BA\BA\BA\BA\BA \BA\BA\BA\BA\BA\BA \BA}
\end{center}
\caption{A two-soliton interaction of the box-ball system. (Time runs from the bottom row to the top row.)} \label{fig:bbs-2soliton}
\end{figure}

As noted above, the goal of this article is to study the BBS as an interacting particle system started from a random initial condition. In this setting it is natural to ask what configurations are invariant in distribution, i.e.\ when is $T\eta\buildrel{d}\over{=}\eta$? Of course, given the transience of the system (all particles move at speed at least one to the right), this question immediately necessitates the consideration of two-sided infinite particle configurations. However, whilst it is easy to extend the definitions of the previous paragraph to the case when $\eta_n=1$ infinitely often as $n\rightarrow +\infty$, the same is not true when $\eta_n=1$ infinitely often as $n\rightarrow -\infty$. Indeed, for such configurations the equation characterising the system at (\ref{origdef}) is no longer well-defined, and one needs to make sense of starting the carrier from $-\infty$. As we will discuss in more detail below, even though we can extend the definition for a certain class of configurations quite straightforwardly, the continued evolution and reversibility of the system can no longer be taken for granted.

It transpires that a convenient way to approach the issue of extending the dynamics to an infinite system is to introduce a certain path encoding of the particle configuration, and consider the dynamics of this. In particular, we define a two-sided nearest-neighbour path $S=(S_n)_{n\in\mathbb{Z}}$  by setting $S_0=0$, and
\begin{equation}\label{SRWrep}
S_{n}=S_{n-1}+1-2\eta_{n},\qquad\forall n\in\mathbb{Z},
\end{equation}
i.e.\ the increment $S_{n}-S_{n-1}$ is equal to $-1$ if there is a particle at $n$, and equal to $+1$ otherwise. Now, if $\eta_n=0$ eventually as $n\rightarrow-\infty$, then it is an elementary exercise (cf.\ Lemma \ref{tlem}) to check that the action of the carrier on $S$ is given by
\begin{equation}\label{pitmans}
(TS)_{n}=2M_n-S_n-2M_0,
\end{equation}
where we slightly abuse notation by writing $TS=((TS)_n)_{n\in\mathbb{Z}}$ for the path encoding of $T\eta$, and $M=(M_n)_{n\in\mathbb{Z}}$ is the past maximum of $S$, i.e.\
\begin{equation}\label{mdef}
M_n=\sup_{m\leq n}S_m.
\end{equation}
The mapping given by (\ref{pitmans}) is Pitman's transformation, which has been studied extensively in the stochastic processes literature, and we will discuss in Section \ref{pitmansec} how our results relate to known results in this area. We note that it is easy to understand this transformation pictorially, see Figure \ref{bbsfig} for an example realisation of $S$, $M$ and $TS$. Moreover, it is straightforward to connect the path encoding $S$ to the carrier process $W$ through the identity
\begin{equation}\label{wms}
W=M-S.
\end{equation}
(See Lemma \ref{wlem}.) Figure \ref{bbsw} shows the sample path of $W$ and $TW$ corresponding to the particle configuration of Figure \ref{bbsfig}, where we write $TW$ for the carrier process corresponding to particle configuration $T\eta$.

\begin{figure}[!htb]
\vspace{-20pt}
\centering
\scalebox{0.58}{\includegraphics{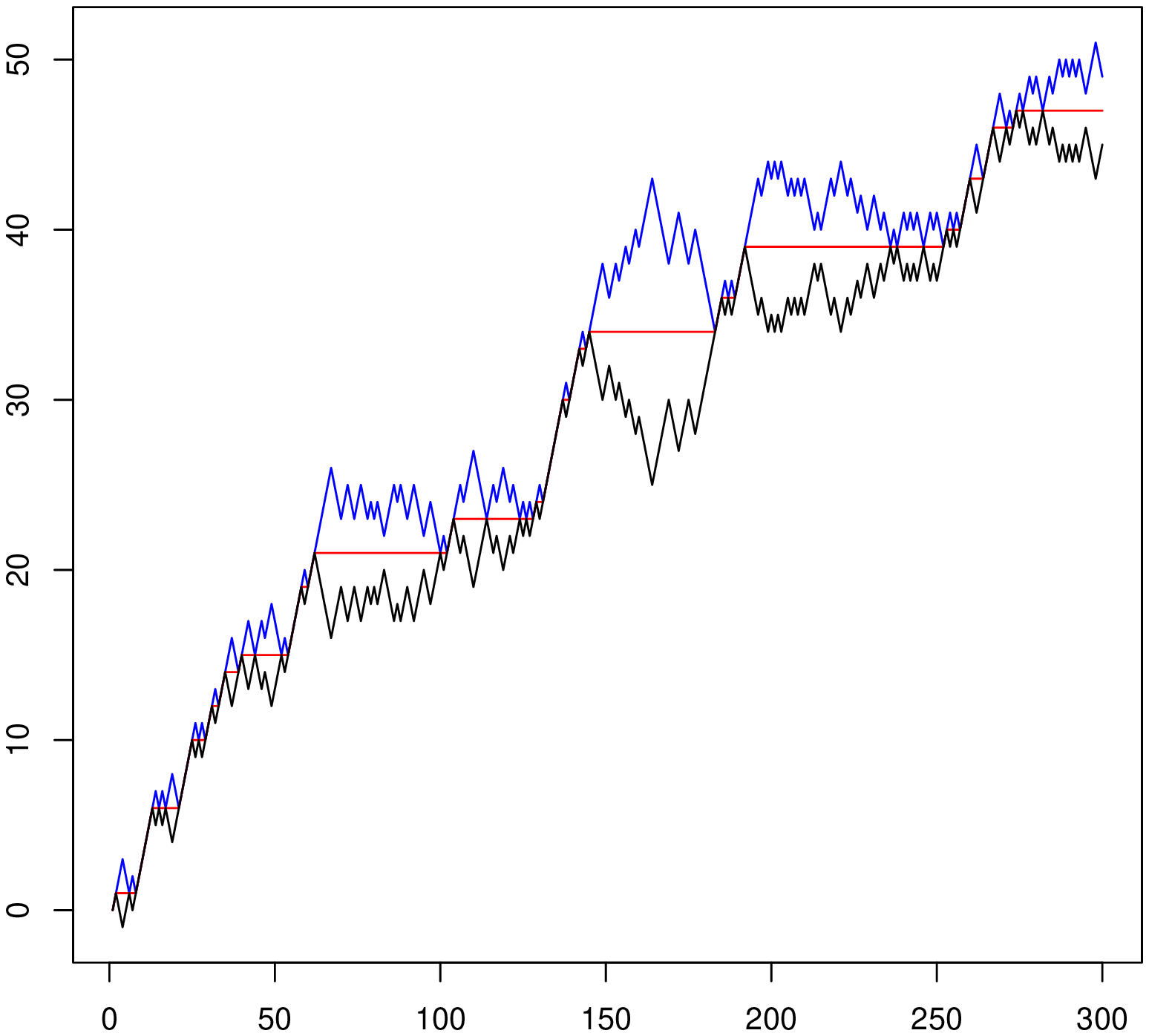}}
\vspace{-30pt}
\caption{Example sample path of $S$ (black), $M$ (red) and $TS$ (blue).}\label{bbsfig}

\scalebox{0.58}{\includegraphics{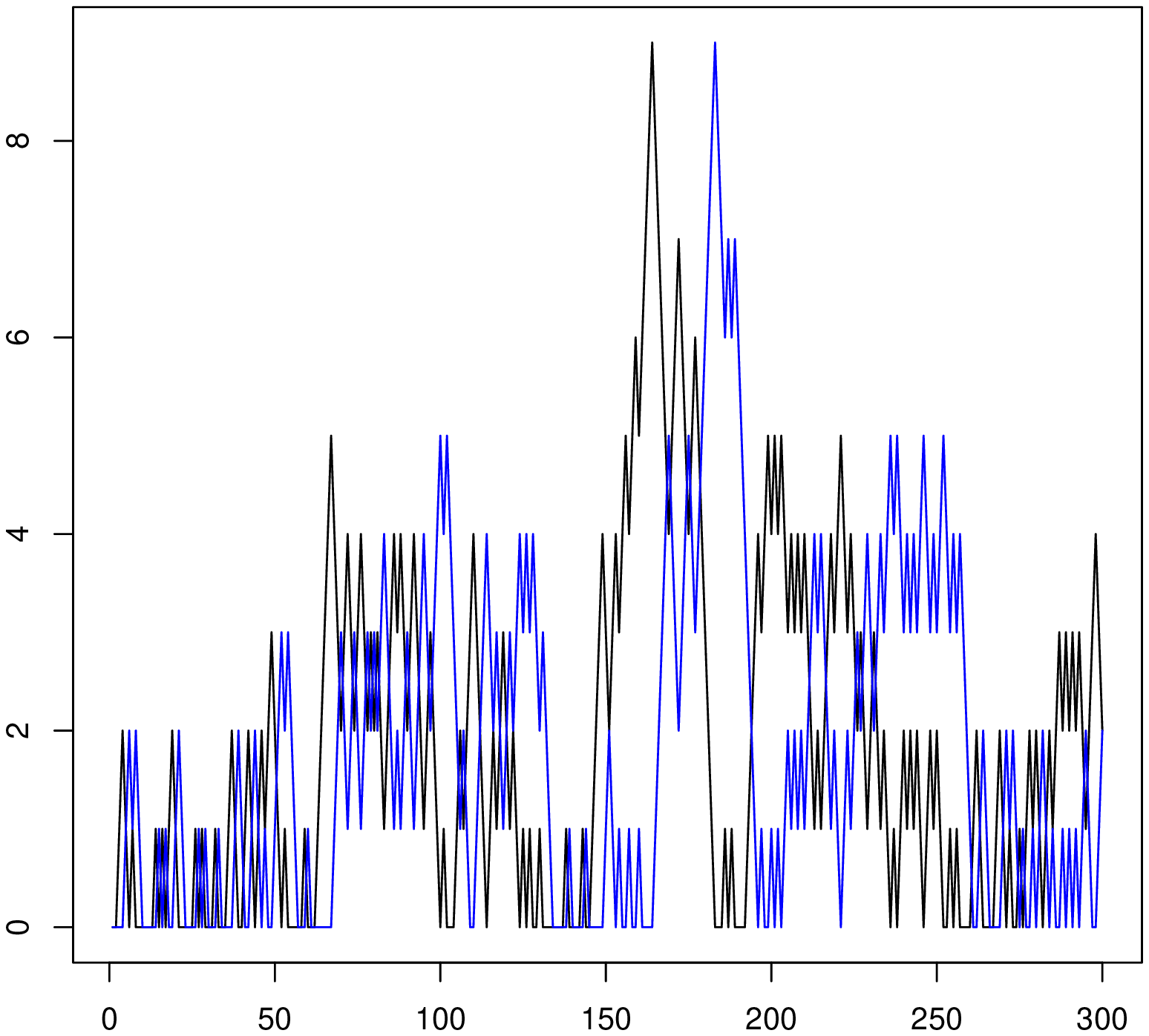}}
\vspace{-30pt}
  \caption{Example sample path of $W$ (black) and $TW$ (blue), corresponding to the particle configuration of Figure \ref{bbsfig}.}\label{bbsw}
\end{figure}

At least formally, the discussion of the previous paragraph suggests that we will be able to extend the dynamics of the system to infinite particle configurations whenever $M_0<\infty$. In Section \ref{extensionsec}, we show that this indeed is the case, and that the resulting system satisfies the update rule at (\ref{updaterule}). However, this picture is not completely satisfactory, as it does not guarantee that the second step of the dynamics will be well-defined, or that the dynamics are reversible. Regarding the latter issue in particular, as we noted for the finite particle case, \cite{takahashi1990} established that the inverse action of the system is given by running the carrier from right to left. In terms of path encodings, this is the map given by
\[T^{-1}S=2I-S-2I_0,\]
where $I=(I_n)_{n\in\mathbb{Z}}$ is the future minimum of $S$, i.e.\
\[I_n=\inf_{m\geq n}S_m.\]
(See Section \ref{inversesec}.) Similarly to the observation made for $T$ above, for $T^{-1}$ to be defined for infinite particle configurations, we will thus require $I_0>-\infty$. However, whilst we can introduce such a definition, for infinite particle systems the BBS is not conservative in general, by which we mean that for some particle configurations $T$ will send some particles `to infinity'. Since we can not expect the carrier run from right to left to recover these particles, this suggests that it is not the case that $T^{-1}T\eta=\eta$ for general particle configurations. (See Remark \ref{needcond} below for an example of a `bad' configuration, where $T^{-1}T\eta\neq\eta$.)

In view of resolving the issues raised in the previous paragraph, the initial aim of this work was to characterise the set
\begin{equation}\label{srevdef}
\mathcal{S}^{rev}:=\left\{S\in \mathcal{S}^0\::\:\mbox{$TS$, $T^{-1}S$, $T^{-1}TS$, $TT^{-1}S$ well-defined, $T^{-1}TS=S$, $TT^{-1}S=S$}\right\},
\end{equation}
upon which the one-step (forwards or backwards) dynamics are well-defined and reversible, where we have written
\begin{equation}\label{s0def}
\mathcal{S}^0:=\left\{S:\mathbb{Z}\rightarrow \mathbb{Z}:\: S_0=0,\:|S_n-S_{n-1}|=1,\:\forall n\in\mathbb{Z}\right\}
\end{equation}
for the set of two-sided nearest-neighbour paths started from 0. Moreover, since the BBS dynamics can take us out of this set, it is also natural to consider the invariant set
\begin{equation}\label{sinv}
\mathcal{S}^{inv}:=\left\{S\in \mathcal{S}^{0}\::\:\mbox{$T^kS\in \mathcal{S}^{rev}$ for all $k\in\mathbb{Z}$}\right\},
\end{equation}
upon which the dynamics are well-defined and reversible for all time.

In the following result, we give a complete description of both $\mathcal{S}^{rev}$ and $\mathcal{S}^{inv}$. For the statement of the result involving $\mathcal{S}^{inv}$, it is convenient to decompose the latter set according to the behaviour of functions at infinity, which we will describe in terms of four sets, $\mathcal{S}_{sub-critical}^{\pm}$ and
$\mathcal{S}_{critical}^{\pm}$. In particular, for these sets we have
\[\mathcal{S}_{sub-critical}^{\pm}\cap\mathcal{S}^{inv}=\left\{S\in \mathcal{S}^{inv}\::\:\lim_{n\rightarrow\pm\infty}S_n=\pm\infty\right\}.\]
The sub-criticality refers to the density of particles. Indeed, for particle configurations encoded by paths in $\mathcal{S}_{sub-critical}^{+}$ we have
\begin{equation}\label{denscond}
\sum_{m=0}^{n-1}\eta_m -\frac{n}{2}\rightarrow -\infty,
\end{equation}
as $n\rightarrow\infty$ (and a similar result holds on $\mathcal{S}_{sub-critical}^{-}$ as $n\rightarrow-\infty$), which can be interpreted as meaning we have a limiting particle density strictly below $1/2$, or, in other words, that we have infinitely many more spaces than particles asymptotically. We also have
\[\mathcal{S}_{critical}^{\pm}\cap\mathcal{S}^{inv}=\left\{S\in \mathcal{S}^{inv}\::\mbox{$S$ bounded as }n\rightarrow\pm\infty\right\}.\]
Similarly to (\ref{denscond}), we can consider particle configurations with path encodings in $\mathcal{S}_{critical}^{\pm}$ as having limiting particle density in the relevant direction of precisely $1/2$. In the following result, we establish that the sets $\mathcal{S}_{sub-critical}^{\pm}$ and $\mathcal{S}_{critical}^{\pm}$ cover all the possible boundary behaviour for functions in $\mathcal{S}^{inv}$, and give a full description of their composition. In Theorem \ref{characterizationinv} below, we give a slightly more detailed decomposition of $\mathcal{S}^{inv}$, which establishes that the set is naturally partitioned in a finer way, and also show that the BBS dynamics respect this partition.

\begin{thm}\label{mr1} (a) It holds that
\[\mathcal{S}^{rev}=\left\{S\in \mathcal{S}^0\::\:M_0<\infty,\:I_0>-\infty,\: \limsup_{n\rightarrow\infty}S_n=M_\infty,\;\liminf_{n\rightarrow-\infty}S_n=I_{-\infty}\right\},\]
where the limits $M_\infty =\lim_{n\rightarrow\infty}M_n=\sup_{n\in\mathbb{Z}}S_n$ and $I_{-\infty}=\lim_{n\rightarrow-\infty}I_n=\inf_{n\in\mathbb{Z}}S_n$ are well-defined by monotonicity.\\
(b) It holds that
\[\mathcal{S}^{inv}=\bigcup_{*_1,*_2\in\{sub-critical,critical\}}\left(\mathcal{S}_{*_1}^-\cap\mathcal{S}_{*_2}^+\right),\]
and also
\begin{eqnarray}
\lefteqn{\mathcal{S}_{sub-critical}^{\pm}}\nonumber\\
&=&\left\{S\in \mathcal{S}^{0}\::\:\lim_{n\rightarrow\pm\infty}\frac{S_n}{F(n)}=1\mbox{ for some strictly increasing function }F:\mathbb{Z}\rightarrow\mathbb{R}\right\},\nonumber\\
&&\nonumber\\
\lefteqn{\mathcal{S}_{critical}^{\pm}}\nonumber\\
&=&
\left\{S\in \mathcal{S}^{0}\::\: \sup_{n\in\mathbb{Z}}\left(M_n-I_n\right)=\limsup_{n\rightarrow\pm\infty}S_n-\liminf_{n\rightarrow\pm\infty}S_n=K\:\mbox{ for some }K\in \mathbb{N}\right\}\label{skdef}\\
&=&
\left\{S\in \mathcal{S}^{0}\::\: \sup_{n\in\mathbb{Z}}\left(M_n-S_n\right)<\infty,\:
\limsup_{n \to \pm \infty}S_n= \liminf_{n \to \pm \infty}S_n + \sup_{n}(M_n-S_n) \in \mathbb{R}\right\}.\nonumber
\end{eqnarray}
NB. Any $F$ relevant to the definition of $\mathcal{S}_{sub-critical}^{\pm}$ is necessarily divergent as $n\rightarrow\pm\infty$.
\end{thm}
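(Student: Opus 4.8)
The plan is to reduce both parts to direct computations with the explicit Pitman formula \eqref{pitmans} and its inverse, tracking only the past-maximum $M$ and future-minimum $I$. For part (a), I would first observe that $TS$ is well-defined exactly when $M_0<\infty$ (since $M_n=\max(M_0,\sup_{0<m\le n}S_m)$ is then finite for all $n$, and conversely), and symmetrically $T^{-1}S$ exactly when $I_0>-\infty$. Substituting one map into the other, and writing $J_n:=\inf_{m\ge n}(2M_m-S_m)$ and $g_n:=J_n-M_n$, a short manipulation gives $(T^{-1}TS)_n=S_n+2(g_n-g_0)$. Since $2M_m-S_m=M_m+(M_m-S_m)\ge M_m\ge M_0$ for $m\ge0$, we have $J_0\ge M_0>-\infty$, so $T^{-1}TS$ is automatically well-defined once $M_0<\infty$; the only substantive extra requirement is thus that $g_n$ be constant in $n$.

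The core of (a) is then to show that $g_n\equiv g_0$ if and only if $\limsup_{n\to\infty}S_n=M_\infty$. I would note $0\le g_n\le W_n:=M_n-S_n$, and that $W_n=0$ at any running-maximum point (one exists since the finite integer supremum $M_0$ is attained at some $m\le0$); hence constancy of $g$ forces $g\equiv0$. Because $M_m-M_n+W_m$ is a sum of nonnegative integers for $m\ge n$, the infimum defining $g_n$ is zero precisely when it is attained, i.e. when some $m\ge n$ has $S_m=M_m=M_n$, which by the nearest-neighbour property is equivalent to $\sup_{m\ge n}S_m\ge M_n$. As $\sup_{m\ge n}S_m\downarrow\limsup_{n\to\infty}S_n$ and $M_n\uparrow M_\infty$, the statement ``$g_n=0$ for all $n$'' is exactly $\limsup_{n\to\infty}S_n=M_\infty$. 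The symmetric fact $TT^{-1}S=S\iff\liminf_{n\to-\infty}S_n=I_{-\infty}$ follows via the reflection $S_n\mapsto-S_{-n}$, which interchanges $T$ and $T^{-1}$; assembling the four conditions yields the description of $\mathcal{S}^{rev}$.

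For part (b) the plan is to classify the behaviour at each end and show the two ends decouple. Membership in $\mathcal{S}^{rev}$ already constrains $S$ to be bounded above near $-\infty$ and below near $+\infty$ and to satisfy the two reachability conditions; in particular the super-critical behaviour $S_n\to-\infty$ as $n\to+\infty$ fails $\limsup_{n\to\infty}S_n=M_\infty$ outright. To prove $\bigcup_{*_1,*_2}(\mathcal{S}_{*_1}^{-}\cap\mathcal{S}_{*_2}^{+})\subseteq\mathcal{S}^{inv}$ I would check that each such class lies in $\mathcal{S}^{rev}$ and that $T,T^{-1}$ preserve the class at each end. The sub-critical case is immediate, since $S_n\to\pm\infty$ forces $(TS)_n\sim S_n$. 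For the critical case the key algebraic input is the identity $\sup_n(M_n-I_n)=\sup_n(M_n-S_n)=\sup_n(S_n-I_n)$ (proved by exhibiting, for each $n$, past and future times attaining $M_n$ and $I_n$), together with the $T$-invariance of this common quantity; since on a path bounded near $+\infty$ the nondecreasing integer sequence $M_n$ is eventually constant, one gets $(TS)_n=2M_\infty-S_n-2M_0$ for large $n$, so $TS$ stays bounded with the same oscillation and the same $K$, i.e. $\mathcal{S}_{critical}^{\pm}$ is preserved.

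The reverse inclusion is where the real work lies, and I expect it to be the main obstacle: I must show that any end-behaviour other than sub-critical or critical is destroyed under iteration. The prototypical bad case at $+\infty$ is a path bounded below with $\limsup_{n\to\infty}S_n=+\infty$; there $I_n$ stays finite but $(T^{-1}S)_n=2I_n-S_n-2I_0$ has $\inf_{m\ge0}(T^{-1}S)_m=-\infty$, so $T^{-1}S$ violates $I_0>-\infty$ and hence $S\notin\mathcal{S}^{inv}$. The delicate remaining case is a path bounded near $+\infty$ for which the critical identity $\sup_n(M_n-I_n)=\limsup_{n\to+\infty}S_n-\liminf_{n\to+\infty}S_n$ fails; here I would argue, using the conserved quantity $\sup_n(M_n-I_n)$ and the solitonic transport encoded by $T$, that the excess interior range is carried to the boundary after finitely many steps, eventually breaking a reachability condition. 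Controlling this transport quantitatively, rather than merely asymptotically, is what I anticipate to be the hardest step. Finally, the stated equivalence of the two descriptions of $\mathcal{S}_{critical}^{\pm}$ follows from the identity $\sup_n(M_n-I_n)=\sup_n(M_n-S_n)$ together with the reachability condition $\limsup_{n\to\pm\infty}S_n=M_\infty$ already built into $\mathcal{S}^{rev}$.
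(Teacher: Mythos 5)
Your treatment of part (a) is correct and takes a genuinely different route from the paper. The paper works through the carrier-space formalism (Propositions \ref{phiproperty} and \ref{adapprop}, Lemma \ref{phipsiy+y-}), writing $T=\Psi\Phi^{-1}$ and showing $T^{-1}TS=S$ exactly when $\Phi^{-1}S\in\mathcal{Y}^+$, i.e.\ $\liminf_{n\to\infty}W_n=0$; you instead compute $(T^{-1}TS)_n-S_n=2(g_n-g_0)$ with $g_n=\inf_{m\ge n}(M_m-M_n+W_m)$ and analyse directly when $g\equiv0$. The two conditions are the same ($\liminf_{n\to\infty}W_n=0\Leftrightarrow\limsup_{n\to\infty}S_n=M_\infty$), and your argument is sound: $0\le g_n\le W_n$, the integer supremum $M_0$ is attained so $g$ vanishes somewhere on $(-\infty,0]$, and the infimum defining $g_n$ is attained (hence zero) iff $\sup_{m\ge n}S_m\ge M_n$. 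Your computation is more self-contained; what the paper's machinery buys in exchange is the auxiliary identities $I^{TS}=M-2M_0$ and $TV=W$ of Theorem \ref{goodsetthm}, which are reused repeatedly later (e.g.\ in Section \ref{currentsec} and in the proofs of Lemmas \ref{cha:key1} and \ref{cha:key2}).

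Part (b) has a genuine gap: your case analysis at $+\infty$ for the reverse inclusion is incomplete. Eliminating the case $\limsup_{n\to\infty}S_n=+\infty$ with $\liminf_{n\to\infty}S_n<\infty$ (via $\inf_{n\ge0}(T^{-1}S)_n=-\infty$) only reduces you to $\lim_{n\to\infty}S_n=+\infty$; it does \emph{not} put you in $\mathcal{S}^+_F$. Membership in $\mathcal{S}^+_F$ for a strictly increasing $F$ forces $M_n/I_n\to1$, and a path can diverge to $+\infty$ while oscillating so wildly that this ratio stays bounded away from $1$. Ruling such paths out of $\mathcal{S}^{inv}$ is the content of the paper's Lemma \ref{maxlem}, and it is arguably the hardest step of the whole proof: one assumes $\limsup_n M_n/I_n\ge1+\delta$, shows via $I^{T^{-1}S}_{\tilde n_k}\le(1-\delta')I_{\tilde n_k}-2I_0$ and the identity $M^{T^{-1}S}=I-2I_0$ that the defect propagates to $T^{-1}S$ with the worse constant $1+\delta_1=\frac{1}{1-\delta}$, and iterates until $\delta_{k_0}>1$, at which point $\liminf_n I^{T^{-(k_0+1)}S}_n/I^{T^{-k_0}S}_n\le1-\delta_{k_0}<0$ contradicts the divergence of both numerator and denominator. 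Nothing in your proposal substitutes for this argument, and it is not a routine verification. Separately, for the bounded case with the critical identity failing you have correctly identified the right strategy (the paper's Lemma \ref{cha:key2} makes "excess interior range is transported to the boundary" quantitative by producing a sequence $n_k$ along which $W^{T^kS}_{n_k}-\tilde M^{T^kS}_{n_k}$ increases by at least $1$ per time step, eventually violating the bound $\sup_n(W^{T^kS}_n-\tilde M^{T^kS}_n)\le K$ forced by $T^kS\in\mathcal{S}^{rev}$), but as you acknowledge, this step is not carried out in your proposal.
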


With the preceding preparations in place, we turn our attention to random initial conditions, and return to the issue of invariance in distribution under $T$. Our first main result in this direction, Theorem \ref{mra}, gives some basic properties of invariant measures. In particular, it states that any path encoding within the support of an invariant measure must have matching boundary conditions at $\pm{\infty}$. More precisely, any such path encoding almost-surely takes a value in either
\begin{equation}\label{subcritdef}
\mathcal{S}_{sub-critical}:=\mathcal{S}_{sub-critical}^-\cap\mathcal{S}_{sub-critical}^+
\end{equation}
or
\[\mathcal{S}_{critical}:=\mathcal{S}_{critical}^-\cap\mathcal{S}_{critical}^+.\]
(In fact, Proposition \ref{critsubcrit} shows invariant measures are supported on slightly smaller sets than these). Moreover, Theorem \ref{mra} demonstrates that any invariant configuration has a constant density.
(NB.\ $\mathbf{P}$-a.s.\ denotes that the event in question holds with probability $1$.)

{\thm\label{mra} Suppose $\eta$ is a random particle configuration whose path encoding has distribution supported in $\mathcal{S}^{rev}$, and which satisfies $T\eta\buildrel{d}\over{=}\eta$. It is then the case that \[S\in\mathcal{S}_{sub-critical}\cup\mathcal{S}_{critical},\qquad\mathbf{P}\mbox{-a.s.}\] Moreover, there exists a constant $\rho\in[0,\frac12]$ such that
\[\mathbf{P}\left(\eta_n=1\right)=\rho,\qquad \forall n\in\mathbb{Z},\]
where $\rho=\frac12$ if and only if $S\in \mathcal{S}_{critical}$, $\mathbf{P}$-a.s.}

\begin{rem}\label{nonsrem} The above result naturally brings one to ask whether particle configurations that are invariant under $T$ are necessarily stationary with respect to spatial shifts. However, the answer to this is negative. Indeed, it is easy to construct examples of random particle configurations that are invariant under $T$, but whose law is not stationary.
For example, in Theorem \ref{mre} below, we show that the particle configuration $\eta:=(\eta_n)_{n\in\mathbb{Z}}$ given by a sequence of independent identically distributed (i.i.d.) Bernoulli random variables with parameter $p\in(0,\frac{1}{2})$ is invariant under $T$, and it readily follows that so is the non-stationary configuration $\eta'$ given by $\eta_{n}':=\eta_{\lfloor n/k\rfloor}$ for some $k\in\mathbb{N}$.
\end{rem}

By virtue of the previous result, it is only necessary to consider the behaviour of invariant measures on the sets $\mathcal{S}_{sub-critical}$ and $\mathcal{S}_{critical}$ separately. We start with the latter of these, for which it turns out that the invariant dynamics are trivial, whereby spaces and particles are simply reversed, see Theorem \ref{mrb}. Moreover, in the following result, we also consider the issue of ergodicity for invariant measures supported on $\mathcal{S}_{critical}$, showing that any ergodic measure is supported on the simplest possible set for the aforementioned dynamics.
(NB.\ $\buildrel{d}\over{=}$ denotes equality in distribution.)

{\thm\label{mrb} Suppose $\eta$ is a random particle configuration whose path encoding has distribution supported in $\mathcal{S}_{critical}$.\\
(a) It holds that $T\eta\buildrel{d}\over{=}\eta$ if and only if $\eta\buildrel{d}\over{=}1-\eta$. Moreover, under either of these conditions, we have that $T\eta=1-\eta$, $\mathbf{P}$-a.s.\\
(b) If $T\eta\buildrel{d}\over{=}\eta$, then it is the case that $\eta$ is ergodic under $T$ if and only if the distribution of $\eta$ is supported on a two point set of the form $\{\eta^*,1-\eta^*\}\subseteq\mathcal{S}_{critical}$.}

\begin{rem}\label{K-Wrem} An alternative characterisation of invariance in the critical case is given in terms of the symmetry of $W$. Indeed, we show in Proposition \ref{critprop} below that the support of any invariant measure on $\mathcal{S}_{critical}$ naturally decomposes into the sets $\mathcal{S}_K$, $K\in\mathbb{N}$, where the boundary conditions at $\pm\infty$ of $\mathcal{S}_K$ are given by fixing a single $K$ in the sets defined at \eqref{skdef}. It is then possible to check that the particle configurations whose path encodings are supported on $\mathcal{S}_K$ and which are invariant under $T$ are completely characterised by carrier processes that are stochastic processes on the state space $\{0,1,\dots,K\}$ satisfying
\[\liminf_{n\rightarrow\pm\infty}W_n=0,\]
$\mathbf{P}$-a.s., and also
\begin{equation}\label{strongsym}
W\buildrel{d}\over{=} K-W.
\end{equation}
In particular, this includes the case when $W$ is any two-sided stationary Markov process that is irreducible on $\{0,1,\dots,K\}$ and satisfies (\ref{strongsym}).
\end{rem}

Concerning measures supported on $\mathcal{S}_{sub-critical}$, we relate invariance and ergodicity under $T$ to the current of particles crossing the origin. More specifically, observe that $W_0$ represents the number of particles moved by the carrier from $\{\dots,-1,0\}$ to $\{1,2,\dots\}$ on the first evolution of the BBS, and $(T^{k-1}W)_0$ the corresponding figure for the $k$th evolution. From a particle system perspective, it is natural to ask how much information about the initial configuration is contained in the current sequence $((T^{k}W)_0)_{k\in\mathbb{Z}}$. In Section \ref{currentsec}, we provide conditions under which the entire particle configuration can be reconstructed from it. As a consequence, we are able to describe how, in the sub-critical case, the invariance and ergodicity of the map $\eta\mapsto T\eta$ precisely aligns with the corresponding properties holding for the sequence $((T^{k}W)_0)_{k\in\mathbb{Z}}$ under the canonical shift $\theta$ (i.e.\ $\theta(\dots,x_{-1},x_0,x_1,\dots)=(\dots,x_0,x_1,x_2,\dots)$).

{\thm\label{mrc} Suppose $\eta$ is a random particle configuration whose path encoding has distribution supported in $\mathcal{S}_{sub-critical}$.\\
(a) It holds that $T\eta\buildrel{d}\over{=}\eta$ if and only if $((T^kW)_0)_{k\in\mathbb{Z}}$ is stationary under $\theta$.\\
(b) The configuration $\eta$ is invariant and ergodic under $T$ if and only if $((T^kW)_0)_{k\in\mathbb{Z}}$ is stationary and ergodic under $\theta$.}
\medskip

As we will describe in Theorem \ref{mrf} and Corollary \ref{ergcormrf} below, the previous result can be applied to check the ergodicity of several examples of random configurations. However, the current might not be the most straightforward object to study, and so in the following result, we provide an alternative means for checking the invariance in distribution of random configurations. In particular, we give some simple sufficient conditions for invariance in terms of the symmetry of the particle configuration $\eta$ and carrier process $W$. For the statement of the result, we introduce the reversed configuration $\overleftarrow{\eta}$, as defined by setting
\begin{equation}\label{revcon}
\overleftarrow{\eta}_n=\eta_{-(n-1)},
\end{equation}
and the reversed carrier process $\bar{W}$, given by
\[\bar{W}_n=W_{-n}.\]

\begin{thm}\label{mrd} Suppose $\eta$ is a random particle configuration, and that the distribution of the corresponding path encoding $S$ is supported on $\mathcal{S}^{rev}$. It is then the case that any two of the three following conditions imply the third:
\begin{equation}\label{threeconds}
\overleftarrow{\eta}\buildrel{d}\over{=}\eta,\qquad \bar{W}\buildrel{d}\over{=}W,\qquad T\eta\buildrel{d}\over{=}\eta.
\end{equation}
Moreover, in the case that two of the above conditions are satisfied, then the distribution of $S$ is actually supported on $\mathcal{S}^{inv}$.
\end{thm}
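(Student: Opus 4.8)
The plan is to rephrase all three conditions in \eqref{threeconds} as equalities between the laws of three explicit transformations of the path encoding $S$, and then to observe that the resulting statements are merely the three pairwise equalities among a triple of distributions, so that any two force the third by transitivity. Throughout, write $R$ for the reflection of $\mathcal{S}^0$ given by $(RS)_n=-S_{-n}$; this is an involution, and a direct check of increments shows that $RS$ is the path encoding of the reversed configuration $\overleftarrow{\eta}$, so that the first condition in \eqref{threeconds} reads $RS\buildrel{d}\over{=}S$. From \eqref{pitmans} and the analogous formula $T^{-1}S=2I-S-2I_0$ for the inverse (with $M_n=\sup_{m\le n}S_m$ and $I_n=\inf_{m\ge n}S_m$) one obtains the Pitman conjugation identity $R\,T=T^{-1}R$, i.e.\ $R(TS)=T^{-1}(RS)$, valid on $\mathcal{S}^{rev}$. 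Since $R$ is a bijection, the third condition $T\eta\buildrel{d}\over{=}\eta$ is equivalent to $R(TS)\buildrel{d}\over{=}RS$.

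The crux is to identify the second condition. I would establish the deterministic identity that, for $S\in\mathcal{S}^{rev}$, the carrier process of the configuration $\overleftarrow{T\eta}$ (whose encoding is $R(TS)$) is exactly $\bar{W}$. Writing the carrier of $R(TS)$ at site $n$ as $\sup_{m\le n}(R(TS))_m-(R(TS))_n=(TS)_{-n}-\inf_{m\ge -n}(TS)_m$ and inserting $(TS)_k=2M_k-S_k-2M_0$, the claim reduces to showing
\[(TS)_k-\inf_{m\ge k}(TS)_m=M_k-S_k=W_k\qquad\text{for all }k,\]
i.e.\ that the future carrier of $T\eta$ equals the past carrier of $\eta$. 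Rearranging the left-hand side expresses it as $W_k+\sup_{m\ge k}\big[(M_k-M_m)-W_m\big]$, and since each bracketed term is nonpositive, the identity amounts to the existence of some $m\ge k$ with $M_m=M_k$ and $W_m=0$; equivalently, $S$ must return to its running maximum $M_k$ at a later time. This is precisely where the defining property $\limsup_{n\to\infty}S_n=M_\infty$ of $\mathcal{S}^{rev}$ is needed, and I expect this identity to be the main obstacle of the argument, the remainder being formal. Because the carrier process determines the configuration in a measurable way (via $\eta_n=\max(W_n-W_{n-1},0)$) and conversely, the identity yields that $\bar{W}\buildrel{d}\over{=}W$ is equivalent to $\overleftarrow{T\eta}\buildrel{d}\over{=}\eta$, that is, to $R(TS)\buildrel{d}\over{=}S$.

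With these reformulations in hand, the three conditions of \eqref{threeconds} become exactly $RS\buildrel{d}\over{=}S$, $R(TS)\buildrel{d}\over{=}S$ and $R(TS)\buildrel{d}\over{=}RS$, which are the three pairwise equalities among the laws of $S$, $RS$ and $R(TS)$; hence any two of them imply the third by transitivity of equality of distributions. Concretely, each implication is obtained by applying the (total) involution $R$ to one of the given distributional equalities and chaining with the other, so no appeal to $T$ preserving $\mathcal{S}^{rev}$ is required at this stage. Finally, for the closing assertion, once two conditions hold we have in particular $T\eta\buildrel{d}\over{=}\eta$ with $S$ supported on $\mathcal{S}^{rev}$; then $TS\in\mathcal{S}^{rev}$ almost surely, so $T^2S$ is well defined and, applying $T$ to $TS\buildrel{d}\over{=}S$, again lies in $\mathcal{S}^{rev}$ almost surely, and symmetrically for $T^{-1}$. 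Intersecting these countably many almost sure events over $k\in\mathbb{Z}$ shows that $T^kS\in\mathcal{S}^{rev}$ for all $k$ simultaneously, $\mathbf{P}$-a.s., which is exactly the statement that $S\in\mathcal{S}^{inv}$, $\mathbf{P}$-a.s.
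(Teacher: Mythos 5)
Your argument is correct, and at heart it is the same as the paper's, just packaged so that the conclusion becomes pure transitivity. The paper restates the three conditions of \eqref{threeconds} as $S\buildrel{d}\over{=}RS$, $\Phi^{-1}S\buildrel{d}\over{=}\Psi^{-1}RS$ and $\Psi^{-1}S\buildrel{d}\over{=}\Phi^{-1}S$, and chains any two to get the third using the duality relations $R\Psi=\Phi\tilde{R}$, $R\Phi=\Psi\tilde{R}$ of Lemma \ref{dualityrel} together with the bijectivity of $\Phi|_{\mathcal{Y}^-}$ and $\Psi|_{\mathcal{Y}^+}$. You instead identify the three conditions with the three pairwise equalities in law among the triple $S$, $RS$, $R(TS)$ and conclude by transitivity; the one non-formal input this requires, namely that the past-maximum carrier of $R(TS)$ equals $\bar{W}$, i.e.\ $(TS)_k-\inf_{m\ge k}(TS)_m=W_k$ for all $k$, is precisely the identity $TV=W$ already recorded in Theorem \ref{goodsetthm} for $S\in\mathcal{S}^{T^{-1}T}\supseteq\mathcal{S}^{rev}$, and your direct verification of it from $\limsup_{n\to\infty}S_n=M_\infty$ (the supremum of the nonpositive terms $(M_k-M_m)-W_m$ vanishes because $S$ revisits the level $M_k$ at some $m\ge k$) is sound. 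The passage between $\bar{W}\buildrel{d}\over{=}W$ and $R(TS)\buildrel{d}\over{=}S$ is justified because $\Phi$ and $\Phi^{-1}$ are deterministic measurable maps with $\Phi\Phi^{-1}=\mathrm{id}$ on $\mathcal{S}^T$, and your handling of the final claim (iterating $TS\buildrel{d}\over{=}S$ and intersecting countably many almost sure events over $k\in\mathbb{Z}$) coincides with the paper's. In short: same underlying mechanism, with your version making the ``three pairwise equalities among one triple'' structure explicit at the level of path encodings rather than carriers.
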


Regarding the application of this general result, we first note that when $\eta$ is a stationary, ergodic sequence of Bernoulli ($\rho$) random variables, then,  by the ergodic theorem, $S\in \mathcal{S}_{sub-critical}$ almost-surely whenever the density $\rho$ is strictly less than $\frac{1}{2}$. (See Section \ref{probsec} for further details.) Within this class, we are able to present several natural examples of random particle configurations for which we can check invariance in distribution under $T$. Figure \ref{solitonsfig} shows the typical evolution of $T^kW$ for the first of these examples, illustrating the solitonic behaviour of the system. See Remark \ref{solrem} for a brief discussion of this aspect of the BBS. (We observe Remark \ref{K-Wrem} exhibits a class of examples of invariant measures with $\rho=\frac12$ whose path encodings have distribution supported in $\mathcal{S}_{critical}$, and note that when $\rho>\frac12$, $\eta$ is almost-surely not in $\mathcal{S}^{rev}$.)

\begin{thm}\label{mre} The following particle configurations all give rise to path encodings such that $S\in\mathcal{S}_{sub-critical}$, $\mathbf{P}$-a.s., and which are invariant in distribution under $T$.\\
(a) The particle configuration $(\eta_n)_{n\in\mathbb{Z}}$ given by a sequence of independent identically distributed (i.i.d.) Bernoulli random variables with parameter $p\in[0,\frac{1}{2})$.\\
(b) The particle configuration $(\eta_n)_{n\in\mathbb{Z}}$ given by a two-sided stationary Markov chain on $\{0,1\}$ with transition matrix
\[\left(
  \begin{array}{cc}
    1-p_0 & p_0 \\
    1-p_1 & p_1 \\
  \end{array}
\right)\]
where $p_0\in (0,1)$, $p_1\in[0,1)$ satisfy $p_0+p_1<1$.\\
(c) The particle configuration $(\eta_n)_{n\in\mathbb{Z}}$ given by conditioning a sequence of i.i.d.\ Bernoulli random variables with parameter $p\in(0,1)$ on the event $\sup_{n\in\mathbb{Z}}W_n\leq K$, for any $K\in\mathbb{Z}_+$. (NB. Since the event in question has probability 0, this conditioning is non-trivial, and should be understood in terms of a limiting operation which is described in Section \ref{boundedsec} and yields a Markov carrier process.)
\end{thm}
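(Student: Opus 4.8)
The plan is to derive all three cases from Theorem \ref{mrd}: for a law supported on $\mathcal{S}^{rev}$ it suffices to verify two of the three symmetries in \eqref{threeconds}, and I will throughout aim for the pair $\overleftarrow{\eta}\buildrel{d}\over{=}\eta$ and $\bar{W}\buildrel{d}\over{=}W$. First, though, I would establish membership in $\mathcal{S}^{rev}$ and the asserted $S\in\mathcal{S}_{sub-critical}$ simultaneously. In each example the particle density $\rho=\mathbf{P}(\eta_n=1)$ is strictly below $\frac12$: for (a) this is the hypothesis $p<\frac12$; for (b) the stationary density equals $p_0/(1+p_0-p_1)$, which is $<\frac12$ exactly when $p_0+p_1<1$; and for (c) it holds because the (recurrent, finite-state) carrier chain spends a positive fraction of time held at $0$. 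Hence the increments of $S$ are ergodic and stationary with positive mean $1-2\rho$, so by the ergodic theorem $S_n\to+\infty$ and $S_{-n}\to-\infty$ almost surely. This gives $M_0<\infty$, $I_0>-\infty$, $\limsup_{n\to\infty}S_n=M_\infty=+\infty$ and $\liminf_{n\to-\infty}S_n=I_{-\infty}=-\infty$, so $S\in\mathcal{S}^{rev}$ by Theorem \ref{mr1}(a), and in fact $S\in\mathcal{S}_{sub-critical}$.

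The reversal symmetry $\overleftarrow{\eta}\buildrel{d}\over{=}\eta$ is the more elementary of the two. In (a) it is immediate, as $\overleftarrow{\eta}$ is again an i.i.d.\ Bernoulli($p$) sequence. In (b) every stationary two-state Markov chain is reversible --- the stationarity relation $\pi_0p_0=\pi_1(1-p_1)$ is exactly detailed balance --- so $(\eta_n)_n\buildrel{d}\over{=}(\eta_{-n})_n$, and the unit shift in \eqref{revcon} is absorbed by stationarity. In (c) I would show that the conditioning event is reversal-invariant: writing $W_n=M_n-S_n=\sup_{m\le n}(S_m-S_n)$, one has
\[\sup_{n\in\mathbb{Z}}W_n=\sup_{n}\sup_{m\le n}(S_m-S_n)=\sup_{a\le b}(S_a-S_b),\]
and the reversed path $\overleftarrow{S}_n=-S_{-n}$ produces the same quantity; since the unconditioned law is i.i.d.\ (hence reversible) and we condition on a reversal-invariant event, $\overleftarrow{\eta}\buildrel{d}\over{=}\eta$ survives the limiting construction of the conditioned measure.

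For the carrier symmetry $\bar{W}\buildrel{d}\over{=}W$, the key structural fact is the recursion $W_n=(W_{n-1}-X_n)^+$ with $X_n=S_n-S_{n-1}$ coming from \eqref{wupdate}, which realises $W$ as a reflected random walk. In (a), $W$ is then a stationary birth-death chain on $\mathbb{Z}_{\geq 0}$ with geometric invariant law, and any stationary birth-death chain is reversible, giving $\bar{W}\buildrel{d}\over{=}W$ at once. In (c) the conditioned carrier is, as recorded in the statement and to be justified via the limiting construction of Section \ref{boundedsec}, an irreducible birth-death chain on the finite state space $\{0,1,\dots,K\}$, which is likewise automatically reversible. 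In both cases, with the two symmetries in hand, Theorem \ref{mrd} yields $T\eta\buildrel{d}\over{=}\eta$ and places $S$ in $\mathcal{S}^{inv}$, as required.

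The principal obstacle is case (b), where $W$ is not Markov on its own; reading $\eta_n$ off the increment $W_n-W_{n-1}$ shows only that the pair $(\eta_n,W_n)_{n\in\mathbb{Z}}$ is Markov. I would therefore compute the stationary law of this pair explicitly --- I anticipate a matrix-geometric form $\pi(\eta,w)\propto c_\eta\rho^{w}$ for $w\geq 1$, with a boundary correction at $w=0$ --- and then check that the induced marginal of $W$ is invariant under space reversal. Note that $\overleftarrow{\eta}\buildrel{d}\over{=}\eta$ already forces $\bar{W}\buildrel{d}\over{=}(S_n-I_n)_n$, the future-minimum carrier, so the outstanding point is the equality in law $(M_n-S_n)_n\buildrel{d}\over{=}(S_n-I_n)_n$ between the past-maximum and future-minimum carriers; verifying this for the Markov-modulated reflected walk is where the real work lies. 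Should the stationary computation prove unwieldy, an alternative is to prove invariance in (b) directly, by showing $T$ maps the two-state Markov law to itself, and then to recover the carrier symmetry from $\overleftarrow{\eta}\buildrel{d}\over{=}\eta$ via Theorem \ref{mrd}.
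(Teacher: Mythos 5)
Your overall strategy is exactly the paper's: establish $S\in\mathcal{S}_{sub-critical}\subseteq\mathcal{S}^{rev}$ via the ergodic theorem (Lemma \ref{slinrand}), then verify $\overleftarrow{\eta}\buildrel{d}\over{=}\eta$ and $\bar{W}\buildrel{d}\over{=}W$ and invoke Theorem \ref{mrd}. For (a) and (c) your arguments match the paper's (Lemma \ref{Wprops}, Corollary \ref{revcor}; Section \ref{boundedsec}, Corollary \ref{condcor}): the carrier is a stationary birth--death-type chain satisfying detailed balance, hence reversible, and in (c) the conditioning event is reversal-invariant, so the symmetry of the i.i.d.\ law survives the quasi-stationary limiting construction. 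One caveat in (c): you assert rather than prove that the limiting construction yields a reversible finite-state carrier chain; the paper does the work of showing that the Doob $h$-transform $\tilde{P}^{(K)}(x,y)=P^{(K)}(x,y)h_K(y)/\lambda_Kh_K(x)$ is stochastic and reversible with stationary measure $\tilde{\pi}^{(K)}_x\propto h_K(x)^2\pi_x$, and proves the convergence of the conditioned truncated processes (Lemma \ref{condconst}). That is routine but should not be waved away.

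The genuine gap is case (b), and you have correctly located it: since $W$ is not Markov there, $\bar{W}\buildrel{d}\over{=}W$ cannot be obtained from detailed balance, and what remains after $\overleftarrow{\eta}\buildrel{d}\over{=}\eta$ is precisely the process-level identity $(M_n-S_n)_{n\in\mathbb{Z}}\buildrel{d}\over{=}(S_n-I_n)_{n\in\mathbb{Z}}$. Your proposed route --- compute the stationary law of the Markov pair $(\eta_n,W_n)$ and ``check that the induced marginal of $W$ is invariant under space reversal'' --- is not sufficient as stated: equality of one-dimensional (or even pairwise) marginals does not give equality of the laws of the two non-Markov processes $W$ and $\bar W$; you would need to identify the full law of the time-reversed pair chain and show that, after the appropriate relabelling of the $\eta$-coordinate (which is read off the increment of $W$ and so changes meaning under reversal), it coincides with the original. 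This is exactly the content of a Burke-type output theorem for the Markov-modulated ${M}/{M}/1$ queue, and the paper does not reprove it: Corollary \ref{markovcor} closes case (b) by observing that $W$ has the law of the process $Q$ of Hambly--Martin--O'Connell and citing \cite[Theorem 2]{HMOC} for $\bar{W}\buildrel{d}\over{=}W$. Your fallback suggestion (prove $T\eta\buildrel{d}\over{=}\eta$ directly and recover the carrier symmetry from Theorem \ref{mrd}) is logically sound but simply relocates the same computation. So the proposal is complete for (a) and (c) and leaves the essential step of (b) unproved.
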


\begin{figure}[t]
\vspace{-0pt}
\centering
\includegraphics[width = 12cm]{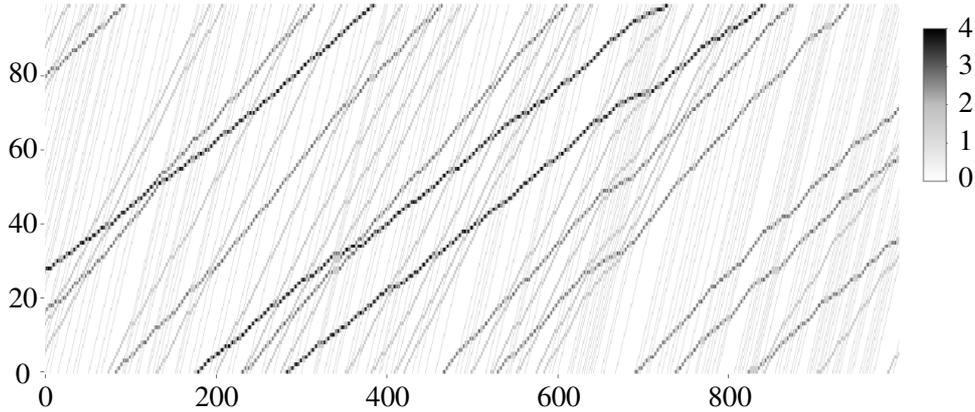}
\rput(-12,-0.2){$0$}
\rput(-9.75,-0.2){$200$}
\rput(-7.5,-0.2){$400$}
\rput(-5.25,-0.2){$600$}
\rput(-3,-0.2){$800$}
\rput(-12.3,0.1){$0$}
\rput(-12.3,1.1){$20$}
\rput(-12.3,2.1){$40$}
\rput(-12.3,3.1){$60$}
\rput(-12.3,4.1){$80$}
\rput(0.1,2.7){$0$}
\rput(0.1,3.2){$1$}
\rput(0.1,3.7){$2$}
\rput(0.1,4.2){$3$}
\rput(0.1,4.7){$4$}
\vspace{10pt}
\caption{Evolution of the box-ball system from a random initial condition. Specifically, the figure shows $((T^kW)_n)_{n=0,\dots,1,000,\: k=0,\dots,100}$ for initial configuration $\eta=(\eta_n)_{n\in\mathbb{Z}}$ a realisation of a sequence of i.i.d.\ Bernoulli($0.2$) random variables.}\label{solitonsfig}
\end{figure}

\begin{rem}
The invariance in distribution of the Markov initial configuration of Theorem \ref{mre}(b) was essentially established in \cite[Corollary 3]{HMOC}.
\end{rem}

\begin{rem} \label{mrem} We will further establish in Section \ref{examplessec} that the examples described in Remark \ref{K-Wrem} and Theorem \ref{mre} represent the only particle configurations whose path encodings are supported on ${S}^{rev}$, which are invariant under $T$, and for which $\eta$ or $W$ is a two-sided stationary Markov chain.
\end{rem}

\begin{rem}\label{solrem} Although in this paper we will not consider the soliton decomposition of particle configurations, we note that conditioning the i.i.d.\ configuration on the event $\sup_{n\in\mathbb{Z}}W_n\leq K$ as we do in the example of Theorem \ref{mre}(c) could alternatively be seen as conditioning on the configuration in question forming no solitons of size greater than $K$. Indeed, local maxima of the carrier process $W$ are in one-to-one correspondence with solitons -- that is, the basic strings of \cite{takahashi1990}, which are preserved by the BBS, and the maximum value obtained by an excursion of $W$ represents the size of the largest soliton contained within the part of the particle configuration encoded by that part of the carrier path. For more details about the soliton decomposition of random two-sided initial configurations, we refer the reader to the forthcoming article \cite{Ferrari}. In that work, the dynamics of the two-sided infinite BBS are studied for random particle configurations taking values in a subset of $\mathcal{S}_{sub-critical}$, and it is observed that the i.i.d.\ particle configuration of Theorem \ref{mre}(a) is invariant under $T$. The main focus of \cite{Ferrari}, though, is a soliton decomposition for invariant configurations with particle density strictly lower than $1/4$, with it being established that it is possible to decompose any such configuration into solitons of different sizes, and that the distributions of constituent parts of the decomposition must be independent. Moreover, \cite{Ferrari} studies the effective speed of the solitons of different sizes. In this direction, we also acknowledge the recent work of \cite{Lev}, which studies soliton sizes in the BBS with a one-sided infinite i.i.d.\ initial configuration.
\end{rem}

\begin{rem}\label{gibbs}
The invariant measures given in Theorem \ref{mre} are formally given as Gibbs measures
\[\frac{1}{Z}\exp\left(-\sum_{k=0}^{\infty}\beta_k f_k(\eta)\right)\mathbf{P}(d\eta),\]
where $\mathbf{P}$ is the reference measure under which $\eta$ is the i.i.d. sequence with density $\frac{1}{2}$, and $Z$ is a normalising constant. Moreover, in the above expression, $f_0(\eta)=\sum_{n \in \mathbb{Z}}\eta_n$ is the number of particles, and, for $k\geq1$, $f_k(\eta)$ is the number of solitons of size greater or equal to $K$. (In particular, $f_1(\eta)=\sum_{n \in \mathbb{Z}}\mathbf{1}_{\{\eta_n=1,\eta_{n+1}=0\}}$ is the number of solitons.) We note each of these is a formally conserved quantity of the BBS. Of course, in the infinite system, it is possible that some, or indeed all, of the quantities is infinite, and so to make the understanding rigourous, one would have to consider a finite box approximation, as is common when constructing Gibbs measures on infinite systems. Specifically, we observe that example (a) corresponds to taking parameters
\[\beta_0=\log\left(\frac{1-p}{p}\right),\qquad \beta_k=0,\:\forall k \ge 1,\]
where we note that the restriction $p<1/2$ is equivalent to taking $\beta_0>0$. Example (b) corresponds to parameters
\[\beta_0=\log \left(\frac{1-p_0}{p_1}\right),\qquad \beta_1=\log \left(\frac{p_1(1-p_0)}{p_0(1-p_1)}\right),\qquad\beta_k=0,\:\forall k \ge 2,\]
where $p_0+p_1<1$ is equivalent to $\beta_0 >0$. We note that this Gibbs measure takes the same form as that of the one-dimensional Ising model (or, more precisely, the related lattice gas model). Example (c) corresponds to parameters
\[\beta_0=\log\left(\frac{1-p}{p}\right),\qquad \beta_k=0,\:\forall k\in\{1,\dots K\},\qquad \beta_k=\infty,\:\forall k>K.\]
\end{rem}

\begin{rem} The BBSs that have been studied in the deterministic literature generally consist of a finite number of particles. As we commented above, however, no random configuration with a finite number of balls can be invariant under $T$. However, if we consider the periodic BBS introduced in \cite{YT} -- that is, the BBS that evolves on the torus $\mathbb{Z}/N\mathbb{Z}$, and there being strictly fewer than $N/2$ balls, then this can be embedded into our setting. Indeed, if we repeat the configuration in a cyclic fashion, then we obtain a configuration whose path encoding is in $\mathcal{S}_{sub-critical}$. Moreover, by placing equal probability on each of the distinct configurations that we see as the BBS evolves, then we obtain an invariant measure for the system. In this case, the resulting random configuration does not necessarily satisfy the symmetry requirements of Theorem \ref{mrd}. We note that the correlation functions under this measure have been studied in \cite{MaT}.
\end{rem}

In the remainder of the study, we consider more detailed properties of the evolution of the BBS that are often the focus of work in the area of interacting particle systems. One such topic we pursue is the current of particles crossing the origin $((T^{k}W)_0)_{k\in\mathbb{Z}}$, as introduced above. When $(\eta_n)_{n\in\mathbb{Z}}$ is given by a sequence of i.i.d.\ Bernoulli random variables with parameter $p<1/2$ (as in Theorem \ref{mre}(a)), we somewhat remarkably have that $((T^kW)_0)_{k\in\mathbb{Z}}$ is an i.i.d.\ sequence (see Theorem \ref{currentcltthm}). With more care, it is further possible to check that for the bounded soliton example of Theorem \ref{mre}(c) the current is a Markov process (see Proposition \ref{lll1}), and in the Markov configuration example of Theorem \ref{mre}(b), the two-dimensional process $((T^{k}\eta)_0,(T^{k}W)_0)_{k\in\mathbb{Z}}$ is a Markov chain. These observations give a route via which to analyse the current (which is, of course, particularly straightforward in the i.i.d.\ case). For instance, if we define
\begin{equation}\label{icdef}
C_k=\sum_{l=0}^{k-1}(T^{l}W)_0,
\end{equation}
which is the number of particles to have crossed the origin up to time $k$, and often referred to as the integrated current, then we can immediately deduce asymptotic distributional results for $(C_k)_{k\geq 0}$ from standard theory regarding Markov chains. In particular, we have the following result. (More detailed statements of the large deviations principle are given as parts of Theorems \ref{currentcltthm}, \ref{lll2} and \ref{llll2} below, including an explicit rate function in the i.i.d.\ case.)

\begin{thm}\label{mrf} (a) If $(\eta_n)_{n\in\mathbb{Z}}$ is given by one of the three examples of Theorem \ref{mre}, then the current sequence $((T^kW)_0)_{k\in\mathbb{Z}}$ is stationary and ergodic under $\theta$. In particular, it $\mathbf{P}$-a.s.\ holds that
\[\frac{C_k}{k}\rightarrow \mathbf{E}W_0.\]
(b) For each of the three examples, it holds that
\[\frac{C_k-k\mathbf{E}W_0}{\sqrt{\sigma^2 k}}\rightarrow N(0,1)\]
in distribution, where $N(0,1)$ is a standard normal random variable, and $\sigma^2\in(0,\infty)$ is given by
\begin{equation}\label{var1}
\sigma^2:=\mathrm{Var}\left(W_0\right)+2\sum_{k=1}^{\infty}\mathrm{Cov}\left(W_0,
\left(T^kW\right)_0\right).
\end{equation}
(c) Moreover, for each of the three examples, $(k^{-1}C_k)_{k\geq 1}$ satisfies a large deviations principle.
\end{thm}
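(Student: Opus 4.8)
The plan is to reduce all three parts to standard probabilistic limit theorems applied to the current sequence $((T^kW)_0)_{k\in\mathbb{Z}}$, exploiting its special structure in each of the three examples. For the stationarity claim in part (a), I would first combine Theorem \ref{mre}, which gives $T\eta\buildrel{d}\over{=}\eta$ with $S\in\mathcal{S}_{sub-critical}$, with the equivalence in Theorem \ref{mrc}(a) to conclude immediately that $((T^kW)_0)_{k\in\mathbb{Z}}$ is stationary under $\theta$. For ergodicity, rather than arguing about $\eta$ directly, I would establish it at the level of the current using the additional structure recorded in the discussion preceding the theorem: in example (a) the current is an i.i.d.\ sequence and hence trivially ergodic; in examples (b) and (c) the current (respectively, the pair $((T^k\eta)_0,(T^kW)_0)_k$) is a stationary irreducible Markov chain, and an irreducible stationary Markov chain is ergodic as a stationary process. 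The main point to verify here is irreducibility of the relevant chains on their state spaces, after which the almost-sure convergence $C_k/k\to\mathbf{E}W_0$ follows from Birkhoff's ergodic theorem applied to $C_k=\sum_{l=0}^{k-1}(T^lW)_0$, provided $\mathbf{E}|W_0|<\infty$.

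Establishing the requisite integrability is where the one genuinely analytic input enters. Since $S_0=0$ we have $W_0=M_0=\sup_{m\leq 0}S_m$ by \eqref{wms}, and in the sub-critical regime the increments $S_n-S_{n-1}=1-2\eta_n$ have strictly positive mean, so the backward walk drifts to $-\infty$ and $W_0$ has exponentially decaying tails. This yields finiteness of all moments and of the moment generating function of $W_0$ in a neighbourhood of the origin in example (a); in example (c) the carrier is bounded by $K$ so integrability is automatic, and in example (b) one checks the analogous exponential tail for the stationary law of the carrier chain.

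For part (b), in example (a) the central limit theorem is immediate from the classical i.i.d.\ result, with the covariance terms in \eqref{var1} vanishing so that $\sigma^2=\mathrm{Var}(W_0)$. In examples (b) and (c) I would invoke a central limit theorem for additive functionals of a stationary, irreducible Markov chain, either via a martingale approximation or directly from exponential mixing in the finite-state case of (c). Exponential decay of the covariances $\mathrm{Cov}(W_0,(T^kW)_0)$ makes the series in \eqref{var1} absolutely convergent and identifies the limit $\lim_k k^{-1}\mathrm{Var}(C_k)$ with $\sigma^2$. The remaining issue is to confirm $\sigma^2\in(0,\infty)$: finiteness follows from the covariance bounds, while strict positivity requires ruling out the degenerate case in which $W_0-\mathbf{E}W_0$ is a coboundary, which can be excluded from the non-degeneracy of the current.

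For part (c), the large deviations principle again follows example by example from classical results once the exponential moment bound of the second paragraph is in hand. In example (a), Cram\'er's theorem yields the LDP for $(k^{-1}C_k)$ with rate function the Legendre transform of $\lambda\mapsto\log\mathbf{E}[e^{\lambda W_0}]$, which is finite near $0$ by the tail estimate. In examples (b) and (c), I would apply the G\"artner--Ellis theorem, computing the limiting logarithmic moment generating function via the Perron--Frobenius eigenvalue of the tilted transition kernel of the underlying Markov chain (a genuinely finite matrix in case (c)); differentiability and steepness of this eigenvalue, standard for irreducible kernels with the required exponential moments, deliver the full LDP. The principal obstacle throughout is thus not the limit theorems themselves but the preliminary tail and irreducibility analysis of the current chains, in particular controlling the past-maximum $W_0$ and verifying positivity of $\sigma^2$.
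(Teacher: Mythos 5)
Your overall architecture --- reduce each of the three cases to a classical limit theorem for an i.i.d.\ or Markov sequence, with tail, irreducibility and positivity-of-variance checks as the remaining work --- matches the paper's (the three cases are proved there as Theorems \ref{currentcltthm}, \ref{lll2} and \ref{llll2}, with the routine parts handled exactly as you suggest: Cram\'er and the i.i.d.\ CLT in case (a), the Poisson-equation/martingale-approximation CLT of \cite{KLO} and tilted-kernel LDPs in cases (b) and (c)). However, there is a genuine gap: you take as given the structural facts that the current is i.i.d.\ in case (a) and (jointly with $(T^k\eta)_0$ in case (b)) a Markov chain in cases (b) and (c), citing ``the discussion preceding the theorem''. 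Those facts are not inputs to Theorem \ref{mrf}; they are the substantive content of its proof, and the introduction states them only with forward references to the very results being proved. Without them your argument does not even establish ergodicity of the current in case (a): stationarity does follow from Theorems \ref{mre} and \ref{mrc}(a) as you say, but an i.i.d.\ initial configuration gives ergodicity of $\eta$ under \emph{spatial} shifts, not under $T$, and you offer no independent route to ergodicity of $((T^kW)_0)_{k\in\mathbb{Z}}$ under $\theta$.

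The missing argument is short but specific, and rests on the time-reversal duality of Theorem \ref{goodsetthm}. For case (a), one uses $W_0=M_0=-\inf_{n\ge 0}(TS)_n$ (from $I^{TS}=M-2M_0$) together with $TS\buildrel{d}\over{=}S$ to conclude that $W_0$ is independent of $((TS)_n)_{n\le 0}$, hence of $((T^kW)_0)_{k\ge 1}$, which is measurable with respect to the latter; iterating gives the i.i.d.\ property (Lemma \ref{indeplem} and the proof of Theorem \ref{currentcltthm}). For cases (b) and (c), the Markov property of the current uses $(T^kV)_0=(T^{k-1}W)_0$: conditional on $W_0$ (resp.\ $(\eta_0,W_0)$), the forward current is $(\eta_n)_{n\le 0}$-measurable, the backward current is $(\eta_n)_{n\ge 1}$-measurable, and these are conditionally independent (Lemma \ref{markovequivalence}). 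You need to supply these arguments. Two smaller points: in case (b) the chain has infinite state space and the jump moment generating function is finite only on a half-line, so the G\"artner--Ellis route requires the explicit boundary blow-up check the paper performs rather than an appeal to ``standard'' steepness; and your coboundary criterion for $\sigma^2>0$ should be made concrete, as in the paper, by observing that degeneracy would force the Poisson-equation solution $g$ to be constant on the set of states reachable in one step from $0$, hence (by the irreducibility statement of Lemma \ref{markovirreducible}) constant everywhere, contradicting $(I-P)g=f\neq 0$.
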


\begin{rem} In the i.i.d.\ case of Theorem \ref{mre}(a), with parameter $p<1/2$, then $\mathbf{E}W_0$ and $\sigma^2$ can be computed explicitly to be
\begin{equation}\label{meanvar}
\mu_p=\frac{p}{1-2p},\qquad \sigma^2_p=\frac{p(1-p)}{(1-2p)^2}.
\end{equation}
More generally, in the Markov configuration of Theorem \ref{mre}(b), with parameters $p_0\in(0,1)$, $p_1\in[0,1)$ satisfying $p_0+p_1<1$, then we have that $\mathbf{E}W_0$ is equal to
\begin{equation}\label{mup0p1}
\mu_{p_0,p_1}=\frac{p_0(1-p_0+p_1)}{(1+p_0-p_1)(1-p_0-p_1)}.
\end{equation}
The limiting variance for this example can also be computed explicitly, being equal to
\begin{equation}\label{var2}
\sigma_{p_0,p_1}^2=\frac{q_0\left((1-q_0)(1+q_1)^2+2q_1(1+q_0)^2\right)}{(1+q_0)^3(1-q_1)^2},
\end{equation}
where $q_i:=p_i/(1-p_{1-i})$ for $i=0,1$.
The mean and limiting variance for the bounded soliton example of Theorem \ref{mre}(c) do not seem straightforward to compute explicitly.
\end{rem}

\begin{rem} Similarly to Remark \ref{mrem}, we establish in Corollary \ref{iidmarkovcharacterization} below that the examples of Theorem \ref{mre}(a) and (c) are the only spatially stationary random configurations with path encodings supported in $\mathcal{S}^{inv}$ that are invariant under $T$ and for which the current $((T^kW)_0)_{k\in\mathbb{Z}}$ is a two-sided stationary Markov chain.
\end{rem}

As an immediate consequence of Theorems \ref{mrc}(b) and \ref{mrf}(a), we obtain the following corollary concerning the ergodicity of the particle configurations introduced in Theorem \ref{mre}. We believe the ergodicity of $T$ will be even more widely true for stationary, ergodic (under spatial shifts) particle configurations in the sub-critical case, at least under suitable mixing conditions.

\begin{cor}\label{ergcormrf} If $(\eta_n)_{n\in\mathbb{Z}}$ is given by one of the three examples of Theorem \ref{mre}, then the transformation $\eta\mapsto T\eta$ is ergodic.
\end{cor}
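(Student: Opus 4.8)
The plan is to obtain the result by directly chaining together the two results cited in the preamble to the corollary, namely Theorem \ref{mrf}(a) and Theorem \ref{mrc}(b); the genuine content has already been established upstream, so what remains is simply to verify that the hypotheses of the two theorems align. First I would recall that, by the statement of Theorem \ref{mre}, each of the three example configurations gives rise to a path encoding with $S\in\mathcal{S}_{sub-critical}$, $\mathbf{P}$-a.s., and is invariant in distribution under $T$. This places us squarely within the standing hypothesis of Theorem \ref{mrc}, which treats precisely the sub-critical case.

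Next I would invoke Theorem \ref{mrf}(a), which asserts that for each of these same three examples the current sequence $((T^kW)_0)_{k\in\mathbb{Z}}$ is stationary and ergodic under the canonical shift $\theta$. With this in hand, the key step is to apply the forward (``if'') direction of the equivalence in Theorem \ref{mrc}(b): since the current sequence is stationary and ergodic under $\theta$, we conclude that $\eta$ is invariant and ergodic under $T$. In particular, the transformation $\eta\mapsto T\eta$ is ergodic, which is exactly the assertion of the corollary.

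The only point requiring a moment's attention — and it is scarcely an obstacle — is to confirm that Theorem \ref{mrc}(b) is genuinely applicable, i.e.\ that its assumption of a distribution supported on $\mathcal{S}_{sub-critical}$ is met; this is precisely what Theorem \ref{mre} supplies for all three examples, so no extra argument is needed. One might also observe that the invariance half of the conclusion is already known from Theorem \ref{mre}, so the new content delivered here is solely the ergodicity, which is furnished by the ergodicity of the current in Theorem \ref{mrf}(a). Consequently the corollary follows immediately, with no genuinely difficult step at this stage: the real substance resides in the reconstruction-of-the-configuration-from-its-current machinery underpinning Theorem \ref{mrc} and in the example-specific analysis behind Theorem \ref{mrf}(a).
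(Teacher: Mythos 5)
Your argument is exactly the paper's: Corollary \ref{ergcormrf} is stated there as an immediate consequence of Theorem \ref{mrc}(b) combined with Theorem \ref{mrf}(a), with Theorem \ref{mre} supplying the sub-critical support needed to apply Theorem \ref{mrc}. Your proposal is correct and matches the paper's route.
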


We next study the progress of a single `tagged' particle in the BBS. For this part of the study, we will focus on the i.i.d.\ case of Theorem \ref{mre}(a), although, as we discuss in Remarks \ref{fiforem} and \ref{liforem}, it is also possible to extend most of the conclusions to the Markov initial configuration case of Theorem \ref{mre}(b) with only a small amount of extra work, and one of the results to the bounded soliton example of Theorem \ref{mre}(c). (See also Remark \ref{boundedrem} for comments on which of the following results can also be extended to the critical bounded soliton example of Remark \ref{K-Wrem}.) To this end, we start by making some heuristic observations (which actually hold for any stationary, ergodic $\eta$ whose path encoding satisfies $\mathbf{E}W_0=\mathbf{E}M_0<\infty$). In particular, since $W_n$ is the number of particles carried from $\{\dots,n-1,n,\}$ to $\{n+1,n+2,\dots\}$ on the first evolution of the BBS, the total particle distance travelled over the interval from $-n$ to $n$ is given by $\sum_{m=-n}^{n-1}W_m$ (roughly speaking, the area under $W$). Moreover, the number of particles in this region is $\sum_{m=-n}^{n-1}\eta_m$. Thus the average distance travelled per particle on one evolution of the BBS is $\mathbf{P}$-a.s.\ given by
\begin{equation}\label{anticipateddistance}
\lim_{n\rightarrow \infty}\frac{\sum_{m=-n}^{n-1}W_m}{\sum_{m=-n}^{n-1}\eta_m}=\frac{\mathbf{E}W_0}{\mathbf{E}\eta_0},
\end{equation}
where this final expression is equal to
\begin{equation}\label{vpdef}
v_p:=\frac{1}{1-2p}
\end{equation}
in the i.i.d.\ case. For systems that are suitably homogenous in space and time, one might anticipate that this spatial average matches the averaging seen over time for a tagged particle. That is, if we observe a tagged particle, then this should move at speed given by the formula at (\ref{anticipateddistance}), reaching a position $k{\mathbf{E}W_0}/{\mathbf{E}\eta_0}$ after $k$ evolutions of the system. In the i.i.d.\ setting, we will establish that this is indeed the case, and explore the fluctuations around this, showing that for two natural versions of the model these are of order $\sqrt{k}$. Thus we confirm that although individual solitons might move at a faster or slower rate, individual particles progress at a steady speed.

Before we get to the result, however, we need to define the tagged particle. This will be the particle that starts at position $\min\{n\geq 1:\:\eta_n=1\}$, and we will track this under repeated evolutions of the BBS. To do this, however, we need to provide more information about the action of the carrier on individual particles. Two  natural schemes one might consider for this are as follows:
\begin{description}
  \item[First-in-first-out (FIFO)] Namely, the carrier drops particles in the order in which they are collected. Note that this scheme preserves the particle ordering, and in the finite particle case is consistent with the particle picture described in
      \cite{TS1991} whereby one step of the BBS dynamics is given as follows:
      \begin{enumerate}
        \item First, move leftmost ball to its nearest empty box on the right;
        \item Move the leftmost ball of those not moved so far to its nearest empty box on the right;
        \item Repeat the previous step until all balls are moved exactly once.
      \end{enumerate}
  \item[Last-in-first-out (LIFO)] That is, the carrier drops the most recently collected particle first. This means that for an isolated string of adjacent particles, the order of particles is reversed by the action of the BBS. This scheme is consistent with the time evolution rule given in \cite{YT, YYT}, for which a single step is described as follows:
      \begin{enumerate}
        \item Move all balls with an empty box immediately on their right to that box;
        \item Neglecting the boxes to which and from the balls were moved in the previous step(s), Move all balls with an empty box immediately on their right to that box;
        \item Repeat the previous step until all balls are moved exactly once.
      \end{enumerate}
\end{description}
We will write $X^F=(X_k^F)_{k\geq 0}$ for the position of the tagged particle after $k$ evolutions of the BBS under the FIFO scheme, and $X^L=(X_k^L)_{k\geq 0}$ for the corresponding position under the LIFO scheme. It is straightforward to establish that for random particle configurations whose path encodings have a distribution supported in $\mathcal{S}^{inv}$ that $X^F$ and $X^L$ are well-defined $\mathbf{P}$-a.s. The main result we prove is as follows (see Theorem \ref{distancethm} for a more detailed statement).

\begin{thm}\label{mrg} If $(\eta_n)_{n\in\mathbb{Z}}$ is given by a sequence of i.i.d.\ Bernoulli random variables with parameter $p<1/2$, then $\mathbf{P}$-a.s.,
\[\frac{X^F_k}{k}\rightarrow v_p,\qquad \frac{X^L_k}{k}\rightarrow v_p,\]
where $v_p$ is defined as at (\ref{vpdef}). Moreover, $X^F$ admits fluctuations of order $\sqrt{k}$ around $kv_p$, and $X^L$ satisfies a central limit theorem and a large deviations principle.
\end{thm}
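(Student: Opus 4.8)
The plan is to reduce both the law of large numbers and the fluctuation statements to the properties of the integrated current $C_k$ established in Theorems \ref{mrf} and \ref{currentcltthm}, together with the spatial invariance of the i.i.d.\ law under $T$ from Theorem \ref{mre}(a).

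For the FIFO scheme I would first exploit order-preservation. Since FIFO never changes the relative order of particles, every particle that starts weakly to the left of the origin stays to the left of the tagged particle for all time, while particles starting to the right of it (there are none in $(0,x_0)$, where $x_0$ is the tagged particle's start) stay to the right. Consequently the number of particles in $(0,X^F_k]$ in the configuration $T^k\eta$ is exactly $1+C_k$: the tagged particle itself, plus the $C_k$ particles that have crossed the origin by time $k$, each of which started at a site $\le 0$ and hence remains in $(0,X^F_k)$. On the other hand, by Theorem \ref{mre}(a), $T^k\eta$ is i.i.d.\ Bernoulli($p$), so a spatial law of large numbers gives that this count is asymptotically $p\,X^F_k$. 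Equating and inserting $C_k/k\to\mathbf{E}W_0=\mu_p=p/(1-2p)$ from Theorem \ref{mrf}(a) yields $X^F_k/k\to\mu_p/p=1/(1-2p)=v_p$. For the fluctuations I would retain the error terms: writing $p\,X^F_k=1+C_k-\Phi_k$, where $\Phi_k:=\#\{\text{particles in }(0,X^F_k]\}-p\,X^F_k$ is the spatial Bernoulli fluctuation, both $C_k-k\mu_p$ and $\Phi_k$ are of order $\sqrt{k}$ (the former by the central limit theorem for the i.i.d.\ current in Theorem \ref{mrf}(b), the latter by the classical central limit theorem for partial sums of i.i.d.\ Bernoulli variables at the $O(k)$ random endpoint $X^F_k$). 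Hence $X^F_k-kv_p=O(\sqrt{k})$; I would not expect a clean central limit theorem here, precisely because $X^F_k$ combines a temporal current fluctuation with a spatial density fluctuation evaluated at a correlated random endpoint, and these do not decouple into a single Gaussian.

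For the LIFO scheme the key input is an explicit one-step displacement formula. I would show that under the stack (LIFO) rule the tagged particle sitting at a site $x$ is dropped at $x'=\min\{n>x:\,S_n=S_{x-1}\}$, the first return of the path encoding to the level immediately to the left of the particle; every intermediate empty box triggers a drop because, by \eqref{wms}, $W_n=M_n-S_n>W_{x-1}\ge 0$ throughout the excursion below $S_{x-1}$, so the carrier is never empty there. Thus the one-step displacement $D_0=x'-x$ equals the length of the excursion of $S$ below $S_{x-1}$ begun by the tagged particle, which is the first-passage time of the walk $S$ (of positive drift $1-2p$) from level $S_{x-1}-1$ up to $S_{x-1}$; by Wald's identity $\mathbf{E}[D_0]=1/(1-2p)=v_p$. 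Iterating, $X^L_k=x_0+\sum_{j=0}^{k-1}D_j$, where $D_j$ is the analogous excursion length of $T^jS$ at the current tagged location.

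The crux of the LIFO analysis, and the step I expect to be the main obstacle, is to show that the increment sequence $(D_j)_{j\ge 0}$ has a tractable dependence structure—concretely, that it is i.i.d.\ (or a Markov chain) with exponentially decaying tails. This is the tagged-particle analogue of the remarkable i.i.d.\ property of the current in Theorem \ref{currentcltthm}, and I would establish it by combining the invariance of the i.i.d.\ law under $T$ (so each $T^jS$ is again a two-sided random walk) with the strong Markov property of $S$ and the local, excursion-based nature of the displacement formula, arguing that successive displacements are governed by disjoint, independent portions of the driving randomness. Granting this, $X^L_k/k\to v_p$ follows from the strong law for $(D_j)$, while the central limit theorem and large deviations principle follow from the classical results for sums of i.i.d.\ (or uniformly mixing Markov) variables with light tails, with Cram\'er's theorem supplying the rate function; the exponential tail of $D_0$ needed for the large deviations principle is itself a standard first-passage estimate for the positive-drift walk $S$.
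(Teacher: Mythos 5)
Your strategy coincides with the paper's own proof (given as Theorem \ref{distancethm}): the FIFO part rests on the order-preservation identity $X^F_k=\min\{m:\sum_{i=1}^m(T^k\eta)_i=C_k+1\}$ (equation \eqref{xf}) combined with the limit theorems for the current from Theorem \ref{currentcltthm}, and the LIFO part rests on identifying the one-step displacement with the first passage $\min\{n>x:S_n=S_{x-1}\}$ and proving the increments are i.i.d. However, your sketch stops short in two places. For FIFO, the assertion that $X^F$ ``admits fluctuations of order $\sqrt{k}$'' is two-sided: besides tightness of $(X^F_k-kv_p)/\sqrt{k}$ one must show $\liminf_k\mathbf{P}(|X^F_k-kv_p|/\sqrt{k}>x)>0$ for every $x>0$ (the paper's \eqref{tightsharp}). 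Your decomposition $pX^F_k=1+C_k-\Phi_k$ exhibits two terms each of order $\sqrt{k}$, but---precisely because of the correlation you yourself point out---this does not preclude their cancelling to $o(\sqrt{k})$. The paper closes this by bounding the probability below by $\mathbf{P}(C_k+1>pK+\lambda\sqrt{K})-\mathbf{P}(\sum_{i=1}^K(T^k\eta)_i>pK+\lambda\sqrt{K})$ with $K=kv_p+x\sqrt{k}$, and checking via the explicit constants that $v_p/\sigma_p^2<1/p(1-p)$, so the difference of the two Gaussian tails is strictly positive for $\lambda$ large.

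For LIFO, the i.i.d.\ property of the increments---which you rightly flag as the crux---is not quite a matter of successive displacements being ``governed by disjoint, independent portions of the driving randomness'': the displacement at step $j+1$ is a functional of $T^{j+1}S$ near the new location, and the increments of $T^{j+1}S$ to the right of that location depend on the entire past of $T^{j}S$ through the carrier value there, so literal disjointness fails. The paper's Lemma \ref{hatiid} resolves this under the Palm measure $\hat{\mathbf{P}}=\mathbf{P}(\cdot\,|\,\eta_0=1)$: it shows that $\hat{X}^L_1$ is independent of $((TS)_{\hat{X}^L_1+n}-(TS)_{\hat{X}^L_1})_{n\geq 0}$ and that the latter is again distributed as $(S_n)_{n\geq 0}$, using the invariance $TS\buildrel{d}\over{=}S$, the strong Markov property of $TS$ at the stopping time $\hat{\tau}_{TS}(-1)$, and the cancellation \eqref{finitehit} between $p^{-1}\mathbf{P}((T\eta)_0=0)$ and $\mathbf{P}(\hat{\tau}_{TS}(-1)<\infty)^{-1}$. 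One must then transfer the LLN, CLT and LDP from $\hat{\mathbf{P}}$ back to $\mathbf{P}$ via the exchange formula \eqref{unidecomp}, since the environment seen from the tagged particle's starting position under $\mathbf{P}$ is a size-biased, shifted version of $\hat{\mathbf{P}}$ rather than $\hat{\mathbf{P}}$ itself; your proposal, which works under $\mathbf{P}$ directly, omits this step. These are gaps in execution rather than in strategy, but they are exactly where the real work of the proof lies.
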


In the final part of the article, we study the evolution of the system in the high density regime, that is when the number of particles approaches the number of holes available. More precisely, we continue to restrict attention to the i.i.d.\ case, and consider the behaviour of the system as $p\uparrow \frac12$. As we see from the expressions for $\mu_p$ and $v_p$, defined at (\ref{meanvar}) and (\ref{vpdef}) respectively, in this regime the size of solitons and speed of particles explodes, and so scaling is necessary. The path encoding picture gives a straightforward way to do this, and in particular allows us to obtain a scaling limit using classical results of probability theory, with the limiting path encoding being a Brownian motion with drift. Moreover, the interpretation of the dynamics of the BBS in terms of the mapping at (\ref{pitmans}) naturally transfer to the limiting model, with the rescaled solitons persisting in the limit. Whilst the following result is quite elementary to prove given the results in the discrete setting, this scaling picture motivates a general continuous definition of BBS, which we call BBS on $\mathbb{R}$. In Section \ref{contsec} we present an initial exploration of this model, including showing that the natural analogues of Theorems \ref{mr1} and \ref{mrd} hold in this setting.

\begin{thm}\label{mrh} For $c>0$ and $N$ suitably large ($>c$), set
\[p_N=\frac{1}{2}-\frac{c}{2N}.\]
Let $\eta^N=(\eta^N_n)_{n\in\mathbb{Z}}$ be given by a sequence of i.i.d.\ Bernoulli random variables with parameter $p_N$, and let $S^N$ be its path encoding. It is then the case that, as $N\rightarrow\infty$,
\[\left(\frac{1}{N}S^N_{N^2t}\right)_{t\in\mathbb{R}}\rightarrow \left(B^c_t\right)_{t\in\mathbb{R}}\]
in distribution in $C(\mathbb{R},\mathbb{R})$, where $B^c_t:=B_t+ct$ for $B=(B_t)_{t\in\mathbb{R}}$ a standard two-sided Brownian motion, started from $B_0=0$ (i.e.\ $(B_t)_{t\geq 0}$ and $(B_{-t})_{t\geq 0}$ are independent standard Brownian motions started from 0). Moreover,
\[\left(\frac{1}{N}(TS^N)_{N^2t}\right)_{t\in\mathbb{R}}\rightarrow TB^c\]
in distribution in $C(\mathbb{R},\mathbb{R})$, where $TB^c$ is defined from $B^c$ analogously to the definition of $TS$ from $S$. In addition, the law of the process $B^c$ is invariant under the transformation $T$.
\end{thm}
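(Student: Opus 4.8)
The plan is to establish the three assertions in turn: the convergence of the rescaled path encoding, the convergence of its Pitman transform, and the invariance of $B^c$ under $T$. The first assertion is a direct application of Donsker's invariance principle for triangular arrays. Indeed, from \eqref{SRWrep} the increments $S^N_n-S^N_{n-1}=1-2\eta^N_n$ are independent and uniformly bounded by $1$, with mean $1-2p_N=c/N$ and variance $4p_N(1-p_N)\to1$ as $N\to\infty$. Hence, over $N^2t$ steps the mean contribution is $N^2t\cdot(c/N)=Nct$ and the variance is of order $N^2t$, so that after dividing by $N$ we obtain a drift $ct$ and a diffusive scaling consistent with a standard Brownian motion. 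Since the increments are bounded, the Lindeberg condition is automatic, and tightness follows from standard maximal estimates; treating $(S^N_n)_{n\geq0}$ and $(S^N_{-n})_{n\geq0}$ separately (these being built from disjoint, hence independent, collections of increments) yields the claimed convergence to $B^c$ in $C(\mathbb{R},\mathbb{R})$.

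For the second assertion, recall from \eqref{pitmans} and \eqref{mdef} that $TS$ is obtained from $S$ through the past-maximum functional $M_n=\sup_{m\leq n}S_m$, and that $TB^c$ is defined analogously by $(TB^c)_t=2\sup_{s\leq t}B^c_s-B^c_t-2\sup_{s\leq0}B^c_s$. I would deduce the convergence of the transforms from the continuity of the map $f\mapsto(\sup_{s\leq t}f(s))_{t\in\mathbb{R}}$ in the topology of uniform convergence on compacts. The only delicate point is that this supremum runs over the non-compact interval $(-\infty,t]$. Since $c>0$, however, we have $B^c_t\to-\infty$ as $t\to-\infty$ almost surely, so the past maximum is attained on a compact set, and the functional may be localised: I would show that for each fixed $t$ the supremum over $(-\infty,t]$ agrees with that over $[-A,t]$ once $A$ is large, both for $B^c$ and, uniformly in $N$, for the rescaled walks. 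The required uniform control is a maximal inequality of the form $\mathbf{P}(N^{-1}\sup_{m\leq-AN^2}S^N_m>-cA/2)\to0$ as $A\to\infty$ uniformly in $N$, which follows from the negative drift of $S^N$ in the distant past together with a reflection (or Doob) estimate. Given this, the continuous mapping theorem---or, equivalently, a passage to the Skorokhod representation where the convergence in the first assertion is almost sure---yields $N^{-1}(TS^N)_{N^2t}\to TB^c$ in $C(\mathbb{R},\mathbb{R})$.

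The final assertion then follows with essentially no further work. By Theorem \ref{mre}(a), the i.i.d.\ Bernoulli($p_N$) configuration $\eta^N$ satisfies $T\eta^N\buildrel{d}\over{=}\eta^N$; passing to path encodings, this reads $TS^N\buildrel{d}\over{=}S^N$ as two-sided processes, and hence $N^{-1}(TS^N)_{N^2\,\cdot}\buildrel{d}\over{=}N^{-1}S^N_{N^2\,\cdot}$ for every $N$. Letting $N\to\infty$ and invoking the first two assertions, the left-hand side converges in distribution to $TB^c$ and the right-hand side to $B^c$, whence $TB^c\buildrel{d}\over{=}B^c$.

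I expect the main obstacle to be the localisation step in the second assertion: verifying that the non-compact past-maximum functional is continuous at $B^c$, and in particular establishing the uniform-in-$N$ maximal estimate that prevents the distant past of $S^N$ from contributing to $\sup_{m\leq\cdot}S^N_m$ in the limit. Once this is in hand, the remaining arguments are routine invocations of Donsker's theorem, the continuous mapping theorem, and the already-established discrete invariance.
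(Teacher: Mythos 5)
Your proposal is correct and follows essentially the same route as the paper: Donsker's theorem for the path encoding, localisation of the non-compact past-maximum functional via a uniform-in-$N$ maximal estimate exploiting the negative drift in the distant past (the paper's truncated operators $T^{L'}$ and its bound on $\mu_n(M_x>S_y)$), and passage of the discrete invariance from Theorem \ref{mre}(a) to the limit. The paper also records a second, independent proof of the invariance of $B^c$ via the continuous analogue of Theorem \ref{mrd}, but your argument coincides with its primary one.
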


\begin{rem} The final claim of the preceding result has already been observed in the heavy traffic regime of the queuing literature, see \cite[Theorem 3]{OCY}, and \cite{HW} for an even earlier proof.
\end{rem}

The remainder of the article is organised as follows. In Section \ref{pathsec}, we work in a deterministic framework, introducing the main objects of discussion, and establishing Theorem \ref{mr1}. Section \ref{probsec} concerns random initial configurations, and contains the proofs of Theorems \ref{mra}, \ref{mrb}, \ref{mrc}, \ref{mrd}, \ref{mre} and \ref{mrf}. An overview of the links with the literature concerning Pitman's transformation and the totally asymmetric exclusion process is then provided in Section \ref{pitmantasepsec}, following which Section \ref{contsec} details our results for the BBS on $\mathbb{R}$, including Theorem \ref{mrh}. Finally, in Section \ref{oq} we summarise some of the open questions that this article gives rise to. Regarding notational conventions, we distinguish $\mathbb{N}=\{1,2,\dots,\}$ and $\mathbb{Z}_+=\{0,1,\dots\}$.

\section{Path encodings of the BBS}\label{pathsec}

This section provides a detailed study of the path encodings of particle configurations, and their dynamics under the BBS. We start by presenting the path description of the initial particle configuration and carrier process, initially for the one-sided case, when particles are sited on a half-infinite line (see Sections \ref{configsec} and \ref{carriersec}), but later extend this construction to cover the two-sided case, when particles may be spread along the entirety of the integers (see Section \ref{extensionsec}), and also highlight the subtlety of defining the carrier in this more general setting (see Section \ref{uniquesection}). Moreover, in Section \ref{actionsec}, we introduce Pitman's transformation in the one-sided case, before extending this to the two-sided case (again in Section \ref{extensionsec}), discussing its inverse for two-sided infinite configurations (see Section \ref{inversesec}), and studying the invariant set $\mathcal{S}^{inv}$ introduced at (\ref{sinv}) (see Section \ref{invariantsetsec}). In particular, we establish Theorem \ref{mr1}. We will see in later sections that the viewpoint set out here is extremely useful for probabilistic analysis when the initial configuration is random. One further issue that will be particularly relevant in the study of BBSs with random two-sided infinite configurations is the question of whether the current contains enough information to recover the particle configuration; this is explored in Section \ref{currentsec}.

\subsection{Initial configuration of the one-sided BBS}\label{configsec}

In the next three subsections, we consider the one-sided case, that is, the box-ball system on $\mathbb{Z}_+=\{0,1,2,\dots\}$. We denote by $\eta=(\eta_n)_{n\in\mathbb{N}}\in \{0,1\}^{\mathbb{N}}$ a particle configuration. Specifically, as in the introduction, we write $\eta_n=1$ if there is a particle at $n$, and $\eta_n=0$ otherwise. Also as in the introduction, we can summarise this in a nearest-neighbour walk path $S=(S_n)_{n\in\mathbb{Z}_+}$, where $S_0=0$, and the increments of $S$ are defined as at (\ref{SRWrep}) for $n\geq 1$.

\subsection{Carrier process for the one-sided BBS}\label{carriersec}

As described in the introduction, the carrier moves along $\mathbb{Z}_+$, picking up a particle when it crosses one, and dropping off a particle when it is holding at least one particle and sees a space. In particular, this is the process $W=(W_n)_{n\in \mathbb{Z}_+}$, obtained by setting $W_0=0$ and satisfying (\ref{wupdate}). As claimed at (\ref{wms}), it turns out we can write $W$ as a difference of $S$ from its maximum process, which we denote by $M=(M_n)_{n\in \mathbb{Z}_+}$, and is defined as at (\ref{mdef}). Specifically, we have the following lemma.

{\lem\label{wlem}
It holds that
\[W_n=M_n-S_n,\qquad \forall n\in\mathbb{Z}_+.\]}
\begin{proof} We will proceed by induction. Clearly the result is true for $n=0$. Suppose that we have checked $W_{n-1}=M_{n-1}-S_{n-1}$ for some $n\geq 1$. It then holds that
\begin{equation}\label{wincrement}
W_n-W_{n-1}=\left\{\begin{array}{ll}
               +1, & \mbox{if }\eta_n=1,\\
               0, & \mbox{if }\eta_n=0\mbox{ and }M_{n-1}=S_{n-1},\\
               -1, & \mbox{if }\eta_n=0\mbox{ and }M_{n-1}>S_{n-1}.
             \end{array}\right.
\end{equation}
Now, if $\eta_n=1$, then $S_n=S_{n-1}-1$ and $M_n=M_{n-1}$, and so
\[M_n-S_n-(M_{n-1}-S_{n-1})=1.\]
Moreover, if $\eta_n=0$ and $M_{n-1}=S_{n-1}$, then it must also be the case that
$M_n=S_n$, and so
\[M_n-S_n-(M_{n-1}-S_{n-1})=0.\]
Similarly, if $\eta_n=0$ and $M_{n-1}>S_{n-1}=0$, then $M_n=M_{n-1}$ and $S_n=S_{n-1}+1$, and so
\[M_n-S_n-(M_{n-1}-S_{n-1})=-1.\]
In particular, we have checked that
\[W_n-W_{n-1}=M_n-S_n-(M_{n-1}-S_{n-1}),\]
which by the inductive hypothesis implies $W_n=M_n-S_n$, as desired.
\end{proof}

The above lemma explains how to obtain $W$ from $S$. It is also possible to describe the inverse mapping explicitly, as we do in the next result. To this end, we introduce a version of the local time of $W$ at 0, $\ell=(\ell_n)_{n\in\mathbb{Z}_+}$, by setting $\ell_0=0$ and, for $n\geq 1$,
\[\ell_n=\sum_{m=1}^{n}\mathbf{1}_{\{W_{m-1}=W_{m}=0\}}.\]

{\lem
\label{llem}
It holds that
\[S_n=\ell_n-W_n,\qquad \forall n\in\mathbb{Z}_+.\]}
\begin{proof} The result is obvious for $n=0$. For $n\geq1$, from (\ref{wincrement}) we have that
\begin{eqnarray*}
W_n&=&\sum_{m=1}^n\left(W_m-W_{m-1}\right)\\
&=&\sum_{m=1}^n \left[\left(S_{m-1}-S_m\right)\mathbf{1}_{\{S_{m-1}<M_{m-1}\}}+\mathbf{1}_{\{S_{m}-S_{m-1}=-1\}}\mathbf{1}_{\{S_{m-1}=M_{m-1}\}}\right]\\
&=&\sum_{m=1}^n \left[\left(S_{m-1}-S_m\right)+\left(S_m-S_{m-1}+\mathbf{1}_{\{S_{m}-S_{m-1}=-1\}}\right)\mathbf{1}_{\{S_{m-1}=M_{m-1}\}}\right]\\
&=&-S_n+\sum_{m=1}^n \mathbf{1}_{\{S_{m}-S_{m-1}=1\}}\mathbf{1}_{\{S_{m-1}=M_{m-1}\}}\\
&=&-S_n+\sum_{m=1}^n \mathbf{1}_{\{W_{m}=W_{m-1}=0\}}\\
&=&-S_n+\ell_n,
\end{eqnarray*}
which completes the proof.
\end{proof}

\subsection{Action of the carrier for the one-sided BBS}\label{actionsec}

In the one-sided setting, the action $T$ of the carrier on $\eta$ was defined at (\ref{origdef}). However, this formula is not especially convenient for analysis, especially when we seek to extend the dynamics to the two-sided infinite case. For what follows, we find that it is clearer when we consider the action of $T$ on $S$. In this direction, it is helpful to observe that the positions of the particles prior to the carrier passing them precisely corresponds to location of up jumps of $W$, and that the positions of particles after the carrier has visited corresponds to locations of down jumps of $W$.  Formally, we can write this as \[(T\eta)_n=\mathbf{1}_{\{W_n=W_{n-1}-1\}},\qquad \forall n\in\mathbb{N}.\]
The following lemma explains how this equation yields the identity at (\ref{pitmans}). (We note that $M_0=0$ in the present setting.) To be precise, as in the introduction, we write $TS=((TS)_n)_{n\geq 0}$ for the path encoding of $T\eta$.

{\lem\label{tlem} It holds that
\[(TS)_n=2M_n-S_n,\qquad \forall n\in\mathbb{Z}_+.\]}
\begin{proof} First observe that $(TS)_n-(TS)_{n-1}=-1$ if and only if there is a particle at $n$ after the carrier has passed. As noted above the lemma, the latter  is equivalent to $W_n-W_{n-1}=-1$. Thus
\begin{eqnarray*}
(TS)_n-(TS)_{n-1}&=&1-2\mathbf{1}_{\{W_n-W_{n-1}=-1\}}\\
&=&1-2\mathbf{1}_{\{S_{n-1}<M_{n-1},S_n-S_{n-1}=1\}}\\
&=&S_{n-1}-S_n+2\mathbf{1}_{\{S_{n-1}=M_{n-1},S_n-S_{n-1}=1\}}.
\end{eqnarray*}
Summing over the increments thus yields
\begin{eqnarray*}
(TS)_n-(TS)_0&=&\sum_{m=1}^n(TS)_m-(TS)_{m-1}\\
&=&S_0-S_n+2\sum_{m=1}^n\mathbf{1}_{\{S_{m-1}=M_{m-1},S_m-S_{m-1}=1\}}\\
&=&S_0-S_n+2(M_n-M_0).
\end{eqnarray*}
Since $(TS)_0=S_0=M_0=0$, we are done.
\end{proof}

\subsection{Extension to the two-sided BBS}\label{extensionsec}

In this section, we discuss extending to the case when we have a doubly-infinite particle configuration $(\eta_n)_{n\in\mathbb{Z}}\in\{0,1\}^\mathbb{Z}$. We can again encode this in a nearest neighbour path $S=(S_n)_{n\in\mathbb{Z}}$ by continuing to assume $S_0=0$, and defining increments of $S$ as at \eqref{SRWrep}. Such a path is always an element of $\mathcal{S}^0$, as defined at (\ref{s0def}). Whilst in the one-sided case, the construction of the carrier $W$ and transformed path $TS$ was possible for any particle configuration, in the two-sided case we do need some restriction to be able to define the relevant objects finitely. In this subsection, we make the following assumption on $S$:
\begin{equation}\label{sfinlim}
\limsup_{n\rightarrow-\infty}S_n<\infty,
\end{equation}
which means that the process $M=(M_n)_{n\in\mathbb{Z}}$ defined by (\ref{mdef}) is finite. In fact,  the assumption at (\ref{sfinlim}) is equivalent to the condition at $M_0<\infty$. Formally, we can then define the two-sided carrier process $W=(W_n)_{n\in\mathbb{Z}}$ and transformed path $TS=((TS)_n)_{n\in\mathbb{Z}}$ by setting
\begin{equation}\label{wtsdef}
W_n=M_n-S_n,\qquad (TS)_n=2M_n-S_n-2M_0,
\end{equation}
as is motivated by Lemmas \ref{wlem} and \ref{tlem} respectively. We then have that $TS\in\mathcal{S}^0$, and we can define the transformed particle configuration $((T\eta)_n)_{n\in\mathbb{Z}}$ by setting
\begin{equation}\label{tetadef}
(T\eta)_n=\mathbf{1}_{\{(TS)_n=(TS)_{n-1}-1\}},\qquad \forall n\in\mathbb{Z}.
\end{equation}
However, it is not a priori clear that these definitions are justified, since one can not simply start the carrier at $-\infty$. To provide this justification, we show that the definitions are consistent with taking the limit of a sequence of BBSs on half-lines, which are well-defined.

We start by introducing notation for the approximating sequence of BBSs. In particular, we write $\eta^{[k]}=(\eta^{[k]}_n)_{n\in\mathbb{Z}}$ for the particle configuration truncated at $k$ and below. That is,
\[\eta^{[k]}_n=\eta_n\mathbf{1}_{\{n>k\}}.\]
Write $S^{[k]}=(S^{[k]}_n)_{n\in\mathbb{Z}}$ for the corresponding path encoding (again, defined by setting $S^{[k]}_0=0$ and defining increments as at (\ref{SRWrep})). Note that for $n\leq k$, it holds that $S^{[k]}_n-S^{[k]}_{n-1}=1$, and so (\ref{sfinlim}) is satisfied by $S^{[k]}$. This implies that the corresponding maximum process $M^{[k]}$ is well-defined. Now, let $W^{[k]}=(W^{[k]}_n)_{n\in\mathbb{Z}}$ be the carrier process corresponding to $\eta^{[k]}$. Clearly there is no problem with defining this process, since we know the carrier is empty up to location $k$. Moreover, proceeding as in Lemma \ref{wlem}, we have that
\begin{equation}\label{wkident}
W^{[k]}=M^{[k]}-S^{[k]}.
\end{equation}
Similarly, we can also define the transformed path $T(S^{[k]})$, and check, as in Lemma \ref{tlem}, that
\begin{equation}\label{tsindent}
T(S^{[k]})=2M^{[k]}-S^{[k]}-2M^{[k]}_0.
\end{equation}
By virtue of the following natural limit result, we can understand $W$ and $TS$, defined at (\ref{wtsdef}), as the carrier process and transformed path for the two-sided particle configuration. Moreover, we check that the update rule presented at (\ref{updaterule}) is still valid in this more general setting.

{\lem \label{twosided} Suppose $S$ is an element of $\mathcal{S}^0$ satisfying \eqref{sfinlim}, then
\begin{equation}\label{firstclaims}
S^{[k]}\rightarrow S,\qquad M^{[k]}\rightarrow M,\qquad W^{[k]}\rightarrow W,\qquad T(S^{[k]})\rightarrow TS,
\end{equation}
as $k\rightarrow-\infty$, where $M$, $W$ and $TS$ are defined at (\ref{mdef}) and (\ref{wtsdef}). Moreover,
\begin{equation}\label{secondclaims}
\eta^{[k]}\rightarrow \eta,\qquad T(\eta^{[k]})\rightarrow T\eta,
\end{equation}
where $T\eta$ is defined at (\ref{tetadef}), and
\begin{equation}\label{twosidedupdaterule}
(T\eta)_{n}=\min\left\{1-\eta_{n},W_{n-1}\right\},\qquad \forall n\in\mathbb{Z}.
\end{equation}}
\begin{proof} Since we are dealing with discrete time processes, it suffices to show convergence pointwise. Since $S^{[k]}_n= S_n$ for $n\geq k$ (assuming $k$ is negative), we readily deduce that $S^{[k]}_n\rightarrow S_n$ as $k\rightarrow-\infty$, which establishes the first claim. For the second claim, we start by noting the obvious inequality $S_n^{[k]}\leq S_n$ for all $n\in\mathbb{Z}$, $k\leq 0$. This implies that $M_n^{[k]}\leq M_n$ for all $n\in\mathbb{Z}$, $k\leq 0$. To obtain the opposite inequality in the limit, observe that (\ref{sfinlim}) yields, for a given $n$, the existence of a finite $n_1\leq n$ such that $M_n=S_{n_1}$. Moreover, for $k\leq n_1$,
\[M_{n}^{[k]}=\max_{k\leq m\leq n}S_{m}^{[k]}\geq S_{n_1}^{[k]}.\]
Since $S_{n_1}^{[k]}\rightarrow S_{n_1}=M_n$ as $k\rightarrow -\infty$ by the previous part of the proof, we have thus demonstrated that $M_{n}^{[k]}\rightarrow M_n$ as $k\rightarrow -\infty$, as desired. The remaining two claims of \eqref{firstclaims} are now easy consequences of the definition of $W$ and $TS$, and equations (\ref{wkident}) and (\ref{tsindent}). To complete the proof, we simply observe that the first claim of (\ref{secondclaims}) is clear by definition, the results at (\ref{firstclaims}) imply
\[(T(\eta^{[k]}))_n=\mathbf{1}_{\{(T(S^{[k]}))_n=(T(S^{[k]}))_{n-1}-1\}}\rightarrow \mathbf{1}_{\{(TS)_n=(TS)_{n-1}-1\}}=(T\eta)_n,\]
as $k\rightarrow-\infty$, and also
\[(T\eta)_{n}=\lim_{k\rightarrow-\infty}(T(\eta^{[k]}))_n=
\lim_{k\rightarrow-\infty}\min\left\{1-\eta^{[k]}_{n},W^{[k]}_{n-1}\right\}=
\min\left\{1-\eta_{n},W_{n-1}\right\},\]
where for the second inequality we apply the update rule from (\ref{updaterule}) that holds in the one-sided setting.
\end{proof}

{\rem In the two-sided case, it is further straightforward to check that the identity of Lemma \ref{llem} can be adapted to
\[S_n=\ell_n-W_n+W_0,\qquad \forall n\in\mathbb{Z},\]
if the definition of the local time $\ell$ is extended as follows:
\begin{equation}\label{localdef}
\ell_n=\left\{\begin{array}{ll}
              \sum_{m=1}^{n}\mathbf{1}_{\{W_{m-1}=W_{m}=0\}}, & \mbox{if }n>0, \\
              0 & \mbox{if }n=0,\\
               -\sum_{m=n+1}^{0}\mathbf{1}_{\{W_{m-1}=W_{m}=0\}}, & \mbox{if }n<0.
            \end{array}\right.
            \end{equation}}

\subsection{Defining the carrier process and dynamics uniquely}\label{uniquesection}
In Section \ref{extensionsec}, we gave a definition of the dynamics of the BBS for two-sided infinite particle configurations satisfying (\ref{sfinlim}). In this section, we show that for no other configurations can the BBS dynamics be reasonably defined, in the sense that one can not construct a carrier process which corresponds to the configuration in the natural way. Moreover, we will show that the carrier process defined in the previous section is unique in a certain sense, which we relate to excluding the possibility of particles coming into the system from $-\infty$. The discussion presented here will be useful in subsequent sections when it comes to defining the inverse of the dynamics, and proving Theorem \ref{mr1}.

We start by introducing the space of carrier paths
\[ \mathcal{Y}:=\left\{Y: \mathbb{Z} \to \mathbb{Z}_+\::\: |Y_n-Y_{n-1}|=1\mbox{ or }Y_n=Y_{n-1}=0,\: \forall n \in \mathbb{Z}\right\}.\]
Given a configuration $\eta\in\{0,1\}^\mathbb{Z}$, the associated carrier path, which is formally given by $Y_n=\sum_{k=-\infty}^{n}(\eta_k-T\eta_k)$, should satisfy the update rule at (\ref{wupdate}) (for all $n\in\mathbb{Z}$). Equivalently, we require $Y$ to satisfy $\eta_n=\mathbf{1}_{\{Y_{n}=Y_{n-1}+1\}}$. This being the case, we define a map $\Phi : \mathcal{Y} \to \{0,1\}^{\mathbb{Z}}$ by setting
\[ (\Phi Y)_n = \mathbf{1}_{\{Y_{n}=Y_{n-1}+1\}}.\]
Since the map from $\eta \in \{0,1\}^{\mathbb{Z}}$ to $S \in \mathcal{S}^0$ is one-to-one, this map may equivalently viewed as a map from $\mathcal{Y}$ to $\mathcal{S}^0$. Henceforth, in a slight abuse of notation, we will use $\Phi$ for both versions of the map, with it being clear from the context whether we are mapping to $\{0,1\}^\mathbb{Z}$ or $\mathcal{S}^0$. In fact, $\Phi:\mathcal{Y}\rightarrow\mathcal{S}^0$ is given explicitly by
\begin{equation}\label{phidefY}
\Phi Y_n =\ell(Y)_n -Y_n+Y_0,
\end{equation}
where $\ell : \mathcal{Y} \to \mathcal{A}^0$ is defined as same way as (\ref{localdef}), and
\[ \mathcal{A}^0 =\left\{A : \mathbb{Z} \to \mathbb{Z}\::\: A_0=0, \: A_n-A_{n-1} \in \{0,1\},\: \forall n \in \mathbb{Z} \right\}\]
is a set of non-decreasing functions passing through the origin. In Section \ref{extensionsec}, we showed the following result.

\begin{lem} Assume $S \in \mathcal{S}^0$ satisfies (\ref{sfinlim}), and let $W=M-S$. It is then the case that $W \in \mathcal{Y}$ and $\Phi W=S$. In particular, $S \in \Phi(\mathcal{Y})$. Furthermore, $\lim_{k \to -\infty}T \eta^{[k]}$ exists and is equal to $T\eta$ given by (\ref{tetadef}), and satisfies (\ref{twosidedupdaterule}), i.e.\ $T\eta_n=\min \{1-\eta_n, W_{n-1}\}$.
\end{lem}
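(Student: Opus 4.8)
The plan is to recognise that this lemma essentially consolidates facts already established in Section~\ref{extensionsec}, so the proof proceeds by assembling those pieces together with two short direct verifications. I would organise it around the four assertions: (i) $W\in\mathcal{Y}$; (ii) $\Phi W=S$ (whence $S\in\Phi(\mathcal{Y})$); (iii) convergence of $T\eta^{[k]}$ to $T\eta$; and (iv) the update rule. Since $W=M-S$ is now given by a closed formula rather than a recursion, the first two steps can be checked directly, pointwise in $n\in\mathbb{Z}$, with no need for the inductive argument used in Lemma~\ref{wlem}.

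First I would check $W\in\mathcal{Y}$ by a direct increment analysis of $W=M-S$. Since $M_n=\sup_{m\le n}S_m\ge S_n$, we have $W_n\ge 0$, so $W$ maps into $\mathbb{Z}_+$. For the increment condition, I would distinguish the three cases exactly as in the derivation of~(\ref{wincrement}): if $\eta_n=1$ then $S_n=S_{n-1}-1$ and $M_n=M_{n-1}$, giving $W_n=W_{n-1}+1$; if $\eta_n=0$ and $M_{n-1}=S_{n-1}$ then $M_n=S_n$, forcing $W_n=W_{n-1}=0$; and if $\eta_n=0$ and $M_{n-1}>S_{n-1}$ then $M_n=M_{n-1}$ and $S_n=S_{n-1}+1$, giving $W_n=W_{n-1}-1$. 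In every case $|W_n-W_{n-1}|=1$ or $W_n=W_{n-1}=0$, which is precisely membership in $\mathcal{Y}$.

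Next I would establish $\Phi W=S$. By the explicit form~(\ref{phidefY}), $\Phi W_n=\ell(W)_n-W_n+W_0$, and since $\ell(W)$ is defined in exactly the same way as the local time in~(\ref{localdef}), this coincides with the identity $S_n=\ell_n-W_n+W_0$ recorded in the remark following Lemma~\ref{twosided}. Hence $\Phi W=S$, and in particular $S\in\Phi(\mathcal{Y})$. Alternatively, one can read off $\Phi W=\eta$ directly from the increment analysis of the previous step, since that case analysis shows $W_n=W_{n-1}+1$ holds if and only if $\eta_n=1$, which is the defining property $(\Phi W)_n=\mathbf{1}_{\{W_n=W_{n-1}+1\}}=\eta_n$.

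Finally, the convergence $\lim_{k\to-\infty}T\eta^{[k]}=T\eta$, with $T\eta$ as in~(\ref{tetadef}), together with the update rule $T\eta_n=\min\{1-\eta_n,W_{n-1}\}$, are exactly the contents of~(\ref{secondclaims}) and~(\ref{twosidedupdaterule}) in Lemma~\ref{twosided}, so these two claims require nothing beyond citing that result. The only genuine content beyond Lemma~\ref{twosided} is therefore the bookkeeping in the first two steps, and I do not expect any serious obstacle: the mildest care needed is simply to confirm that the local-time normalisation appearing in the definition of $\Phi$ matches that used in the remark after Lemma~\ref{twosided}, so that the two formulas for $\Phi W$ and $S$ genuinely agree.
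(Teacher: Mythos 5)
Your proposal is correct and follows essentially the same route as the paper, which offers no separate argument for this lemma beyond the remark that it "was shown in Section \ref{extensionsec}": the convergence and update-rule claims are exactly Lemma \ref{twosided}, and the claims $W\in\mathcal{Y}$ and $\Phi W=S$ follow from the same increment case analysis underlying \eqref{wincrement} and the local-time identity in the remark after Lemma \ref{twosided}. Your direct pointwise verification of those first two assertions is a harmless (and arguably welcome) elaboration of what the paper leaves implicit.
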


The above lemma tells that (\ref{sfinlim}) is a sufficient condition for $S$ to have an associated carrier path. The next lemma establishes (\ref{sfinlim}) is in fact a necessary condition for this. In fact, this result provides an intuitive explanation for the dynamics of the BBS, suggesting that when $\limsup_{n \to -\infty}S_n=\infty$ we have a carrier bringing infinite particles from $-\infty$, which fill all the holes, and transports all the particles to $\infty$. The limits in Lemma \ref{twosided} can also be understood in this case, although those for $M$ and $W$ will be $\infty$, and $TS$ no longer can be defined via \eqref{wtsdef}.

\begin{lem} \label{wbadlem} Assume $S \in \mathcal{S}^0$ and $\limsup_{n \to -\infty}S_n=\infty$. It is then the case that $S \notin \Phi(\mathcal{Y})$. In particular, $\lim_{k \to -\infty} W^{[k]}_n= \infty$, and it moreover holds that $\lim_{k \to -\infty} T\eta^{[k]}=1-\eta$.
\end{lem}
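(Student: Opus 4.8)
The plan is to establish the two halves of Lemma \ref{wbadlem} in order: first that $S\notin\Phi(\mathcal{Y})$, and then the more precise statements about the limiting behaviour of $W^{[k]}$ and $T\eta^{[k]}$. For the first claim, I would argue by contradiction. Suppose there exists $Y\in\mathcal{Y}$ with $\Phi Y=S$. Since $\Phi Y_n=\mathbf{1}_{\{Y_n=Y_{n-1}+1\}}$ encodes the increments of $S$ via $\eta_n=\mathbf{1}_{\{S_n=S_{n-1}-1\}}$, the up-jumps of $Y$ must coincide exactly with the down-jumps of $S$ (the particle locations), and at every other step $Y$ either steps down by $1$ or stays at $0$. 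The key observation is that because $Y$ takes values in $\mathbb{Z}_+$, it is bounded below by $0$; combining this with $\Phi Y=S$ should force a lower bound on how negative $S$ can go relative to its own running maximum, and in particular should force $\limsup_{n\to-\infty}S_n<\infty$. Concretely, I would use the explicit formula \eqref{phidefY}, namely $\Phi Y_n=\ell(Y)_n-Y_n+Y_0$, to write $S_n=\ell(Y)_n-Y_n+Y_0$. Since $\ell(Y)$ is non-increasing as $n\to-\infty$ (it decreases by the number of visits to $0$ on the way back) and $Y_n\geq 0$, one reads off that $S_n\leq \ell(Y)_n+Y_0$ is bounded above as $n\to-\infty$, contradicting $\limsup_{n\to-\infty}S_n=\infty$. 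This yields $S\notin\Phi(\mathcal{Y})$.

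For the divergence $\lim_{k\to-\infty}W^{[k]}_n=\infty$, I would use the identity $W^{[k]}=M^{[k]}-S^{[k]}$ from \eqref{wkident}, fix $n$, and note that $S^{[k]}_m=S_m$ for all $m\geq k$, so $W^{[k]}_n=M^{[k]}_n-S_n$ where $M^{[k]}_n=\max_{k\leq m\leq n}S_m$. The hypothesis $\limsup_{n\to-\infty}S_n=\infty$ means that as $k\to-\infty$ we can find indices $m\leq n$ with $S_m$ arbitrarily large, so $M^{[k]}_n\to\infty$, and hence $W^{[k]}_n\to\infty$ for each fixed $n$. This is the cleanest step.

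The remaining claim $\lim_{k\to-\infty}T\eta^{[k]}=1-\eta$ requires a little more care, and I expect it to be the main obstacle, since naively the formula $T\eta^{[k]}_n=\min\{1-\eta^{[k]}_n,W^{[k]}_{n-1}\}$ involves $W^{[k]}_{n-1}\to\infty$. The idea is that because $W^{[k]}_{n-1}\to\infty$ while $1-\eta^{[k]}_n$ is bounded, the minimum is eventually attained by the first term once $k$ is sufficiently negative (for fixed $n$). More precisely, for each fixed $n$ there exists $k_0$ so that for all $k\leq k_0$ we have $W^{[k]}_{n-1}\geq 1$, whence $\min\{1-\eta^{[k]}_n,W^{[k]}_{n-1}\}=1-\eta^{[k]}_n$. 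Since also $\eta^{[k]}_n=\eta_n$ for $n>k$, we get $T\eta^{[k]}_n=1-\eta_n$ for all sufficiently negative $k$, giving pointwise convergence to $1-\eta$. I would invoke the one-sided update rule \eqref{updaterule} (valid for $\eta^{[k]}$ since it is supported on a half-line) to justify the formula for $T\eta^{[k]}_n$ in the first place. The subtlety to watch is ensuring the threshold $k_0$ can be chosen depending only on $n$ (not uniformly), which is fine for pointwise convergence; the fact that $W^{[k]}_{n-1}\to\infty$ from the previous step delivers exactly this.
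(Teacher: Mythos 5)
Your proposal is correct and follows essentially the same route as the paper: the first claim via the formula $\Phi Y_n=\ell(Y)_n-Y_n+Y_0$ together with $\ell(Y)_n\le 0$ for $n\le 0$ and $Y_n\ge 0$ (the paper derives the identical bound $S_{-n}\le Y_0$ by telescoping the increments, which is the same computation), the second via $W^{[k]}_n=\max_{k\le m\le n}S_m-S_n$, and the third by noting the minimum in the update rule is eventually attained by $1-\eta_n$ once $W^{[k]}_{n-1}\ge 1$. No gaps.
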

\begin{proof} Suppose $S = \Phi Y$ for some $Y \in \mathcal{Y}$. Applying \eqref{phidefY} yields that, for any $n\ge 0$,
\begin{align*}
S_0-S_{-n} & =\sum_{k=-n+1}^{0}(S_{k}-S_{k-1})\\
&=\sum_{k=-n+1}^0 (\ell(Y)_k -Y_k-\ell(Y)_{k-1} +Y_{k-1}) \\
& \ge \sum_{k=-n+1}^0 (Y_{k-1}-Y_k)\\
&=Y_{-n}-Y_0\\
& \ge -Y_0.
\end{align*}
Namely $S_{-n} \le Y_0$. However, this contradicts the assumption that $\limsup_{n \to -\infty}S_n=\infty$, and so it must be the case that $S \notin \Phi(\mathcal{Y})$.

For the remaining claims, we note that, for any $n \in \mathbb{Z}$ and $k \le n$,
\begin{align*}
W_n^{[k]}=M_n^{[k]}-S_n=\max_{k\leq m\leq n}S_m-S_n,
\end{align*}
and so $\lim_{k \to -\infty} W_n^{[k]} = \lim_{k \to -\infty}  (\max_{k\leq m\leq n}S_m-S_n) =\infty$. Since $T\eta^{[k]}_n=\min\{1-\eta_n^{[k]}, W_{n-1}^{[k]}\}$, it also follows that $\lim_{k \to -\infty}T\eta^{[k]}_n=1-\eta_n$.
\end{proof}

Now, if the BBS dynamics are to be defined in terms of the carrier, then it is natural to define the domain of $T$ to be the set where we have a carrier process, i.e.\
\[\mathcal{S}^{T}:=\Phi(\mathcal{Y}).\]
Moreover, the previous two lemmas yield the following alternative expression for $\mathcal{S}^T$.

\begin{cor}\label{domaincor} It holds that
\[\mathcal{S}^{T}=\left\{S \in \mathcal{S}^{0}\::\:\limsup_{n \to \infty} S_n  < \infty \right\} =\left\{ S \in \mathcal{S}^{0}\::\:M_0\in\mathbb{R}\right\}.\]
\end{cor}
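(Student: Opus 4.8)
The plan is to read off both equalities almost directly from the two lemmas that immediately precede the corollary, adding only one elementary observation relating boundedness of $S$ at $-\infty$ to finiteness of $M_0$. Since $\mathcal{S}^T$ is by definition $\Phi(\mathcal{Y})$, the corollary is simply a clean description of which paths in $\mathcal{S}^0$ arise as $\Phi$-images of carrier paths, and the two lemmas have already carried out exactly this analysis.

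For the first equality I would split $\mathcal{S}^0$ according to the value of $\limsup_{n\to-\infty}S_n$, which for a nearest-neighbour path lies in $\{-\infty\}\cup\mathbb{R}\cup\{+\infty\}$; the conditions $\limsup_{n\to-\infty}S_n<\infty$ and $\limsup_{n\to-\infty}S_n=\infty$ are then complementary and exhaustive. The first (sufficient-condition) lemma gives that any $S$ satisfying \eqref{sfinlim} has $W=M-S\in\mathcal{Y}$ with $\Phi W=S$, hence $S\in\Phi(\mathcal{Y})=\mathcal{S}^T$, yielding the inclusion $\{S\in\mathcal{S}^0:\limsup_{n\to-\infty}S_n<\infty\}\subseteq\mathcal{S}^T$. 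Lemma \ref{wbadlem} gives the reverse containment: if $\limsup_{n\to-\infty}S_n=\infty$ then $S\notin\Phi(\mathcal{Y})=\mathcal{S}^T$, so $\mathcal{S}^T\subseteq\{S\in\mathcal{S}^0:\limsup_{n\to-\infty}S_n<\infty\}$. Combining the two inclusions yields the first claimed identity.

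For the second equality I would verify that, for $S\in\mathcal{S}^0$, one has $\limsup_{n\to-\infty}S_n<\infty$ if and only if $M_0=\sup_{m\leq 0}S_m\in\mathbb{R}$ (noting $M_0\geq S_0=0$, so $M_0\in\mathbb{R}$ is the same as $M_0<\infty$). One direction is immediate, since $\limsup_{n\to-\infty}S_n\leq M_0$. For the converse, given $\limsup_{n\to-\infty}S_n=L<\infty$ I would choose $N\leq 0$ with $S_m\leq L+1$ for all $m\leq N$; then $M_0$ is the maximum of the finitely many values $S_m$ with $N<m\leq 0$ together with a tail bounded above by $L+1$, and is therefore finite. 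This is precisely the equivalence already flagged in the text just after \eqref{sfinlim}.

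I do not expect a genuine obstacle here: all the real content lies in the two preceding lemmas, which together pin down membership of $\Phi(\mathcal{Y})$, and what remains is bookkeeping. The only points meriting slight care are to confirm that the limsup dichotomy is exhaustive, and in particular that the degenerate case $\limsup_{n\to-\infty}S_n=-\infty$ is subsumed by the hypothesis \eqref{sfinlim} (it is, since $-\infty<\infty$) rather than being overlooked, and to handle the routine equivalence between $S$ being bounded above as $n\to-\infty$ and $M_0$ being finite.
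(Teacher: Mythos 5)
Your proof is correct and takes exactly the route the paper intends: the corollary is stated without proof precisely because it amounts to combining the two preceding lemmas (one for each inclusion of the dichotomy on $\limsup_{n\to-\infty}S_n$) with the elementary equivalence between \eqref{sfinlim} and $M_0<\infty$ that the paper already records just after \eqref{sfinlim}. Note that you have, correctly, read the ``$\limsup_{n\to\infty}$'' in the displayed statement as $\limsup_{n\to-\infty}$ --- as written it would be false (e.g.\ for $S_n=n$), and it is evidently a typo.
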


Whilst the above discussion might give a suitable domain for $T$, we further need to consider what the canonical action of the BBS should be. Indeed, given a particle configuration $\eta$ and carrier process $Y$, one might seek to define the dynamics in terms of the update rule at \eqref{twosidedupdaterule}. However, as we discuss further in Remark \ref{noninirem} below, $\Phi$ is not an injective map, and consequently the latter approach does not uniquely define the dynamics of the system. The next lemma is intended to give a solution to this problem. In particular, we will identify a subset of $\mathcal{Y}$ on which the map is a bijection, and show that the carriers contained in this set (which are in fact given by the process $W=M-S$ defined in the previous section) are the minimal carriers for the corresponding configuration. As we argue in Remark \ref{noninirem}, this minimal carrier can be seen as a natural choice, since it is the one that excludes the possibility of extra particles appearing in the system from $-\infty$. The following proposition may be considered the main result of this subsection.

\begin{prop}\label{phiproperty}
Define
\[\mathcal{Y}^-:=\left\{ Y \in \mathcal{Y}\::\:\liminf_{n \to -\infty}Y_n=0\right\}.\]
It is then the case that $\Phi|_{\mathcal{Y}^-} :\mathcal{Y}^- \to \mathcal{S}^T$ is a bijection, and its inverse map is given by $\Phi^{-1}S=M-S$. Furthermore,
\[(\Phi^{-1}S)_n=\min \left\{Y_n\::\:\Phi Y = S,\: Y \in \mathcal{Y} \right\}.\]
\end{prop}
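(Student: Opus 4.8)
The plan is to prove three things about $\Phi|_{\mathcal{Y}^-}$: that it maps into $\mathcal{S}^T$, that the candidate inverse $S\mapsto M-S$ is a genuine two-sided inverse, and that $M-S$ is the pointwise minimum among all carriers producing $S$. The first is essentially bookkeeping. Given $Y\in\mathcal{Y}^-$, I would set $S:=\Phi Y$ and show $\limsup_{n\to-\infty}S_n<\infty$, so that $S\in\mathcal{S}^T$ by Corollary \ref{domaincor}. This follows from the chain of inequalities already used in the proof of Lemma \ref{wbadlem}: from \eqref{phidefY} one gets $S_0-S_{-n}\ge Y_{-n}-Y_0$, hence $S_{-n}\le Y_0 - Y_{-n}+S_0$, and since $\liminf_{n\to-\infty}Y_n=0$ one can take a subsequence along which $Y_{-n}=0$, forcing $\limsup_{n\to-\infty}S_n\le Y_0+S_0<\infty$.

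Next I would verify that $\Phi$ and $S\mapsto M-S$ are mutually inverse on these sets. One direction is already in hand: Lemma \ref{wlem} (in its two-sided form, equation \eqref{wtsdef}) shows that for $S\in\mathcal{S}^T$ the process $W=M-S$ lies in $\mathcal{Y}$ and satisfies $\Phi W=S$, and I must also check $W\in\mathcal{Y}^-$, i.e. $\liminf_{n\to-\infty}(M_n-S_n)=0$. This holds because, by the definition \eqref{mdef} of $M$ and the finiteness of $M_0$, the past maximum $M_n$ is attained at some finite index for each $n$, and along a sequence of indices realising the running maximum from the left one has $M_n=S_n$, giving $W_n=0$; combined with \eqref{sfinlim} this yields the $\liminf$ statement. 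For the reverse composition, given $Y\in\mathcal{Y}^-$ with $S=\Phi Y$, I must show $M-S=Y$, where $M$ is the past maximum of $S=\Phi Y$. Using \eqref{phidefY} to write $S_n=\ell(Y)_n-Y_n+Y_0$ and the fact that $\ell(Y)$ increases exactly when $Y$ has consecutive zeros, I would compute $M_n=\sup_{m\le n}S_m$ directly: the supremum over $m\le n$ is achieved at an index where $Y_m=0$ (which exists and can be taken with $m\le n$ precisely because $\liminf_{n\to-\infty}Y_n=0$ rules out the supremum escaping to $-\infty$), at which point $S_m=\ell(Y)_m+Y_0$, and monotonicity of $\ell(Y)$ identifies $M_n=\ell(Y)_n+Y_0$. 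Subtracting gives $M_n-S_n=Y_n$, as required. This is the step I expect to be the main obstacle, since it requires pinning down exactly where the running maximum of $S$ is attained and using the $\mathcal{Y}^-$ condition in an essential way — without it the supremum could fail to be realised at a zero of $Y$, and the identity would break.

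Finally, for the minimality statement I would fix any $Y\in\mathcal{Y}$ with $\Phi Y=S$ and show $M_n-S_n\le Y_n$ for every $n$. Running the same inequality from the proof of Lemma \ref{wbadlem} between an arbitrary $m\le n$ and $n$ gives $S_n-S_m\ge Y_m-Y_n\ge -Y_n$, so $S_m\le S_n+Y_n$ for all $m\le n$; taking the supremum over $m\le n$ yields $M_n\le S_n+Y_n$, that is, $(M-S)_n\le Y_n$. Since $M-S=\Phi^{-1}S\in\mathcal{Y}^-$ is itself one of the competing carriers, the infimum is attained and equals $M-S$, establishing
\[(\Phi^{-1}S)_n=\min\left\{Y_n\::\:\Phi Y=S,\:Y\in\mathcal{Y}\right\}.\]
The whole argument thus reduces to the single inequality $S_n-S_m\ge Y_m-Y_n$ coming from \eqref{phidefY} and the non-decreasing nature of $\ell(Y)\in\mathcal{A}^0$, applied twice: once forward to locate the maximum for the bijectivity claim, and once as a one-sided bound for the minimality claim.
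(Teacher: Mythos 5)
Your proposal is correct, but it reaches the conclusion by a genuinely different route from the paper. The paper dispenses with surjectivity exactly as you do (citing that $M-S\in\mathcal{Y}^-$ and $\Phi(M-S)=S$), but it proves injectivity by a crossing argument: if $Y,\tilde{Y}\in\mathcal{Y}^-$ have the same image and $Y_{n_0}>\tilde{Y}_{n_0}$, then the strict ordering propagates backwards to all $n\le n_0$, forcing $\liminf_{n\to-\infty}Y_n\ge 1$ and contradicting $Y\in\mathcal{Y}^-$; the minimality claim is then obtained by rerunning the same crossing argument against the competitor $\Phi^{-1}S\in\mathcal{Y}^-$. You instead compute the inverse explicitly, identifying $M_n=\ell(Y)_n+Y_0$ for $Y\in\mathcal{Y}^-$ so that $\Phi^{-1}\Phi Y=Y$ outright, and you get minimality by summing the increment inequality $S_k-S_{k-1}\ge Y_{k-1}-Y_k$ — the same inequality the paper uses in Lemma \ref{wbadlem}. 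Each approach has its merits: the paper's comparison argument is softer (no need to locate where the running maximum is attained) and does double duty for both injectivity and minimality, while your computation delivers as a by-product the identity $M-M_0=\ell(W)$, which the paper has to record separately as Corollary \ref{corafter}. The one step in your write-up that deserves a little more care is the identification $M_n=\ell(Y)_n+Y_0$: a zero of $Y$ at $m\le n$ only gives $S_m=\ell(Y)_m+Y_0$, so you must choose $m$ to be the last index at or below $n$ where $\ell(Y)$ increments (i.e.\ where $Y_{m-1}=Y_m=0$), or, if there is none, use that $\ell(Y)$ is then constant on $(-\infty,n]$ together with the integer-valuedness of $Y$ and $\liminf_{n\to-\infty}Y_n=0$ to find a zero with $\ell(Y)_m=\ell(Y)_n$. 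With that case distinction spelled out, the argument is complete.
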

\begin{proof} Since $M-S \in \mathcal{Y}^-$ and $\Phi (M-S)=S$ for any $S \in \mathcal{S}^T$, we only need to show that $\Phi|_{\mathcal{Y}^-}$ is injective for the first statement. Let $Y, \tilde{Y} \in \mathcal{Y}^-$ satisfy $\Phi Y=\Phi \tilde{Y}$, and suppose $Y_{n_0} > \tilde{Y}_{n_0}$ for some $n_0$. Let $n_1:=\sup\{ n \le n_0:\: Y_n \le \tilde{Y}_n \}$, with the convention that $\sup \emptyset =-\infty$. If $n_1 \neq -\infty$, then $Y_{n_1} \le \tilde{Y}_{n_1}$ and $Y_{n_1+1} > \tilde{Y}_{n_1+1}$. Thus either
\[Y_{n_1 +1} - Y_{n_1}  =1\mbox{ and }\tilde{Y}_{n_1+1} - \tilde{Y}_{n_1}  \le 0,\]
or
\[Y_{n_1 +1} - Y_{n_1}   =0\mbox{ and }\tilde{Y}_{n_1+1} - \tilde{Y}_{n_1}  =-1,\]
must be satisfied. However, since $\mathbf{1}_{\{Y_{n_1}=Y_{n_1}+1\}}=\mathbf{1}_{\{\tilde{Y}_{n_1}=\tilde{Y}_{n_1}+1\}}$ and $Y_{n_1 +1} - Y_{n_1}   =0$ implies $Y_{n_1}=Y_{n_1 +1}=0$, neither of the above possibilities can occur. Hence, $n_1=-\infty$ and $Y_{n} > \tilde{Y}_n$ for all $n \le n_0$. In particular, this implies $Y_n \ge \tilde{Y}_n+1 \ge 1$ for all $n \le n_0$, and so $\liminf_{n \to -\infty}Y_n \ge 1$, which contradicts the assumption that $Y \in \mathcal{Y}^-$.

The claim that $(\Phi^{-1}S)_n=\min \{Y_n \ ; \ \Phi Y = S, \ Y \in \mathcal{Y} \}$ is shown by a similar argument. Suppose $Y \in \mathcal{Y}$ satisfies $Y_{n_0} < (\Phi^{-1}S)_{n_0}$ for some $n_0$, where $S= \Phi Y$. Then, since $\Phi Y= \Phi \Phi^{-1}S$, by the argument we have just given, $(\Phi^{-1}S)_n > Y_n$ for all $n \le n_0$. However, this contradicts the fact that $\Phi^{-1}S \in \mathcal{Y}^-$.
\end{proof}

As a simple corollary of this result, we have the following, which was essentially shown in Subsection \ref{extensionsec}.

\begin{cor}\label{corafter} For $S \in \mathcal{S}^{T}$, it holds that $W=\Phi^{-1}S$ and $M-M_0=\ell(\Phi^{-1}S)$.
\end{cor}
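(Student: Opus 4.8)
The plan is to obtain both identities directly from Proposition \ref{phiproperty} together with the explicit formula for $\Phi$ recorded at (\ref{phidefY}), so that no fresh estimate is required.

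For the first identity, I would begin by noting that $S \in \mathcal{S}^T$ forces $M_0 < \infty$ by Corollary \ref{domaincor}, so that the maximum process $M$, and hence $W = M - S$, are finite and well-defined. Proposition \ref{phiproperty} identifies the inverse of the bijection $\Phi|_{\mathcal{Y}^-}$ explicitly as $\Phi^{-1}S = M - S$. Comparing this with the definition $W = M - S$ from Subsection \ref{extensionsec} gives $W = \Phi^{-1}S$ immediately.

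For the second identity, the key point is that $W = \Phi^{-1}S$ lies in $\mathcal{Y}^-$ and satisfies $\Phi W = S$. I would substitute $Y = W$ into the explicit expression (\ref{phidefY}), obtaining $S_n = \ell(W)_n - W_n + W_0$ for every $n \in \mathbb{Z}$. Inserting $W_n = M_n - S_n$ and using $W_0 = M_0 - S_0 = M_0$ (recall $S_0 = 0$ for paths in $\mathcal{S}^0$), the $-W_n = -(M_n - S_n)$ term combines with $S_n$ so that the $S_n$ contributions cancel, leaving $\ell(W)_n = M_n - M_0$. Since $W = \Phi^{-1}S$, this is precisely the assertion $\ell(\Phi^{-1}S) = M - M_0$.

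I expect no genuine obstacle here: the statement is a routine algebraic consequence of the material already developed in this subsection, and indeed was essentially established en route to Proposition \ref{phiproperty}. The only points requiring a little care are verifying finiteness via Corollary \ref{domaincor}, so that all the processes involved make sense, and correctly carrying the additive constant $W_0 = M_0$ through the rearrangement of (\ref{phidefY}).
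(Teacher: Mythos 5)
Your proposal is correct and follows essentially the same route as the paper: both identities come from combining Proposition \ref{phiproperty} (which gives $\Phi^{-1}S=M-S=W$) with the explicit formula (\ref{phidefY}) applied to $\Phi W=S$, and the resulting algebra is the same rearrangement the paper performs (the paper writes it as $M=W+S=\Phi^{-1}S+\Phi\Phi^{-1}S$ and reads off $M_0=(\Phi^{-1}S)_0$, whereas you use $W_0=M_0$ directly, which is immediate from $S_0=0$). No gaps.
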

\begin{proof} The identity $W= \Phi^{-1}S$ follows by the definition of $W$ at (\ref{wtsdef}). Since
\begin{align*}
M&=W+S\\
&=\Phi^{-1}S +S \\
&= \Phi^{-1}S + \Phi\Phi^{-1}S\\
&=\Phi^{-1}S+\ell(\Phi^{-1}S)-\Phi^{-1}S+(\Phi^{-1}S)_0\\
&=\ell(\Phi^{-1}S) +(\Phi^{-1}S)_0,
\end{align*}
we have $M_0=(\Phi^{-1}S)_0$ and $M-M_0=\ell(\Phi^{-1}S)$.
\end{proof}

\begin{rem}\label{noninirem} To indicate why the process $\Phi^{-1}S=M-S$ is the natural carrier for a particle configuration, first consider the empty configuration $\eta$ given by setting $\eta_n=0$ for all $n\in\mathbb{Z}$. In this case, for each $N\in\mathbb{Z}$, if we set
\[Y^{N}_n=(N-n)_+,\qquad \forall n\in\mathbb{Z},\]
then we obtain a carrier $Y^N\in \mathcal{Y}$ such that $\Phi Y^N=\eta$. None of these functions are equal to the minimal carrier $W=M-S\equiv 0$. Now, if we define the BBS via the update rule (\ref{twosidedupdaterule}), then the carrier $Y^N$ yields a new configuration
\[\min\left\{1-\eta_{n},Y^N_{n-1}\right\}=\mathbf{1}_{\{n\leq N\}},\]
whereas the carrier $W$ yields $\min\{1-\eta_{n},W_{n-1}\}=\eta$. (See Figure \ref{nonunicarrier}.) In particular, we see that the carriers $Y^N$ are transporting a semi-infinite line of particles from $-\infty$. Without good reason for wanting to create such particles, it is thus apparently more natural to take as a carrier $W$. We note that in this case, the latter process is the only one for which $\lim_{k \to -\infty}T \eta^{[k]}$ is equal to the output of (\ref{twosidedupdaterule}), which also gives an understanding that, in this case, the system is not picking up information from $-\infty$. A similar argument can be given for other configurations where $W$ has flat segments (i.e.\ locations where $W_n=W_{n+1}=0$), see Remark \ref{needcond} for a particular example. In the same spirit, one might think about the case when $\eta\not \in \Phi(\mathcal{Y})$ and $\lim_{k \to -\infty}T \eta^{[k]}=1-\eta$, as described in Lemma \ref{wbadlem}, as having a carrier which transports an infinite number of particles from $-\infty$ to fill all the holes with particles, and which transports all the particles to $+\infty$.

\begin{figure}[!htb]
\vspace{0pt}
\centering
\scalebox{0.3}{\hspace{250pt}\includegraphics{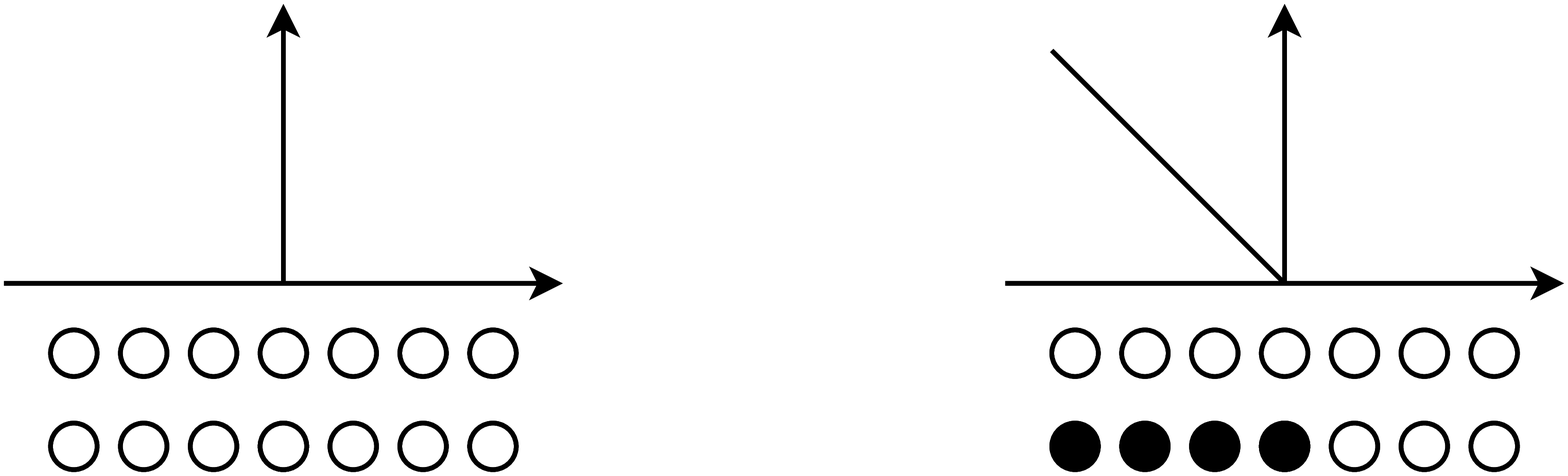}}
\rput(-6.9,2.6){$W$}
\rput(-0.3,2.6){$Y^0$}
\rput(-10.3,0.7){$\eta_n$}
\rput(-11.5,0.15){$\min\{1-\eta_{n},W_{n-1}\}$}
\rput(-3.9,0.7){$\eta_n$}
\rput(-5.0,0.15){$\min\{1-\eta_{n},Y^0_{n-1}\}$}
\vspace{0pt}
\caption{Particles transported from $-\infty$ by non-minimal carrier.}\label{nonunicarrier}
\end{figure}

As a slightly cautionary example about using the condition that $\lim_{k \to -\infty}T \eta^{[k]}$ is equal to the right-hand side of (\ref{twosidedupdaterule}) to pick the carrier, however, let us consider $\eta$ given by $\eta_n=\mathbf{1}_{\{n\mbox{ odd}\}}$. (See Figure \ref{nonunicarrier2}.) In this case, the minimal carrier $W$ is given by $W_n=\mathbf{1}_{\{n\mbox{ odd}\}}$. Other carriers are given by $W+N$ for any $N\in\mathbb{Z}_+$. However, if we define the BBS via the update rule (\ref{twosidedupdaterule}), then all of these carriers give rise to the configuration $1-\eta$, and this in turn is equal $\lim_{k \to -\infty}T \eta^{[k]}$. This example is somewhat subtle, however. One could interpret the carrier $W+N$ as taking $N$ particles from $-\infty$ to $\infty$. So, although the resulting dynamics are the same, it might be considered reasonable to rule this situation out. Note that this argument applies to any configuration where $W$ does not have flat segments.
\end{rem}

\begin{figure}[!htb]
\vspace{0pt}
\centering
\scalebox{0.3}{\includegraphics{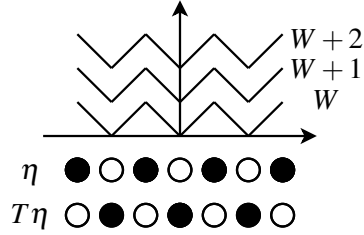}}
\rput(-0,1.7){$W$}
\rput(-0,2.1){$W+1$}
\rput(-0,2.5){$W+2$}
\rput(-3.9,0.7){$\eta$}
\rput(-3.9,0.15){$T \eta$}
\vspace{0pt}
\caption{Example showing multiple carriers giving rise to same dynamics.}\label{nonunicarrier2}
\end{figure}

\subsection{Inverse of the action of the carrier}\label{inversesec} Now we have identified $\mathcal{S}^T=\Phi(\mathcal{Y})$ as the domain where the BBS is well-defined, and presented justification for the dynamics of the system being given by the definitions in Section \ref{extensionsec} (equations (\ref{wtsdef}) and (\ref{tetadef}) in particular), a natural counterpart to study is the inverse of the action of the carrier. Or, to state this as a question, if we see the particle configuration $\eta$ left by the carrier, then can we identify the initial configuration $T^{-1}\eta$? In the finite particle case, this question was answered affirmatively in the original paper of Takahashi and Satsuma \cite{takahashi1990}. Indeed, in this case, they showed that the BBS is reversible, with the inverse dynamics given by the action of a carrier which moves from right to left, rather than left to right, as appears in the original definition. More explicitly, this action can be written
\begin{equation}\label{inverseaction}
T^{-1}\eta=\overleftarrow{T\overleftarrow{\eta}}.
\end{equation}
where $\overleftarrow{\eta}$ is the reversed configuration defined as at (\ref{revcon}). In particular, this identity tells us that, at least for the class of particle configurations considered, the backwards (in time) evolution of the BBS is described as follows: first reverse the configuration space according to \eqref{revcon}, apply the usual (forwards in time) BBS, and then reverse the space again.

To describe the inverse action of the carrier in terms of path encodings, let us first introduce the operator $R:\mathcal{S}^0\rightarrow\mathcal{S}^0$, defined by setting $RS=((RS)_n)_{n\geq 0}$ to be the path given by
\[(RS)_n=-S_{-n}.\]
Note that if $S$ is the path encoding of $\eta$, then
\[(RS)_n-(RS)_{n-1}=1-2\eta_{-(n-1)},\]
and so $RS$ is the path encoding of particle configuration  $\overleftarrow\eta$. Moreover, using this notation, it is clear that the action at \eqref{inverseaction} can be reexpressed as
\begin{equation}\label{inverseRTR}
T^{-1}S=R T RS.
\end{equation}
Alternatively, by applying the definition of $R$ and $T$, we see that this operation describes the dual of Pitman's transformation given by `reflecting in the future minimum'. Indeed, one can check that
\begin{equation}\label{Treverse}
(T^{-1}S)_n=2I_n-S_n-2I_0,
\end{equation}
where we define
\[I_n=\inf_{m \geq n}S_m.\]
In this section, we seek to extend the above observations to the case when the number of particles is infinite. For this, we take (\ref{Treverse}) as a definition of $T^{-1}$, and verify that this can also be written in terms of the formula at (\ref{inverseRTR}), see Lemma \ref{rtrlem}. The main goal of the section, however, is to prove Theorem \ref{mr1}(a), confirming the expression that was stated there for $\mathcal{S}^{rev}$. (Recall $\mathcal{S}^{rev}$, defined at (\ref{srevdef}), is the set upon which the forward and backward dynamics are well-defined and reversible, in that $T^{-1}TS=TT^{-1}S=S$ holds.)

We first introduce the domain of $T^{-1}$, which by applying the obvious symmetry and comparing with Corollary \ref{domaincor} we can suppose is given by
\[\mathcal{S}^{T^{-1}}=\left\{S \in \mathcal{S}^{0}\::\:T^{-1}{S}\mbox{ well-defined}\right\} =\left\{S\in\mathcal{S}^{0}\::\:I_0 \in \mathbb{R}\right\}.\]
Since \[I^{RS}_0=\inf_{n \geq 0} (RS)_n =-\sup_{n \leq 0}S_{n}=-M_0\]
and $R$ is a bijection on $\mathcal{S}^0$, we have $R\mathcal{S}^{T}=\mathcal{S}^{T^{-1}}$ and $R\mathcal{S}^{T^{-1}}=\mathcal{S}^{T}$. We are now ready to check (\ref{inverseRTR}) in our more general setting.

\begin{lem}\label{rtrlem} It holds that $T^{-1}=RTR$ on $\mathcal{S}^{T^{-1}}$.
\end{lem}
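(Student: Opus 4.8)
Since both $T$ and $T^{-1}$ are given by explicit formulas (via the past maximum $M$ and the future minimum $I$ respectively), and $R$ is the explicit reflection $(RS)_n=-S_{-n}$, the identity $T^{-1}=RTR$ reduces to a direct index computation. The plan is as follows. First I would confirm that the right-hand side is well-defined on $\mathcal{S}^{T^{-1}}$: by the equality $R\mathcal{S}^{T^{-1}}=\mathcal{S}^{T}$ established just above the statement, for any $S\in\mathcal{S}^{T^{-1}}$ the reflected path $RS$ lies in $\mathcal{S}^T$, so $TRS$ is defined via \eqref{wtsdef}, and then $R$ may be applied to return to $\mathcal{S}^0$. (One also uses that $R$ is an involution, $R^2=\mathrm{id}$, which is immediate.)

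The key step is to express the past maximum of the reflected path in terms of the future minimum of the original path. Writing $S':=RS$ and letting $M'_n:=\sup_{m\le n}S'_m$ denote its past maximum, I would compute
\[
M'_n=\sup_{m\le n}(-S_{-m})=-\inf_{m\le n}S_{-m}=-\inf_{k\ge -n}S_k=-I_{-n},
\]
where the third equality uses the substitution $k=-m$. This is the heart of the matter and the only place where the index bookkeeping must be handled carefully; everything else is substitution.

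With this in hand I would substitute into the definition of $T$ from \eqref{wtsdef}, giving
\[
(TS')_n=2M'_n-S'_n-2M'_0=-2I_{-n}+S_{-n}+2I_0,
\]
using $M'_0=-I_0$ and $S'_n=-S_{-n}$. Finally I would apply $R$ once more and simplify:
\[
(RTRS)_n=-(TS')_{-n}=-\bigl(-2I_n+S_n+2I_0\bigr)=2I_n-S_n-2I_0,
\]
which is exactly $(T^{-1}S)_n$ by the defining formula \eqref{Treverse}. Since $n\in\mathbb{Z}$ was arbitrary, this proves $T^{-1}=RTR$ on $\mathcal{S}^{T^{-1}}$.

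I do not anticipate any genuine obstacle here: the result is a bookkeeping identity rather than a substantive lemma, so the only care needed is to track the sign and index reversals consistently (in particular the passage from $\inf_{m\le n}S_{-m}$ to $\inf_{k\ge -n}S_k$, and the evaluation of the constant terms $M'_0=-I_0$). The well-definedness, which relies on $R\mathcal{S}^{T^{-1}}=\mathcal{S}^{T}$, has already been supplied in the text preceding the statement.
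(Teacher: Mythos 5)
Your proposal is correct and follows essentially the same route as the paper: a direct index computation identifying the past maximum of $RS$ with $-I_{-\cdot}$, combined with the observation that $R\mathcal{S}^{T^{-1}}=\mathcal{S}^T$ handles well-definedness. The paper merely phrases the computation as verifying $RT^{-1}=TR$ (using that $R$ is a bijection) rather than computing $RTR$ directly via $R^2=\mathrm{id}$, which is an immaterial difference.
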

\begin{proof} The comments preceding the lemma give that both operators have domain $\mathcal{S}^{T^{-1}}$. Moreover, since $R$ is a bijection on $\mathcal{S}^0$, we only need to show that $RT^{-1}=TR$. This can be done directly as follows:
\begin{eqnarray*}
(RT^{-1}S)_n&=&-(2I_{-n}-S_{-n}-2I_0)\\
&=&-2 \inf_{m \geq -n}S_m+S_{-n}+2\inf_{m \geq 0}S_m\\
&=& 2\sup_{m \leq n}(-S_{-m})-(-S_{-n})-2\sup_{m \leq 0}(-S_{-m})\\
&=&(TRS)_n.
\end{eqnarray*}
\end{proof}

Since the operation of $T^{-1}$ is given by the carrier moving from right to left, it is also natural to consider a carrier path $V$ corresponding to $\eta$, or equivalently $S$. By applying the arguments of Section \ref{extensionsec}, for any $S \in \mathcal{S}^{T^{-1}}$, we can see that $V=S-I$ and $T^{-1}\eta_n=\min \{ 1-\eta_n, V_{n+1}\}$. We note this process is the natural generalisation of $V_n=\sum_{k=n}^{\infty}(\eta_k -T^{-1}\eta_k)$ from the finite to the infinite particle case, where the latter expression was the one that appeared in discussion of the time-reversed dynamics in the original paper of Takahashi and Satsuma \cite{takahashi1990}. Note that $V \in \mathcal{Y}^+$, where
\[\mathcal{Y}^+:=\left\{ Y \in \mathcal{Y}\::\: \liminf_{n \to \infty}Y_n=0\right\}.\]
The various notions introduced in Section \ref{uniquesection} are also naturally extended to their dual versions. In particular, let $\Psi : \mathcal{Y} \to \{0,1\}^{\mathbb{Z}}$ be the map
\[ \Psi Y_n = \mathbf{1}_{\{Y_{n}=Y_{n-1}-1\}}.\]
The map $\Psi: \mathcal{Y} \to \mathcal{S}^0$ that this induces is given by
\[\Psi Y_n =\ell(Y)_n +Y_n-Y_0.\]
We then have the following adaptation of Corollary \ref{domaincor}, Proposition \ref{phiproperty} and Corollary \ref{corafter}.

\begin{prop}\label{adapprop} It holds that $\Psi (\mathcal{Y})=\mathcal{S}^{T^{-1}}$, and $\Psi|_{\mathcal{Y}^+} : \mathcal{Y}^+ \to \mathcal{S}^{T^{-1}}$ is a bijection with inverse map given by $\Psi^{-1}S=V$. Moreover,
\[(\Psi^{-1}S)_n=\min\left\{Y_n\::\:\Psi Y=S,\:Y \in \mathcal{Y} \right\}.\]
Also, $I-I_0=\ell (\Psi^{-1}S)$ for any $S \in \mathcal{S}^{T^{-1}}$.
\end{prop}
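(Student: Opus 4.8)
\emph{Proof proposal.} The plan is to obtain every assertion as the image under the space reflection $R$ of the corresponding statement for $\Phi$ already established in Corollary~\ref{domaincor}, Proposition~\ref{phiproperty} and Corollary~\ref{corafter}. The linchpin is an intertwining relation between $\Psi$ and $\Phi$ via reflection. Writing $\bar Y=(\bar Y_n)_{n\in\mathbb{Z}}$ for the reversed carrier path $\bar Y_n:=Y_{-n}$, I would first check that $Y\mapsto\bar Y$ is an involutive bijection of $\mathcal{Y}$ (the defining constraint `$|Y_n-Y_{n-1}|=1$ or $Y_n=Y_{n-1}=0$' is symmetric under reversal) which carries $\mathcal{Y}^+$ onto $\mathcal{Y}^-$, since $\liminf_{n\to\infty}\bar Y_n=\liminf_{n\to-\infty}Y_n$. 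The central identity to prove is
\[\Psi Y=R\,\Phi\bar Y,\qquad\forall Y\in\mathcal{Y}.\]
This is cleanest at the level of increments: comparing $\Psi Y_n=\mathbf{1}_{\{Y_n=Y_{n-1}-1\}}$ with $\Phi\bar Y_n=\mathbf{1}_{\{Y_{-n}=Y_{-n+1}+1\}}$, a single index substitution identifies $\Phi\bar Y$ with the reversed configuration $\overleftarrow{\Psi Y}$; since $R$ implements configuration reversal on path encodings (Section~\ref{inversesec}) and is an involution, this yields $\Psi Y=R\,\Phi\bar Y$. Alternatively one may match the explicit formulas for $\Phi$ and $\Psi$ as maps into $\mathcal{S}^0$, using the companion relation $\ell(\bar Y)_n=-\ell(Y)_{-n}$.

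Granting the intertwining, the structural claims follow by composing bijections. For the image, as $Y$ ranges over $\mathcal{Y}$ so does $\bar Y$, hence $\Phi\bar Y$ ranges over $\Phi(\mathcal{Y})=\mathcal{S}^T$ by Corollary~\ref{domaincor}, and applying $R$ gives $\Psi(\mathcal{Y})=R\mathcal{S}^T=\mathcal{S}^{T^{-1}}$, the last equality being recorded in the text preceding Lemma~\ref{rtrlem}. Restricting to $\mathcal{Y}^+$, the composite $\Psi|_{\mathcal{Y}^+}=R\circ\Phi|_{\mathcal{Y}^-}\circ(\,\bar{\cdot}\,)$ is a bijection onto $\mathcal{S}^{T^{-1}}$, since the reversal $\mathcal{Y}^+\to\mathcal{Y}^-$ is a bijection, $\Phi|_{\mathcal{Y}^-}:\mathcal{Y}^-\to\mathcal{S}^T$ is a bijection by Proposition~\ref{phiproperty}, and $R:\mathcal{S}^T\to\mathcal{S}^{T^{-1}}$ is a bijection. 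Inverting the composite (using $R^{-1}=R$ and that reversal is self-inverse) gives $\Psi^{-1}S=\overline{\Phi^{-1}(RS)}$; a short computation based on $\Phi^{-1}S=M-S$ and the evaluation $\sup_{m\le n}(RS)_m=-I_{-n}$ of the past maximum of $RS$ then identifies this with $S-I=V$, as required.

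For the minimality statement I would again transport the corresponding claim for $\Phi$: since $\Psi Y=S$ is equivalent to $\Phi\bar Y=RS$, the set $\{Y_n:\Psi Y=S,\,Y\in\mathcal{Y}\}$ coincides with $\{Z_{-n}:\Phi Z=RS,\,Z\in\mathcal{Y}\}$, and the minimality part of Proposition~\ref{phiproperty} evaluates its minimum at $(\Phi^{-1}(RS))_{-n}=V_n$. Finally, the local-time identity follows from Corollary~\ref{corafter} applied to $RS\in\mathcal{S}^T$: combining $\ell(\Phi^{-1}(RS))$ with the past-maximum evaluation $\sup_{m\le n}(RS)_m=-I_{-n}$ and the reversal rule $\ell(\bar Z)_n=-\ell(Z)_{-n}$ yields $\ell(\Psi^{-1}S)=\ell(V)=I-I_0$.

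I expect the only genuine obstacle to be the bookkeeping of index shifts and sign conventions when establishing the intertwining $\Psi Y=R\,\Phi\bar Y$ together with the companion relations $\ell(\bar Y)_n=-\ell(Y)_{-n}$ and $\sup_{m\le n}(RS)_m=-I_{-n}$: the mismatch between the forward rule $Y_n=Y_{n-1}+1$ defining $\Phi$ and the backward rule $Y_n=Y_{n-1}-1$ defining $\Psi$, together with the $-(n-1)$ shift in the reversal of configurations at \eqref{revcon}, is precisely where care is required. Once these identities are pinned down, the remaining assertions are formal consequences of composing known bijections, with no further analytic input.
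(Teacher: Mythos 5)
Your proposal is correct and matches the paper's intended route: the paper omits the proof of Proposition \ref{adapprop} precisely because it is the reflection-symmetric adaptation of Corollary \ref{domaincor}, Proposition \ref{phiproperty} and Corollary \ref{corafter}, and the intertwining $R\Psi=\Phi\tilde{R}$ you establish is exactly the duality recorded in Lemma \ref{dualityrel}. Your index and sign bookkeeping ($\sup_{m\le n}(RS)_m=-I_{-n}$, $\ell(\tilde{R}Y)_n=-\ell(Y)_{-n}$) checks out, so nothing further is needed.
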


With the above notation, for $S \in \mathcal{S}^T$,
\[(T\eta_n)=\mathbf{1}_{\{W_{n}=W_{n-1}-1\}}=(\Psi W)_n =(\Psi \Phi^{-1} \eta)_n,\]
and so $T=\Psi \Phi^{-1}$. In the same way, we have $T^{-1} = \Phi \Psi^{-1}$. From this, we can easily see that
\begin{equation}\label{easily}
T(\mathcal{S}^T)  \subseteq \Psi (\mathcal{Y}) = \mathcal{S}^{T^{-1}},\qquad T^{-1}(\mathcal{S}^{T^{-1}})  \subseteq \Phi (\mathcal{Y}) = \mathcal{S}^{T}.
\end{equation}

We now come to the main result of the section, which gives the characterisation of $\mathcal{S}^{rev}$ stated in Theorem \ref{mr1}(a). We will break the problem into two parts, namely writing $\mathcal{S}^{rev}=\mathcal{S}^{T^{-1}T}\cap\mathcal{S}^{TT^{-1}}$, where
\[\mathcal{S}^{T^{-1}T} =\left\{S \in \mathcal{S}^{T}\::\:T^{-1}TS=S\right\},\qquad
\mathcal{S}^{TT^{-1}}=\left\{S \in \mathcal{S}^{T^{-1}}\::\:TT^{-1}S=S\right\}.\]
For future use, we also summarise equalities involving the future minimum of $TS$ and past maximum of $T^{-1}S$ that arise in the proof.

\begin{thm}\label{goodsetthm}
It holds that
\[\mathcal{S}^{T^{-1}T} =\left\{ S \in \mathcal{S}^{0}\::\:M_0 \in \mathbb{R},\:\limsup_{n \to \infty}S_n=M_{\infty}\right\},\]
\[\mathcal{S}^{TT^{-1}} =\left\{ S \in \mathcal{S}^{0}\::\:I_0 \in \mathbb{R},\: \liminf_{n \to -\infty }S_n=I_{-\infty}\right\},\]
and so
\[\mathcal{S}^{rev}=\left\{S\in \mathcal{S}^0\::\:M_0, I_0\in\mathbb{R},\: \limsup_{n\rightarrow\infty}S_n=M_\infty,\;\liminf_{n\rightarrow-\infty}S_n=I_{-\infty}\right\}.\]
Moreover, for $S\in\mathcal{S}^{T^{-1}T}$,
\[I^{TS}=M-2M_0, \qquad  TV=W,\]
and, for $S\in\mathcal{S}^{TT^{-1}}$,
\[M^{T^{-1}S}=I-2I_0, \qquad T^{-1}W=V,\]
where $I^{TS}_n=\inf_{m \geq n}(TS)_m$ and $M^{T^{-1}S}_n=\sup_{m \leq n}(T^{-1}S)_m$.
\end{thm}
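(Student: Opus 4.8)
The plan is to prove the description of $\mathcal{S}^{T^{-1}T}$ directly and to deduce the one for $\mathcal{S}^{TT^{-1}}$ from the duality $T^{-1}=RTR$ of Lemma~\ref{rtrlem}. Since $R$ is an involution, $TT^{-1}S=S$ holds iff $T^{-1}T(RS)=RS$, i.e.\ iff $RS\in\mathcal{S}^{T^{-1}T}$; as $(RS)_n=-S_{-n}$ sends $M_0\mapsto -I_0$, $M_\infty\mapsto -I_{-\infty}$ and $\limsup_{n\to\infty}S_n\mapsto-\liminf_{n\to-\infty}S_n$, the stated form of $\mathcal{S}^{TT^{-1}}$ is exactly the image of that of $\mathcal{S}^{T^{-1}T}$, and the formula for $\mathcal{S}^{rev}=\mathcal{S}^{T^{-1}T}\cap\mathcal{S}^{TT^{-1}}$ is their intersection. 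I work throughout on $\mathcal{S}^T=\{S\in\mathcal{S}^0:M_0\in\mathbb{R}\}$, on which $TS$ is defined and, by \eqref{easily}, $TS\in\mathcal{S}^{T^{-1}}$, so that $T^{-1}TS$ is automatically well-defined.

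The central object is the nonnegative defect
\[g_n:=\inf_{m\ge n}\bigl(2M_m-S_m\bigr)-M_n,\qquad n\in\mathbb{Z}.\]
Because $2M_m-S_m\ge M_m\ge M_n$ for $m\ge n$ we have $g_n\ge0$, and taking $m=n$ gives $g_n\le M_n-S_n=W_n$. Directly from the definition, $I^{TS}_n=\inf_{m\ge n}(TS)_m=g_n+M_n-2M_0$, so the identity $I^{TS}=M-2M_0$ is equivalent to $g\equiv0$. Substituting $TS=2M-S-2M_0$ and $I^{TS}=g+M-2M_0$ into $T^{-1}TS=2I^{TS}-TS-2I^{TS}_0$ and simplifying collapses everything to
\[(T^{-1}TS)_n=S_n+2(g_n-g_0),\qquad n\in\mathbb{Z},\]
whence $T^{-1}TS=S$ iff $g$ is constant. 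The problem is thereby reduced to analysing the defect $g$.

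The heart of the matter, and the step I expect to be hardest, is the pair of claims: (i) $\lim_{n\to-\infty}g_n=0$; and (ii) $g\equiv0$ iff $\limsup_{n\to\infty}S_n=M_\infty$. For (i), the key observation is that $W_n=M_n-S_n$ vanishes for arbitrarily negative $n$, i.e.\ the walk meets its running past-maximum infinitely often as $n\to-\infty$; this one verifies by tracking the record times of the nondecreasing process $M$ downwards, treating the cases $M_{-\infty}\in\mathbb{R}$ and $M_{-\infty}=-\infty$ separately. Along such a sequence $m_k\to-\infty$ one has $2M_{m_k}-S_{m_k}=M_{m_k}\to M_{-\infty}$, forcing $\inf_m(2M_m-S_m)=M_{-\infty}$ and hence $g_n\to0$. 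Together with $g\ge0$, this shows that $g$ constant forces $g\equiv0$, so that $\mathcal{S}^{T^{-1}T}$ is precisely $\{g\equiv0\}=\{I^{TS}=M-2M_0\}$. For (ii), fix $n$: if $S$ ever exceeds level $M_n$ after time $n$, then one step before it first does so the walk sits at $M_n=M_m$, giving $2M_m-S_m=M_n$ and $g_n=0$ unconditionally; the only remaining indices are those with $M_n=M_\infty$, and for these $g_n=0$ is equivalent, by integrality of $S$, to $S_m=M_\infty$ for arbitrarily large $m$, that is to $\limsup_{n\to\infty}S_n=M_\infty$. Chaining (i), (ii) and the reduction above yields $T^{-1}TS=S\iff g\equiv0\iff\limsup_{n\to\infty}S_n=M_\infty$, the desired characterisation.

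It remains to record the carrier identities. On $\mathcal{S}^{T^{-1}T}$ one has $\liminf_{n\to\infty}W_n=0$ (this equals $M_\infty-\limsup_{n\to\infty}S_n=0$ when $M_\infty<\infty$, and holds automatically, via the forward records of $M$, when $M_\infty=\infty$), so that $W$ is a legitimate minimal future-minimum carrier. Using $I^{TS}=M-2M_0$, the future-minimum carrier of $TS$ is
\[TS-I^{TS}=(2M-S-2M_0)-(M-2M_0)=M-S=W,\]
which, on unwinding the definition of $T=\Psi\Phi^{-1}$ through the maps of Section~\ref{uniquesection} (where, when $I_0\in\mathbb{R}$, $V=S-I$ is the minimal reverse carrier), is the asserted identity $TV=W$; the dual statements $M^{T^{-1}S}=I-2I_0$ and $T^{-1}W=V$ follow by applying $R$. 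The one place I would take care is the bookkeeping of the centring constants $M_0$, $I_0$ in this last step, since all the substantive content is already contained in $I^{TS}=M-2M_0$, i.e.\ in $g\equiv0$.
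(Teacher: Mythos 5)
Your argument is correct, but it follows a genuinely different route from the paper's. The paper proves the characterisation structurally: using the bijections $\Phi|_{\mathcal{Y}^-}$ and $\Psi|_{\mathcal{Y}^+}$ together with Lemma \ref{phipsiy+y-}, it reduces $T^{-1}TS=S$ to the statement $\Phi^{-1}S\in\mathcal{Y}^+$, i.e.\ $\liminf_{n\to\infty}W_n=0$, which is then translated into $\limsup_{n\to\infty}S_n=M_\infty$; the carrier identities drop out of $\Psi^{-1}TS=\Phi^{-1}S$ together with Corollary \ref{corafter} and Proposition \ref{adapprop}. You instead work with the explicit defect $g_n=I^{TS}_n-M_n+2M_0\ge0$ and the exact identity $T^{-1}TS=S+2(g-g_0)$, so that reversibility becomes constancy of $g$; your two path lemmas — that $W$ vanishes at arbitrarily negative record times of $M$ (forcing $g_n=0$ for all sufficiently negative $n$, since the supremum defining a finite integer-valued $M_n$ is attained), and that $g_n=0$ automatically whenever $S$ exceeds $M_n$ after time $n$, leaving only indices with $M_n=M_\infty$ where $g_n=M_\infty-\sup_{m\ge n}S_m$ — are both sound, and correctly yield $g\equiv0\Leftrightarrow\limsup_{n\to\infty}S_n=M_\infty$. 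The deduction of $\mathcal{S}^{TT^{-1}}$ via $T^{-1}=RTR$ and the computation $TS-I^{TS}=M-S=W$ are also fine, and your flagged worry about centring constants is harmless since $W=M-S$ and $TV=TS-I^{TS}$ carry no additional centring in the paper's conventions. What the trade-off buys: the paper's argument recycles the Skorokhod-type uniqueness machinery that it needs anyway (and that transfers verbatim to the continuum setting of Section \ref{contsec}), whereas yours is self-contained, more elementary, and yields as a by-product the quantitative formula $T^{-1}TS-S=2(g-g_0)$ measuring the failure of reversibility.
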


\begin{rem} We can also characterize the reversible set as $\mathcal{S}^{rev}=\Phi(\mathcal{Y}^{rev}) \cap \Psi( \mathcal{Y}^{rev})$ where $\mathcal{Y}^{rev}=\mathcal{Y}^- \cap \mathcal{Y}^+$.
\end{rem}

To prove the above result, we prepare a simple lemma.

\begin{lem}\label{phipsiy+y-} It holds that $\Phi^{-1} \Phi (\mathcal{Y}^+) \subseteq \mathcal{Y}^+$ and $\Psi^{-1} \Psi (\mathcal{Y}^-) \subseteq \mathcal{Y}^-$.
\end{lem}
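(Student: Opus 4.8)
The plan is to reduce both inclusions to the minimality of the canonical carriers $\Phi^{-1}S$ and $\Psi^{-1}S$ established in Propositions~\ref{phiproperty} and~\ref{adapprop}, combined with the fact that every element of $\mathcal{Y}$ is $\mathbb{Z}_+$-valued. I will treat the first inclusion, $\Phi^{-1}\Phi(\mathcal{Y}^+)\subseteq\mathcal{Y}^+$, in detail; the second, $\Psi^{-1}\Psi(\mathcal{Y}^-)\subseteq\mathcal{Y}^-$, then follows by the evident symmetry obtained on replacing $\Phi$ by $\Psi$ and interchanging the roles of $+\infty$ and $-\infty$.

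First I would fix $Y\in\mathcal{Y}^+$ and set $S:=\Phi Y$. Since $\mathcal{Y}^+\subseteq\mathcal{Y}$, we have $S\in\Phi(\mathcal{Y})=\mathcal{S}^T$, so $\Phi^{-1}S$ is well-defined, lies in $\mathcal{Y}^-$, and equals the minimal carrier $M-S$ by Proposition~\ref{phiproperty}.

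The key step is a pointwise comparison followed by a squeeze. By the minimality formula $(\Phi^{-1}S)_n=\min\{Y'_n:\Phi Y'=S,\,Y'\in\mathcal{Y}\}$ of Proposition~\ref{phiproperty}, and since $Y$ is itself a carrier for $S$ (that is, $\Phi Y=S$ with $Y\in\mathcal{Y}$), it follows that $(\Phi^{-1}S)_n\le Y_n$ for every $n\in\mathbb{Z}$. As $\Phi^{-1}S$ is $\mathbb{Z}_+$-valued we also have $(\Phi^{-1}S)_n\ge 0$, and hence, passing to the liminf as $n\to\infty$,
\[0\le\liminf_{n\to\infty}(\Phi^{-1}S)_n\le\liminf_{n\to\infty}Y_n=0,\]
where the final equality is the defining property of $\mathcal{Y}^+$. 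This yields $\liminf_{n\to\infty}(\Phi^{-1}S)_n=0$, i.e.\ $\Phi^{-1}\Phi Y=\Phi^{-1}S\in\mathcal{Y}^+$, as required.

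I do not expect a genuine obstacle here: the whole argument is a two-sided squeeze resting on nonnegativity and on the already-proven minimality of $\Phi^{-1}S$ among all carriers of $S$. The only points meriting a line of checking are that $\Phi(\mathcal{Y}^+)$ lies in the domain $\mathcal{S}^T$ of $\Phi^{-1}$ (immediate from $\mathcal{Y}^+\subseteq\mathcal{Y}$) and that one may legitimately feed the specific competitor $Y$ into the minimality formula (immediate since $\Phi Y=S$). For the second inclusion the same squeeze is run with $V=\Psi^{-1}S\le Y$ and the liminf taken as $n\to-\infty$, using $\liminf_{n\to-\infty}Y_n=0$ for $Y\in\mathcal{Y}^-$ via Proposition~\ref{adapprop}.
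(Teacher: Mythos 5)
Your proof is correct and follows essentially the same route as the paper's: both arguments invoke the minimality of $\Phi^{-1}S$ among all carriers of $S$ from Proposition \ref{phiproperty} to get the pointwise bound $(\Phi^{-1}\Phi Y)_n\le Y_n$, then take $\liminf_{n\to\infty}$ and use nonnegativity, with the $\Psi$ case handled symmetrically via Proposition \ref{adapprop}. No gaps.
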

\begin{proof}
We only give a proof for $\Phi$. Let $Y \in \mathcal{Y}^+$ and $\tilde{Y}:= \Phi^{-1} \Phi Y$. Since $\Phi \tilde{Y}= \Phi Y$, Proposition \ref{phiproperty} gives that $\tilde{Y}_n \le Y_n$ for any $n \in \mathbb{Z}$. In particular, $\liminf_{n \to \infty}\tilde{Y}_n \le \liminf_{n \to \infty}Y_n =0$.
\end{proof}

\begin{proof}[Proof of Theorem \ref{goodsetthm}]
First note that
\[\left\{ S \in \mathcal{S}^{0}\::\:M_0 \in \mathbb{R},\:\limsup_{n \to \infty}S_n=M_{\infty}\right\} = \left\{ S \in \mathcal{S}^{T}\::\: \liminf_{n \to \infty} W_n=0\right\}.\]
Therefore, we only need to show that $\mathcal{S}^{T^{-1}T} = \{ S \in \mathcal{S}^{T}\::\: \Phi^{-1}S \in \mathcal{Y}^+\}$.

We first show that $\mathcal{S}^{T^{-1}T} \subseteq \{ S \in \mathcal{S}^{T}\::\: \Phi^{-1}S \in \mathcal{Y}^+\}$. Let $S \in \mathcal{S}^{T^{-1}T}$. Then, $S=T^{-1}TS$, so $S=\Phi \Psi^{-1} \Psi \Phi^{-1}S$. From (\ref{easily}), we further have that $Y:=\Psi^{-1} \Psi \Phi^{-1}S \in \Psi^{-1}(\mathcal{S}^{T^{-1}})=\mathcal{Y}^+$.
Hence $\Phi^{-1}S=\Phi^{-1}\Phi Y  \in \mathcal{Y}^+$ from Lemma \ref{phipsiy+y-}.

Next, we prove $\mathcal{S}^{T^{-1}T} \supseteq \{ S \in \mathcal{S}^{T}\::\: \Phi^{-1}S \in \mathcal{Y}^+\}$. Assume $S \in \mathcal{S}^{T}$ and $\Phi^{-1}S \in \mathcal{Y}^+$. Since $\Psi|_{\mathcal{Y}^+}$ is a bijection, it is then the case that $\Psi^{-1} \Psi \Phi^{-1}S = \Phi^{-1}S$. Consequently $T^{-1}TS=\Phi \Psi^{-1} \Psi \Phi^{-1}S = \Phi \Phi^{-1}S=S$.

Finally, if $S \in \mathcal{S}^{T^{-1}T}$, then as we have seen $\Psi^{-1} TS= \Psi^{-1} \Psi \Phi^{-1}S = \Phi^{-1}S$ holds. This expression can be rewritten as $TS-I^{TS}=M-S$, which implies $TV=TS-I^{TS}=M-S=W$. In particular, $-I^{TS}_0=M_0$ holds. Applying Corollary \ref{corafter} and Proposition \ref{adapprop}, we further obtain that $I^{TS}-I^{TS}_0=\ell (\Psi^{-1} TS)=\ell(\Phi^{-1}S)=M-M_0$. Hence $I^{TS}=M-M_0+I^{TS}_0=M-2M_0$.

Noting the relation
\[\left\{ S \in \mathcal{S}^{0}\::\:I_0 \in \mathbb{R},\:\liminf_{n \to -\infty}S_n=I_{-\infty}\right\} = \left\{ S \in \mathcal{S}^{T^{-1}}\::\: \liminf_{n \to -\infty} V_n=0\right\},\]
we can prove the rest of the claim in the same way.
\end{proof}

Finally, we summarise some of the duality relations that hold for the various maps that we have introduced. To state the result, we further define $\tilde{R}: \mathcal{Y} \to \mathcal{Y}$ by setting $(\tilde{R}Y)_n=Y_{-n}$. The proof, which is straightforward, is omitted.

\begin{lem}\label{dualityrel} It holds that
\[ R\Psi=\Phi\tilde{R}, \qquad R\Phi=\Psi\tilde{R}.\]
Moreover, $\tilde{R}(\mathcal{Y}^-)=\mathcal{Y}^+$, $\tilde{R}(\mathcal{Y}^+)=\mathcal{Y}^-$ and the maps in the following diagram are all bijections and commutative.
\[\xymatrix@C+30pt@R+10pt
{      \mathcal{Y}^- \ar@<.5ex>[r]^\Phi \ar[d]  &  \mathcal{S}^T \ar[d]^-{R} \ar@<.5ex>[l]^{\Phi^{-1}}\\
                  \mathcal{Y}^+ \ar[u]^-{\tilde{R}} \ar@<.5ex>[r]^\Psi   &  \mathcal{S}^{T^{-1}} \ar[u] \ar@<.5ex>[l]^{\Psi^{-1}}
                  }\]
Also, the following diagram satisfies the same property.
\[\xymatrix@C+30pt@R+10pt
{       \mathcal{Y}^{rev} \ar@<.5ex>[r]^-\Phi \ar[d]  &  \mathcal{S}^{rev} \ar[d]^-{R} \ar@<.5ex>[l]^-{\Phi^{-1}}\\
                  \mathcal{Y}^{rev}  \ar[u]^-{\tilde{R}} \ar@<.5ex>[r]^-\Psi   &  \mathcal{S}^{rev} \ar[u] \ar@<.5ex>[l]^-{\Psi^{-1}}
                  }\]
\end{lem}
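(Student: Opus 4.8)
The plan is to reduce the whole lemma to the two commutation identities $R\Psi=\Phi\tilde{R}$ and $R\Phi=\Psi\tilde{R}$, after which every other assertion follows formally from the bijectivity already recorded in Propositions \ref{phiproperty} and \ref{adapprop} together with the fact that $R$ and $\tilde{R}$ are involutions that interchange the relevant boundary conditions. To prove the identities I would work with the explicit $\mathcal{S}^0$-formulas $\Phi Y_n=\ell(Y)_n-Y_n+Y_0$ and $\Psi Y_n=\ell(Y)_n+Y_n-Y_0$. The one computation that makes both identities drop out is the oddness of the local time under reflection,
\[\ell(\tilde{R}Y)_n=-\ell(Y)_{-n},\qquad\forall n\in\mathbb{Z},\]
which one checks directly from \eqref{localdef} by reindexing the defining sums (a consecutive pair $\{Y_{m-1}=Y_m=0\}$ at a negative argument corresponds, after reflection, to such a pair at a positive argument, and the sign in \eqref{localdef} flips accordingly). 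Granting this, $(R\Psi Y)_n=-(\Psi Y)_{-n}=-\ell(Y)_{-n}-Y_{-n}+Y_0$ while $(\Phi\tilde{R}Y)_n=\ell(\tilde{R}Y)_n-(\tilde{R}Y)_n+(\tilde{R}Y)_0=-\ell(Y)_{-n}-Y_{-n}+Y_0$, so the two agree; the identity $R\Phi=\Psi\tilde{R}$ comes out identically with the sign of $Y$ reversed.

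Next I would record the elementary structural facts. Both $R$ and $\tilde{R}$ are involutions of $\mathcal{S}^0$ and $\mathcal{Y}$ respectively (the step constraint defining $\mathcal{Y}$ is symmetric under negation of increments, so $\tilde{R}Y\in\mathcal{Y}$). Moreover $\tilde{R}(\mathcal{Y}^{\pm})=\mathcal{Y}^{\mp}$, since $\liminf_{n\to-\infty}(\tilde{R}Y)_n=\liminf_{n\to+\infty}Y_n$; in particular $\tilde{R}$ fixes $\mathcal{Y}^{rev}=\mathcal{Y}^-\cap\mathcal{Y}^+$. The fact that $R:\mathcal{S}^T\to\mathcal{S}^{T^{-1}}$ is a bijection was already noted before Lemma \ref{rtrlem}. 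With these in place the first diagram is immediate: the four horizontal arrows are bijections by Propositions \ref{phiproperty} and \ref{adapprop}, the two vertical arrows are the bijections $R$ and $\tilde{R}$, and commutativity of the square is precisely the pair of identities, read in the two directions ($R\Phi=\Psi\tilde{R}$ one way, $R\Psi=\Phi\tilde{R}$ the other); the corresponding relations for the inverse arrows follow by inverting these, using that $R$ and $\tilde{R}$ are their own inverses.

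For the second diagram I would view it as the restriction of the first to the reversible sets, so the only genuine content is that the maps preserve those sets. That $R$ fixes $\mathcal{S}^{rev}$ follows from $T^{-1}=RTR$ (Lemma \ref{rtrlem}), which gives $R\mathcal{S}^{T^{-1}T}=\mathcal{S}^{TT^{-1}}$ and vice versa, whence $R$ preserves the intersection $\mathcal{S}^{rev}$ computed in Theorem \ref{goodsetthm}; that $\tilde{R}$ fixes $\mathcal{Y}^{rev}$ was noted above. To identify the horizontal maps as bijections onto $\mathcal{S}^{rev}$ I would invoke the characterization $\mathcal{S}^{rev}=\Phi(\mathcal{Y}^{rev})\cap\Psi(\mathcal{Y}^{rev})$ from the preceding remark, which guarantees that $\Phi^{-1}$ and $\Psi^{-1}$ carry $\mathcal{S}^{rev}$ into $\mathcal{Y}^{rev}$, so that $\Phi$ and $\Psi$ restrict to bijections from $\Phi^{-1}(\mathcal{S}^{rev})$ and $\Psi^{-1}(\mathcal{S}^{rev})$ onto $\mathcal{S}^{rev}$, with commutativity inherited verbatim from the first diagram.

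The point requiring the most care — which I expect to be the main obstacle — is that the two copies of $\mathcal{Y}^{rev}$ in the reversible diagram must be read as the preimages $\Phi^{-1}(\mathcal{S}^{rev})$ and $\Psi^{-1}(\mathcal{S}^{rev})$, which are in general \emph{proper} subsets of $\mathcal{Y}^{rev}$: indeed, the argument in Theorem \ref{goodsetthm} shows $\Phi(\mathcal{Y}^{rev})=\mathcal{S}^{T^{-1}T}$, which strictly contains $\mathcal{S}^{rev}$ (a $Y\in\mathcal{Y}^{rev}$ whose excursions away from $0$ grow without bound produces an $S=\Phi Y$ with $I_0=-\infty$, hence $S\notin\mathcal{S}^{rev}$). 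The consistency one must then verify is that $\tilde{R}$ interchanges these two preimages, which follows from the rearrangement $\tilde{R}\Psi^{-1}S=\Phi^{-1}RS$ of the identity $R\Psi=\Phi\tilde{R}$ together with the $R$-invariance of $\mathcal{S}^{rev}$: for $S\in\mathcal{S}^{rev}$ one has $RS\in\mathcal{S}^{rev}$, so $\tilde{R}\Psi^{-1}S=\Phi^{-1}RS\in\Phi^{-1}(\mathcal{S}^{rev})$, and the reverse inclusion holds by the involution property. Once this interchange is established, all arrows of the reversible diagram are bijections between the correctly-identified reversible sets and the square commutes, completing the proof.
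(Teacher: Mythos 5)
The paper omits the proof of this lemma as ``straightforward,'' so there is no argument of its own to compare against; your proof is correct and is surely the intended one. The two commutation identities follow exactly as you compute from the antisymmetry $\ell(\tilde{R}Y)_n=-\ell(Y)_{-n}$ of the local time under reflection, and the first diagram is then formal given the bijections of Propositions \ref{phiproperty} and \ref{adapprop}, the identity $R\mathcal{S}^T=\mathcal{S}^{T^{-1}}$, and the involutive character of $R$ and $\tilde{R}$. Your point about the second diagram is a genuine and worthwhile refinement rather than a pedantic one: since $\Phi(\mathcal{Y}^{rev})=\mathcal{S}^{T^{-1}T}$ strictly contains $\mathcal{S}^{rev}$ in general (your example of a carrier in $\mathcal{Y}^{rev}$ with unboundedly growing excursions is right), the left-hand nodes must be read as the preimages $\Phi^{-1}(\mathcal{S}^{rev})$ and $\Psi^{-1}(\mathcal{S}^{rev})$, and your verification that $\tilde{R}$ exchanges these two sets --- via $\tilde{R}\Psi^{-1}S=\Phi^{-1}RS$ together with $R\mathcal{S}^{rev}=\mathcal{S}^{rev}$, the latter following from $T^{-1}=RTR$ --- is exactly what is needed to make the diagram literally correct.
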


{\rem \label{needcond} As an example of a particle configuration for which the BBS dynamics are well-defined, but whose path encoding does not satisfy $T^{-1}TS=S$, or indeed $TT^{-1}S=S$, consider ${\eta}$ as given by $\eta_n=\mathbf{1}_{\{n\leq 0,\:n\mbox{ even}\}}+\mathbf{1}_{\{n\geq 1,\:n\mbox{ odd}\}}$. Then we find that the particle configuration encoded by $T^{-1}TS$ is equal to ${\eta}'$, where ${\eta}'_n=\mathbf{1}_{\{n\leq -2,\:n\mbox{ even}\}}+\mathbf{1}_{\{n\geq 1,\:n\mbox{ odd}\}}$, and the particle configuration encoded by $TT^{-1}S$ is equal to ${\eta}''$, where ${\eta}''_n=\mathbf{1}_{\{n\leq 0,\:n\mbox{ even}\}}+\mathbf{1}_{\{n\geq 3,\:n\mbox{ odd}\}}$. (The relevant path encodings are shown in Figure \ref{needcondfig}.) In essence, the operation ${S}\mapsto TS$ sends one particle (from 0) to $\infty$, and this is not recovered by $T^{-1}$. Similarly, ${S}\mapsto T^{-1}S$ sends one particle (from 1) to $-\infty$. The conditions required for a function $S$ to be in $\mathcal{S}^{T^{-1}T}$ or $\mathcal{S}^{TT^{-1}}$ prevent this happening, by ensuring that the carrier must empty itself infinitely often in the relevant direction.

\begin{figure}[!htb]
\vspace{0pt}
\centering
\scalebox{0.2}{\includegraphics{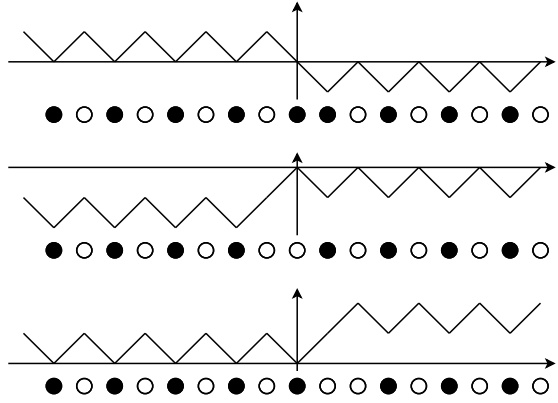}}
\vspace{0pt}
\caption{Path encodings of ${S}$, $T^{-1}TS$ and $TT^{-1}S$, as described in Remark \ref{needcond}.}\label{needcondfig}
\end{figure}

Continuing from the discussion in Remark \ref{noninirem}, we note that it is possible to recover the missing particle from a reservoir at $\pm\infty$ by using a different carrier. For instance, two possible carriers for $T^{-1}S$, $W^{T^{-1}S}$ and $Y^{T^{-1}S}$ say, are shown in Figure \ref{needcondfig2}. If we use $W^{T^{-1}S}$ (in the right-hand side of \eqref{twosidedupdaterule}), then we arrive at $TT^{-1}S$ as described above. If we use $Y^{T^{-1}S}$, then we recover the missing particle, and return $S$. More generally, for a carrier $Y\in \mathcal{Y}^{\pm}$, $\liminf_{n\rightarrow\pm\infty} Y_n$ represents the number of particles returned from $\pm\infty$, and $\liminf_{n\rightarrow\mp\infty} Y_n$ represents the number of particles carried to $\mp\infty$. Such observations could motivate a more general model with reservoirs at infinity, where any carrier $Y\in\mathcal{Y}$ is allowed to determine the dynamics, but we do not pursue that here.

\begin{figure}[!htb]
\vspace{0pt}
\centering
\scalebox{0.2}{\includegraphics{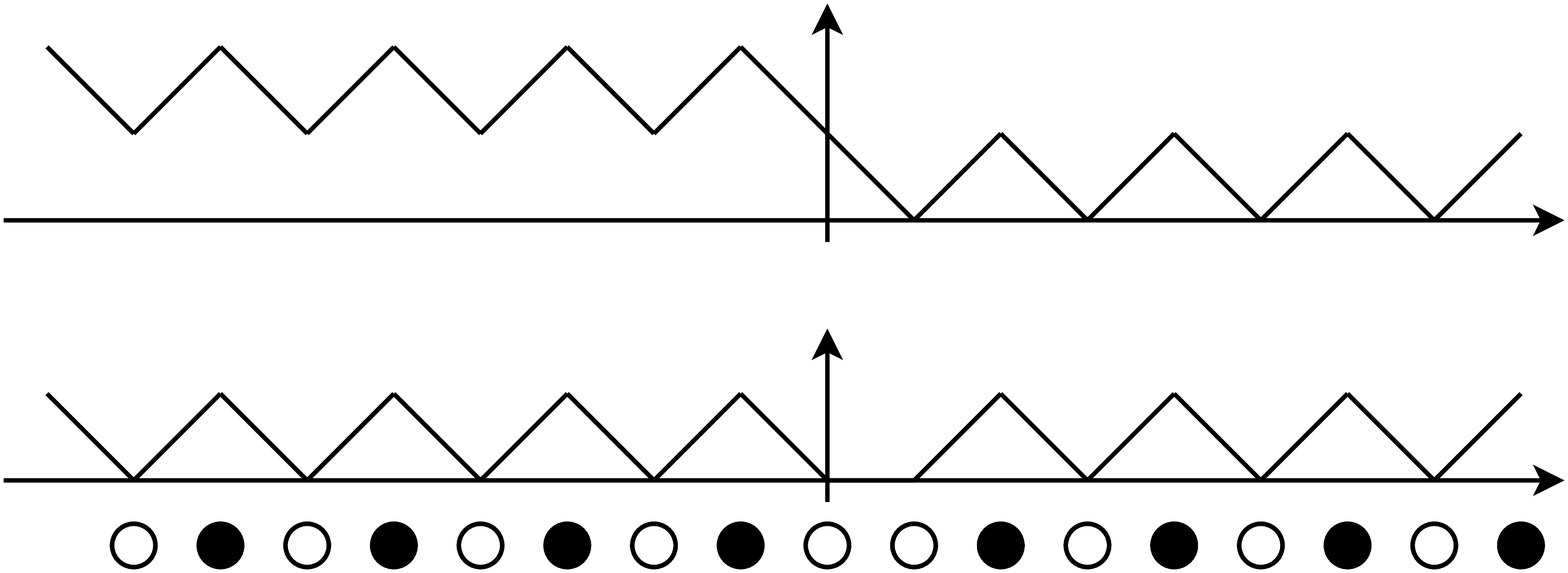}}
\vspace{0pt}
\caption{Carriers for $T^{-1}S$, as described in Remark \ref{needcond}. (The lower figure shows $W^{T^{-1}S}$, and the upper shows $Y^{T^{-1}S}$.)}\label{needcondfig2}
\end{figure}}

\subsection{Invariant set of initial conditions}\label{invariantsetsec} The aim of this section is to characterise the invariant set of initial configurations, $\mathcal{S}^{inv}$ (defined at (\ref{sinv})), for which repeated passes of the carrier are possible from left and right and they are compatible. In particular, we will complete the proof of Theorem \ref{mr1}.

To describe the set $\mathcal{S}^{inv}$, we start by introducing some definitions. For any strictly increasing function $F : \mathbb{Z} \to \mathbb{R}$ satisfying $\lim_{n \rightarrow \infty} F(n)=\infty$, we introduce the following subset of $\mathcal{S}^0$,
\begin{equation}\label{sfudef}
\mathcal{S}^+_F:=\left\{S\in\mathcal{S}^0:\:\:\lim_{n\rightarrow \infty}\frac{S_n}{F(n)}=1 \right\},
\end{equation}
and for any strictly increasing function $F : \mathbb{Z} \to \mathbb{R}$ satisfying $\lim_{n \rightarrow -\infty} F(n)=-\infty$,
\begin{equation}\label{sfldef}
\mathcal{S}^-_F:=\left\{S\in\mathcal{S}^0:\:\:\lim_{n\rightarrow -\infty}\frac{S_n}{F(n)}=1 \right\}.
\end{equation}
Also, for any $K\in\mathbb{N}$, we introduce the following subsets of $\mathcal{S}^0$,
\[\mathcal{S}^+_K:=\left\{S \in \mathcal{S}^0\::\:  \sup_{n \in \mathbb{Z}} (\sup_{m \le n}S_m -\inf_{m \ge n}S_m) =K, \: \limsup_{n\to \infty}S_n - \liminf_{n \to \infty}S_n=K \right\},\]
\[\mathcal{S}^-_K:=\left\{S \in \mathcal{S}^0\::\: \sup_{n \in \mathbb{Z}} (\sup_{m \le n}S_m -\inf_{m \ge n}S_m) =K, \: \limsup_{n\to -\infty}S_n - \liminf_{n \to -\infty}S_n=K \right\}.\]
Note that $S \in\mathcal{S}^+_F$ for some $F$ only if $\lim_{n \rightarrow \infty}S_n=\infty$, and $S \in\mathcal{S}^+_K$ for some $K$ only if it holds that $\limsup_{n \rightarrow \infty}S_n <\infty$. Moreover, $S \in\mathcal{S}^+_K$ for some $K$ only if $M_0, I_0 \in \mathbb{R}$. The same is true for $\mathcal{S}^-_F$ and $\mathcal{S}^-_K$.

The main result of the subsection is the following characterization of the invariant set. Clearly this gives Theorem \ref{mr1}(b).

\begin{thm}\label{characterizationinv} For $S \in \mathcal{S}^0$, $S \in \mathcal{S}^{inv}$ if and only if $S \in \mathcal{S}^-_{*_1} \cap \mathcal{S}^+_{*_2}$, where $*_1$ and $*_2$ are some $F$ or $K$. Moreover, if the condition holds, then $T^kS \in \mathcal{S}^-_{*_1} \cap \mathcal{S}^+_{*_2}$ for any $k\in \mathbb{Z}$.
\end{thm}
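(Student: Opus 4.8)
The plan is to derive the theorem from two self-contained assertions: \textbf{(I)} each of the four sets $\mathcal{S}^-_{*_1}\cap\mathcal{S}^+_{*_2}$ (with $*_1,*_2\in\{F,K\}$) is contained in $\mathcal{S}^{rev}$ and is mapped into itself by both $T$ and $T^{-1}$; and \textbf{(II)} conversely, $S\in\mathcal{S}^{inv}$ forces the boundary behaviour on each side to be of exactly one of the two types. Granting (I), if $S\in\mathcal{S}^-_{*_1}\cap\mathcal{S}^+_{*_2}$ then every iterate $T^kS$ stays in the same set, hence lies in $\mathcal{S}^{rev}$, so $S\in\mathcal{S}^{inv}$; this yields the ``if'' direction and the final ``Moreover'' clause simultaneously, while (II) gives the ``only if'' direction.

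For assertion (I), I would first verify the four conditions of Theorem \ref{goodsetthm} for each combination. Finiteness of $M_0,I_0$ is immediate (a subcritical side sends $S$ to $\mp\infty$, a critical side keeps it bounded). The equalities $\limsup_{n\to\infty}S_n=M_\infty$ and $\liminf_{n\to-\infty}S_n=I_{-\infty}$ are trivial on subcritical sides, while on a critical side the identity $\sup_n(M_n-I_n)=\sup_{j\le k}(S_j-S_k)=\sup_nW_n$, together with $\lim_{n\to\infty}(M_n-I_n)=M_\infty-\liminf_{n\to\infty}S_n$, shows that the defining relation of $\mathcal{S}^+_K$ forces $M_\infty=\limsup_{n\to\infty}S_n$. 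For invariance, by the duality $T^{-1}=RTR$ of Lemma \ref{rtrlem} (which, via Lemma \ref{dualityrel}, exchanges the two sides and the sets $\mathcal{S}^{\pm}_*$) it suffices to treat $T$. On a subcritical side, $\lim S_n/F(n)=1$ and monotonicity of $F$ give $M_n/F(n)\to1$, whence $(TS)_n=2M_n-S_n-2M_0$ again satisfies $(TS)_n/F(n)\to1$. On a critical side the crucial conservation law is that $\sup_nW_n$ is preserved: using $\sup_n(M_n-I_n)=\sup_nW_n$ together with $I^{TS}=M-2M_0$ from Theorem \ref{goodsetthm} (available since $S\in\mathcal{S}^{T^{-1}T}$), a short computation gives $\sup_n(M^{TS}_n-I^{TS}_n)=\sup_nW_n$, and separately $M_n\to M_\infty$ shows the asymptotic fluctuation $\limsup_{n\to\infty}S_n-\liminf_{n\to\infty}S_n$ is preserved; hence $TS\in\mathcal{S}^+_K$.

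The substance is assertion (II), which I would prove on the right side only (the left following by $R$-symmetry), using that $\mathcal{S}^{inv}$ is closed under $T^{\pm1}$ so that \emph{every} iterate lies in $\mathcal{S}^{rev}$. The argument splits on $\limsup_{n\to\infty}S_n$. If it is finite, then $I_0>-\infty$ forces $\liminf_{n\to\infty}S_n>-\infty$, so $S$ is bounded on the right; the remaining task is the ``$K$-matching'', that the asymptotic right-fluctuation equals $\sup_nW_n$, which I would extract by feeding the condition $\liminf_{n\to\infty}W^{TS}_n=0$ (and its $T^{-1}$-dual) back through the identities above, the geometric point being that a bulk soliton strictly larger than the asymptotic oscillation would, after one step, protrude above $M_\infty$ and violate $\limsup_{n\to\infty}(TS)_n=M^{TS}_\infty$. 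If instead $\limsup_{n\to\infty}S_n=\infty$, then necessarily $S_n\to\infty$: a $\liminf$ of $-\infty$ already violates $I_0>-\infty$, while a finite $\liminf$ makes $\liminf_{n\to\infty}(T^{-1}S)_n=-\infty$, so $T^{-1}S\notin\mathcal{S}^{rev}$.

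The hard part, and the main obstacle, is upgrading $S_n\to\infty$ to $S\in\mathcal{S}^+_F$, i.e. ruling out paths with $S_n\to\infty$ but $\liminf_{n\to\infty}S_n/M_n=c<1$. My plan is to track this oscillation ratio under backward iteration: a computation with the future minimum shows $T^{-1}$ sends $c$ to at most $g(c)=2-1/c$, and since $g(c)<c$ for every $c\in(0,1)$ with the unique fixed point $c=1$, the orbit $g^k(c)$ drops below $\tfrac12$ after finitely many steps, at which stage $T^{-k}S$ acquires $\liminf_{n\to\infty}=-\infty$, contradicting $I_0>-\infty$. Establishing the governing inequality $\liminf_{n\to\infty}(T^{-1}S)_n/M^{T^{-1}S}_n\le g(c)$ for genuine, non-idealised oscillations is the delicate estimate, and is where the bulk of the technical effort will lie; the critical-side $K$-matching is the second such estimate, handled by the same philosophy of translating $\mathcal{S}^{rev}$-membership of iterates into asymptotic constraints via the Pitman identities.
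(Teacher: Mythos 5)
Your proposal is correct and follows essentially the same route as the paper: the paper proves exactly your assertions (I) and (II) as Lemmas \ref{cha:goodset}--\ref{cha:key2}, and your ratio map $g(c)=2-1/c$ is precisely the paper's recursion $1+\delta_k=\frac{1}{1-\delta_{k-1}}$ in Lemma \ref{maxlem} under the substitution $c=1/(1+\delta)$, with the same fixed-point/finitely-many-steps contradiction, while your critical-side ``protrusion'' argument matches the paper's iterative construction of a sequence along which $W^{T^kS}-\tilde{M}^{T^kS}$ grows by $1$ per step in Lemma \ref{cha:key2}.
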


{\rem The final statement of the above theorem can be understood as meaning the BBS preserves the asymptotic density profile. Indeed, if $S\in \mathcal{S}_F^+$, then
\[n^{-1}\sum_{m=0}^{n-1}(T^k\eta)_m\approx\frac12(1-n^{-1}F(n))\]
for each $k\in\mathbb{Z}$. In particular, if $F(n)=cn$ for some $c\in(0,1]$, then we have a constant asymptotic density given by $\frac12(1-c)$. Furthermore, if $S\in \mathcal{S}_K^+$, then $n^{-1}\sum_{m=0}^{n-1}(T^k\eta)_m\rightarrow\frac12$ for all $k\in\mathbb{Z}$.}

\begin{rem} We can also characterize the invariant set in terms of the carrier by the identity $\mathcal{S}^-_{*_1} \cap \mathcal{S}^+_{*_2} = \Phi(\mathcal{Y}^-_{*_1} \cap \mathcal{Y}^+_{*_2}) \cap \Psi(\mathcal{Y}^-_{*_1} \cap \mathcal{Y}^+_{*_2}) $  where $*_1$ and $*_2$ are critical or sub-critical where
\[\mathcal{Y}^{\pm}_{sub-critical}:=\left\{Y \in \mathcal{Y}^{rev} \:: \: \lim_{n \to \pm \infty}\frac{Y_n}{\ell(Y)_n}=0\right\},\]
\[\mathcal{Y}^{\pm}_{critical}:=\left\{Y \in \mathcal{Y}^{rev} \:: \:  \lim_{n \to \pm \infty}|\ell(Y)_n| < \infty,\: \limsup_{n \to \pm \infty}Y_n=\sup_n Y_n <\infty \right\}.\]
\end{rem}

The proof of Theorem \ref{characterizationinv} is divided into four lemmas.

\begin{lem}\label{cha:goodset} It holds that $\mathcal{S}^+_K \subseteq \mathcal{S}^{T^{-1}T}$ and $\mathcal{S}^-_K \subseteq \mathcal{S}^{TT^{-1}}$. Also, $\mathcal{S}^-_{*_1} \cap \mathcal{S}^+_{*_2} \subseteq \mathcal{S}^{rev}$ for any $*_1$ and $*_2$.
\end{lem}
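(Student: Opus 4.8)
The plan is to reduce everything to the characterisations of $\mathcal{S}^{T^{-1}T}$ and $\mathcal{S}^{TT^{-1}}$ from Theorem \ref{goodsetthm}, together with the identity $\mathcal{S}^{rev}=\mathcal{S}^{T^{-1}T}\cap\mathcal{S}^{TT^{-1}}$. Throughout I would write $M_n=\sup_{m\le n}S_m$ and $I_n=\inf_{m\ge n}S_m$, so that the quantity in the definitions of $\mathcal{S}^{\pm}_K$ is exactly $\sup_n(M_n-I_n)$, and I would use the observations recorded before Theorem \ref{characterizationinv}: membership in $\mathcal{S}^+_K$ or $\mathcal{S}^-_K$ forces $M_0,I_0\in\mathbb{R}$, while membership in $\mathcal{S}^+_F$ (resp.\ $\mathcal{S}^-_F$) forces $\lim_{n\to\infty}S_n=\infty$ (resp.\ $\lim_{n\to-\infty}S_n=-\infty$). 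The overall strategy is to prove $\mathcal{S}^+_K\subseteq\mathcal{S}^{T^{-1}T}$ directly, to deduce $\mathcal{S}^-_K\subseteq\mathcal{S}^{TT^{-1}}$ by the reflection symmetry $R$, and then to assemble the inclusion into $\mathcal{S}^{rev}$ by a short case analysis on $*_1,*_2$.

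For the first inclusion I would take $S\in\mathcal{S}^+_K$. Then $M_0\in\mathbb{R}$, so by Theorem \ref{goodsetthm} it only remains to verify $\limsup_{n\to\infty}S_n=M_\infty$. Since $\limsup_{n\to\infty}S_n<\infty$ and $M_0<\infty$, the global supremum $M_\infty=\sup_n S_n$ is finite; being the supremum of an integer-valued sequence that is bounded above, it is attained at some $n^*$, with $S_{n^*}=M_\infty$ and hence $M_{n^*}=M_\infty$. The defining bound $\sup_n(M_n-I_n)=K$ then gives $M_\infty-I_{n^*}=M_{n^*}-I_{n^*}\le K$, i.e.\ $I_{n^*}\ge M_\infty-K$. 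Since also $I_{n^*}=\inf_{m\ge n^*}S_m\le\liminf_{n\to\infty}S_n$ and, by the definition of $\mathcal{S}^+_K$, $\limsup_{n\to\infty}S_n=\liminf_{n\to\infty}S_n+K$, I would conclude $\limsup_{n\to\infty}S_n\ge I_{n^*}+K\ge M_\infty$; the reverse inequality being automatic, equality holds and $S\in\mathcal{S}^{T^{-1}T}$. I expect this critical-case step to be the main obstacle, as it is where the real combinatorial content lies; the remaining arguments are essentially bookkeeping.

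The second inclusion I would obtain by reflecting. Using $(RS)_n=-S_{-n}$ one checks that $R$ interchanges the past-maximum and future-minimum processes (so that $M_n^{RS}=-I_{-n}$ and $I_n^{RS}=-M_{-n}$), which gives $R(\mathcal{S}^-_K)=\mathcal{S}^+_K$; and from $T^{-1}=RTR$ (Lemma \ref{rtrlem}) together with $R^2=\mathrm{id}$ one obtains $R(\mathcal{S}^{T^{-1}T})=\mathcal{S}^{TT^{-1}}$. Then if $S\in\mathcal{S}^-_K$ we have $RS\in\mathcal{S}^+_K\subseteq\mathcal{S}^{T^{-1}T}$, whence $S=R(RS)\in\mathcal{S}^{TT^{-1}}$.

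For the final inclusion I would fix $*_1,*_2\in\{F,K\}$ and $S\in\mathcal{S}^-_{*_1}\cap\mathcal{S}^+_{*_2}$, and, since $\mathcal{S}^{rev}=\mathcal{S}^{T^{-1}T}\cap\mathcal{S}^{TT^{-1}}$, place $S$ in each factor. To show $S\in\mathcal{S}^{T^{-1}T}$ I would split on $*_2$: if $*_2=K$ then $S\in\mathcal{S}^+_K\subseteq\mathcal{S}^{T^{-1}T}$ by the first inclusion; if $*_2=F$ then $\lim_{n\to\infty}S_n=\infty$, so $M_\infty=\infty=\limsup_{n\to\infty}S_n$, while $M_0<\infty$ is supplied by the minus side (from $\lim_{n\to-\infty}S_n=-\infty$ when $*_1=F$, or directly when $*_1=K$). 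To show $S\in\mathcal{S}^{TT^{-1}}$ I would split symmetrically on $*_1$: when $*_1=K$ apply the second inclusion, and when $*_1=F$ note $\lim_{n\to-\infty}S_n=-\infty$ forces $I_{-\infty}=-\infty=\liminf_{n\to-\infty}S_n$, with $I_0\in\mathbb{R}$ coming from the plus side. This covers all four combinations of $*_1,*_2$ and completes the proof.
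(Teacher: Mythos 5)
Your proposal is correct and follows essentially the same route as the paper: the key step in both is the bound $M_{n}-I_{n}\le K$ applied at (or near) the point where $\sup_n S_n$ is attained, combined with $\limsup_{n\to\infty}S_n-\liminf_{n\to\infty}S_n=K$, followed by the identical case analysis on $*_1,*_2$ for the inclusion into $\mathcal{S}^{rev}$. The only cosmetic differences are that you argue the first inclusion directly at the maximiser where the paper argues by contradiction, and that you deduce $\mathcal{S}^-_K\subseteq\mathcal{S}^{TT^{-1}}$ via the reflection $R$ where the paper simply repeats the mirrored argument.
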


\begin{lem}\label{cha:invariant} The set $\mathcal{S}^-_{*_1} \cap \mathcal{S}^+_{*_2} $ is invariant under $T$ and $T^{-1}$ for any $*_1$ and $*_2$.
\end{lem}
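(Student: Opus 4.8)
The plan is to prove the forward inclusion $T(\mathcal{S}^-_{*_1}\cap\mathcal{S}^+_{*_2})\subseteq\mathcal{S}^-_{*_1}\cap\mathcal{S}^+_{*_2}$; invariance under $T^{-1}$ then follows formally from $T^{-1}=RTR$ (Lemma \ref{rtrlem}), since $R$ carries sets of this form to sets of the same form (sending $\mathcal{S}^+_F$ to $\mathcal{S}^-_{\tilde F}$ with $\tilde F(n):=-F(-n)$, and $\mathcal{S}^+_K$ to $\mathcal{S}^-_K$, and symmetrically) and $R^2=\mathrm{id}$: writing $\mathcal{C}=\mathcal{S}^-_{*_1}\cap\mathcal{S}^+_{*_2}$ and $\mathcal{C}'=R\mathcal{C}$, the $T$-invariance of all such sets gives $T^{-1}\mathcal{C}=RT\mathcal{C}'\subseteq R\mathcal{C}'=\mathcal{C}$. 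By Lemma \ref{cha:goodset} we have $\mathcal{C}\subseteq\mathcal{S}^{rev}\subseteq\mathcal{S}^{T^{-1}T}$, so $TS=2M-S-2M_0$ is well-defined and Theorem \ref{goodsetthm} is available. Since the membership conditions for $\mathcal{S}^{\pm}_F$ and $\mathcal{S}^{\pm}_K$ decouple into a behaviour-at-$+\infty$ part, a behaviour-at-$-\infty$ part, and (in the critical case) one global descent condition, I would check that $TS$ inherits the correct behaviour at each end separately.

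For a sub-critical end the argument is a direct limit computation. If $S\in\mathcal{S}^+_F$, then $S_n/F(n)\to1$ with $F(n)\to\infty$, so $S_n\to\infty$, and since $F$ is increasing a routine sandwiching gives $M_n/F(n)\to1$ as well; hence $(TS)_n/F(n)=2M_n/F(n)-S_n/F(n)-2M_0/F(n)\to2-1-0=1$, so $TS\in\mathcal{S}^+_F$. The case $S\in\mathcal{S}^-_F$ is identical, dividing by the negative quantity $F(n)$ as $n\to-\infty$ (one checks $M_n/F(n)\to1$ there too, being careful with the sign reversals), giving $TS\in\mathcal{S}^-_F$.

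For a critical end I would use two facts. First, the one-sided oscillation condition is preserved because $M_n$ converges to a finite constant at each end where $S$ is bounded (at $+\infty$, $M_n\uparrow\sup_nS_n$; at $-\infty$, $M_n\downarrow\limsup_{m\to-\infty}S_m$). Thus from $(TS)_n=2M_n-S_n-2M_0$ the oscillating part of $TS$ is just $-S_n$ up to an additive constant, whence $\limsup_{n\to\pm\infty}(TS)_n-\liminf_{n\to\pm\infty}(TS)_n$ equals $\limsup_{n\to\pm\infty}S_n-\liminf_{n\to\pm\infty}S_n=K$. Second, for the global condition $\sup_n(M_n-I_n)=K$, I would record the elementary identity, valid for any path with finite maximal descent,
\[\sup_n\left(M_n-I_n\right)=\sup_{m\le m'}\left(S_m-S_{m'}\right)=\sup_nW_n=\sup_nV_n,\]
all three suprema ranging over the same set of pairs $m\le m'$. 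Applying this to $TS$ gives $\sup_n(M^{TS}_n-I^{TS}_n)=\sup_nV^{TS}_n$, while Theorem \ref{goodsetthm} yields $V^{TS}=TS-I^{TS}=(2M-S-2M_0)-(M-2M_0)=W$; therefore $\sup_n(M^{TS}_n-I^{TS}_n)=\sup_nV^{TS}_n=\sup_nW_n=\sup_n(M_n-I_n)=K$. Together these show $TS$ lies in the same critical set as $S$.

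I expect the main obstacle to be exactly this critical global condition: the point is to recognise that $\sup_n(M_n-I_n)$ is the conserved "maximal descent", and that it is most cleanly tracked through the carriers, so that the chain of identities reducing it to $\sup_nV^{TS}_n$ and the relation $V^{TS}=W$ do all the work; by comparison the sub-critical limits and the oscillation bookkeeping are routine. The one subtlety to flag is that $V^{TS}=W$, and the finiteness of all the descent quantities, rely on $S\in\mathcal{S}^{T^{-1}T}$, which is precisely what Lemma \ref{cha:goodset} supplies; I would also double-check the reflected-parameter bookkeeping for $R$ so that the reduction of the $T^{-1}$ case to the $T$ case is genuinely complete.
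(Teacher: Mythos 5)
Your proposal is correct. For the sub-critical ends and for the oscillation condition at a critical end you follow essentially the paper's route: show $M_n/F(n)\to 1$ by sandwiching (the paper does this via a subsequence of times where $S_n=M_n$, but the content is the same), and observe that $M_n$ converges to a finite constant at a bounded end so that $TS$ oscillates like $-S$ there. You genuinely diverge in two places. First, you dispose of the $T^{-1}$ half by conjugation, $T^{-1}=RTR$ (Lemma \ref{rtrlem}) together with the fact that $R$ permutes the sets $\mathcal{S}^{\pm}_{*}$ among themselves; the paper instead repeats the whole argument symmetrically using $(T^{-1}S)_n=2I_n-S_n-2I_0$, with the dual limits $I_n/F(n)\to1$. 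Second, and more substantially, for the global descent condition $\sup_n(M_n-I_n)=K$ you invoke the identity $\sup_n(M_n-I_n)=\sup_{m\le m'}(S_m-S_{m'})=\sup_nW_n=\sup_nV_n$ and the relation $V^{TS}=W$ from Theorem \ref{goodsetthm}, which gives conservation of the maximal descent in one line; the paper instead proves the two inequalities directly, bounding $\sup_{m\le n}(TS)_m\le -2M_0+M_n+K$ and $\inf_{m\ge n}(TS)_m\ge -2M_0+M_n$ for the upper bound and exhibiting an explicit $n_1$ with $S_{n_1}=I_\infty$ for the lower bound. Your identity-based argument is cleaner and makes the conserved quantity visible, at the cost of routing through Theorem \ref{goodsetthm}; both versions need $S\in\mathcal{S}^{T^{-1}T}$, which Lemma \ref{cha:goodset} supplies, exactly as you flag.
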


\begin{lem}\label{cha:key1} The following inclusions hold:
\[\mathcal{S}^{inv} \cap \left\{S \in \mathcal{S}^0\::\: \limsup_{n \to \infty}S_n=\infty \right\} \subseteq \bigcup_{F}\mathcal{S}^+_F,\]
\[\mathcal{S}^{inv} \cap \left\{S \in \mathcal{S}^0\::\: \liminf_{n \to -\infty}S_n=-\infty \right\} \subseteq \bigcup_{F}\mathcal{S}^-_F.\]
\end{lem}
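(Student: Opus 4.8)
The plan is to prove the two inclusions separately. Since the reflection $R$ of \eqref{inverseRTR}, given by $(RS)_n=-S_{-n}$, restricts to a bijection of $\mathcal{S}^{rev}$ that intertwines $T$ and $T^{-1}$ (by Lemma~\ref{rtrlem} and Lemma~\ref{dualityrel}), it maps $\mathcal{S}^{inv}$ to itself, sends $\{\liminf_{n\to-\infty}S_n=-\infty\}$ to $\{\limsup_{n\to\infty}(RS)_n=\infty\}$, and sends $\bigcup_F\mathcal{S}^-_F$ to $\bigcup_F\mathcal{S}^+_F$; hence the second inclusion follows from the first applied to $RS$. So I focus on showing that $S\in\mathcal{S}^{inv}$ with $\limsup_{n\to\infty}S_n=\infty$ lies in $\bigcup_F\mathcal{S}^+_F$. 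The key reduction is that, by \eqref{sfudef}, membership in $\mathcal{S}^+_F$ for some strictly increasing divergent $F$ is equivalent to the two asymptotics $\lim_{n\to\infty}S_n=\infty$ and $W_n:=M_n-S_n=o(M_n)$: given these, $F_n:=M_n+\arctan(n)$ is strictly increasing (as $M$ is non-decreasing), diverges, and satisfies $S_n/F_n\to 1$, while conversely a strictly increasing $F$ with $S_n/F_n\to 1$ cannot tolerate down-excursions of $S$ that are a fixed proportion of the running maximum. Thus the whole problem reduces to establishing these two facts.

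For the first, I would rule out $\liminf_{n\to\infty}S_n<\infty$. Suppose for contradiction that $\liminf_{n\to\infty}S_n=L\in\mathbb{R}$. As $S\in\mathcal{S}^{inv}\subseteq\mathcal{S}^{TT^{-1}}$, Theorem~\ref{goodsetthm} gives $M^{T^{-1}S}=I-2I_0$; letting $n\to\infty$ and using $I_n\uparrow L$ shows $\sup_n(T^{-1}S)_n=\lim_{n\to\infty}M^{T^{-1}S}_n=L-2I_0<\infty$, so $T^{-1}S$ is bounded above. But $S=T(T^{-1}S)=2M^{T^{-1}S}-T^{-1}S-2M^{T^{-1}S}_0$, and since $M^{T^{-1}S}$ is bounded while $\limsup_{n\to\infty}S_n=\infty$, the term $-T^{-1}S$ must be unbounded above, i.e.\ $\liminf_{n\to\infty}(T^{-1}S)_n=-\infty$. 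This forces $I_0(T^{-1}S)=\inf_{m\ge 0}(T^{-1}S)_m=-\infty$, so $T^{-1}S\notin\mathcal{S}^{T^{-1}}$ and $T^{-2}S$ is not even well-defined, contradicting $S\in\mathcal{S}^{inv}$. Hence $\lim_{n\to\infty}S_n=\infty$. Moreover, applying the identity $\sup_n(T^{-1}\,\cdot\,)=\liminf_{n\to\infty}(\cdot)-2I_0(\cdot)$ inductively shows that every backward iterate again has $\limsup_{n\to\infty}=\infty$, whence (by the same argument) $T^{-k}S_n\to+\infty$ as $n\to\infty$ for all $k\ge 0$.

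The remaining step, $W_n=o(M_n)$, is where I expect the real difficulty. Suppose instead $\limsup_{n\to\infty}W_n/M_n=\delta>0$, so there are excursion tops $m_k\to\infty$ after which $S$ drops by at least $\delta M_{m_k}$ below its running maximum $M_{m_k}\to\infty$. Using $(T^{-1}S)_n=2I_n-S_n-2I_0$ together with the relation $W^{T^{-1}S}=S-I$ (which follows from the formulas of Theorem~\ref{goodsetthm}), such a deep future excursion of $S$ passes to an excursion of $T^{-1}S$ whose depth, relative to the running maximum of $T^{-1}S$, is strictly larger. Iterating, the relative depth should grow by a definite factor at each backward step, so that after finitely many applications of $T^{-1}$ the iterate $T^{-k}S$ is driven to arbitrarily negative values at positions tending to $+\infty$, i.e.\ $\liminf_{n\to\infty}(T^{-k}S)_n=-\infty$, contradicting the conclusion of the previous paragraph. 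The crux — and the main obstacle — is to make this amplification quantitative when the large excursion is not isolated, controlling its interaction with the surrounding profile; I would carry this out through the explicit expressions for $M^{T^{-1}S}$, $I^{T^{-1}S}$ and the carriers $W,V$ supplied by Theorem~\ref{goodsetthm}, rather than via the informal soliton-speed picture. With both asymptotics established, taking $F_n=M_n+\arctan(n)$ exhibits $S\in\mathcal{S}^+_F$, and the reflection argument of the first paragraph then yields the inclusion at $-\infty$.
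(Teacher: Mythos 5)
Your overall architecture is the same as the paper's: reduce to the $+\infty$ case by reflection, first upgrade $\limsup_{n\to\infty}S_n=\infty$ to $\lim_{n\to\infty}S_n=\infty$ by showing that otherwise $\inf_{n\ge0}(T^{-1}S)_n=-\infty$ (so $T^{-2}S$ is undefined), and then show that $S$ tracks its running maximum, taking $F$ to be (a strictly increasing perturbation of) $M$. The first step is complete and correct, and your observation that membership in $\bigcup_F\mathcal{S}^+_F$ is equivalent to $\lim_n S_n=\infty$ together with $W_n=o(M_n)$ is also sound and matches what the paper does implicitly via $\mathcal{S}^+_M$.

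However, the heart of the lemma is the step you explicitly leave open, and what is missing there is a genuine idea, not just bookkeeping. Two things are needed. First, the amplification inequality only bites at the right points: one must pass from ``$M_{n_k}/I_{n_k}>1+\delta'$ along some subsequence'' to points $\tilde n_k$ at which $S_{\tilde n_k}=M_{n_k}$ (running-maximum points with $I_{\tilde n_k}\le I_{n_k}$), since it is the bound $I^{T^{-1}S}_{\tilde n}\le 2I_{\tilde n}-S_{\tilde n}-2I_0\le(1-\delta')I_{\tilde n}-2I_0$, combined with the identity $M^{T^{-1}S}=I-2I_0$ from Theorem~\ref{goodsetthm}, that converts a lower bound on $\limsup_n M_n/I_n$ for $S$ into the strictly larger lower bound $1/(1-\delta)$ for $T^{-1}S$. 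Second, and more importantly, ``after finitely many applications of $T^{-1}$'' is not automatic: the amplified ratios $1+\delta_k$ satisfy $1+\delta_k=\frac{1}{1-\delta_{k-1}}$, and one must rule out the possibility that $\delta_k$ increases to a limit $\delta_\infty\in(0,1]$ without ever exceeding $1$. The paper does this by noting that any such limit would solve $1+\delta_\infty=\frac{1}{1-\delta_\infty}$, whose only solution is $\delta_\infty=0$, contradicting $\delta_\infty\ge\delta>0$; only once some $\delta_{k_0}>1$ does one get $\liminf_n I^{T^{-(k_0+1)}S}_n/I^{T^{-k_0}S}_n\le 1-\delta_{k_0}<0$, contradicting the fact (from your first step, applied to every backward iterate) that all these infima diverge to $+\infty$. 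Without the choice of running-maximum points and the fixed-point analysis of the recursion, the ``definite factor per step, hence finitely many steps'' claim does not stand on its own; this is exactly the content of Lemma~\ref{maxlem} in the paper.
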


\begin{lem}\label{cha:key2} The following inclusions hold:
\[\mathcal{S}^{inv} \cap \left\{S \in \mathcal{S}^0\::\: \limsup_{n \to \infty}S_n<\infty \right\} \subseteq \bigcup_{K}\mathcal{S}^+_K,\]
\[\mathcal{S}^{inv} \cap \left\{S \in \mathcal{S}^0\::\: \liminf_{n \to -\infty}S_n > -\infty \right\} \subseteq \bigcup_{K}\mathcal{S}^-_K.\]
\end{lem}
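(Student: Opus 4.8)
The plan is to prove the first inclusion and then deduce the second from it by the reflection symmetry $R$. So fix $S\in\mathcal{S}^{inv}$ with $\limsup_{n\to\infty}S_n<\infty$. Since $\mathcal{S}^{inv}\subseteq\mathcal{S}^{rev}$, Theorem~\ref{goodsetthm} gives $M_0,I_0\in\mathbb{R}$ and $\limsup_{n\to\infty}S_n=M_\infty=\sup_n S_n$, so $S$ is bounded above; moreover $\liminf_{n\to\infty}S_n\ge I_0>-\infty$, so $S$ is in fact bounded as $n\to\infty$, and $K:=\limsup_{n\to\infty}S_n-\liminf_{n\to\infty}S_n$ is a positive integer (a nearest-neighbour path cannot converge). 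Membership $S\in\mathcal{S}^+_K$ requires the two equalities defining that set, of which $\limsup_{n\to\infty}S_n-\liminf_{n\to\infty}S_n=K$ is automatic; thus the whole task reduces to proving $\sup_n(M_n-I_n)=K$.

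The first thing I would record is the elementary identity, valid for every $S\in\mathcal{S}^0$,
\[\sup_n(M_n-I_n)=\sup_{a\le b}(S_a-S_b)=\sup_n(M_n-S_n)=\sup_n(S_n-I_n)=:\kappa,\]
so that $\kappa=\sup_n W_n=\sup_n V_n$ with $W=M-S$ and $V=S-I$. Because $M_n\to M_\infty$ and $\liminf_{n\to\infty}S_n=M_\infty-K$, we have $\limsup_{n\to\infty}W_n=K$, whence $\kappa\ge K$; everything now comes down to the reverse inequality $\kappa\le K$, i.e.\ $W_n\le K$ for all $n$.

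To prove $\kappa\le K$ I would exploit that the \emph{whole} orbit $\{T^kS\}_{k\in\mathbb{Z}}$ lies in $\mathcal{S}^{rev}$. Two clean facts hold. First, $\kappa$ is conserved: from Theorem~\ref{goodsetthm} one computes $W^{T^{-1}S}=M^{T^{-1}S}-T^{-1}S=(I-2I_0)-(2I-S-2I_0)=V$, and since $\sup_n V_n=\kappa$ this gives $\kappa(T^{-1}S)=\kappa(S)$; replacing $S$ by $TS$ (legitimate as $T^{-1}TS=S$ on $\mathcal{S}^{rev}$) yields $\kappa(T^kS)=\kappa$ for all $k$. Second, the $+\infty$-amplitude is preserved: from $(TS)_n=2M_n-S_n-2M_0$ and $M_n\to M_\infty$ one reads off $\limsup_{n\to\infty}(TS)_n-\liminf_{n\to\infty}(TS)_n=\limsup_{n\to\infty}W_n-\liminf_{n\to\infty}W_n=K$, so inductively every $T^kS$ has $+\infty$-amplitude $K$. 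The decisive input is then the pointwise bound
\[W_n\le K+\big(M_\infty-M_n\big),\qquad\forall n\in\mathbb{Z},\]
which I would obtain by applying the reversibility condition $\sup_n(TS)_n=\limsup_{n\to\infty}(TS)_n$ (valid since $TS\in\mathcal{S}^{rev}$) to the identity $\sup_n(TS)_n=\sup_n(2M_n-S_n)-2M_0$: this forces $\sup_n(2M_n-S_n)=M_\infty+K$, that is $M_n+W_n\le M_\infty+K$. In particular $W_n\le K$ at every $n$ where the past maximum has already attained its ceiling $M_\infty$, and a maximal excursion $W_{n^*}=\kappa$ must satisfy $M_{n^*}\le M_\infty-(\kappa-K)$, so it is ``buried'' at least $\kappa-K$ below the top of the range. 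The remaining step is to show that iterating $T$ drives such an excursion into the region where $M_n=M_\infty$, at which point the bound forces $\kappa\le K$.

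I expect this last transport step to be the main obstacle: turning ``the maximal excursion of $W$ is carried monotonically towards the global-maximum region under $T$'' into a rigorous statement—for instance by tracking the record/excursion structure of $W$ under the explicit map $W^{TS}_n=\sup_{m\le n}(M_m+W_m)-(M_n+W_n)$, or by recentring $T^kS$ on the location of the extreme excursion and passing to a limit—is exactly where the full force of $S\in\mathcal{S}^{inv}$ (every iterate, not just $S$, lying in $\mathcal{S}^{rev}$) is needed, and where care is required to rule out the excursion ``escaping'' in the moving frame. In the sub-case $\liminf_{n\to-\infty}S_n=-\infty$ one can first invoke Lemma~\ref{cha:key1} to conclude $S_n\to-\infty$, so that $W$ is automatically bounded near $-\infty$ and the extreme excursion is confined to a finite window. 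Once $\kappa=K$ is established, $S\in\mathcal{S}^+_K$ follows, and the second inclusion is obtained by applying the first to $RS$, using $R\mathcal{S}^{inv}=\mathcal{S}^{inv}$, $R\mathcal{S}^+_K=\mathcal{S}^-_K$, and that $R$ exchanges the hypotheses at $\pm\infty$.
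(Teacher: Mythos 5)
Your reduction of the problem is exactly the paper's: set $K=\limsup_{n\to\infty}S_n-\liminf_{n\to\infty}S_n$, note $L:=\sup_n(M_n-I_n)\ge K$ trivially, and observe that reversibility of $TS$ (via Theorem \ref{goodsetthm}, applied to every iterate $T^kS$, all of which have $+\infty$-amplitude $K$) forces the pointwise bound $M_n+W_n\le M_\infty+K$, i.e.\ $W_n-\tilde M_n\le K$ where $\tilde M_n=M_\infty-M_n$. Up to that point your argument matches the paper step for step, and your side observations ($\kappa=\sup_nW_n=\sup_nV_n$ is conserved since $W^{T^{-1}S}=V$) are correct, though the paper does not need the conservation law. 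But the decisive step — ruling out $L>K$ — is precisely the part you leave as ``the main obstacle,'' and as written the proof is genuinely incomplete: nothing you have established shows that a deep excursion must migrate into the region where $M_n=M_\infty$, and indeed that is not how the contradiction is obtained.

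The paper closes the gap not by transporting the excursion into the ceiling region but by showing that the \emph{excess} $W_n-\tilde M_n$, evaluated at suitably chosen points, strictly increases under $T$. Concretely: if $L>K$ there is an $n_0$ with $W_{n_0}\ge K+1$; then $M^{TS}_{n_0}\ge (TS)_{n_0}=M_{n_0}+W_{n_0}-2M_0$ together with $I^{TS}_{n_0}=M_{n_0}-2M_0$ (Theorem \ref{goodsetthm}) gives $M^{TS}_{n_0}-I^{TS}_{n_0}\ge W_{n_0}\ge K+1$, hence an $n_1\ge n_0$ with $W^{TS}_{n_1}\ge K+1$; and the estimate $\tilde M^{TS}_{n_1}\le K+\tilde M_{n_0}-W_{n_0}$ (using $M^{TS}_\infty-I^{TS}_\infty=K$ and $I^{TS}_\infty\ge (TS)_{n_0}$... more precisely $I^{TS}_{\infty}-M^{TS}_{n_0}\le M_\infty-2M_0-(2M_{n_0}-S_{n_0}-2M_0)$) yields
\[
W^{TS}_{n_1}-\tilde M^{TS}_{n_1}\;\ge\;W_{n_0}-\tilde M_{n_0}+1 .
\]
Iterating produces $n_0\le n_1\le n_2\le\cdots$ with $W^{T^kS}_{n_k}-\tilde M^{T^kS}_{n_k}$ increasing by at least $1$ per step, which eventually violates the uniform bound $\le K$ that you derived; hence $L\le K$. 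This growth mechanism — a single over-large value of $W$ regenerating an over-large value for $TW$ while the deficit $\tilde M$ shrinks in a controlled way — is the missing idea, and without it (or a worked-out substitute for your ``recentring and passing to a limit'' sketch, where ruling out escape in the moving frame is nontrivial) the proposal does not constitute a proof. Your final paragraph deducing the second inclusion from the first via $R$ is fine once the first is established.
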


Before giving proofs of the above lemmas, we prove Theorem \ref{characterizationinv} assuming that they hold.

\begin{proof}[Proof of Theorem \ref{characterizationinv}]
From Lemmas \ref{cha:key1} and \ref{cha:key2}, for any $S \in \mathcal{S}^{inv}$, there exists some $*_1$ and $*_2$ such that $S \in \mathcal{S}^-_{*_1} \cap \mathcal{S}^+_{*_2}$. Conversely, if $S \in \mathcal{S}^-_{*_1} \cap \mathcal{S}^+_{*_2}$ for some $*_1$ and $*_2$, then Lemma \ref{cha:invariant} gives $T^kS \in \mathcal{S}^-_{*_1} \cap \mathcal{S}^+_{*_2}$ for any $k \in \mathbb{Z}$. Since Lemma \ref{cha:goodset} gives $\mathcal{S}^-_{*_1} \cap \mathcal{S}^+_{*_2} \subset \mathcal{S}^{rev}$, we deduce that $T^kS \in \mathcal{S}^{rev}$ for any $k \in \mathbb{Z}$. Hence $S \in \mathcal{S}^{inv}$.
\end{proof}

\begin{proof}[Proof of Lemma \ref{cha:goodset}] Since $S \in \mathcal{S}^+_K$ implies $M_0 \in \mathbb{R}$, to check that $S\in \mathcal{S}^{T^{-1}T}$, we only need to show that $\limsup_{n \rightarrow \infty}S_n=\sup_n S_n$. Suppose $\limsup_{n \rightarrow \infty}S_n < \sup_n S_n$. Thus there exists an $n_0$ such that $S_n \le S_{n_0}-1$ for all $n \ge n_0+1$. Since $S \in \mathcal{S}^+_K$, $S_{n_0}-I_{n_0} \le M_{n_0}-I_{n_0} \le K$. Therefore, $\limsup_{n \rightarrow \infty}S_n -\liminf_{n \rightarrow \infty}S_n \le S_{n_0}-1-I_{n_0} \le K-1$, which contradicts the condition that $\limsup_{n \rightarrow \infty}S_n -\liminf_{n \rightarrow \infty}S_n=K$. The claim that $\mathcal{S}^-_K \subseteq \mathcal{S}^{TT^{-1}}$ is shown in the same way.

Next, we prove $\mathcal{S}^-_{*_1} \cap \mathcal{S}^+_{*_2} \subset \mathcal{S}^{rev}$. For $\mathcal{S}^-_{K} \cap \mathcal{S}^+_{K'}$, it is clear from the fact just we have shown. If $S \in  \mathcal{S}^-_{F}\cap\mathcal{S}^+_{K}$, then $M_0,I_0 \in \mathbb{R}$ and $\limsup_{n \rightarrow \infty}S_n=\sup_n S_n$. So, we only need to show that $\liminf_{n \rightarrow -\infty}S_n=\inf_n S_n$ but since $\lim_{n \rightarrow -\infty}S_n=-\infty$, this is clear. The case $\mathcal{S}^-_{K}\cap \mathcal{S}^+_{F}$ is exactly the same. Finally, if $S \in \mathcal{S}^-_{F} \cap \mathcal{S}^+_{F'}$, then $\lim_{n \rightarrow \pm \infty}S_n=\infty$, so $M_0,I_0 \in \mathbb{R}$ and the other conditions also clearly hold.
\end{proof}

\begin{rem} It is obvious that $\mathcal{S}^-_{K} \cap \mathcal{S}^+_{K'} \neq \emptyset$ if and only if $K=K'$.
\end{rem}

\begin{proof}[Proof of Lemma \ref{cha:invariant}] We first consider the case $S \in \mathcal{S}^-_{*}\cap \mathcal{S}^+_F$. For any $S \in \mathcal{S}_F^+$ and $\epsilon >0$, there exists $N_{\epsilon}$ so that for any $n \ge N_{\epsilon}$, $(1-\epsilon) F(n) \le S_n \le (1+\epsilon)F(n)$. In particular, for any $n \ge N_{\epsilon}$,
$(1-\epsilon)F(n) \le I_n \le (1+\epsilon)F(n)$ and so $\lim_{n\rightarrow \infty}\frac{I_n}{F(n)}=1$. Moreover, for $S \in \mathcal{S}^-_{*}\cap\mathcal{S}^+_F $, we have $M_0 \in \mathbb{R}$ and $\lim_{n \to \infty} S_n=\infty$. Thus there exists an increasing subsequence $\{n_k\}_{k \in \mathbb{N}}$ satisfying $n_k \to \infty \ (k\to \infty)$ and $S_{n_k}=M_{n_k}$ for all $k$. For each $\epsilon$, let $L_{\epsilon}:=\min\{n_k:\:k \in \mathbb{N},\:n_k \ge N_{\epsilon}\}$. Then, for any $n \ge L_{\epsilon}$, $(1-\epsilon)F(n) \le S_n \le M_n$ and
\[M_n = \max\left\{ M_{L_{\epsilon}},\max\{ S_k:\: L_{\epsilon} \le k \le n\}\right\} = \max\{S_k:\: \ L_{\epsilon} \le k \le n\} \le (1+\epsilon)F(n).\]
Hence $\lim_{n\rightarrow \infty}\frac{M_n}{F(n)}=1$. Therefore,
\[\lim_{n\rightarrow  \infty}\frac{TS_n}{F(n)}=\lim_{n\rightarrow \infty}\frac{2M_n-S_n-2M_0}{F(n)}=1,\]
and
\[\lim_{n\rightarrow \infty}\frac{T^{-1}S_n}{F(n)}=\lim_{n\rightarrow \infty}\frac{2I_n-S_n-2I_0}{F(n)}=1.\]
Therefore, $TS, T^{-1}S \in \mathcal{S}^+_F$. In the same way, if $S \in \mathcal{S}^-_{F}\cap\mathcal{S}^+_*$, then $TS, T^{-1}S \in \mathcal{S}^-_{F}$.

Next, we prove that $\mathcal{S}^+_K$ is invariant under $T$ and $T^{-1}$. Let $S \in \mathcal{S}^+_K$. Since $\limsup_{n \rightarrow \infty}S_n \in \mathbb{R}$ and $\liminf_{n \rightarrow \infty}S_n \in \mathbb{R}$ and $\limsup_{n \rightarrow \infty}S_n=M_{\infty}$ (from the proof of Lemma \ref{cha:goodset}), there exists an $n_0$ such that $M_n=M_{\infty}$ and $I_n=I_{\infty}$ for all $n \ge n_0$. Therefore,
\[\limsup_{n \rightarrow \infty}TS_n = \limsup_{n \rightarrow \infty} (2M_n -S_n-2M_0)= 2M_{\infty}-2M_0-\liminf_{n \rightarrow \infty}S_n= 2M_{\infty}-2M_0-I_{\infty}\]
and similarly $ \liminf_{n \rightarrow \infty}TS_n=2M_{\infty}-2M_0-M_{\infty}$. Therefore,
\[\limsup_{n \rightarrow \infty}TS_n -\liminf_{n \rightarrow \infty}TS_n = M_{\infty}-I_{\infty}=K.\]
In the same way, $\limsup_{n \rightarrow \infty}T^{-1}S_n -\liminf_{n \rightarrow \infty}T^{-1}S_n=K$. Also,
\begin{eqnarray*}
\sup_{m \le n}TS_m &=& \sup_{m \le n} (2M_m -S_m-2M_0) \\
&=& -2M_0+\sup_{m \le n}(M_m +M_m-S_m)\\
& \le& -2M_0+M_n +\sup_{m \le n} (M_m-I_m)\\
& \le& -2M_0+M_n +K,
\end{eqnarray*}
and
\[
\inf_{m \ge n}TS_m = \inf_{m \ge n} (2M_m -S_m-2M_0) = -2M_0+\inf_{m \ge n}(M_m +M_m-S_m) \ge -2M_0+M_n.
\]
Therefore, $\sup_{m \le n}TS_m -\inf_{m \ge n}TS_m \le -2M_0+M_n +K - ( -2M_0+M_n) =K$. Moreover, letting $n_1\geq n_0$ be such that $S_{n_1}=I_\infty$, we have that
\[\sup_{m \le n_1}TS_m -\inf_{m \ge n_1}TS_m \geq (2M_{n_1}-S_{n_1}-2M_0)-\inf_{m\geq n_1}(2M_\infty-S_n-2M_0)=M_\infty-I_\infty=K.\]
In a similar way, we can show that $\sup_{m \le n}T^{-1}S_m -\inf_{m \ge n}T^{-1}S_m =K$. Therefore, $TS, T^{-1}S \in \mathcal{S}^+_K$. The proof of the invariance of $\mathcal{S}^-_K$ under $T$ and $T^{-1}$ is the same as that for  $\mathcal{S}^+_K$.
\end{proof}

\begin{proof}[Proof of Lemma \ref{cha:key1}] We first check that if $S \in \mathcal{S}^{inv} \cap \{S \in \mathcal{S}^0: \limsup_{n \to \infty}S_n=\infty \}$, then $\lim_{n \to \infty}S_n=\infty$. Actually, if we suppose $S  \in \mathcal{S}^{inv}$, $\limsup_{n \to \infty}S_n=\infty$ and $\liminf_{n \to \infty}S_n =I_{\infty}< \infty$, then
\[\inf_{n \ge 0}(T^{-1}S)_n =\inf_{n \ge 0} (2I_n -S_n -2I_0) \le \inf_{n \ge 0} (2I_{\infty} -S_n -2I_0)= 2I_{\infty}-2I_0 -\sup_{n \ge 0}S_n =-\infty.\]
This implies $T^{-1}S \notin \mathcal{S}^{T^{-1}}$, and so $S \notin \mathcal{S}^{inv}$ in particular.

So, we only need to show that $\mathcal{S}^{inv} \cap \{S \in \mathcal{S}^0: \lim_{n \to \infty}S_n=\infty\} \subseteq \cup_F \mathcal{S}^+_F$. This will be done if we can prove for any $S \in \mathcal{S}^{inv} \cap \{S \in \mathcal{S}^0: \lim_{n \to \infty}S_n=\infty\} $, $S \in \mathcal{S}^+_M$ where $M_n=\sup_{m \le n}S_m$, since $M$ is increasing and under the assumption, $\lim_{n \to \infty}M_n=\infty$. We will show that this is indeed the case in Lemma \ref{maxlem}.
\end{proof}

\begin{lem}\label{maxlem} It holds that
\[\mathcal{S}^{inv} \cap \left\{S \in \mathcal{S}^0: \lim_{n \to \infty}S_n=\infty\right\}  \subseteq \left\{S \in \mathcal{S}^0\::\: \ \lim_{n \to \infty} S_n =\infty,\:M_0 <\infty,\: \lim_{n \to \infty}\frac{M_n}{I_n} =1 \right\}.\]
\end{lem}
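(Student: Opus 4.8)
The plan is to treat the two assertions separately. Finiteness of $M_0$ is immediate: since $\mathcal{S}^{inv}\subseteq\mathcal{S}^{rev}$ (take $k=0$ in the definition \eqref{sinv}) and membership of $\mathcal{S}^{rev}$ forces $M_0\in\mathbb{R}$ by Theorem \ref{goodsetthm}, we get $M_0<\infty$. For the ratio, note first that $\lim_{n\to\infty}S_n=\infty$ gives $M_n\to\infty$ and $I_n\to\infty$, and that $I_n\le S_n\le M_n$ yields $M_n/I_n\ge 1$ together with $\limsup_{n\to\infty}S_n/I_n=\limsup_{n\to\infty}M_n/I_n=:r\in[1,\infty]$ (for the last equality, evaluate $S/I$ at a point $p\le n$ with $S_p=M_n$, using $I_p\le I_n$). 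As $M_n/I_n\ge 1$, it suffices to prove $r=1$, and I would argue by contradiction, assuming $r>1$.

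The engine is the behaviour of $r$ under one application of $T^{-1}$. Writing $S^{(-k)}=T^{-k}S$ and using Theorem \ref{goodsetthm}, we have (up to the additive constant $-2I_0$, irrelevant to ratios of quantities tending to $\infty$) that $T^{-1}S=2I-S$, with past maximum $M^{T^{-1}S}=I-2I_0$ and future minimum $I^{T^{-1}S}_n=\inf_{m\ge n}(2I_m-S_m)-2I_0$. The crucial structural fact is that $S^{(-k)}\in\mathcal{S}^{rev}$ for all $k$ forces each $I_0(S^{(-k)})=\inf_{m\ge 0}S^{(-k)}_m$ to be finite. Now evaluate at $p_j\to\infty$ with $S_{p_j}=M_{n_j}$ and $S_{p_j}/I_{p_j}\to r$: then $I^{T^{-1}S}_{p_j}\le 2I_{p_j}-S_{p_j}\lesssim(2-r)I_{p_j}$, while $M^{T^{-1}S}_{p_j}= I_{p_j}-2I_0$. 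Consequently, if $r\ge 2$ the future minimum $\inf_{m\ge p_j}(T^{-1}S)_m\to-\infty$, so $I_0(T^{-1}S)=-\infty$, already contradicting $T^{-1}S\in\mathcal{S}^{rev}$; and if $r\in(1,2)$ the ratio for $T^{-1}S$ satisfies $\limsup_n M^{T^{-1}S}_n/I^{T^{-1}S}_n\ge 1/(2-r)$.

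Writing $\psi(r)=1/(2-r)$, the previous step shows that $r_k:=\limsup_n M^{(-k)}_n/I^{(-k)}_n$ obeys $r_{k+1}\ge\psi(r_k)$ as long as $r_k<2$, while $r_k\ge 2$ produces an immediate contradiction at the next step. Since $\psi$ is increasing on $[1,2)$ with $\psi(r)>r$ there and $r=1$ its only fixed point, the orbit $(r_k)$ starting from $r_0=r>1$ is strictly increasing and cannot converge inside $[1,2)$; hence it exceeds $2$ after finitely many steps, giving the contradiction, and forcing $r=1$, i.e. $\lim_{n\to\infty}M_n/I_n=1$. Along the way one checks, by induction, that $r_k<2$ guarantees $S^{(-k-1)}_n=2I^{(-k)}_n-S^{(-k)}_n\to\infty$ (since $2-M^{(-k)}_n/I^{(-k)}_n$ is eventually bounded below by a positive constant), so each $r_k$ really is a ratio of quantities diverging to $+\infty$ and the scheme is self-consistent. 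I expect the main obstacle to be making the one-step estimate rigorous: the clean identities above hold only ``at the descent'', so one must work with the precise $\limsup$ along $p_j$, handle the additive constants $-2I^{(-k)}_0$ (finite at each stage, hence harmless in the limit), treat the degenerate case where the relevant future minimum becomes non-positive (which only speeds up the escape, giving $r_{k+1}=\infty$), and verify that the bound $I^{T^{-1}S}_{p_j}\lesssim(2-r)I_{p_j}$ and the resulting inequality $r_{k+1}\ge\psi(r_k)$ genuinely hold rather than merely heuristically.
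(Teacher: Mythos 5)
Your proposal is correct and follows essentially the same route as the paper: argue by contradiction from $\limsup_n M_n/I_n>1$, use the identities $M^{T^{-1}S}=I-2I_0$ and $I^{T^{-1}S}_{\tilde n}\le 2I_{\tilde n}-S_{\tilde n}-2I_0$ evaluated at points where $S$ attains its running maximum, and iterate; your map $\psi(r)=1/(2-r)$ is exactly the paper's recursion $1+\delta_k=1/(1-\delta_{k-1})$ under $r=1+\delta$, with the same unique fixed point at $r=1$ and the same terminal contradiction once the ratio exceeds $2$ (the paper phrases it as a negative ratio of two quantities diverging to $+\infty$, you as $I_0=-\infty$ for the next iterate — these are equivalent). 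The only cosmetic difference is that the paper tracks a deterministic lower-bound sequence $\delta_k$ defined by equality, which neatly avoids the boundary case $r_k=2$ that your inequality version would need one extra step to dispose of.
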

\begin{proof} Let $S \in \mathcal{S}^{inv} \cap \{S \in \mathcal{S}^0: \lim_{n \to \infty}S_n=\infty\}$. For the argument, it will be convenient to note that $\lim_{n\to\infty}T^{-k}S_n=\infty$ for all $k\geq 0$. Indeed Theorem \ref{goodsetthm} yields $M^{T^{-1}S}_n=I_n^S-2I_0^S\rightarrow\infty$, and so $\limsup_{n\to\infty}T^{-1}S_n=\infty$. Hence by the argument used in the proof of Lemma \ref{cha:key1}, we have that $\lim_{n\to\infty}T^{-1}S_n=\infty$. Iterating this establishes the claim.

To complete the proof, we will suppose that $\limsup_{n \to \infty}{M_n}/{I_n} \ge 1+\delta$ for some $\delta \in(0,1)$ and show that it gives a contradiction. Under the latter assumption, for any $0 < \delta' <\delta$, there exists an increasing subsequence $\{n_k\}_{k \in \mathbb{N}}$, $n_k \to \infty \ (k \to \infty)$ such that for all $k \in \mathbb{N}$,
\[ \frac{M_{n_k}}{I_{n_k}} > 1+\delta', \qquad I_{n_k}>0.\]
Define $\tilde{n}_1 = \sup \{ n \in \mathbb{Z}:\: n \le n_1, \: S_{n}=M_{n_1} \}$ and $\tilde{n}_k = \sup \{ n \in \mathbb{Z}:\: \tilde{n}_{k-1} \le n \le n_k, \:   S_{n}=M_{n_k} \}$ recursively.
Then, $\tilde{n}_k$ is non-decreasing and satisfies $\tilde{n}_k \to \infty \ (k \to \infty)$ since $M_{n_k} \to \infty$. In particular, for large enough $k$, $I_{\tilde{n}_k} >0$, and so we can assume $I_{\tilde{n}_k} >0$ for all $k$. By the definition of $\tilde{n}_k$, we have $S_{\tilde{n}_k}=M_{n_k}$ and $I_{\tilde{n}_k} \le I_{n_k}$. Therefore, for all $k$, it holds that $S_{\tilde{n}_k}/I_{\tilde{n}_k} > 1+\delta'$, and $I_{\tilde{n}_k}>0$. Hence
\[I^{T^{-1}S}_{\tilde{n}_k}=\inf_{n \ge \tilde{n}_k} (T^{-1}S)_n =\inf_{n \ge \tilde{n}_k} ( 2I_n-S_n-2I_0 ) \le 2I_{\tilde{n}_k}-S_{\tilde{n}_k}-2I_0 \le (1-\delta') I_{\tilde{n}_k} -2I_0.\]
From this, we conclude
\begin{equation}\label{stat}
\liminf_{n \to \infty} \frac{I^{T^{-1}S}_n}{I_n} \le \liminf_{k \to \infty} \frac{I^{T^{-1}S}_{\tilde{n}_k}}{I_{\tilde{n}_k}} \le 1-\delta'.
\end{equation}
Since $\delta'\in(0, \delta)$ was arbitrary, this implies $\liminf_{n \to \infty} \frac{I^{T^{-1}S}_n}{I_n} \le 1-\delta$. Recalling again from Theorem \ref{goodsetthm} that $M^{T^{-1}S}_n=I^S_n-2I^S_0$, this gives
\[ \limsup_{n \to \infty} \frac{M^{T^{-1}S}_n}{I^{T^{-1}S}_n}=\limsup_{n \to \infty} \frac{I^S_n}{I^{T^{-1}S}_n}\times\frac{I^S_n-2I^S_0}{I^S_n}=\limsup_{n \to \infty} \frac{I^S_n}{I^{T^{-1}S}_n} \ge \frac{1}{1-\delta} > 1.\]
Following the same argument recursively, we thus obtain that
\[ \limsup_{n \to \infty} \frac{M^{T^{-k}S}_n}{I^{T^{-k}S}_n} \ge 1+\delta_k,\]
where $\delta_0=\delta$ and $1+\delta_k=\frac{1}{1-\delta_{k-1}}$ for $k\geq 1$. From this relation, $\{\delta_k\}$ is well-defined and strictly increasing up to $k_0:=\inf\{l:\delta_{l}>1\}$. If $k_0=\infty$, then $\delta_{\infty}:=\lim_{k \to \infty}\delta_k\in[\delta,1]$ must solve $1+\delta_{\infty}=\frac{1}{1-\delta_{\infty}}$. However, the latter equation implies $\delta_{\infty}=0$, which is a contradiction to $\delta_{\infty} \ge \delta$. Thus, we must have that $k_0<\infty$, and $\delta_{k_0}>1$. Now, repeating the part of the argument leading to (\ref{stat}), we find that
\[\liminf_{n \to \infty} \frac{I^{T^{-(k_0+1)}S}_n}{I^{T^{-k_0}S}_n} \le 1-\delta_{k_0}<0.\]
However, this contradicts the fact that both $I^{T^{-k_0}S}_n$ and $I^{T^{-(k_0+1)}S}_n$ diverge (to $+\infty$), as was noted in the previous paragraph. Thus the proof is complete.
\end{proof}

\begin{proof}[Proof of Lemma \ref{cha:key2}] For any $S \in \mathcal{S}^{inv} \cap \{S \in \mathcal{S}^0: \limsup_{n \to \infty}S_n < \infty\}$, it is clear that  $I_{\infty}, M_{\infty} \in \mathbb{R}$. Moreover, $\liminf_{n \to \infty} S_n=I_\infty$ is obvious by definition, and $\limsup_{n \to \infty} S_n =M_{\infty}$ holds by Theorem \ref{goodsetthm}. We define $K=M_{\infty}-I_{\infty}$ and $L:=\sup_{n \in \mathbb{Z}} (M_n-I_n)$, where a priori $L$ can be $\infty$. It is clear that $L\geq K$, because $M_n=M_\infty$, $I_n=I_\infty$ for large $n$. Thus, to establish the lemma, we only need to show that $L \le K$.

To this end, we first note that the argument used in the proof of Lemma \ref{cha:invariant} yields, for every $k\in\mathbb{Z}$,
\[\limsup_{n \to \infty}T^kS_n-\liminf_{n \to \infty}T^kS_n=K.\]
Here, since $S \in \mathcal{S}^{inv}$, $T^kS$ is well-defined for all $k$.

Next, we consider $\tilde{M}_n:=M_{\infty}-M_n$ and consider the pair $(\tilde{M}_n, W_n) = (\tilde{M}_n, M_n-S_n)$. If there exists an $n_0$ such that $\tilde{M}_{n_0}+ K < W_{n_0}$, then
\begin{align*}
TS_{n_0} -\limsup_{n \to \infty}TS_n & = 2M_{n_0}-S_{n_0} - \limsup_{n \to \infty} (2M_n -S_n) \\
&=M_{n_0}+W_{n_0}-M_{\infty}-(M_{\infty}-I_{\infty})\\
&=W_{n_0}-\tilde{M}_{n_0}-K\\
& >0,
\end{align*}
and so $TS \notin \mathcal{S}^{rev}$ (by Theorem \ref{goodsetthm}). This implies $S \notin \mathcal{S}^{inv}$. Thus, in combination with the conclusion of the previous paragraph, we have established that if $S \in  \mathcal{S}^{inv} \cap \{S \in \mathcal{S}^0: \limsup_{n \to \infty}S_n < \infty\}$, then for any $k \in \mathbb{Z}$, $T^kS$ must satisfy $\tilde{M}^{T^kS}_n+ K \ge W^{T^kS}_n$ for all $n \in \mathbb{Z}$, or equivalently $K \ge \sup_{n \in \mathbb{Z}}(W^{T^kS}_n-\tilde{M}^{T^kS}_n)$.

Our next goal is to show that if $L >K$, then we can construct an increasing sequence $\{n_k\}$ such that
\[W_{n_{k+1}}^{T^{k+1}S}-\tilde{M}_{n_{k+1}}^{T^{k+1}S} \ge W_{n_k}^{T^kS}-\tilde{M}_{n_k}^{T^kS}  +1.\]
If we have this, then $\sup_{n \in \mathbb{Z}}(W^{T^kS}_n-\tilde{M}^{T^kS}_n) > K$ for large enough $k$, and hence $S \notin \mathcal{S}^{inv}$. Suppose $L > K$. It is then the case that there exists an $\tilde{n}_0$ such that $M_{\tilde{n}_0}-I_{\tilde{n}_0} \ge K+1$, and so there exists an $n_0 \ge \tilde{n}_0$ such that $W_{n_0}=M_{n_0}-S_{n_0} \ge K+1$. Thus, since
\[M^{TS}_{n_0} \ge (TS)_{n_0}=2M_{n_0}-S_{n_0}-2M_0=M_{n_0}+W_{n_0}-2M_0\]
and $I^{TS}_{n_0}=M_{n_0}-2M_0$ (again by Theorem \ref{goodsetthm}), we obtain $M^{TS}_{n_0} -I^{TS}_{n_0} \ge W_{n_0} \ge K+1$. From this, we deduce there exists an $n_1 \ge n_0$ such that $W^{TS}_{n_1}=M^{TS}_{n_1}-(TS)_{n_1} \ge K+1$. Moreover,
\begin{align*}
\tilde{M}^{TS}_{n_1}&=M_{\infty}^{TS}-M^{TS}_{n_1} \\
&\le (M_{\infty}^{TS}-I_{\infty}^{TS})+I_{\infty}^{TS}-M^{TS}_{n_0} \\
&\leq K+(M_{\infty}-2M_0)-(2M_{n_0}-S_{n_0}-2M_0)\\
&=K+\tilde{M}_{n_0}-W_{n_0}.
\end{align*}
Therefore,
\[W^{TS}_{n_1}-\tilde{M}^{TS}_{n_1} \ge K+1 - (K+\tilde{M}_{n_0}-W_{n_0}) =W_{n_0}-\tilde{M}_{n_0}+1.\]
Since $W^{TS}_{n_1} \ge K+1$, we can repeat the procedure to obtain an $n_2 \ge n_1$ such that $W_{n_2} \ge K+1$ and
\[W^{T^2S}_{n_2}-\tilde{M}^{T^2S}_{n_2} \ge W^{TS}_{n_1}-\tilde{M}^{TS}_{n_1}+1,\]
and so we are able to find the desired sequence ${n_k}$. Thus the proof is complete.
\end{proof}

\subsection{Correspondence between particle configuration and current}\label{currentsec}

As we observed in the introduction, $W_0$ represents the number of particles moved by the carrier across the origin on the first evolution of the BBS (or, more precisely, from $\{\dots,-1,0\}$ to $\{1,2,\dots\}$). Similarly, $(T^{k-1}W)_0$ represents the particles moved by the carrier across the origin on the $k$th evolution of the system. It is natural to ask how much information about the initial particle configuration $\eta$ we can extrapolate from observing the particle current  $((T^kW)_0)_{k\in\mathbb{Z}}$. The goal of this section is to demonstrate that, at least when restricted to a suitable domain $\mathcal{S}_{sub-critical}^*\subseteq\mathcal{S}_{sub-critical}$ (see \eqref{ssubcritstar} for a precise definition) and corresponding codomain, the map from $(\eta_n)_{n\in\mathbb{Z}}$ to $((T^kW)_0)_{k\in\mathbb{Z}}$ is a bijection. We note that the proof of this result depends on the simple condition under which $(\eta_n)_{n\leq 0}$ can be recovered from $((T^kW)_0)_{k\geq0}$ that is provided by Proposition \ref{infoddlem}. Note that, until we note otherwise, we restrict our attention to configurations $\eta$ for which $S\in\mathcal{S}^{inv}$ so that the BBS dynamics are well-defined for all time. The results of this section will be useful in Section \ref{invarsec} when studying the properties of invariant measures and the issue of ergodicity for the map $\eta\mapsto T\eta$.

For convenience, in this section we will use the notation
\[\eta^k_n:=(T^k\eta)_n,\qquad w^k_n:=(T^kW)_n,\qquad \forall k,n\in\mathbb{Z}.\]
Now, viewing the BBS as a cellular automaton, we can describe the dynamics as in the following diagram.
\[\xymatrix@C-15pt@R-15pt{ & \eta^{k+1}_n & \\
            w_{n-1}^k \ar[rr]& & w_{n}^k\\
             & \eta^k_n \ar[uu]&}\]
That is, we have a system with input $(\eta^k_n,w^k_{n-1})$, which returns an output of $(\eta^{k+1}_n,w^k_{n})$, where the value of $\eta^{k+1}_n$ is given by the BBS update rule of (\ref{twosidedupdaterule}), and the value of $w^k_{n}$ is given by the update rule at (\ref{wupdate}). In particular, there are three basic patterns that can appear in this system, which can be represented as follows.
\begin{equation}\label{patterns}
\xymatrix@C-15pt@R-15pt{ & 0 & \\
            0 \ar[rr]& & 0\\
             & 0 \ar[uu]&}\qquad
\xymatrix@C-15pt@R-15pt{ & 1 & \\
            w>0 \ar[rr]& & w-1\\
             & 0 \ar[uu]&}\qquad
\xymatrix@C-15pt@R-15pt{ & 0 & \\
            w \ar[rr]& & w+1\\
             &  1\ar[uu]&}
\end{equation}
From this simple observation, we obtain the following result.

\begin{lem}\label{infzerolem}
(a) If $w^{k}_{n}=0$ for some $k\in\mathbb{Z}_+$, $n\in\mathbb{Z}$, then the values of $(\eta^l_{n})_{l=0}^k$ are uniquely determined by $(w^l_{n})_{l=0}^k$.\\
(b) If $w^{k}_{n}=0$ infinitely often as $k\rightarrow\infty$ for some $n\in\mathbb{Z}$, then the values of $(\eta^k_{n})_{k\geq0}$ are uniquely determined by $(w^k_{n})_{k\geq0}$.
\end{lem}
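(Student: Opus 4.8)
The plan is to exploit the three local patterns recorded in \eqref{patterns} to obtain a recursion running \emph{downwards} in time along the single column at spatial position $n$. The first step I would take is to read each cell update from its output (north-east) corner $(\eta^{l+1}_n,w^l_n)$ back to its input particle $\eta^l_n$, and observe that the latter is then determined by the pair $(\eta^{l+1}_n,w^l_n)$ alone. Indeed, inspecting the three cases in \eqref{patterns} gives
\[
\eta^l_n=\mathbf{1}_{\{\eta^{l+1}_n=0,\ w^l_n>0\}},
\]
since the pattern with $\eta^{l+1}_n=1$ forces $\eta^l_n=0$, while among the two patterns with $\eta^{l+1}_n=0$ the value of $w^l_n$ (zero versus positive) distinguishes the all-zero pattern from the pattern with $\eta^l_n=1$. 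The key feature of this expression is that it no longer involves the incoming carrier value $w^l_{n-1}$ from the neighbouring column.

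To convert this into a genuine reconstruction for part (a), I would supply a seed at the top time level, and this is where the hypothesis $w^k_n=0$ enters. By the carrier update rule \eqref{wupdate}, a particle at $(k,n)$ would force $w^k_n=w^k_{n-1}+1\geq 1$; hence $w^k_n=0$ implies $\eta^k_n=0$. Starting from this seed and applying the displayed recursion successively for $l=k-1,k-2,\dots,0$ then determines each $\eta^l_n$ using only the column data $(w^l_n)_{l=0}^k$, which is exactly the assertion of (a).

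Part (b) would follow by a subsequence argument: given any $l\geq 0$, choose $k\geq l$ with $w^k_n=0$, which exists by the infinitely-often hypothesis, and apply (a) to recover $\eta^l_n$ from $(w^m_n)_{m=0}^k$; since this works for every $l$, the whole sequence $(\eta^k_n)_{k\geq 0}$ is determined by $(w^k_n)_{k\geq 0}$. The one genuinely non-obvious point—the main obstacle—is the very first step: the space-time update is diagonal, so one expects recovering the particles in column $n$ to require the carrier in column $n-1$. Overcoming this is precisely the observation that the \emph{inverse} of the local cell map can be read off from the north-east corner, eliminating the westward dependence and collapsing the task to a self-contained recursion in the single column $n$; the hypothesis $w^k_n=0$ then serves only to pin down the seed value $\eta^k_n$.
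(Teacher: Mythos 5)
Your proposal is correct and follows essentially the same route as the paper: both arguments invert the local cell map by observing that $\eta^l_n$ is determined by the output corner $(\eta^{l+1}_n,w^l_n)$ alone (the paper phrases this as the alternation of $\eta^{k-1}_n,\eta^{k-2}_n,\dots$ between consecutive zeros of $(w^l_n)_l$, which is exactly what your recursion $\eta^l_n=\mathbf{1}_{\{\eta^{l+1}_n=0,\:w^l_n>0\}}$ produces), seed with $\eta^k_n=0$ forced by $w^k_n=0$, and deduce (b) by applying (a) from arbitrarily large $k$. Your explicit closed-form for the inverse step is a slightly cleaner packaging of the same idea, but there is no substantive difference.
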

\begin{proof} Suppose $w^{k}_{n}=0$ for some $k\in\mathbb{Z}_+$. We see from the first and second patterns shown in \eqref{patterns} that it must then be the case that $\eta^{k}_{n}=0$. Now, if $w^{k-1}_n,w^{k-2}_n,\dots>0$, then we see from the second and third pattern of (\ref{patterns}) that $\eta^{k-1}_n,\eta^{k-2}_n,\dots$ must alternate between $1$ and $0$. Of course, if we eventually come to an $l<k$ with $w^{l}_n=0$, then we return to setting $\eta^{l}_{n}=0$ and the alternation starts again. This argument, which completes the proof of part (a) is summarised in the following diagram.
\[\begin{array}{cc}
   & w^{k}_{n}=0 \\
  \eta^{k}_{n}=0 &  \\
   & w^{k-1}_n >0\\
  \eta^{k-1}_{n}=1 &  \\
   & w^{k-2}_n >0\\
  \eta^{k}_{n}=0 &  \\
   & w^{k-3}_n >0\\
  \eta^{k-2}_{n}=1 & \vdots \\
  \vdots& w^{l}_n=0\\
  \eta^{l}_{n}=0 &
\end{array}\]
The proof of (b) is now obvious, since we can apply the argument of (a) from an arbitrarily large value of $k$.
\end{proof}

\begin{rem}\label{aboverem}
In fact, the condition given in part (b) of the above lemma is also necessary for the result. Indeed, if $w^{k}_{n}>0$ for $k\geq k_0$, then one can only deduce from $(w^k_{n})_{k\geq0}$ that
$(\eta^k_{n})_{k\geq k_0}$ are alternating between 0 and 1, but not the actual values. For example, consider the periodic configuration $\eta$ given by $(\dots,\eta_0=1,0,0,1,\dots)$. It is then the case that $w_0^k=1$ for all $k\geq 0$. However, the same is true if $\eta$ is given by $(\dots,\eta_0=0,1,1,0,\dots)$, and so the current does not determine $\eta_0$.
\end{rem}

We next look to extend the previous result to recover from $(w^k_{n})_{k\geq0}$ not just the values of $(\eta^k_{n})_{k\geq0}$, but the entire array $(\eta^k_{m})_{k\geq0,m\leq n}$. However, $(w^k_{n})_{k\geq0}$ having infinitely many zeros does not imply that $(w^k_{n-1})_{k\geq0}$ does, and so we can not immediately iterate the argument of Lemma \ref{infzerolem} to obtain the result. (Indeed, consider $(w^k_{1})_{k\geq0}$ and $(w^k_{0})_{k\geq0}$ for the example configurations discussed in Remark \ref{aboverem}.) We overcome this problem by more carefully considering the spacing between zeros. In particular, for a given $n$, define a sequence $\sigma_n=(\sigma_n^i)_{i\geq 1}$ by setting
\[\sigma_n^i=\inf\left\{k>\sigma_n^{i-1}:\:w^k_n=0\right\},\]
where we fix by convention $\sigma_n^{0}=-1$, and set $\sigma^i_n=\infty$ when it is not well-defined by the above equation. We will say that $(w^k_{n})_{k\geq0}$ has `infinitely many odd gaps between zeros' if the sequence in question has infinitely many zeros, and $\sigma^{i+1}_n-\sigma^i_n$ is odd infinitely often. A key result of this section is the following. Whilst the assumption might not immediately seem natural, it becomes clearer why it is relevant when we apply the result in the case of random initial configurations in Section \ref{invarsec}. In particular, from Corollary \ref{oddgapandflatpath} and Lemma \ref{particleflatdensity} as presented below, we will see how it is related to having a sub-critical density of particles.

\begin{prop}\label{infoddlem}
(a) If $(w^k_{n})_{k\geq0}$ has infinitely many odd gaps between zeros, then so does the sequence $(w^k_{n-1})_{k\geq0}$.\\
(b) If $(w^k_{n})_{k\geq0}$ has infinitely many odd gaps between zeros, then the values of $(\eta^k_{m})_{k\geq0,m\leq n}$ are uniquely determined by $(w^k_{n})_{k\geq0}$.
\end{prop}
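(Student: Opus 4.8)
The plan is to first record two pointwise relations between adjacent sites, then reduce part (a) to a clean parity statement about the zero set of the current, and finally obtain part (b) by iterating part (a) together with Lemma~\ref{infzerolem}(b). First I would extract from the three patterns in \eqref{patterns} (equivalently from \eqref{wupdate} and \eqref{twosidedupdaterule}) the discrete continuity identity
\[
w^k_{n-1}=w^k_n-\eta^k_n+\eta^{k+1}_n,\qquad\forall k,n,
\]
together with the backward recursion $\eta^j_n=(1-\eta^{j+1}_n)\mathbf{1}_{\{w^j_n>0\}}$ (since pattern~3 is the only one with bottom entry equal to $1$, and it forces $\eta^{j+1}_n=0$, $w^j_n>0$). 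In particular $w^k_n=0$ forces $\eta^k_n=0$, so between two consecutive zeros $\sigma^i_n<\sigma^{i+1}_n$ of $(w^k_n)_{k\ge0}$ the recursion makes $(\eta^k_n)$ alternate downward from $\eta^{\sigma^{i+1}_n}_n=0$; writing $g_i:=\sigma^{i+1}_n-\sigma^i_n$ this gives $\eta^{\sigma^i_n+1}_n=(g_i-1)\bmod 2$. Combining this with the continuity identity at $k=\sigma^i_n$ (where $\eta^{\sigma^i_n}_n=0$) yields $w^{\sigma^i_n}_{n-1}=\eta^{\sigma^i_n+1}_n$, so that
\[
w^{\sigma^i_n}_{n-1}=0\quad\Longleftrightarrow\quad g_i\text{ is odd}.
\]

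The crucial reduction for part (a) is the observation that ``$(w^k_n)_{k\ge0}$ has infinitely many odd gaps'' is equivalent to ``the zero set $\{k\ge0:w^k_n=0\}$ meets both the even and the odd integers infinitely often'': a gap $g_i$ is odd precisely when $\sigma^i_n$ and $\sigma^{i+1}_n$ have opposite parities, so infinitely many odd gaps is the same as the parity of the zeros failing to be eventually constant. Granting this, the argument runs as follows. By the displayed equivalence above, each odd gap of $(w^k_n)$ produces a zero of $(w^k_{n-1})$ at its left endpoint $\sigma^i_n$. Moreover these left endpoints alternate in parity: if $i<i'$ are successive indices with $g_i,g_{i'}$ odd, then every gap strictly between them is even, whence $\sigma^{i+1}_n\equiv\sigma^{i'}_n\pmod 2$, while $\sigma^{i+1}_n\not\equiv\sigma^i_n$, so $\sigma^{i'}_n\not\equiv\sigma^i_n\pmod 2$. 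Since there are infinitely many odd gaps there are infinitely many such left endpoints, their parities alternate, and hence $(w^k_{n-1})$ has zeros at both even and odd times infinitely often. Applying the equivalence now to $(w^k_{n-1})$ gives exactly that $(w^k_{n-1})_{k\ge0}$ has infinitely many odd gaps, proving (a).

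Part (b) would then follow by induction on decreasing $m\le n$. Infinitely many odd gaps forces infinitely many zeros, so Lemma~\ref{infzerolem}(b) recovers $(\eta^k_n)_{k\ge0}$ from $(w^k_n)_{k\ge0}$; the continuity identity then recovers $(w^k_{n-1})_{k\ge0}$ from $(w^k_n)_{k\ge0}$ and $(\eta^k_n)_{k\ge0}$. By part (a) the sequence $(w^k_{n-1})_{k\ge0}$ again has infinitely many odd gaps, so the same two steps recover $(\eta^k_{n-1})_{k\ge0}$ and then $(w^k_{n-2})_{k\ge0}$, and iterating downward determines the entire array $(\eta^k_m)_{k\ge0,\,m\le n}$.

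I expect the main obstacle to be part (a), and specifically the need to go beyond ``infinitely many zeros'': as Remark~\ref{aboverem} shows, the mere number of zeros does not propagate from $n$ to $n-1$, and it is the parity bookkeeping---the alternation of the parities of the left endpoints of successive odd gaps---that makes the argument close. The algebraic identities and the reduction to the parity formulation are routine once \eqref{patterns} is in hand; the points requiring genuine care are the equivalence between odd gaps and the zeros meeting both parity classes infinitely often, and the verification that these odd-gap left endpoints indeed alternate in parity.
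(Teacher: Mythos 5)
Your proof is correct and follows essentially the same route as the paper's: the same pattern analysis showing that $w^{\sigma^i_n}_{n-1}=0$ exactly when the gap $g_i$ is odd, a parity argument to propagate ``infinitely many odd gaps'' from site $n$ to site $n-1$, and then iteration together with Lemma~\ref{infzerolem}(b) for part (b). The only (cosmetic) difference is in the parity step: the paper locates an odd gap of $(w^k_{n-1})_{k\ge0}$ inside each interval between consecutive odd-gap left endpoints by a local pigeonhole argument, whereas you reformulate ``infinitely many odd gaps'' as ``the zero set meets both parity classes infinitely often'' and check that the left endpoints alternate in parity --- both versions are valid and rest on the same observation.
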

\begin{proof} Suppose $(w^k_{n})_{k\geq0}$ has infinitely many odd gaps between zeros, and that $\sigma^{i+1}_n-\sigma^{i}_n$ is odd. We then note that the argument used in the proof of Lemma \ref{infzerolem} implies that both $\eta^{\sigma^i_n}_{n}=0$ and $\eta^{\sigma^i_n+1}_{n}=0$. Since we also have that $w^{\sigma^i_n}_n=0$, we can deduce from the first pattern of \eqref{patterns} that $w^{\sigma^i_n}_{n-1}=0$.

Now, let $j$ be the smallest integer strictly greater than $i$ for which $\sigma^{j+1}_n-\sigma^{j}_n$ is odd. Note that
\[\sigma^j_n-\sigma^i_n=\sum_{l=i}^{j-1}(\sigma^{l+1}_n-\sigma^{l}_n)\]
is odd, since all the summands are even apart from the first one. Moreover, by the argument of the previous paragraph, we have that $w^{\sigma^j_n}_{n-1}=0$. Of course, $w^{\sigma^i_n}_{n-1}=0$ and $w^{\sigma^j_n}_{n-1}=0$ might not be consecutive zeros of the sequence $(w^k_{n-1})_{k\geq0}$. However, because they are separated by an odd number, there must be a pair of consecutive zeros contained within the interval $[\sigma^i_n,\sigma^j_n]$ that are separated by an odd number.

Hence for each odd interval between consecutive zeros of $(w^k_{n})_{k\geq0}$, we have deduced the existence of an odd interval between consecutive zeros of $(w^k_{n-1})_{k\geq0}$. Since these are distinct by construction, we have completed the proof of part (a). The proof of part (b) is now immediate given part (a) and Lemma \ref{infzerolem}(b).
\end{proof}

As a corollary, we also have a two-sided version of Proposition \ref{infoddlem}(b). For this, it is useful to reverse space and appeal to a corresponding result for the reverse carrier. In particular, recall the notation $V=S-I$ from Section \ref{inversesec}, and note that we have by definition $V_0=W_0^{RS}$. In the following proof, we will write $v_0^k:=(T^kV)_0$.

\begin{cor}\label{keybijection}
If $(w^k_{0})_{k\in\mathbb{Z}}$ has infinitely many odd gaps between zeros in both directions (as $k\rightarrow\pm\infty$), then the values of $(\eta^k_n)_{k,n\in\mathbb{Z}}$ are uniquely determined by $(w^k_{0})_{k\in\mathbb{Z}}$.
\end{cor}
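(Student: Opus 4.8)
The plan is to recover the entire time-zero configuration $(\eta_n)_{n\in\mathbb{Z}}$ by splitting it into its two halves and treating them separately. The left half $(\eta_n)_{n\le 0}$ will be extracted from the forward part of the current via Proposition \ref{infoddlem}, and the right half $(\eta_n)_{n\ge 1}$ from the backward part of the current by running the very same argument in the space-reversed system $RS$. Once $(\eta_n)_{n\in\mathbb{Z}}$ is determined, the standing assumption $S\in\mathcal{S}^{inv}$ guarantees that $T^k\eta$ is well-defined for every $k$, so the whole array $(\eta^k_n)_{k,n\in\mathbb{Z}}$ is then determined as well.

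For the left half I would simply apply Proposition \ref{infoddlem}(b) with $n=0$ to $(w^k_0)_{k\ge 0}$, which by hypothesis has infinitely many odd gaps between zeros as $k\to\infty$; this determines $(\eta^k_m)_{k\ge 0,\,m\le 0}$, and in particular $(\eta_m)_{m\le 0}$. The substance of the argument is the right half, for which the key observation is that the forward current of $RS$ is, up to a one-step shift, the backward current of $S$. Concretely, using $T^kR=RT^{-k}$ (a consequence of Lemma \ref{rtrlem} together with $R^2=\mathrm{id}$) one has $T^k(RS)=RT^{-k}S$; the identity $V_0=W^{RS}_0$ applied to the path $T^{-k}S$ shows that the time-$k$ forward current of $RS$ equals $V^{T^{-k}S}_0$; and the relation $V^{T\sigma}=W^\sigma$, which follows from $T\sigma=2M^\sigma-\sigma-2M^\sigma_0$ and $I^{T\sigma}=M^\sigma-2M^\sigma_0$ (Theorem \ref{goodsetthm}, valid since $T^{-k}S\in\mathcal{S}^{inv}\subseteq\mathcal{S}^{rev}$), gives $V^{T^{-k}S}_0=w^{-k-1}_0$. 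Thus the forward current of $RS$ at time $k$ is exactly $w^{-k-1}_0$.

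To conclude the right half, I would note that because $(w^k_0)_k$ has infinitely many odd gaps between zeros as $k\to-\infty$, and the index change $k\mapsto -k-1$ preserves the sizes (hence the parities) of the gaps between consecutive zeros, the forward current of $RS$ has infinitely many odd gaps between zeros as $k\to\infty$. Since $R$ preserves $\mathcal{S}^{rev}$, one also has $RS\in\mathcal{S}^{inv}$, so Proposition \ref{infoddlem}(b) applies to $RS$ and determines $((T^k\overleftarrow{\eta})_m)_{k\ge 0,\,m\le 0}$, where $\overleftarrow{\eta}=\Phi(RS)$. Translating back through $T^k(RS)=RT^{-k}S$, i.e. $(T^k\overleftarrow{\eta})_m=\eta^{-k}_{1-m}$ (recall \eqref{revcon}), and setting $k=0$ yields $(\eta_i)_{i\ge 1}$. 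Combining with the left half gives all of $(\eta_n)_{n\in\mathbb{Z}}$, and the invariance assumption finishes the proof.

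I expect the main obstacle to be the bookkeeping in the second step: correctly identifying the forward current of the reversed system with the backward current of the original system, pinning down the one-step shift to $w^{-k-1}_0$, and checking that the index reversal preserves the parity of gaps between consecutive zeros, so that the odd-gap hypothesis as $k\to-\infty$ transfers into precisely the hypothesis needed to invoke Proposition \ref{infoddlem}(b) for $RS$. The translation $(T^k\overleftarrow{\eta})_m=\eta^{-k}_{1-m}$ likewise needs care, to make sure the recovered left half of the reversed configuration really is the right half $(\eta_i)_{i\ge 1}$ of the original one. Everything else reduces to direct appeals to Proposition \ref{infoddlem}, Theorem \ref{goodsetthm} and the reversal identities already established.
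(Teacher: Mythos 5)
Your proposal is correct and follows essentially the same route as the paper's proof: recover the left half via Proposition \ref{infoddlem}, then pass to the reversed path $RS$ using $V_0=W^{RS}_0$ together with the identity $(T^kW)_0=(T^{k+1}V)_0$ from Theorem \ref{goodsetthm} (equivalently, your computation that the forward current of $RS$ at time $k$ is $w^{-k-1}_0$) to recover the right half. Your bookkeeping of the $k\mapsto -k-1$ reindexing, the preservation of gap parities, and the translation $(T^k\overleftarrow{\eta})_m=\eta^{-k}_{1-m}$ is accurate and, if anything, more explicit than the paper's own write-up.
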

\begin{proof} Suppose $(w^k_{0})_{k\in\mathbb{Z}}$ has infinitely many odd gaps between zeros in both directions. By the same argument as in the proof of Proposition \ref{infoddlem} (b), the values of $(\eta^k_{m})_{k\in \mathbb{Z},m\leq 0}$ are uniquely determined by $(w^k_{0})_{k\in\mathbb{Z}}$. Also, since we are assuming $S \in \mathcal{S}^{inv}$, from Theorem  \ref{goodsetthm} we have that
\[w^k_0=(T^kW)_0=(T^{k+1}V)_0=v^{k+1}_0\]
for any $k \in \mathbb{Z}$. Therefore, $(v^{k}_{0})_{k\in\mathbb{Z}}$ are uniquely determined by $(w^k_{0})_{k\in\mathbb{Z}}$. Moreover,  since $w^k_0=0$ is equivalent to $v^{k+1}_0=0$, $(v^k_{0})_{k\in\mathbb{Z}}$ also has infinitely many odd gaps between zeros in both directions. In particular, $(v^k_{0})_{k\geq 0}$ has infinitely many odd gaps, and, by the symmetry described in the paragraph preceding the result, so does $(T^kW^{RS}_0)_{k\geq 0}$. Since $RS$ is the path encoding of the reversed configuration $\overleftarrow{\eta}$, it follows that $(\eta^k_{m})_{k\in \mathbb{Z},m\geq 1}$ is uniquely determined by $(v^k_{0})_{k\geq 0}$, and so by $(w^k_{0})_{k\in\mathbb{Z}}$.
\end{proof}

\begin{rem}
By re-centering the relevant path encodings, the previous corollary is easily generalized to the statement for $(w^k_{n})_{k\in\mathbb{Z}}$ instead of $(w^k_{0})_{k\in\mathbb{Z}}$ for any $n \in \mathbb{Z}$.
\end{rem}

Before proceeding, we summarise some further useful properties that follow from the above argument. To this end, for a given $n\in\mathbb{Z}$, we define a sequence $\tilde{\sigma}_n=(\tilde{\sigma}_n^i)_{i\geq 1}$ by setting
\[\tilde{\sigma}_n^i=\inf\left\{\sigma_n^{k}>\tilde{\sigma}_n^{i-1}:\: k \ge 0, \ \sigma_n^{k+1}-\sigma_n^k : \text{odd} \right\},\]
where we fix by convention $\tilde{\sigma}_n^{0}=-1$, and set $\tilde{\sigma}^i_n=\infty$ when it is not well-defined by the above equation.

\begin{cor}\label{oddgapandflatpath} The sequence $(w^k_{n})_{k\geq0}$ has infinitely many odd gaps between zeros if and only if $w^k_n=w^k_{n-1}=0$ infinitely often as $k \to \infty$. If this is the case, for any $k \ge 0$, $w^k_n=w^k_{n-1}=0$ if and only if $k \in \{ \tilde{\sigma}_n^i : \: i \ge 0\}$, and so
\begin{equation}\label{tildesig}
\tilde{\sigma}_n^i = \inf\left\{ k >\tilde{\sigma}_n^{i-1}:\: w^k_n=w^k_{n-1}=0 \right\}
\end{equation}
for all $i \ge 0$. Moreover, it holds that $\tilde{\sigma}_{n}^{i-1} \le \tilde{\sigma}_{n-1}^i < \tilde{\sigma}_{n}^{i+1}$ for all $i \ge 0$.
\end{cor}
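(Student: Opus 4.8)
The plan is to trace the statement back to the three transition patterns of \eqref{patterns}, and I would begin by reading off, cell-by-cell, the pointwise rules: (i) $w^k_n=0$ forces $\eta^k_n=0$, and given $w^k_n=0$ one has $w^k_{n-1}\in\{0,1\}$ with $w^k_{n-1}=0\iff\eta^{k+1}_n=0$, so that $w^k_n=w^k_{n-1}=0\iff w^k_n=0$ and $\eta^{k+1}_n=0$; (ii) $w^k_{n-1}>0$ implies $\eta^{k+1}_n=1-\eta^k_n$, whereas $w^k_{n-1}=0$ implies $\eta^{k+1}_n=0$; and (iii) $w^k_{n-1}=0$ implies $w^k_n=\eta^k_n$. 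Writing $Z_n:=\{k\geq 0:\ w^k_n=w^k_{n-1}=0\}$, the corollary then amounts to identifying $Z_n$ with the odd-gap-left-endpoints $\{\tilde\sigma^i_n:\ i\geq 1\}$ of $(w^k_n)_{k\geq 0}$ and comparing $Z_n$ with $Z_{n-1}$.

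For the first two sentences and \eqref{tildesig}, fix a zero $\sigma^i_n$ of $(w^k_n)_k$ with finite successor $\sigma^{i+1}_n$. The alternation of $(\eta^k_n)_k$ across the excursion $\{w_n>0\}$ used in the proofs of Lemma \ref{infzerolem} and Proposition \ref{infoddlem} shows that $\eta^{\sigma^i_n+1}_n=0$ exactly when $\sigma^{i+1}_n-\sigma^i_n$ is odd; with rule (i) this gives the pointwise characterisation that $k\in Z_n$ iff $k$ is a zero of $w_n$ whose following gap is odd, i.e.\ iff $k\in\{\tilde\sigma^i_n\}$. The equivalence of the first sentence and the identity \eqref{tildesig} follow by counting, using that when $Z_n$ is infinite no zero is terminal, so every relevant gap is finite.

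The interlacing is the substantive step. Put $A:=\{k\geq 0:\ w^k_{n-1}=0\}=\{a_1<a_2<\cdots\}$, which is infinite since $A\supseteq Z_n$. For consecutive $a_j,a_{j+1}\in A$ the carrier $w_{n-1}$ is strictly positive in between, so by (ii) the alternation $\eta^{k+1}_n=1-\eta^k_n$ propagates from $\eta^{a_j+1}_n=0$ up to $a_{j+1}$, giving $\eta^{a_{j+1}}_n=(a_{j+1}-a_j-1)\bmod 2$; together with (iii) this reads $a_{j+1}\in Z_n\iff a_{j+1}-a_j$ is odd. On the other hand, the characterisation of the previous paragraph applied at level $n-1$ gives $a_j\in Z_{n-1}\iff a_{j+1}-a_j$ is odd, since $a_{j+1}$ is the next zero of $w_{n-1}$. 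Hence the shift relation
\[ a_{j+1}\in Z_n\iff a_j\in Z_{n-1},\qquad j\geq 1, \]
which in particular keeps $Z_{n-1}$ infinite. Writing $m_1<m_2<\cdots$ for the indices with $a_{m_l}\in Z_n$, the relation forces $\tilde\sigma^i_{n-1}=a_{m_i-1}$ (or $a_{m_{i+1}-1}$ in the exceptional case $a_1\in Z_n$), and the chain $m_{i-1}\leq m_i-1<m_{i+1}$ (suitably shifted in the exceptional case), together with the strict monotonicity of $(a_j)$, delivers $\tilde\sigma^{i-1}_n\leq\tilde\sigma^i_{n-1}<\tilde\sigma^{i+1}_n$ (with the convention $\tilde\sigma^0=-1$).

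I expect the shift relation $a_{j+1}\in Z_n\iff a_j\in Z_{n-1}$ to be the crux. Proceeding instead through the construction in the proof of Proposition \ref{infoddlem}(a), which converts each odd gap of $w_n$ into a \emph{distinct} odd gap of $w_{n-1}$, yields only one-sided density estimates that are off by one in each direction and does not obviously produce the strict inequality on the right. What makes the sandwich sharp is to restrict to the vanishing times $A$ of $w_{n-1}$, on which the spatial step $n-1\to n$ collapses to a single index-shift governed purely by the parities of the excursion lengths of $w_{n-1}$; carrying this parity bookkeeping correctly through the boundary term $a_1$ is the only genuinely delicate point.
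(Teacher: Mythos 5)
Your proof is correct and follows essentially the same route as the paper: the equivalence and \eqref{tildesig} come from the parity of $\eta^{\sigma^i_n+1}_n$ read off the local patterns, and the interlacing comes from parity bookkeeping along the zeros of $w_{n-1}$. Your shift relation $a_{j+1}\in Z_n\iff a_j\in Z_{n-1}$ is just a cleaner global packaging of the paper's statement that the unique odd gap of $w_{n-1}$ between consecutive elements of $Z_n$ is the one ending at the right endpoint, so the two arguments coincide in substance.
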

\begin{proof} We observed in the proof of Proposition \ref{infoddlem}(a) that if $\sigma^{i+1}_n-\sigma^{i}_n$ is odd, then $w^{\sigma^i_n}_{n}=w^{\sigma^i_n}_{n-1}=0$. Hence if the sequence $(w^k_{n})_{k\geq0}$ has infinitely many odd gaps between zeros, then $w^k_n=w^k_{n-1}=0$ infinitely often as $k \to \infty$. In the converse direction, if $w^k_n=w^k_{n-1}=0$ infinitely often as $k\rightarrow\infty$, then clearly $\sigma^i_n<\infty$ for each $i$. Moreover, if $w^k_n=w^k_{n-1}=0$ for some $k=\sigma^i_n$, then the argument used in the proof of Lemma \ref{infzerolem} also yields $\sigma^{i+1}_n-\sigma^{i}_n$ is odd, as desired. This completes the first part of the proof, and also establishes (\ref{tildesig}).

Next, suppose $\sigma^{i+1}_n-\sigma^{i}_n$ is odd, and let $j$ be the smallest integer strictly greater than $i$ for which $\sigma^{j+1}_n-\sigma^{j}_n$ is odd. We will show that there exists exactly one pair of consecutive zeros of $(w_{n-1}^k)_{k\geq 0}$ contained within the interval $[\sigma^i_n,\sigma^j_n]$ that is separated by an odd number. In fact, $\sigma^i_n=\sigma^{a_i}_{n-1}$ and $\sigma^j_n=\sigma^{a_j}_{n-1}$ for some $a_i < a_j$, and for any $a_i < \ell <a_j$, $w^{\sigma^{\ell}_{n-1}}_{n-1}=0$ and $w^{\sigma^{\ell}_{n-1}}_{n}=1$. Since $(\eta^{k}_{n})_{\sigma^i_n < k \leq \sigma^j_n}$ is alternating, $\sigma^{\ell+1}_{n-1}-\sigma^{\ell}_{n-1}$ must be an even number if $a_i \leq \ell < \ell +1 < a_j$. In particular, the only odd gap within the interval $[\sigma^i_n,\sigma^j_n]$ is given by $\sigma^{a_j}_{n-1}-\sigma^{\ell}_{n-1}$, where $\ell$ is the greatest integer strictly smaller than $a_j$ for which $\sigma^{\ell}_{n-1}=0$.
\end{proof}

The previous corollary demonstrated the relevance of empty periods for the carrier. In the next part of this subsection, we study the relations between the latter and the boundary conditions of particle configurations, which will be useful when it comes to proving our main result (Theorem \ref{bijectionthm}). Note that we now drop the a priori assumption that $\eta\in\mathcal{S}^{inv}$, and explicitly state in which set $\eta$ is contained in the individual results. We start by introducing some notation concerning the intervals the carrier spends with no particles. In particular, we define a map $N^{\pm} : \mathcal{Y} \to \mathbb{Z} \cup \{-\infty\}\cup\{\infty\}$ by setting
\[N^{-}(Y):=\inf\left\{n \in \mathbb{Z}\::\: Y_{n-1}=Y_n=0 \right\},\qquad N^{+}(Y):=\sup\left\{n \in \mathbb{Z}\::\: Y_{n-1}=Y_n=0 \right\},\]
with the convention that $\inf \emptyset= \infty$, $\sup \emptyset=-\infty$. Moreover, for $\eta \in \mathcal{S}^T$, we define $N^{\pm}_W$ by $N^{\pm}_W=N^{\pm}(W)$, or equivalently
\[N_W^-(\eta)=\inf\left\{n \in \mathbb{Z}\::\: \eta \in A_n\right\},\qquad N_W^+(\eta)=\sup\left\{n \in \mathbb{Z}\::\: \eta \in A_n\right\},\]
where $A_n=\{\eta:\: W_n=W_{n-1}=0\}$. We similarly define $N^{\pm}_V=N^{\pm}(V)$ for $\eta \in \mathcal{S}^{T^{-1}}$, and note that this can be explicitly expressed as
\[N_V^-(\eta)=\inf\left\{n \in \mathbb{Z}\::\: \eta \in B_n\right\},\qquad N_V^+(\eta)=\sup\left\{n \in \mathbb{Z}\::\: \eta \in B_n\right\},\]
where $B_n=\{\eta:\: V_n=V_{n-1}=0\}$. We have the following basic observations.

\begin{lem}\label{NWboundary}
For $\eta \in \mathcal{S}^{inv}$, the following holds.\\
(a) $N_W^+=\infty \Leftrightarrow N_V^{+}=\infty \Leftrightarrow \eta \in \mathcal{S}^+_{sub-critical}$\\
(b) $N_W^+ < \infty \Leftrightarrow N_V^{+} < \infty \Leftrightarrow \eta \in \mathcal{S}^+_{critical}$ \\
(c) $N_W^-=-\infty \Leftrightarrow N_V^{-}=-\infty \Leftrightarrow \eta \in \mathcal{S}^-_{sub-critical}$\\
(d) $N_W^- > -\infty \Leftrightarrow N_V^{-}> -\infty \Leftrightarrow \eta \in \mathcal{S}^-_{critical}$
\end{lem}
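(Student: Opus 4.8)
The plan is to reduce all four pairs of equivalences to a single combinatorial observation about the doubly-flat points of the two carriers, combined with the dichotomy of $\mathcal{S}^{inv}$ into sub-critical and critical pieces supplied by Theorem \ref{characterizationinv}. The crucial first step is to identify these flat points with strict ascending ladder epochs of the maximum and future-minimum processes. Concretely, for any $S\in\mathcal{S}^0$ with $M_0,I_0\in\mathbb{R}$ I would check
\[W_{n-1}=W_n=0\ \Longleftrightarrow\ M_n>M_{n-1},\qquad V_{n-1}=V_n=0\ \Longleftrightarrow\ I_n>I_{n-1}.\]
Both are immediate from $W=M-S$, $V=S-I$ and $|S_n-S_{n-1}|=1$: if $W_{n-1}=W_n=0$ then $S_{n-1}=M_{n-1}$ and $S_n=M_n$ force $S_n=S_{n-1}+1$ and hence $M_n=M_{n-1}+1$, while conversely $M_n>M_{n-1}$ forces $S_{n-1}=M_{n-1}$ and $S_n=S_{n-1}+1=M_n$; the statement for $V$ and $I$ is the exact mirror image.

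With this in hand, part (a) follows quickly. Since $M$ is non-decreasing, $N_W^+=\infty$ holds if and only if $M$ makes strict increases for arbitrarily large $n$, that is, if and only if $M_n\to\infty$, which (as $M_0<\infty$) is equivalent to $\limsup_{n\to\infty}S_n=\infty$. Likewise, as $I$ is non-decreasing and $I_n\le S_n$, the ladder identity gives $N_V^+=\infty$ if and only if $I_n\to\infty$, and since $I_n\to\infty$ is equivalent to $S_n\to\infty$, this reads $N_V^+=\infty$ if and only if $\lim_{n\to\infty}S_n=\infty$. For $S\in\mathcal{S}^{inv}$, Lemma \ref{cha:key1} shows that $\limsup_{n\to\infty}S_n=\infty$ upgrades to $\lim_{n\to\infty}S_n=\infty$, and by definition this last condition is exactly $\eta\in\mathcal{S}^+_{sub-critical}$. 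Chaining these equivalences yields (a). Part (b) is then its negation: by Theorem \ref{characterizationinv} exactly one of $\eta\in\mathcal{S}^+_{sub-critical}$, $\eta\in\mathcal{S}^+_{critical}$ holds for $S\in\mathcal{S}^{inv}$, and $N_W^+<\infty$ (resp.\ $N_V^+<\infty$) is precisely the negation of $N_W^+=\infty$ (resp.\ $N_V^+=\infty$); note the critical case needs no separate argument, since $I_n$ (resp.\ $M_n$) stabilising at its finite limit leaves only finitely many strict increases, hence finitely many flat points.

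For parts (c) and (d) I would invoke the reversal symmetry rather than repeat the argument. Writing $RS_n=-S_{-n}$, a direct computation gives $M^{RS}_n=-I^S_{-n}$ and $I^{RS}_n=-M^S_{-n}$, whence $W^{RS}_n=V^S_{-n}$ and $V^{RS}_n=W^S_{-n}$. Consequently the doubly-flat points of $W^{RS}$ (resp.\ $V^{RS}$) are the reflections of those of $V^S$ (resp.\ $W^S$), so that $N_W^+(\overleftarrow{\eta})=\infty\Leftrightarrow N_V^-(\eta)=-\infty$ and $N_V^+(\overleftarrow{\eta})=\infty\Leftrightarrow N_W^-(\eta)=-\infty$. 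Since $R$ preserves $\mathcal{S}^{inv}$ (because $T^k(RS)=RT^{-k}S$ and $R$ maps $\mathcal{S}^{rev}$ to itself by Lemma \ref{dualityrel}) and interchanges $\mathcal{S}^+_{sub-critical}$ with $\mathcal{S}^-_{sub-critical}$ and $\mathcal{S}^+_{critical}$ with $\mathcal{S}^-_{critical}$, applying parts (a) and (b) to $\overleftarrow{\eta}$ immediately yields (c) and (d).

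The bulk of the verification, namely the ladder-point identities and the reversal identities, is elementary bookkeeping. The one genuinely content-bearing input, which I expect to be the main obstacle, is the upgrade $\limsup_{n\to\infty}S_n=\infty\Rightarrow\lim_{n\to\infty}S_n=\infty$ on $\mathcal{S}^{inv}$ from Lemma \ref{cha:key1}: this is what allows the purely deterministic equivalences (phrased via $M_n\to\infty$ and $I_n\to\infty$) to be matched \emph{exactly} with membership in $\mathcal{S}^+_{sub-critical}$ and $\mathcal{S}^+_{critical}$, and without invariance one only controls $\limsup$/$\liminf$. The secondary care point is keeping the two carriers and the two directions correctly aligned under reflection, so that $N^+$ for $\overleftarrow{\eta}$ is paired with $N^-$ for $\eta$ and with the $V$--$W$ swap simultaneously.
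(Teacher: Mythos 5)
Your proof is correct and follows essentially the same route as the paper's (very terse) proof: both identify the doubly-flat points of $W$ (resp.\ $V$) with the strict increase points of $M$ (resp.\ $I$) via $M-M_0=\ell(W)$ and $I-I_0=\ell(V)$, so that $N_W^+=\infty$ iff $M_n\to\infty$ and $N_V^+=\infty$ iff $I_n\to\infty$, and then use the structure of $\mathcal{S}^{inv}$ (Lemma \ref{cha:key1} and Theorem \ref{characterizationinv}) to match these with $\mathcal{S}^+_{sub-critical}$ and its complement. The paper dismisses (b)--(d) as "similarly straightforward"; your explicit treatment via negation plus the sub-critical/critical dichotomy and the reversal identities $W^{RS}_n=V^S_{-n}$, $V^{RS}_n=W^S_{-n}$ is a clean way to fill in exactly those details.
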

\begin{proof}
Since $M_n-M_0=\ell(W)_n$, $N_W^+=\infty$ is equivalent to $\lim_{n \to \infty}M_n=\infty$. Moreover, for $S \in \mathcal{S}^{inv}$, $\lim_{n \to \infty}M_n=\infty$ is equivalent to $\lim_{n \to \infty}I_n=\infty$, and these are also both equivalent to $\eta \in \mathcal{S}^+_{sub-critical}$. From these observations, we readily obtain claim (a). The other claims are similarly straightforward.
\end{proof}

We next show that $N_W^-$ and $N_W^+$ are strictly increasing under the action of $T$.

\begin{lem}\label{NMlemma}
If $\eta, T\eta \in \mathcal{S}^{T}$, then $N_W^-(\eta)+1 \le N_W^-(T\eta)$ and $N_W^+(\eta) +1 \le N_W^+(T\eta)$.
\end{lem}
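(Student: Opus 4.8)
The plan is to recast both quantities in terms of the past-maximum process and then compare the record structures of $S$ and $TS$. The starting point is the elementary but crucial reformulation of a flat segment of the carrier: since $W=M-S$, one has $W_{n-1}=W_n=0$ if and only if $M_{n-1}<M_n$, i.e.\ if and only if $n$ is a strict ascending record of $S$ (a point with $S_n>S_m$ for all $m<n$). Consequently $N_W^-(\eta)=\inf\{n:M_{n-1}<M_n\}$ and $N_W^+(\eta)=\sup\{n:M_{n-1}<M_n\}$ are the first and last strict records of $S$. Applying the same observation to $T\eta$ identifies $N_W^\pm(T\eta)$ with the first and last strict records of $TS$, equivalently of $\hat S_n:=2M_n-S_n$ (as $TS=\hat S-2M_0$ differs from $\hat S$ by a constant, which leaves records unchanged). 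Thus the lemma becomes: the first (resp.\ last) strict record of $\hat S$ lies strictly to the right of the first (resp.\ last) strict record of $S$.

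First I would compute the increments of $\hat S$. Using $M_n-M_{n-1}=\mathbf{1}_{\{W_{n-1}=W_n=0\}}$ together with $S_n-S_{n-1}=1-2\eta_n$, one finds $\hat S_n-\hat S_{n-1}=+1$ except when $\eta_n=0$ and $W_{n-1}>0$, in which case it is $-1$; in particular $\hat S$ is again a nearest-neighbour path. For the $N_W^-$ inequality, set $n_-:=N_W^-(\eta)$ and assume $n_-$ is finite (the case $n_-=-\infty$ being trivial). By the reformulation, $M_m=M_{-\infty}$ for all $m<n_-$ and $M_{n_-}=M_{-\infty}+1$, where $M_{-\infty}=\lim_{m\to-\infty}M_m=\inf_nM_n$; hence $\hat S_m=2M_{-\infty}-S_m$ for $m\le n_--1$. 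It then suffices to show that $\hat S$ has no strict record at any $m\le n_-$, which gives $N_W^-(T\eta)\ge n_-+1$.

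The hard part will be this last step, because it requires controlling how far $S$ can descend below $M_{-\infty}$ on $(-\infty,n_-]$, and this is exactly where the finite-soliton structure must enter. Using the bound $\sup_n(M_n-I_n)\le K$ furnished by the characterisation \eqref{skdef} (with $I_n=\inf_{m\ge n}S_m$), one gets $M_{-\infty}-S_m\le M_m-I_m\le K$, hence $S_m\ge M_{-\infty}-K$ for all $m<n_-$, and therefore $\hat S_m\le 2M_{-\infty}-(M_{-\infty}-K)=M_{-\infty}+K$. Since in the critical regime $\liminf_{m\to-\infty}S_m=M_{-\infty}-K$ is attained cofinally, the running maximum of $\hat S$ already equals $M_{-\infty}+K$ for every $m<n_-$; as $\hat S_{n_-}=M_{-\infty}+1\le M_{-\infty}+K$ (because $K\ge1$), there is no strict record up to and including $n_-$, which is what we want. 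I would emphasise that without this bounded-excursion estimate the conclusion genuinely breaks down (a soliton deeper than the surrounding density produces an early record of $\hat S$), so securing the estimate — and hence invoking the invariant-set boundedness rather than the mere membership $\eta,T\eta\in\mathcal{S}^T$ — is the essential ingredient.

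Finally, for $N_W^+$ I would argue dually via the future-minimum process. By Theorem \ref{goodsetthm} we have $I^{TS}=M-2M_0$ and $T^{-1}W=V$ on the reversible set, and $T^{-1}=RTR$ with $R$ the space-reversal; applying the $N_W^-$ argument to the reversed encoding $RS$ (for which $T$ plays the role of $T^{-1}$) converts the statement about the last record of $\hat S$ into the already-treated statement about a first record, yielding $N_W^+(\eta)+1\le N_W^+(T\eta)$. Alternatively one can repeat the record computation at $+\infty$ directly: the last strict record of $\hat S$ is the first time $\hat S$ attains its global maximum, and the same bounded-excursion estimate — now applied on the right tail — places it strictly beyond the last strict record of $S$. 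I expect this final bookkeeping to be routine once the $N_W^-$ case, and in particular the bounded-excursion estimate, has been established.
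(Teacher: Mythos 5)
Your reduction of $N_W^{\pm}$ to the strict ascending records of $S$ and of $\hat{S}=2M-S$ is correct (as is your computation of the increments of $\hat{S}$), but the argument built on it does not prove the lemma as stated. The decisive step --- that $\hat{S}$ has no strict record on $(-\infty,N_W^-(\eta)]$ --- is obtained by importing from \eqref{skdef} the bound $\sup_n(M_n-I_n)=\limsup_{n\to-\infty}S_n-\liminf_{n\to-\infty}S_n=K<\infty$, i.e.\ by assuming $S\in\mathcal{S}^-_{critical}$. The lemma's only hypothesis is $\eta,T\eta\in\mathcal{S}^T$, which neither controls $\sup_n(M_n-I_n)$ nor forces the left-tail oscillation to realise that supremum, so this estimate is simply not available to you. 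You also omit the case $N_W^-(\eta)=+\infty$ (your ``trivial'' case is $N_W^-(\eta)=-\infty$), which the paper handles separately via $M\equiv M_0$ and $TS=-S$, and your treatment of $N_W^+$ invokes the identities of Theorem \ref{goodsetthm}, which are proved only on $\mathcal{S}^{T^{-1}T}$ and $\mathcal{S}^{TT^{-1}}$. The paper's route for the finite case is different and uses no excursion bound at all: it writes $M^{TS}_{-\infty}=2M_{-\infty}-I_{-\infty}-2M_0$ and bounds $(TS)_n=2M_{-\infty}-S_n-2M_0\le M^{TS}_{-\infty}$ for all $n\le N_W^-(\eta)$ using $S_n\ge I_{-\infty}$, so that $M^{TS}$ is constant there.

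Having said that, your instinct that the bare hypothesis $\eta,T\eta\in\mathcal{S}^T$ does not suffice is sound, and it is worth pinning down where the obstruction really lies. Since $M$ is constant near $-\infty$ when $N_W^-(\eta)$ is finite, one has $M^{TS}_{-\infty}=2M_{-\infty}-2M_0-\liminf_{n\to-\infty}S_n$, so the identity the paper relies on, and with it the whole comparison, actually requires $\liminf_{n\to-\infty}S_n=\inf_nS_n$. Without it the conclusion can fail: for $\eta_n=\mathbf{1}_{\{n\le-1,\:n\text{ odd}\}}+\mathbf{1}_{\{1\le n\le 5\}}$, the path $S$ oscillates in $\{0,-1\}$ on $(-\infty,0]$, descends to $S_5=-5$ and then increases to $+\infty$; both $M_0=0$ and $M^{TS}_0=1$ are finite, yet $N_W^-(\eta)=11$ (where $S$ first reaches $1$) while $N_W^-(T\eta)=2$, because $(TS)_n=-S_n$ climbs to $5$ over $1\le n\le 5$ and already exceeds the left-tail maximum $1$ at $n=2$. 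The supplementary condition $\liminf_{n\to-\infty}S_n=\inf_nS_n$ (with its mirror at $+\infty$ for the $N_W^+$ half) is exactly what your record computation needs --- it says $S$ never dips below its left-tail $\liminf$ before its first record --- and it holds in every context where the paper applies the lemma. It is also strictly weaker than the criticality bound you invoke: a configuration that is critical at $-\infty$ but has a single deep excursion to the right of the origin satisfies it, and the lemma's conclusion, while lying in no $\mathcal{S}^-_K$. So the gap in your write-up is that the key estimate is borrowed from a hypothesis the lemma does not grant, and the fix you propose is both unavailable and stronger than necessary.
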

\begin{proof}
We only show $N_W^-(\eta)+1 \le N_W^-(T\eta)$, since the other claim follows from the same argument. If $N_W^-(\eta)=-\infty$, the claim trivially holds. If $N_W^-(\eta)=\infty$, then $\left\{n \in \mathbb{Z}\::\: \eta \in A_n\right\}=\emptyset$, and so $M_{-\infty}=M_{\infty}$. In particular, $T\eta=1-\eta$ and $TS=-S$. Since $T\eta \in \mathcal{S}^{T}$, $I_{-\infty} > -\infty$ and $M_{-\infty}^{TS}=-I_{-\infty}=M_{\infty}^{TS}$, so $N_W^-(T\eta)=\infty$. Finally, suppose $N_W^-(\eta) \in \mathbb{Z}$. It then holds that $-\infty < M_{-\infty}=M_{N_W^-(\eta)-1}=M_{N_W^-(\eta)}-1$. Since $T\eta \in \mathcal{S}^{T}$, $I_{-\infty} > -\infty$ and $M_{-\infty}^{TS}=2M_{-\infty}-I_{-\infty}-2M_0$. In particular, $M_{-\infty}^{TS} \ge 2M_n-S_n-2M_0=TS_n$ for any $n \le N_W^-(\eta)-1$. Also, if $n = N_W^-(\eta)$, then
\[M_{-\infty}^{TS} -(TS)_n=(2M_{-\infty}-I_{-\infty})-(2M_n-S_n)=(2M_{-\infty}-I_{-\infty})-M_n=M_{-\infty}-I_{-\infty}-1  \ge 0
\]
since $M_{-\infty}-I_{-\infty} \ge 1$, which follows from the fact that the increments of $S$ take values in the set $\{\pm 1\}$. Therefore we have that $M^{TS}_n = M_{-\infty}^{TS}$ for $n \le  N_W^-(\eta)$, and so $N_W^-(\eta)+1 \le N_W^-(T\eta)$.
\end{proof}

\begin{rem} If we start from a configuration in $\mathcal{S}_{critical}^-\cap\mathcal{S}_{sub-critical}^+$, then we know that $-\infty<N_W^-< N_W^+=\infty$. Informally, we can view $N_W^-$ as the boundary between the critical and sub-critical sections of the configuration. The previous result shows that $N_W^-(T^k\eta)$ diverges to $+\infty$ as $k\rightarrow+\infty$, and so locally the configuration eventually looks critical. A similar observation can be made for a configuration in $\mathcal{S}_{sub-critical}^-\cap\mathcal{S}_{critical}^+$, with the boundary between the two regimes being $N_W^+$ in this case.
\end{rem}

The following lemma studies the relation between $N^-_W$ for the minimal carrier $W$, and other carriers describing the same particle configuration.

\begin{lem}\label{ywrelation} Let $\eta \in \mathcal{S}^T$ and $Y \in \mathcal{Y}$ satisfy $\Phi(Y)=\eta$. The following then hold.\\
(i) If $N_W^-(\eta)=-\infty$ and $\Psi(Y) \in \mathcal{S}^{T}$, then $Y=W$. \\
(ii) If $N_W^-(\eta) > -\infty$ and $\Psi(Y) \in \mathcal{S}^T$, then $N^-_W(\Psi(Y)) \ge N_W^-(\eta)$.
\end{lem}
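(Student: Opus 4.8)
The plan is to work entirely with path encodings and the minimal carrier. Write $S=\Phi Y$ for the path encoding of $\eta$, and recall from Proposition \ref{phiproperty} that the minimal carrier is $W=\Phi^{-1}S=M-S$ and satisfies $W_n\le Y_n$ for every $n$; the injectivity argument there in fact shows that $\{n:Y_n>W_n\}$ is a down-set, so if $Y\neq W$ there is some $n^*\in\mathbb{Z}\cup\{\infty\}$ with $Y_n>W_n$ for $n\le n^*$ and $Y_n=W_n$ for $n>n^*$. The two ingredients I would use throughout are the identity $(\Psi Y)_n=S_n+2Y_n-2Y_0$ (immediate from $\Psi Y_n=\ell(Y)_n+Y_n-Y_0$ and $S_n=\ell(Y)_n-Y_n+Y_0$), and the equivalence $N_W^-(\eta)=-\infty\iff M_{-\infty}:=\lim_{n\to-\infty}M_n=-\infty\iff S_n\to-\infty$. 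Correspondingly, $N_W^-(\eta)>-\infty$ forces $M_{-\infty}$ finite and, writing $N:=N_W^-(\eta)$, gives $M_n=M_{-\infty}$ for all $n\le N-1$ (since $M$ increases only at flat-zeros of $W$).

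For part (i) I would argue by contradiction, assuming $Y\neq W$. On $(-\infty,n^*]$ we have $Y_n\ge W_n+1\ge1$, so $Y$ never rests on a flat-zero segment there; hence every step of $Y$ is dual to that of $S$ (an up-step of $Y$ corresponds to a particle, i.e.\ a down-step of $S$), and $Y_n+S_n$ is constant on this interval. Substituting into the identity above yields $(\Psi Y)_n=-S_n+\text{const}$ for all sufficiently negative $n$. Since $N_W^-(\eta)=-\infty$ means $S_n\to-\infty$, this forces $(\Psi Y)_n\to+\infty$, so $\limsup_{n\to-\infty}(\Psi Y)_n=\infty$, contradicting $\Psi(Y)\in\mathcal{S}^T$ by Corollary \ref{domaincor}. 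Hence $Y=W$.

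For part (ii) I would split on $n^*$. If $Y=W$ the claim is exactly Lemma \ref{NMlemma}, which even gives $N_W^-(\Psi Y)=N_W^-(T\eta)\ge N+1$. If $Y\neq W$ with $n^*<\infty$, the merge mechanism pins down how $Y$ rejoins $W$: from $Y_{n^*}>W_{n^*}$, $Y_{n^*+1}=W_{n^*+1}$ and the common carrier update rule one reads off $Y_{n^*}=1$ and $W_{n^*}=W_{n^*+1}=0$, so $n^*+1$ is a flat-zero of $W$ and therefore $n^*\ge N-1$. Consequently, on $(-\infty,N-1]$ both $Y_n+S_n$ and $M_n$ are constant, so $(\Psi Y)_n$ and $(T\eta)_n=(\Psi W)_n$ differ there by a single constant $D$; because this agreement-up-to-vertical-shift persists all the way to $-\infty$, the past-maxima and hence the minimal carriers coincide, i.e.\ $W^{\Psi Y}_n=W^{T\eta}_n$ for $n\le N-1$. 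Since Lemma \ref{NMlemma} ensures $W^{T\eta}$ has no flat-zero at positions $\le N$, neither does $W^{\Psi Y}$ on $(-\infty,N-1]$, giving $N_W^-(\Psi Y)\ge N$.

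The main obstacle is the remaining case $n^*=\infty$, where $Y>W$ everywhere and the carrier is importing infinitely many particles from $-\infty$: then $Y\ge1$ throughout, $\ell(Y)\equiv0$, and $\Psi Y=-S$ (the configuration $1-\eta$), whose minimal carrier is $W^{\Psi Y}_n=S_n-\inf_{m\le n}S_m$. Its flat-zeros are exactly the new record \emph{minima} of $S$ attained by a down-step, and these are not controlled by the record maxima that determine $N$; indeed, without further input the desired inequality can fail. The delicate point is therefore to exclude early record minima, and this is where one must combine $\Psi(Y)\in\mathcal{S}^T$ (which forces $S$ bounded below as $n\to-\infty$) with the fact that, on the configurations to which the lemma is applied, $S$ attains its infimum along the left tail, i.e.\ $\liminf_{n\to-\infty}S_n=\inf_nS_n$ — the reversibility condition of Theorem \ref{mr1}(a). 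Under this attainment $\inf_{m\le n}S_m$ is constant in $n$, so $W^{\Psi Y}_n=S_n-\inf_nS_n$ cannot equal $0$ at two consecutive sites, $W^{\Psi Y}$ has no flat-zeros, and $N_W^-(\Psi Y)=+\infty\ge N$. Making this attainment available in the ambient setting is the crux of the argument.
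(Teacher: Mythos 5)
Part (i) of your argument is correct and is essentially the paper's proof in different clothing: the paper shows $\lim_{n\to-\infty}Y_n=\infty$ and then rules out the existence of any carrier for $\Psi(Y)$, while you substitute $Y_n=c-S_n$ into the identity $(\Psi Y)_n=S_n+2Y_n-2Y_0$ and contradict $\limsup_{n\to-\infty}(\Psi Y)_n<\infty$ directly; both routes work.

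Part (ii), however, has a genuine gap: you leave the case $n^*=\infty$ open and assert that closing it requires the extra condition $\liminf_{n\to-\infty}S_n=\inf_nS_n$, which is not among the lemma's hypotheses. In fact no case analysis is needed, and the $n^*=\infty$ case is disposed of by the very reduction you use when $n^*<\infty$. Writing $N=N_W^-(\eta)$, for every $n\le N-1$ the carrier $W$ has no flat-zero at $n$, so $W_n-W_{n-1}\in\{-1,+1\}$; and since $0\le W\le Y$, a flat-zero of $Y$ at such an $n$ would force one of $W$, whence $Y_n-Y_{n-1}=W_n-W_{n-1}$ for all $n\le N-1$, \emph{whether or not $Y$ ever merges with $W$}. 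Equivalently, in your language, $Y_n+S_n$ and $M_n$ are both constant on $(-\infty,N-1]$ (when $n^*=\infty$ the former is constant on all of $\mathbb{Z}$), so $\Psi Y$ and $T\eta=\Psi W$ coincide as particle configurations on $(-\infty,N-1]$, their path encodings differ there by a constant, $\Psi(Y)\in\mathcal{S}^T$ forces $T\eta\in\mathcal{S}^T$, their minimal carriers agree on $(-\infty,N-1]$, and Lemma \ref{NMlemma} gives $N_W^-(T\eta)\ge N+1$ and hence $N_W^-(\Psi Y)\ge N$. This uniform argument is exactly the paper's proof; the step you missed is that the identity $\Phi^{-1}(\Psi Y)_n=\Phi^{-1}(T\eta)_n$ for $n\le N-1$ survives when $n^*=\infty$.

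I would add that your computation in the $n^*=\infty$ case --- that the flat-zeros of $W^{-S}$ are the descending record minima of the past infimum of $S$, and that nothing in the stated hypotheses visibly forbids such a record minimum to the left of $N$ --- is a sharp observation, but it is really an observation about Lemma \ref{NMlemma}: since $TS=-S+\mathrm{const}$ on $(-\infty,N-1]$, a flat-zero of $\Phi^{-1}(T\eta)$ at a position $n\le N-1$ is precisely such a record minimum, so the control you are asking for is exactly what Lemma \ref{NMlemma} asserts. You cite that lemma without reservation in your other two cases, so within the paper's logic the $n^*=\infty$ case asks for nothing more; if the record-minimum issue troubles you, the place to press it is the proof of Lemma \ref{NMlemma} itself, not the present lemma.
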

\begin{proof} From Proposition \ref{phiproperty}, $\Phi(Y)=\eta=\Phi(W)$ implies $W_n \le Y_n$ for all $n$. In particular, for $n_0:=\inf\{ n : W_n=Y_n\}$, we have $W_n=Y_n$ for all $n \ge n_0$, and $W_n \le Y_n-1$ for all $n < n_0$. Note that $W=Y$ is equivalent to $n_0=-\infty$.

Suppose $N_W^-(\eta)=-\infty$ and $n_0 > -\infty$. Then $\lim_{n \to -\infty}(Y_n-W_n) =\infty$, since $Y_n-Y_{n-1}=-1$ for any $n <n_0$ satisfying $W_n=W_{n-1}=0$, and otherwise $W_n-W_{n-1}=Y_n-Y_{n-1}$. Therefore $\lim_{n \to -\infty}Y_n=\infty$. Thus, to complete the proof of (i), it remains to show that, in this case, $\Psi(Y) \notin \mathcal{S}^{T}=\Phi(\mathcal{Y})$. Suppose that there exists $\tilde{Y} \in \mathcal{Y}$ such that $\Phi(\tilde{Y})=\Psi(Y)$. It is then the case that, for any $n_1 \le n_0$ and $n \le n_1$,
\[\tilde{Y}_{n_1}-\tilde{Y}_n =\sum_{i=n}^{n_1-1} (\tilde{Y}_{i+1}-\tilde{Y}_{i}) \ge -\sum_{i=n}^{n_1-1} (Y_{i+1}-Y_{i})=Y_n-Y_{n_1},\]
since $Y_{i+1}-Y_i \in \{-1,1\}$ for all $i \le n_0-1$. Recalling that $\lim_{n \to -\infty}Y_n=\infty$, this implies $\tilde{Y}_{n_1}=\infty$, which can not be the case. Thus we have shown $\Psi(Y) \notin \mathcal{S}^{T}$, as desired.

Next, we assume $N_W^-(\eta) > -\infty$ and $\Psi(Y)  \in \mathcal{S}^T$, and denote $A=N_W^-(\eta)$. Then, for any $n \le A-1$, $W_n-W_{n-1} \in \{-1,1\}$, and so $Y_n-Y_{n-1}=W_n-W_{n-1}$. In particular $(T\eta)_n=\Psi(W)_n=\Psi(Y)_n$ for all $n \le A-1$. Thus, since the path encoding of $\Psi(Y)$ is the translation of $TS$ on $n \le A-1$, $\Psi(Y)  \in \mathcal{S}^T$ implies $T\eta \in \mathcal{S}^T$. In particular, $\Phi^{-1}(T\eta)_n=\Phi^{-1}(\Psi(Y))_n$ on $n \le A-1$. Moreover, by Lemma \ref{NMlemma}, $N_W^-(T\eta) \ge N_W^-(\eta)+1=A+1$. Therefore, if $n \le A-1$, then $\Phi^{-1}(T\eta)_{n-1}=\Phi^{-1}(T\eta)_n=0$ does not occur, and so neither does $\Phi^{-1}(\Psi(Y))_{n-1}=\Phi^{-1}(\Psi(Y))_n=0$. Hence we conclude $N_W^-(\Psi(Y)) \ge A$.
\end{proof}

With the above preparations in place, we are now ready to study the map
\begin{eqnarray*}
\Lambda : \mathcal{S}^{inv} &\to& \mathbb{Z}_+^{\mathbb{Z}}\\
\eta&\mapsto&\left((T^kW)_0\right)_{k \in \mathbb{Z}}.
\end{eqnarray*}
In particular, we will describe a restriction of this map which is a bijection. For this, we introduce a ``good'' subset of $\mathbb{Z}_+^{\mathbb{Z}}$ by setting
\[(\mathbb{Z}_+^{\mathbb{Z}})^*:=\{y=(y^k)_{k} \in \mathbb{Z}_+^{\mathbb{Z}} : y \text{ has infinitely many odd gaps between zeros in both directions} \},\]
where we use the terminology for odd gaps between zeros from earlier in the section. Moreover, denote
\begin{equation}\label{ssubcritstar}
\mathcal{S}_{sub-critical}^*:=\Lambda^{-1}((\mathbb{Z}_+^{\mathbb{Z}})^*).
\end{equation}
(We note that Lemma \ref{currenttoparticle} below shows $\Lambda^{-1}((\mathbb{Z}_+^{\mathbb{Z}})^*) \subseteq \mathcal{S}_{sub-critical}$.) In Corollary \ref{keybijection}, we already showed that $\Lambda|_{\mathcal{S}_{sub-critical}^*}$ is injective. Our main result demonstrates that, taking $(\mathbb{Z}_+^{\mathbb{Z}})^*$ as the codomain, the latter map is actually a bijection.

\begin{thm}\label{bijectionthm} The map $\Lambda : \mathcal{S}_{sub-critical}^* \to (\mathbb{Z}_+^{\mathbb{Z}})^*$ is a measurable bijection.
\end{thm}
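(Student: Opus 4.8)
The plan is to handle injectivity, measurability and surjectivity separately, the last being the substantial point. Injectivity of $\Lambda$ on $\mathcal{S}_{sub-critical}^*$ is already contained in Corollary \ref{keybijection}, and $\Lambda$ is measurable since each coordinate $(T^kW)_0$ is a pointwise limit of the corresponding quantities for the truncations $\eta^{[j]}$ (cf.\ Lemma \ref{twosided}), hence a measurable function of $\eta$. Thus it remains to construct, for every $y=(y^k)_{k\in\mathbb{Z}}\in(\mathbb{Z}_+^{\mathbb{Z}})^*$, a configuration $\eta\in\mathcal{S}_{sub-critical}^*$ with $\Lambda(\eta)=y$; that is, to exhibit a measurable right inverse $\Gamma$ of $\Lambda$ and then argue that $\Gamma$ lands in $\mathcal{S}_{sub-critical}^*$.

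I would build $\Gamma$ by reconstructing the whole space--time array $(\eta^k_n,w^k_n)_{k,n\in\mathbb{Z}}$ from its central current column. Set $w^k_0:=y^k$ and recover $(\eta^k_0)_{k}$ from $(w^k_0)_k$ by the alternation rule of Lemma \ref{infzerolem}(b), which applies because $y$ has infinitely many zeros in both directions. For $n\le 0$ proceed downwards: having the full columns $(\eta^k_n)_k$ and $(w^k_n)_k$, define $(w^k_{n-1})_k$ by inverting the three local patterns of \eqref{patterns}, namely $w^k_{n-1}=w^k_n-1$ when $\eta^k_n=1$, $w^k_{n-1}=w^k_n+1$ when $(\eta^k_n,\eta^{k+1}_n)=(0,1)$, and $w^k_{n-1}=0$ when $(\eta^k_n,\eta^{k+1}_n)=(0,0)$, and then recover $(\eta^k_{n-1})_k$ by alternation. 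Proposition \ref{infoddlem}(a) guarantees that the odd-gap, and hence the infinitely-many-zeros, hypothesis passes to each lower column, so the recursion never stalls. For $n\ge 1$ I would run the symmetric recursion for the reversed configuration exactly as in the proof of Corollary \ref{keybijection}, using the identity $v^{k+1}_0=w^k_0$ and the reverse carrier $V$. By construction the resulting array satisfies the local conservation law $w^k_n=w^k_{n-1}+\eta^k_n-\eta^{k+1}_n$ at every site, so the columns are mutually consistent; set $\eta:=(\eta^0_n)_{n\in\mathbb{Z}}$.

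Both required properties, $\eta\in\mathcal{S}^{inv}$ and $\Lambda(\eta)=y$, reduce to a single point: for each $k$ the reconstructed spatial carrier $(w^k_n)_n$, together with its reverse counterpart, should be the minimal carrier of $\eta^k:=(\eta^k_n)_n$, i.e.\ should lie in $\mathcal{Y}^{rev}$, equivalently $\liminf_{n\to-\infty}w^k_n=\liminf_{n\to+\infty}w^k_n=0$ and similarly for the reverse carrier (Propositions \ref{phiproperty} and \ref{adapprop}). Granting this, the characterisation $\mathcal{S}^{rev}=\Phi(\mathcal{Y}^{rev})\cap\Psi(\mathcal{Y}^{rev})$ following Theorem \ref{goodsetthm} gives $\eta^k\in\mathcal{S}^{rev}$, while the update rule \eqref{tetadef} applied to the minimal carrier yields $T\eta^k=\eta^{k+1}$ and $T^{-1}\eta^{k}=\eta^{k-1}$. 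Consequently $T^k\eta=\eta^k\in\mathcal{S}^{rev}$ for all $k\in\mathbb{Z}$, so $\eta\in\mathcal{S}^{inv}$ by the definition \eqref{sinv}, and $\Lambda(\eta)=((T^kW)_0)_k=(w^k_0)_k=y$. Since $y\in(\mathbb{Z}_+^{\mathbb{Z}})^*$, the definition \eqref{ssubcritstar} then places $\eta$ in $\mathcal{S}_{sub-critical}^*$, as required. Finally, $\Gamma:y\mapsto\eta$ is measurable because every entry $\eta^0_n$ is obtained from $y$ by countably many applications of the (coordinatewise measurable) alternation and pattern-inversion operations.

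The crux, and the step I expect to be genuinely delicate, is the transfer of the temporal odd-gap structure of $y$ into the spatial boundary behaviour $\liminf_{n\to\pm\infty}w^k_n=0$ of the reconstructed carriers. This is precisely what rules out the spurious ``particles entering from $-\infty$'' carriers of Remark \ref{noninirem} and simultaneously forces the sub-critical density; it is the content of the auxiliary Lemma \ref{currenttoparticle} asserting $\Lambda^{-1}((\mathbb{Z}_+^{\mathbb{Z}})^*)\subseteq\mathcal{S}_{sub-critical}$. The natural route is to use the bookkeeping of flat-segment times from Corollary \ref{oddgapandflatpath}, in particular the interlacing $\tilde{\sigma}_n^{i-1}\le\tilde{\sigma}_{n-1}^i<\tilde{\sigma}_n^{i+1}$, to convert the infinitely many odd temporal gaps present at every spatial level into infinitely many spatial flat segments of $(w^0_n)_n$ extending to $\pm\infty$; the remaining consistency and measurability verifications are routine given the local structure recorded in \eqref{patterns}.
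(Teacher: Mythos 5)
Your overall architecture coincides with the paper's: injectivity is Corollary \ref{keybijection}, measurability is routine, and surjectivity is to be proved by reconstructing the space--time array $(\eta^k_n,w^k_n)_{k,n\in\mathbb{Z}}$ from the central current column via the local patterns \eqref{patterns} and then showing the reconstructed column carriers are the genuine (minimal) carriers. The reconstruction itself, and the reduction of everything to the boundary behaviour of the reconstructed carriers, are correctly set up. The problem is that the one step you yourself flag as delicate is left unproved, and the route you sketch for it does not work. The interlacing $\tilde{\sigma}_{n}^{i-1}\le\tilde{\sigma}_{n-1}^{i}<\tilde{\sigma}_{n}^{i+1}$ of Corollary \ref{oddgapandflatpath} only tells you that, for each fixed spatial level $n$, flat segments $w^k_n=w^k_{n-1}=0$ occur at infinitely many times $k$, and that flat times at adjacent levels are loosely interleaved. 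It is entirely consistent with these constraints that the first flat time $\tilde{\sigma}^1_{n}$ drifts to $+\infty$ as $n\to-\infty$ (the only upper bound the interlacing yields is $\tilde{\sigma}^1_{n-j}<\tilde{\sigma}^{j+1}_{n}$, which grows with $j$), in which case the time-zero carrier $(w^0_n)_n$ would have no flat segments far to the left and $M_n\not\to-\infty$. So ``infinitely many temporal flat segments at every level'' cannot be converted into ``infinitely many spatial flat segments at a fixed time'' by this bookkeeping alone.

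The paper closes the gap by a different mechanism, and this is the actual content of Lemma \ref{currenttoparticle}, which you cannot simply cite since proving it is proving the theorem. One argues by contradiction: if some reconstructed $\eta^k$ fails to satisfy $\lim_{n\to-\infty}M_n=-\infty$, then $N_W^-(\eta^k)=A>-\infty$, and Lemma \ref{ywrelation}(ii) (resting on the strict forward-monotonicity of $N_W^-$ under $T$ from Lemma \ref{NMlemma}) propagates this in time: $N_W^-(\eta^\ell)\ge A$ for every $\ell\ge k$. Since $y^\ell_n\ge(\Phi^{-1}\eta^\ell)_n$ for all $n$, the column at level $A-1$ then never exhibits $y^\ell_{A-1}=y^\ell_{A-2}=0$ for $\ell\ge k$, contradicting, via Corollary \ref{oddgapandflatpath}, the assumption that $(y^\ell_{A-1})_\ell$ has infinitely many odd gaps between zeros. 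Only once this dichotomy is resolved can one invoke Lemma \ref{ywrelation}(i) to identify $y^k$ with the minimal carrier and run your induction $T\eta^k=\eta^{k+1}$; note that this identification is itself conditional on the boundary behaviour, which is why the argument must be organised as a dichotomy rather than as the direct verification you propose. With this replacement for your final paragraph, the rest of your proposal is sound.
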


This result is a consequence of the following lemma.

\begin{lem}\label{currenttoparticle} The map $\Lambda : \mathcal{S}_{sub-critical}^* \to (\mathbb{Z}_+^{\mathbb{Z}})^*$ is surjective, and it moreover holds that $\mathcal{S}_{sub-critical}^* \subseteq \mathcal{S}_{sub-critical}$.
\end{lem}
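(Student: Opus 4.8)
The plan is to treat the two assertions separately, disposing of the inclusion $\mathcal{S}_{sub-critical}^*\subseteq\mathcal{S}_{sub-critical}$ first, since it is the cleaner of the two and clarifies what the configuration built in the surjectivity part must look like.

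For the inclusion, take $\eta\in\mathcal{S}_{sub-critical}^*$, so that $\eta\in\mathcal{S}^{inv}$ (as $\mathcal{S}_{sub-critical}^*=\Lambda^{-1}((\mathbb{Z}_+^{\mathbb{Z}})^*)$ lies in the domain of $\Lambda$) and $\Lambda(\eta)=(w^k_0)_{k\in\mathbb{Z}}$ has infinitely many odd gaps between zeros in both directions. By the two-sided form of Corollary \ref{oddgapandflatpath}, this is equivalent to $w^k_0=w^k_{-1}=0$ holding for infinitely many $k\to+\infty$ and infinitely many $k\to-\infty$; in the notation of Lemma \ref{NWboundary}, each such $k$ satisfies $N_W^-(T^k\eta)\le 0\le N_W^+(T^k\eta)$. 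I would then feed this into Lemma \ref{NMlemma}, which, iterated using that every $T^k\eta\in\mathcal{S}^{rev}\subseteq\mathcal{S}^T$, shows $k\mapsto N_W^-(T^k\eta)$ and $k\mapsto N_W^+(T^k\eta)$ are increasing with increments at least one; in particular $N_W^-(T^k\eta)\ge N_W^-(\eta)+k$ for $k\ge 0$ and $N_W^+(T^k\eta)\le N_W^+(\eta)+k$ for $k\le 0$. If $N_W^-(\eta)$ were finite the first bound forces $N_W^-(T^k\eta)\to+\infty$, contradicting $N_W^-(T^k\eta)\le 0$ for arbitrarily large $k$; hence $N_W^-(\eta)=-\infty$. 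Symmetrically, if $N_W^+(\eta)$ were finite the second bound forces $N_W^+(T^k\eta)\to-\infty$ as $k\to-\infty$, contradicting $N_W^+(T^k\eta)\ge 0$ for arbitrarily negative $k$; hence $N_W^+(\eta)=+\infty$. By Lemma \ref{NWboundary}(a),(c) these say exactly that $\eta\in\mathcal{S}^+_{sub-critical}\cap\mathcal{S}^-_{sub-critical}=\mathcal{S}_{sub-critical}$.

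For surjectivity, fix $y=(y^k)_k\in(\mathbb{Z}_+^{\mathbb{Z}})^*$ and reconstruct a full space-time array column by column. Setting $w^k_0:=y^k$, the forced alternation between consecutive zeros from Lemma \ref{infzerolem} (legitimate since $y$ has zeros accumulating in both directions) determines $(\eta^k_0)_k$, after which $w^k_{-1}:=w^k_0-\eta^k_0+\eta^{k+1}_0$ lies in $\mathbb{Z}_+$ (the three patterns of \eqref{patterns} guarantee non-negativity). Proposition \ref{infoddlem}(a) shows $(w^k_{-1})_k$ again has infinitely many odd gaps between zeros, so the procedure iterates to produce $(\eta^k_m)_k$ and $(w^k_m)_k$ for all $m\le 0$, each carrier column lying in $(\mathbb{Z}_+^{\mathbb{Z}})^*$. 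For the columns $m\ge 1$ I would exploit the reverse carrier: since $TV=W$ yields $v^k_0=w^{k-1}_0$, the reverse current is a shift of $y$ and again lies in $(\mathbb{Z}_+^{\mathbb{Z}})^*$, so the dual reconstruction built on $\Psi$ and $\mathcal{Y}^+$ (Proposition \ref{adapprop}) fills in the right half-plane. Writing $\eta:=(\eta^0_m)_m$ and $S:=S^0$ for its path encoding, one has $\Phi(w^0)=S$, and from \eqref{phidefY} together with $w^0_m\ge 0$ it follows at once that $\limsup_{m\to-\infty}S_m\le w^0_0<\infty$ and, dually, $\liminf_{m\to\infty}S_m>-\infty$, so that at least $S\in\mathcal{S}^T\cap\mathcal{S}^{T^{-1}}$.

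The main obstacle is the final step: showing that this $\eta$ genuinely lies in $\mathcal{S}^{inv}$ and that applying $\Lambda$ returns $y$. The delicate point is that the reconstructed $w^0$ need a priori only be \emph{some} carrier for $\eta$, and, as Remark \ref{noninirem} warns, a non-minimal carrier corresponds to spurious particles entering from $\pm\infty$ and would make $\Lambda(\eta)$ (defined through the minimal carrier $W=M-S$) disagree with $y$. Thus the crux is to prove $w^0\in\mathcal{Y}^-\cap\mathcal{Y}^+$, i.e.\ $\liminf_{m\to\pm\infty}w^0_m=0$; by Proposition \ref{phiproperty} this identifies $w^0=W$, whence $TS=\Psi(w^0)=S^1$ and, inductively, $T^kS=S^k$ reproduces the whole array, giving $\eta\in\mathcal{S}^{inv}$ and $(T^kW)_0=w^k_0=y^k$. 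To establish $\liminf_{m\to-\infty}w^0_m=0$ I would propagate the two-sided flat-zero structure of Corollary \ref{oddgapandflatpath} leftward to produce spatial flat zeros $w^0_m=w^0_{m-1}=0$ at arbitrarily negative $m$, using Lemma \ref{ywrelation} to exclude the non-minimal alternative, with the symmetric argument at $+\infty$. Once $\eta\in\mathcal{S}^{inv}$ and $\Lambda(\eta)=y\in(\mathbb{Z}_+^{\mathbb{Z}})^*$ are in hand, $\eta\in\mathcal{S}_{sub-critical}^*$ holds by definition, completing surjectivity (and, consistently with the inclusion already proved, placing $\eta$ in $\mathcal{S}_{sub-critical}$). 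I expect this boundary analysis at $\pm\infty$ — equivalently, the verification that the forced reconstruction never manufactures particles at infinity — to be the principal technical difficulty.
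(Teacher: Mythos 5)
Your first half is fine: the inclusion $\mathcal{S}_{sub-critical}^*\subseteq\mathcal{S}_{sub-critical}$ via the monotonicity of $N_W^{\pm}$ under $T$ (Lemma \ref{NMlemma}) together with the fact that the odd-gap condition forces $T^k\eta\in A_0$ for arbitrarily large $|k|$ is correct, and it is actually more direct than the paper, which obtains the inclusion only as a by-product of the surjectivity construction (using that the reconstruction from $y=\Lambda(\eta)$ is unique and therefore returns $\eta$ itself). The set-up of the surjectivity half is also right: the column-by-column reconstruction via the patterns \eqref{patterns} and Proposition \ref{infoddlem}(a), and the identification of the crux — that the reconstructed $w^0$ might a priori be a non-minimal carrier — match the paper.

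The gap is in your proposed resolution of that crux. You want to show $\liminf_{m\to-\infty}w^0_m=0$ by ``propagating the flat-zero structure leftward to produce spatial flat zeros $w^0_m=w^0_{m-1}=0$ at arbitrarily negative $m$''. But the reconstruction only guarantees, for each fixed spatial column $m$, \emph{temporal} flat zeros $w^k_m=w^k_{m-1}=0$ at infinitely many times $k$ \emph{depending on} $m$; nothing forces these to occur at the common time $k=0$, and the interlacing in Corollary \ref{oddgapandflatpath} is not strong enough to align them. Indeed, ``flat zeros at arbitrarily negative $m$ at time $0$'' is essentially equivalent to what you are trying to prove (it holds for the minimal carrier of a sub-critical configuration because $\ell(W)_n\to-\infty$, but that is the conclusion, not an input), so the step as sketched is circular. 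The paper closes this gap by a contradiction argument that propagates \emph{forward in time} rather than leftward in space: if some $\eta^k$ fails the boundary condition, so that $A:=N_W^-(\eta^k)>-\infty$ for its \emph{minimal} carrier, then Lemma \ref{ywrelation}(ii) applied inductively gives $N_W^-(\eta^{\ell})\ge A$ for all $\ell\ge k$; since $w^{\ell}\ge\Phi^{-1}\eta^{\ell}$ pointwise (Proposition \ref{phiproperty}), the reconstructed carriers then have no flat zero below $A$ at any time $\ell\ge k$, so the column $(y^{\ell}_{A-1})_{\ell}$ has no flat zeros for $\ell\ge k$ and hence, by Corollary \ref{oddgapandflatpath}, cannot have infinitely many odd gaps as $\ell\to\infty$ — contradicting the fact that every column of the reconstruction lies in $(\mathbb{Z}_+^{\mathbb{Z}})^*$. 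Once all $\eta^k$ satisfy $\lim_{n\to-\infty}M_n=-\infty$ and $\lim_{n\to\infty}I_n=\infty$, Lemma \ref{ywrelation}(i) identifies $w^k$ with the minimal carrier and the induction you describe goes through. Without this (or an equivalent) argument, the surjectivity proof is incomplete.
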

\begin{proof}
For any $(y^k)_{k \in \mathbb{Z}} \in (\mathbb{Z}_+^{\mathbb{Z}})^*$, we can construct $(y^k_n)_{k \in \mathbb{Z}, n \in \mathbb{Z}}$ and $(\eta^k_n)_{k \in \mathbb{Z}, n \in \mathbb{Z}}$ satisfying $(y^k_0)_k=(y^k)_k$ uniquely by applying the basic patterns given in \eqref{patterns}. In particular, they satisfy $\Phi(y^k)=\eta^k$ and $\Psi(y^{k-1})=\eta^{k}$, and so $\eta^k \in \mathcal{S}^T \cap \mathcal{S}^{T^{-1}}$ for any $k\in\mathbb{Z}$. Moreover, $(y^k_n)_k \in (\mathbb{Z}_+^{\mathbb{Z}})^*$ for all $n \in \mathbb{Z}$. Our goal is to prove that $\Lambda(\eta)=(y^k)_{k\in\mathbb{Z}}$ for $\eta:=(\eta^0_n)_n$, and also that $\eta \in \mathcal{S}_{sub-critical}$. For this, it is enough to check that $\eta^k \in \{\lim_{n \to-\infty}M_n=- \infty, \lim_{n \to \infty}I_n=\infty\}$ and $T^kW=y^k$ for all $k\in\mathbb{Z}$, because $\{\lim_{n \to-\infty}M_n=- \infty, \lim_{n \to \infty}I_n=\infty\} \subseteq \mathcal{S}^{rev}$.

First, we suppose $\eta^k \in \{\lim_{n \to-\infty}M_n=- \infty, \lim_{n \to \infty}I_n=\infty\}$ for all $k\in\mathbb{Z}$, and show that $T^kW=y^k$ and $T^k\eta=\eta^k$ by induction. To begin with we show that $W=y^0$. In particular, we have that $\eta\in \mathcal{S}^T$ and $y^0$ satisfies $\Phi(y^0)=\eta=\Phi(W)$. Moreover, by assumption, it holds that $N^-_W(\eta)=-\infty$. Hence, because it also holds that $\Psi(y^0)=\eta^1\in \mathcal{S}^T$, Lemma \ref{ywrelation}(i) yields that $W=y^0$. It moreover follows that $T\eta = \eta^1$. Since by assumption we also have that $\eta^1 \in \{\lim_{n \to-\infty}M_n=- \infty, \lim_{n \to \infty}I_n=\infty\}$, iterating the argument gives $T^kW=y^k$ and $T^k \eta = \eta^k \in \mathcal{S}^{rev}$ for all $k \ge 0$. By symmetry, we can also show that $T^{k+1}V=y^k$ and $T^k \eta = \eta^k \in \mathcal{S}^{rev}$ for all $k \le -1$. From this, we have $\eta \in \mathcal{S}_{sub-critical}$, and so $T^kW=T^{k+1}V$ for all $k$, which confirms $T^kW=y^k$ for all $k$.

Next, we suppose $\eta^k \notin \{\lim_{n \to-\infty}M_n=- \infty, \lim_{n \to \infty}I_n=\infty\}$ for some $k\in\mathbb{Z}$. Without loss of generality, we can assume $\lim_{n \to-\infty}M_n(\eta^k) > -\infty$, hence $N_W^-(\eta^k) > -\infty$. Denote $N_W^-(\eta^k) = A \in \mathbb{Z} \cup \{\infty\}$. From now on, we derive that $N^-(y^{\ell}) \ge A$ for all $\ell \ge k$, which contradicts the condition $(y^{\ell}_{A-1})_{\ell} \in (\mathbb{Z}_+^{\mathbb{Z}})^*$. For this, it is enough to show that $N^-_W(\eta^{\ell}) \ge A$ for all $\ell \ge k$, since $\Phi(y^{\ell})=\Phi(\Phi^{-1}\eta^{\ell})$, and so $y^{\ell}_n \ge (\Phi^{-1}\eta^{\ell})_n$ for all $n$. The claim $N^-_W(\eta^{\ell}) \ge A$ for all $\ell \ge k$ can be shown by induction on $\ell$. Indeed, since $\eta^{\ell}=\Phi(y^{\ell})$ and $\Psi(y^{\ell})=\eta^{\ell+1} \in \mathcal{S}^T$, $N^-_W(\eta^{\ell}) \ge A$ implies $N^-_W(\eta^{\ell+1}) \ge A$ by Lemma \ref{ywrelation}(ii).
\end{proof}

\begin{rem}
Using the notation $A_n^k=T^{-k}A_n=\{\eta:\:T^k\eta \in A_n\}$, the set $\mathcal{S}_{sub-critical}^*$ can alternatively be characterised as
\[\mathcal{S}_{sub-critical}^*=\mathcal{S}_{sub-critical} \cap \left( \bigcap_{n \in \mathbb{Z}} \left(\limsup_{k \to \infty}A_n^k \cap \limsup_{k \to -\infty}A_n^k\right)\right).\]
Indeed, for $\eta \in \mathcal{S}^{inv}$, by appealing to the two-sided extensions of Proposition \ref{infoddlem} and Corollary \ref{oddgapandflatpath} (which are straightforward to deduce by applying similar arguments to above), we have that $\eta \in \cap_{n\in\mathbb{Z}}\left(\limsup_{k \to \infty}A_n^k \cap \limsup_{k \to -\infty}A_n^k\right)$ if and only if $((T^lW)_0)_{l \in \mathbb{Z}} \in (\mathbb{Z}_+^{\mathbb{Z}})^*$.
\end{rem}

\section{Random initial configurations}\label{probsec}

In this section, we turn our attention to the case when the initial configuration is random. The starting point will be that $\eta=(\eta_n)_{n\in\mathbb{Z}}$ is a sequence of Bernoulli random variables, built on a probability space with probability measure $\mathbf{P}$, whose corresponding path encoding has distribution supported in $\mathcal{S}^{rev}$. It is then the case that $T\eta$ is well-defined, $\mathbf{P}$-a.s. Going beyond this, it is a natural for random initial configurations to ask whether the law of $\eta$ is preserved by $T$, that is, is it the case that $T\eta \buildrel{d}\over{=}\eta$? As we noted in the introduction, one way in which we are able to answer this question is in terms of the particle current. In particular, in Section \ref{invarsec}, we prove Theorems \ref{mrb} and \ref{mrc}, which discuss the situation in the critical and sub-critical cases, respectively. We also establish Theorem \ref{mrd}, which gives simple sufficient conditions for invariance based on the symmetry of $\eta$ and $W$. Moreover, we check the basic properties of invariant measures stated as Theorem \ref{mra}.

Our next observation concerns the case when $\eta=(\eta_n)_{n\in\mathbb{Z}}$ is a stationary, ergodic sequence. In particular, if we assume that the density of this sequence satisfies
\begin{equation}\label{p0}
\rho=\mathbf{P}\left(\eta_0=1\right)<\frac12,
\end{equation}
then ergodicity implies that $S$, as defined by \eqref{SRWrep}, satisfies
\[\frac{S_n}n=\frac{\sum_{m=1}^n(S_m-S_{m-1})}{n}=\frac{\sum_{m=1}^n(1-2\eta_m)}{n}\rightarrow 1-2\rho>0,\qquad \mathbf{P}\mbox{-a.s.}\]
Similarly, $S_n/n\rightarrow 1-2\rho>0$ as $n\rightarrow-\infty$, $\mathbf{P}$-a.s. Thus Theorem \ref{mr1} gives the following result, which yields in turn that $(T^kS)_{k\in\mathbb{Z}}$ is well-defined, $\mathbf{P}$-a.s.

{\lem \label{slinrand} If $\eta$ is a stationary, ergodic sequence satisfying (\ref{p0}), then $S\in \mathcal{S}_{F_{1-2\rho}}^-\cap\mathcal{S}_{F_{1-2\rho}}^+$, $\mathbf{P}$-a.s., where $F_{1-2\rho}(n):=(1-2\rho )n$. (Recall the notation for $\mathcal{S}_F^{\pm}$ from \eqref{sfudef} and \eqref{sfldef}.) In particular, $S\in \mathcal{S}_{sub-critical}$ (where the latter set was defined at (\ref{subcritdef})), $\mathbf{P}$-a.s.}
\medskip

As introduced in Theorem \ref{mre}, within the class of stationary, ergodic sequences $\eta$, we are able to establish invariance in distribution under $T$ for a number of specific examples: when the initial configuration is independent and identically distributed (i.i.d.); when the initial configuration is Markov; an example with bounded solitons obtained by conditioning the i.i.d.\ initial configuration; and an example with bounded solitons for which the carrier satisfies a strong symmetry condition. These are introduced in Section \ref{examplessec}, which is where we prove Theorem \ref{mre}. Actually, the Markov initial configuration case includes the i.i.d.\ one, but we prefer to separate these, as many properties of the model are simpler in the i.i.d.\ case, which enables us to derive more detailed results in this setting. As well as checking the invariance of the aforementioned examples, we prove that these examples are the only distributionally invariant (under $T$) configurations with $S\in \mathcal{S}^{rev}$, $\mathbf{P}$-a.s., for which $\eta$ or $W$ is a two-sided stationary Markov chain. In Section \ref{currentclt}, we study the current across the origin, proving Theorem \ref{mrf} and Corollary \ref{ergcormrf} in particular. Finally, the section is completed by an investigation into the distance travelled by a tagged particle (see Section \ref{distancesec}, which is where Theorem \ref{mrg} is established).

\subsection{Invariance in distribution and ergodicity for random particle configurations}\label{invarsec} In this section, we study the properties of invariance and ergodicity for initial configuration $\eta$ under the action of the BBS. In particular, we prove Theorems \ref{mra}, \ref{mrb}, \ref{mrc} and \ref{mrd}. Most of the results are stated under the assumptions of Theorem \ref{mra}, namely that $\eta$ is a random particle configuration such that the distribution of the corresponding path encoding $S$ is supported on $\mathcal{S}^{rev}$, and $T\eta\buildrel{d}\over{=}\eta$ holds. Note that we do not restrict to  stationary, ergodic sequences in this section.

To begin with, we prepare a simple, but useful, lemma that gives a relation between the probability of having a particle at $n$ and the probability of seeing a flat segment in the carrier path at $n$ that holds for any invariant measure.

\begin{lem}\label{particleflatdensity} Under the assumptions of Theorem \ref{mra}, for any $n \in \mathbb{Z}$,
\[\mathbf{P}(\eta_n=1)=\frac{1}{2}\left(1-\mathbf{P}(W_n=W_{n-1}=0)\right).\]
In particular, $\mathbf{P}(\eta_n=1)=\frac{1}{2}$ is equivalent to $\mathbf{P}(W_n=W_{n-1}=0)=0$. Also, for any $n \in \mathbb{Z}$, $\mathbf{P}(\eta_n=1) \le \frac{1}{2}$.
\end{lem}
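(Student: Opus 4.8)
The plan is to exploit the explicit update rule \eqref{twosidedupdaterule}, which holds $\mathbf{P}$-a.s.\ since the distribution of $S$ is supported on $\mathcal{S}^{rev}\subseteq\mathcal{S}^T$ (so that $W=M-S$ and $T\eta$ are well-defined, $\mathbf{P}$-a.s.). First I would rewrite the event $\{(T\eta)_n=1\}$ purely in terms of $\eta_n$ and $W_{n-1}$. Indeed, since $(T\eta)_n=\min\{1-\eta_n,W_{n-1}\}$ and $W_{n-1}\in\mathbb{Z}_+$, one checks directly that $(T\eta)_n=0$ whenever $\eta_n=1$, while for $\eta_n=0$ we have $(T\eta)_n=1$ precisely when $W_{n-1}>0$. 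Hence
\[(T\eta)_n=\mathbf{1}_{\{\eta_n=0,\,W_{n-1}>0\}},\qquad\mathbf{P}\mbox{-a.s.}\]

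Second, I would record the elementary event identity $\{W_n=W_{n-1}=0\}=\{\eta_n=0,\,W_{n-1}=0\}$. This follows from the carrier update rule \eqref{wupdate}: if $\eta_n=0$ and $W_{n-1}=0$ then $W_n=W_{n-1}=0$, and conversely $W_n=W_{n-1}=0$ forces the increment $W_n-W_{n-1}$ to vanish, which by \eqref{wupdate} can only happen when $\eta_n=0$ and $W_{n-1}=0$. Partitioning the event $\{\eta_n=0\}$ according to whether $W_{n-1}>0$ or $W_{n-1}=0$ then yields
\[\mathbf{P}(\eta_n=0)=\mathbf{P}(\eta_n=0,\,W_{n-1}>0)+\mathbf{P}(W_n=W_{n-1}=0).\]

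Finally, I would bring in the invariance hypothesis $T\eta\buildrel{d}\over{=}\eta$, which gives $\mathbf{P}((T\eta)_n=1)=\mathbf{P}(\eta_n=1)$. Combining this with the first step identifies $\mathbf{P}(\eta_n=0,\,W_{n-1}>0)=\mathbf{P}(\eta_n=1)$, and substituting into the decomposition above (using $\mathbf{P}(\eta_n=0)=1-\mathbf{P}(\eta_n=1)$) produces
\[1-\mathbf{P}(\eta_n=1)=\mathbf{P}(\eta_n=1)+\mathbf{P}(W_n=W_{n-1}=0),\]
which rearranges to the claimed formula $\mathbf{P}(\eta_n=1)=\tfrac12\left(1-\mathbf{P}(W_n=W_{n-1}=0)\right)$. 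The two stated consequences are then immediate: the formula shows $\mathbf{P}(\eta_n=1)=\tfrac12$ is equivalent to $\mathbf{P}(W_n=W_{n-1}=0)=0$, and since the latter probability is nonnegative we obtain $\mathbf{P}(\eta_n=1)\le\tfrac12$. There is no serious obstacle here; the only points requiring care are the $\mathbf{P}$-a.s.\ well-definedness of $W$ and $T\eta$ (guaranteed by the support assumption on $S$) and the correct translation of the two update rules into the event identities above.
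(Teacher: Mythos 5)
Your proposal is correct and follows essentially the same route as the paper: both arguments rest on the trichotomy of carrier increments (up-jump $\Leftrightarrow\eta_n=1$, down-jump $\Leftrightarrow(T\eta)_n=1$, flat $\Leftrightarrow W_n=W_{n-1}=0$) together with the invariance $T\eta\buildrel{d}\over{=}\eta$ to equate two of the three probabilities. The paper phrases this as $\mathbf{P}(\eta_n=1)=\frac12\left(\mathbf{P}(W_n-W_{n-1}=1)+\mathbf{P}(W_n-W_{n-1}=-1)\right)$ rather than partitioning $\{\eta_n=0\}$, but the computation is the same.
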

\begin{proof}
By the invariance of the measure under $T$,
\begin{align*}
\mathbf{P}(\eta_n=1)& =\frac{1}{2}\left(\mathbf{P}(\eta_n=1)+\mathbf{P}(T\eta_n=1)\right) \\
& = \frac{1}{2}\left(\mathbf{P}(W_n-W_{n-1}=1)+ \mathbf{P}(W_n-W_{n-1}=-1)\right)\\
&=\frac{1}{2}\left(1-\mathbf{P}(W_n=W_{n-1}=0)\right).
\end{align*}
The other claims are obvious from the equation.
\end{proof}

Next, we give an important characterization of the support of invariant measures.

\begin{lem}\label{Npmlem}
Under the assumptions of Theorem \ref{mra}, it holds that, $\mathbf{P}$-a.s.,
\[\eta \in \left\{N_W^-=-\infty,\:N_W^+=\infty\right\} \cup \{N_W^-=\infty,\:N_W^+=-\infty\}.\]
\end{lem}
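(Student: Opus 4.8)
The plan is to combine the pointwise monotonicity of $N_W^{\pm}$ under the action of $T$, recorded in Lemma \ref{NMlemma}, with the distributional invariance $T\eta\buildrel{d}\over{=}\eta$. First I would observe that, since the law of $\eta$ is supported on $\mathcal{S}^{rev}\subseteq\mathcal{S}^T$ and $T\eta\buildrel{d}\over{=}\eta$, both $\eta$ and $T\eta$ lie in $\mathcal{S}^T$, $\mathbf{P}$-a.s.; hence the hypotheses of Lemma \ref{NMlemma} are met almost surely, giving $N_W^-(T\eta)\geq N_W^-(\eta)+1$ and $N_W^+(T\eta)\geq N_W^+(\eta)+1$, $\mathbf{P}$-a.s., where these inequalities are read with the conventions $\pm\infty+1=\pm\infty$. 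Since $N_W^{\pm}$ are measurable functions of $\eta$ (being defined through the minimal carrier $W=M-S$), invariance also yields $N_W^-(T\eta)\buildrel{d}\over{=}N_W^-(\eta)$ and $N_W^+(T\eta)\buildrel{d}\over{=}N_W^+(\eta)$.

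The core step is to deduce that $N_W^-$ and $N_W^+$ almost surely avoid every finite integer value. Writing $X=N_W^-(\eta)$ and $X'=N_W^-(T\eta)$, for any fixed $a\in\mathbb{Z}$ the inequality $X'\geq X+1$ gives the inclusion $\{X'\leq a\}\subseteq\{X\leq a-1\}$, so that $\mathbf{P}(X'\leq a)\leq\mathbf{P}(X\leq a-1)$. Combining this with $X'\buildrel{d}\over{=}X$ yields $\mathbf{P}(X\leq a)\leq\mathbf{P}(X\leq a-1)$, while monotonicity of the distribution function gives the reverse inequality; hence $\mathbf{P}(X=a)=\mathbf{P}(X\leq a)-\mathbf{P}(X\leq a-1)=0$. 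As $a\in\mathbb{Z}$ was arbitrary, $\mathbf{P}(N_W^-(\eta)\in\mathbb{Z})=0$, i.e.\ $N_W^-(\eta)\in\{-\infty,+\infty\}$, $\mathbf{P}$-a.s. The identical argument applied to $N_W^+$ shows $N_W^+(\eta)\in\{-\infty,+\infty\}$, $\mathbf{P}$-a.s.

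It then remains to rule out the two ``mixed'' sign combinations using the fact that $N_W^-=\inf A$ and $N_W^+=\sup A$ for the same set $A=\{n\in\mathbb{Z}:W_n=W_{n-1}=0\}$ (with $\inf\emptyset=+\infty$, $\sup\emptyset=-\infty$). If $N_W^-=+\infty$ then $A=\emptyset$, forcing $N_W^+=-\infty$; and if $N_W^-=-\infty$ then $A$ is nonempty, so $N_W^+=\sup A>-\infty$, whence $N_W^+=+\infty$ (the only admissible value in $\{-\infty,+\infty\}$ distinct from $-\infty$). This excludes $(N_W^-,N_W^+)=(+\infty,+\infty)$ and $(-\infty,-\infty)$, leaving exactly the two sets in the statement. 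I expect the only delicate point to be the bookkeeping of the $\pm\infty$ conventions—in particular, checking that Lemma \ref{NMlemma} genuinely applies on a set of full measure, which is where the reduction $T\eta\in\mathcal{S}^{rev}\subseteq\mathcal{S}^T$ via distributional invariance is essential.
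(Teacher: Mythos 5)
Your proposal is correct and follows essentially the same route as the paper: both deduce from Lemma \ref{NMlemma} and the distributional identity $N_W^{\pm}(T\eta)\buildrel{d}\over{=}N_W^{\pm}(\eta)$ that the distribution function of $N_W^{\pm}$ is constant on $\mathbb{Z}$, hence assigns zero mass to every integer, and then rule out the mixed sign combinations via the observation that $N_W^-=\infty$ and $N_W^+=-\infty$ are both equivalent to $\{n:W_n=W_{n-1}=0\}=\emptyset$. The only cosmetic difference is that the paper justifies applicability of Lemma \ref{NMlemma} by noting $\eta\in\mathcal{S}^{inv}$ a.s., whereas you use the weaker (and sufficient) fact that $\eta,T\eta\in\mathcal{S}^T$ a.s.
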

\begin{proof}
Since $T\eta\buildrel{d}\over{=}\eta$, we have $\eta \in \mathcal{S}^{inv}$, $\mathbf{P}$-a.s. Now, denoting $c_n=\mathbf{P}(N^+_W(\eta) \le n)$ for $n \in \mathbb{Z}$, by definition we have $c_n \ge c_{n-1}$, and Lemma \ref{NMlemma} yields
\[c_n= \mathbf{P}(N^+_W(T\eta) \le n) \le \mathbf{P}(N^{+}_W(\eta) \le n-1) =c_{n-1}.\]
Hence $c_n=c_{n-1}$ for all $n \in \mathbb{Z}$, and so $\mathbf{P}(N^+_W(\eta)=  n)=c_n-c_{n-1}=0$ for all $n \in \mathbb{Z}$. A similar argument for $N^{-}_W$ shows that $N^{-}_W(\eta) \in \{\infty,-\infty\}$, $\mathbf{P}$-a.s. Finally, since $N^{+}_W(\eta)=-\infty$ if and only if $N^{-}_W(\eta)=\infty$ (on the event that neither take a value in $\mathbb{Z}$), the proof is complete.
\end{proof}

From Lemmas \ref{NWboundary} and \ref{Npmlem}, we see that any invariant measure must satisfy $\eta \in \mathcal{S}_{sub-critical}\cup \mathcal{S}_{critical}$, $\mathbf{P}$-a.s. In the following, we show that we can say even more, specifically that $\eta \in \mathcal{S}_{sub-critical}^*\cup \mathcal{S}_{critical}^*$, $\mathbf{P}$-a.s., where $\mathcal{S}_{sub-critical}^*$ is the set defined in Section \ref{currentsec} (at \eqref{ssubcritstar}),
and
\[\mathcal{S}_{critical}^*:=\mathcal{S}_{critical} \cap \left( \bigcap_{n \in \mathbb{Z}} A_n^c \right)=\left\{ S \in \mathcal{S}^0 \:: \: M_{-\infty}=M_{\infty} \in \mathbb{R},\: I_{-\infty}=I_{\infty} \in \mathbb{R} \right\}.\]
We first deal with the sub-critical case.

\begin{lem}\label{infoftenflat} Under the assumptions of Theorem \ref{mra}, it is the case that
\[\mathbf{P}\left(S\in\mathcal{S}_{sub-critical}\backslash\mathcal{S}_{sub-critical}^*\right)=0.\]
\end{lem}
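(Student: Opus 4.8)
The claim is that, under the invariance assumption $T\eta\buildrel{d}\over{=}\eta$ with path encoding supported in $\mathcal{S}^{rev}$, the event $\mathcal{S}_{sub-critical}\setminus\mathcal{S}_{sub-critical}^*$ is null. Recall from \eqref{ssubcritstar} that $\mathcal{S}_{sub-critical}^*=\Lambda^{-1}((\mathbb{Z}_+^{\mathbb{Z}})^*)$, so an element $S\in\mathcal{S}_{sub-critical}$ fails to be in $\mathcal{S}_{sub-critical}^*$ precisely when the current sequence $((T^kW)_0)_{k\in\mathbb{Z}}$ does \emph{not} have infinitely many odd gaps between zeros in at least one of the two directions $k\to\pm\infty$. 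By Corollary \ref{oddgapandflatpath}, having infinitely many odd gaps between zeros of $(w^k_0)_{k\geq 0}$ is equivalent to the event $w^k_0=w^k_{-1}=0$ occurring infinitely often as $k\to\infty$, i.e.\ $T^k\eta\in A_0$ infinitely often as $k\to\infty$, where $A_0=\{W_0=W_{-1}=0\}$. Thus the plan is to reduce the statement to a recurrence property of the event $A_0$ (and its time-reversed analogue) under the dynamics.

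\textbf{Main strategy: a Poincar\'e-type recurrence argument.} The key structural input is Lemma \ref{particleflatdensity}: on $\mathcal{S}_{sub-critical}$ we have, $\mathbf{P}$-a.s., $\mathbf{P}(\eta_0=1)<\frac12$, so $\mathbf{P}(W_0=W_{-1}=0)=1-2\mathbf{P}(\eta_0=1)>0$; that is, $\mathbf{P}(A_0)>0$ whenever there is positive mass on the sub-critical regime. Since the measure is invariant under $T$, I would exploit this by considering the shift-like dynamical system given by $T$ on path space with the invariant measure $\mathbf{P}$, and apply a Poincar\'e recurrence argument to the positive-measure set $A_0$. Concretely, the plan is: restrict $\mathbf{P}$ to $\mathcal{S}_{sub-critical}$ (a $T$-invariant event by Lemmas \ref{NWboundary} and \ref{Npmlem}), observe that $\mathbf{P}(A_0\mid\mathcal{S}_{sub-critical})>0$, and then use $T$-invariance to conclude that for $\mathbf{P}$-a.e.\ $\eta$ the orbit $(T^k\eta)_{k\geq0}$ returns to $A_0$ infinitely often. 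The standard Poincar\'e statement gives return infinitely often for a.e.\ point \emph{of} $A_0$; to get infinitely many returns for a.e.\ point of the whole sub-critical set, I would apply recurrence simultaneously along a countable generating family or iterate the conditional recurrence, using that $\{T^k\eta\in A_0 \text{ i.o.}\}$ is itself a $T$-invariant event whose complement would have to have $A_0$ of zero measure inside it.

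\textbf{Handling both directions and finishing.} The event $A_0$ must be visited infinitely often both as $k\to+\infty$ and as $k\to-\infty$. For the forward direction I use recurrence under $T$; for the backward direction I use the same argument under $T^{-1}$, which is also measure-preserving by the reversibility hypothesis (the support is in $\mathcal{S}^{rev}$, so $T^{-1}T\eta=\eta$, and $T^{-1}\eta\buildrel{d}\over{=}\eta$ follows from $T\eta\buildrel{d}\over{=}\eta$ together with the bijectivity of $T$ on the invariant set). Once I have that $T^k\eta\in A_0$ infinitely often in both directions $\mathbf{P}$-a.s.\ on $\mathcal{S}_{sub-critical}$, Corollary \ref{oddgapandflatpath} converts this into the statement that $(w^k_0)_{k\in\mathbb{Z}}$ has infinitely many odd gaps between zeros in both directions, i.e.\ $\Lambda(\eta)\in(\mathbb{Z}_+^{\mathbb{Z}})^*$, which is exactly $\eta\in\mathcal{S}_{sub-critical}^*$. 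This yields $\mathbf{P}(\mathcal{S}_{sub-critical}\setminus\mathcal{S}_{sub-critical}^*)=0$.

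\textbf{The main obstacle.} I expect the delicate point to be upgrading Poincar\'e recurrence from the almost-everywhere statement on the set $A_0$ to an almost-everywhere statement on the whole sub-critical set, and making sure the argument is genuinely two-sided. The cleanest route around this is probably to avoid Poincar\'e recurrence for a fixed set and instead argue by invariance of tail-type events: the event $G:=\{T^k\eta\in A_0 \text{ infinitely often as } k\to\infty\}$ is $T$-invariant, so on $\mathcal{S}_{sub-critical}\setminus G$ one would have $\mathbf{P}(A_0\mid \mathcal{S}_{sub-critical}\setminus G)=0$ by invariance (since no orbit in the complement ever returns after some time, yet the set is invariant and has positive $A_0$-density unless it is null), forcing $\mathbf{P}(\mathcal{S}_{sub-critical}\setminus G)=0$ because $\mathbf{P}(A_0\cap\mathcal{S}_{sub-critical})>0$. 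I would take care to justify the density computation via Lemma \ref{particleflatdensity} and to confirm measurability of all the events involved, which is routine since $A_0$ and the orbit maps are measurable.
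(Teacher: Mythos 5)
Your reduction of the lemma to the recurrence statement ``$T^k\eta\in A_0$ infinitely often in both time directions, $\mathbf{P}$-a.s.\ on the sub-critical part'' is correct, and Poincar\'e recurrence is indeed the engine of the paper's proof. However, the way you close the argument has a genuine gap, in two places. First, the positivity $\mathbf{P}(A_0\cap\mathcal{S}_{sub-critical})>0$ is not available here without circularity: Lemma \ref{particleflatdensity} gives only the identity $\mathbf{P}(A_0)=1-2\mathbf{P}(\eta_0=1)$, and the assertion that the density is strictly below $\frac12$ on the sub-critical part is a consequence of Theorem \ref{mra} (via Propositions \ref{critsubcrit} and \ref{densityconstant}), whose proofs depend on the present lemma. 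Second, and more seriously, the final deduction does not follow: from $\mathbf{P}(A_0\cap H)=0$ for $H=\mathcal{S}_{sub-critical}\setminus G$ together with $\mathbf{P}(A_0\cap\mathcal{S}_{sub-critical})>0$ you may conclude only that $A_0$ is almost surely contained in $G$; this does not force $\mathbf{P}(H)=0$, since $H$ could a priori be a positive-measure $T$-invariant set that simply never charges $A_0$. Your parenthetical ``the set is invariant and has positive $A_0$-density unless it is null'' is exactly the unproven claim, and since the measure is not assumed stationary under spatial shifts (cf.\ Remark \ref{nonsrem}), there is no way to transfer mass from $A_n$ to $A_0$ except through the dynamics.

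The two ingredients needed to repair this, both used by the paper and absent from your proposal, are: (i) a deterministic \emph{spatial} recurrence --- for $S\in\mathcal{S}_{sub-critical}$ one has $\lim_{n\to\infty}\ell(W)_n=\infty$, so $W_n=W_{n-1}=0$ for arbitrarily large $n$, i.e.\ $\mathcal{S}_{sub-critical}\subseteq\limsup_{n\to\infty}A_n$; this replaces the unjustified $\mathbf{P}(A_0)>0$ by the fact that any positive-measure invariant subset of the sub-critical regime must charge $A_n$ for arbitrarily large $n$, even if not $A_0$ itself; and (ii) Proposition \ref{infoddlem}(a) together with Corollary \ref{oddgapandflatpath}, which give $\limsup_{k\to\infty}A_n^k=\bigcap_{m\le n}\limsup_{k\to\infty}A_m^k$, so that Poincar\'e recurrence of $A_n$ along an orbit already implies recurrence of $A_0$ along the same orbit whenever $n\ge 0$. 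Combining (i), Poincar\'e recurrence applied to each $A_n$, and (ii), the exceptional set is contained in $\limsup_{n\to\infty}\bigl(A_n\setminus\bigcap_{m\le n}\limsup_{k\to\infty}A_m^k\bigr)$, a $\limsup$ of null sets. The backward direction is then handled via the reversed carrier $V$ and the identity $T^{k+1}V=T^kW$ on $\mathcal{S}^{inv}$ (Theorem \ref{goodsetthm}), rather than by a bare appeal to $T^{-1}$-invariance, though your suggestion there is in the right spirit.
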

\begin{proof} For $S \in \mathcal{S}_{sub-critical}$, $\lim_{n \to \infty}S_n=\infty$, and so $\lim_{n \to \infty}\ell(W)_n= \infty$. Therefore, $W_n=W_{n-1}=0$ for arbitrarily large $n$. Namely, $S\in \limsup_{n \to \infty}A_n$. Now, by Poincar\'{e}'s recurrence theorem \cite[Theorem 1.4]{Walters},
\[\mathbf{P}\left(S\in A_n\backslash \limsup_{k \to \infty}A_n^k \right)=0,\qquad\forall n\in \mathbb{Z}.\]
Moreover, from Proposition \ref{infoddlem}(a),
\[\bigcap_{m \le n} \limsup_{k \to \infty}A_m^k =\limsup_{k \to \infty}A_n^k,\]
and so
\[\mathbf{P}\left(S\in A_n\backslash \bigcap_{m \le n}\limsup_{k \to \infty}A_m^k \right)=0,\qquad\forall n\in \mathbb{Z}.\]
Hence, since $\mathcal{S}_{sub-critical}\subseteq\limsup_{n \to \infty}A_n$, it follows that
\[{\mathbf{P}\left(S\in \mathcal{S}_{sub-critical}\backslash\bigcap_{n \in\mathbb{Z}}\limsup_{k \to \infty}A_n^k \right)}\leq \mathbf{P}\left(S\in \limsup_{n\rightarrow\infty}\left(A_n\backslash \bigcap_{m \le n}\limsup_{k \to \infty}A_m^k\right)\right)=0.\]
By a symmetric argument for the time-reversed process, we can similarly conclude
\[\mathbf{P}\left(S\in \mathcal{S}_{sub-critical}\backslash\bigcap_{n \in\mathbb{Z}}\limsup_{k \to {-\infty}}B_n^k \right)=0,\]
where $B_n:=\{\eta:\: V_n=V_{n+1}=0\}$ and $B_n^k=\{\eta:\: T^k\eta \in B_n\}$. On the other hand, since $S \in \mathcal{S}^{inv}$, $\mathbf{P}$-a.s., we have from Theorem \ref{goodsetthm} that $T^{k+1}V=T^kW$ holds for any $k$, and so
\[\bigcap_{n \in \mathbb{Z}} \limsup_{k \to -\infty}B_n^k = \bigcap_{n \in \mathbb{Z}} \limsup_{k \to -\infty}A_n^k,\]
which completes the proof.
\end{proof}

We now show that the support of any invariant measure is restricted to the union of $\mathcal{S}_{sub-critical}^*$ and $\mathcal{S}_{critical}^*$, thus demonstrating that there must be common boundary conditions at $\pm\infty$.

\begin{prop} \label{critsubcrit}
Under the assumptions of Theorem \ref{mra}, it holds that $S \in \mathcal{S}_{sub-critical}^* \cup \mathcal{S}_{critical}^*$, $\mathbf{P}$-a.s.
\end{prop}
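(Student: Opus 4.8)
The plan is to read the result off directly from the dichotomy of Lemma \ref{Npmlem}, using Lemma \ref{NWboundary} to translate the behaviour of $N_W^{\pm}$ into the boundary classification, and Lemma \ref{infoftenflat} to supply the one genuinely non-trivial refinement (the sub-critical part). Since $T\eta\buildrel{d}\over{=}\eta$ forces $\eta\in\mathcal{S}^{inv}$ $\mathbf{P}$-a.s., all the cited lemmas are available off a $\mathbf{P}$-null set. Lemma \ref{Npmlem} gives that, $\mathbf{P}$-a.s.,
\[\eta \in \left\{N_W^-=-\infty,\:N_W^+=\infty\right\} \cup \left\{N_W^-=\infty,\:N_W^+=-\infty\right\},\]
and I would analyse these two events separately.

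On the first event, Lemma \ref{NWboundary}(a) and (c) give $\eta\in\mathcal{S}^+_{sub-critical}\cap\mathcal{S}^-_{sub-critical}=\mathcal{S}_{sub-critical}$; Lemma \ref{infoftenflat} then upgrades this to $\eta\in\mathcal{S}_{sub-critical}^*$ outside a further $\mathbf{P}$-null set, which disposes of the sub-critical contribution. On the second event, the key observation is that $\{N_W^-=\infty,\:N_W^+=-\infty\}$ is, by the very definitions of $N_W^{\pm}$ and of $A_n=\{W_n=W_{n-1}=0\}$, nothing other than the statement that $\{n\in\mathbb{Z}:\eta\in A_n\}=\emptyset$, i.e.\ $\eta\in\bigcap_{n\in\mathbb{Z}}A_n^c$ (the carrier has no flat segment, equivalently no record up-step). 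Simultaneously, Lemma \ref{NWboundary}(b) and (d) give $\eta\in\mathcal{S}^+_{critical}\cap\mathcal{S}^-_{critical}=\mathcal{S}_{critical}$. Hence on this event $\eta\in\mathcal{S}_{critical}\cap\bigcap_{n}A_n^c=\mathcal{S}_{critical}^*$, with no exceptional set needed at all. Combining the two cases yields $S\in\mathcal{S}_{sub-critical}^*\cup\mathcal{S}_{critical}^*$, $\mathbf{P}$-a.s.

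The one point I would spell out is the consistency of the two expressions defining $\mathcal{S}_{critical}^*$: within $\mathcal{S}_{critical}$ one has $M_\infty-I_\infty=M_{-\infty}-I_{-\infty}=K$ from \eqref{skdef} (using $\limsup_{n\to\pm\infty}S_n=M_{\pm\infty}$ and $\liminf_{n\to\pm\infty}S_n=I_{\pm\infty}$, valid on $\mathcal{S}^{rev}$), so that $M_{-\infty}=M_\infty$ — equivalently $\bigcap_n A_n^c$, i.e.\ $M$ has no increments — forces $I_{-\infty}=I_\infty$, with all four quantities finite because $S$ is bounded in the critical regime. This justifies identifying $\mathcal{S}_{critical}\cap\bigcap_n A_n^c$ with $\{M_{-\infty}=M_\infty\in\mathbb{R},\,I_{-\infty}=I_\infty\in\mathbb{R}\}$.

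I do not expect a real obstacle: the proposition is essentially a packaging of the earlier lemmas. The substantive work for the sub-critical alternative is entirely carried by Lemma \ref{infoftenflat} (Poincar\'e recurrence together with the odd-gap propagation of Proposition \ref{infoddlem}), while the critical alternative is almost definitional once one recognises that the second event in Lemma \ref{Npmlem} literally asserts the absence of any carrier flat segment. The only care required is the bookkeeping of $\mathbf{P}$-null sets and the short consistency check for $\mathcal{S}_{critical}^*$ just described.
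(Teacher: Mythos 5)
Your proposal is correct and follows essentially the same route as the paper: both apply Lemma \ref{Npmlem} to split into the two events on $N_W^{\pm}$, identify the first with $\mathcal{S}_{sub-critical}$ (upgraded to $\mathcal{S}_{sub-critical}^*$ via Lemma \ref{infoftenflat}) and the second with $\mathcal{S}_{critical}\cap\bigcap_n A_n^c=\mathcal{S}_{critical}^*$. The only addition is your explicit consistency check of the two descriptions of $\mathcal{S}_{critical}^*$, which the paper treats as part of the definition.
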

\begin{proof} From Lemma \ref{Npmlem}, we know that $S$ is supported on
\[\mathcal{S}^{inv} \cap \left(\left\{N^-_W=-\infty,\:N^+_W=\infty\right\} \cup \{N^-_W=\infty,\:N^+_W=-\infty\}\right).\]
As $\mathcal{S}^{inv} \cap \left\{N^-_W=-\infty,\:N^+_W=\infty\right\} \subseteq \mathcal{S}_{sub-critical}$
and $\mathcal{S}^{inv} \cap \left\{N^-_W=\infty,\:N^+_W=-\infty\right\} \subseteq  \mathcal{S}_{critical}\cap \left( \bigcap_{n \in \mathbb{Z}} A_n^c \right)$, the result follows from Lemma \ref{infoftenflat}.
\end{proof}

Since $\mathcal{S}_{sub-critical}^*$ and $\mathcal{S}_{critical}^*$ are invariant under $T$, any invariant measure can be decomposed into the parts supported on each of the sets $\mathcal{S}_{sub-critical}^*$ and on $\mathcal{S}_{critical}^*$. Therefore, from now on, we study the properties of invariant measures supported only on $\mathcal{S}_{sub-critical}^*$ or $\mathcal{S}_{critical}^*$. We start by characterising the invariant measures supported on $\mathcal{S}_{critical}^*$.

\begin{prop}\label{critprop} Suppose $\eta$ is a random particle configuration such that the distribution of the corresponding path encoding $S$ is supported on $\mathcal{S}^{rev}$. The following conditions are then equivalent.\\
(i)  $T\eta\buildrel{d}\over{=}\eta$ and $S \in \mathcal{S}_{critical}^*$, $\mathbf{P}$-a.s.\\
(ii) $T\eta\buildrel{d}\over{=}\eta$ and $P(\eta_n=1)=\frac{1}{2}$ for all $n \in \mathbb{Z}$.\\
(iii) $\eta \buildrel{d}\over{=} 1-\eta$ and $S\in \mathcal{S}_{critical}^*$, $\mathbf{P}$-a.s.\\
(iv) $S\buildrel{d}\over{=} -S$ and $S \in \mathcal{S}_{critical}^*$, $\mathbf{P}$-a.s.\\
(v) $S \in \cup_{K \in \mathbb{N} } \mathcal{S}_{K}$, $\mathbf{P}$-a.s., and for each positive integer $K$, $W \buildrel{d}\over{=} K-W$, $\mathbf{P}$-a.s.\ on $\mathcal{S}_{K}$, where we write $\mathcal{S}_K=\mathcal{S}_K^- \cap \mathcal{S}_K^+$.
\end{prop}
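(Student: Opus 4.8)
The plan is to base the entire argument on one pivotal observation, which I would establish first: \emph{on $\mathcal{S}_{critical}^*$ the past maximum $M$ is constant}. Indeed, by definition $\mathcal{S}_{critical}^*=\{M_{-\infty}=M_{\infty}\in\mathbb{R},\ I_{-\infty}=I_{\infty}\in\mathbb{R}\}$, and since $M$ is non-decreasing the equality $M_{-\infty}=M_{\infty}$ forces $M_n\equiv M_0$ for every $n$. Plugging this into the definition of $T$ gives $(TS)_n=2M_n-S_n-2M_0=-S_n$, so that $TS=-S$ on $\mathcal{S}_{critical}^*$; reading off increments, this is exactly $T\eta=1-\eta$. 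Thus on the critical-star set the dynamics reduce to the trivial involution $S\mapsto-S$, and this identity drives all of the equivalences.

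Granting the pivot, the chain (i)$\Leftrightarrow$(ii)$\Leftrightarrow$(iii)$\Leftrightarrow$(iv) is essentially bookkeeping. For (i)$\Leftrightarrow$(ii) I would combine Lemma \ref{particleflatdensity}, which gives $\mathbf{P}(\eta_n=1)=\tfrac12$ if and only if $\mathbf{P}(A_n)=0$ (where $A_n=\{W_n=W_{n-1}=0\}$), with Proposition \ref{critsubcrit}: if $S\in\mathcal{S}_{critical}^*$ $\mathbf{P}$-a.s.\ then no flat segment of $W$ ever occurs, so $\mathbf{P}(A_n)=0$ for all $n$, giving (ii); conversely, $\mathbf{P}(A_n)=0$ for all $n$ forces $\mathbf{P}(\cup_n A_n)=0$, while on $\mathcal{S}_{sub-critical}$ one has $\ell(W)_n\to\infty$ and hence $\eta\in\cup_nA_n$, so Proposition \ref{critsubcrit} leaves only $\mathcal{S}_{critical}^*$. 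For (i)$\Leftrightarrow$(iii) I would simply note that, since $T\eta=1-\eta$ $\mathbf{P}$-a.s.\ on the common support $\mathcal{S}_{critical}^*$, invariance $T\eta\stackrel{d}{=}\eta$ is the same statement as $1-\eta\stackrel{d}{=}\eta$; and (iii)$\Leftrightarrow$(iv) holds because the bijection $\eta\mapsto S$ carries $1-\eta$ to $-S$.

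The substantive step is (iv)$\Leftrightarrow$(v), and here the key is the correspondence between reflection of $S$ and complementation of $W$. On $\mathcal{S}_{critical}^*\cap\mathcal{S}_K$ one has $K=M_\infty-I_\infty$ and $W=M_\infty-S$, and a direct computation shows that replacing $S$ by $-S$ replaces $W$ by $K-W$; since reflection preserves both $\mathcal{S}_{critical}^*$ and the value $K$, the condition $S\stackrel{d}{=}-S$ is equivalent, after conditioning on each $\mathcal{S}_K$, to $W\stackrel{d}{=}K-W$. This yields (iv)$\Rightarrow$(v) at once, using $\mathcal{S}_{critical}^*\subseteq\mathcal{S}_{critical}=\cup_K\mathcal{S}_K$.

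For the converse I must upgrade the weaker hypothesis $S\in\cup_K\mathcal{S}_K$ of (v) to $S\in\mathcal{S}_{critical}^*$, i.e.\ rule out flat segments, and this is the one genuinely non-formal point, since $\mathcal{S}_K\not\subseteq\mathcal{S}_{critical}^*$ in general. I would extract it from the structure of $\mathcal{Y}$: a path in $\mathcal{Y}$ may only be flat at level $0$, so a flat segment of $W$ would force $K-W$ to be flat at level $K\ge1$, which is forbidden in $\mathcal{Y}$. Hence $W$ and $K-W$ cannot both lie in $\mathcal{Y}$ unless $W$ has no flat segment. Since $W=\Phi^{-1}S\in\mathcal{Y}$ $\mathbf{P}$-a.s.\ and $W\stackrel{d}{=}K-W$ forces $K-W\in\mathcal{Y}$ $\mathbf{P}$-a.s., it follows that $W$ has no flat segments a.s., so $M$ is constant; then $I_{-\infty}=M_\infty-K=I_\infty$ as well, giving $S\in\mathcal{S}_{critical}^*$ a.s., and the reflection correspondence completes (v)$\Rightarrow$(iv). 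I expect this $\mathcal{Y}$-incompatibility argument to be the crux, with everything else following from the pivot identity $TS=-S$.
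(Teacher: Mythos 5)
Your proof is correct and follows essentially the same route as the paper's: the pivot identity $TS=-S$ on $\mathcal{S}_{critical}^*$ (from $M$ being constant), Lemma \ref{particleflatdensity} together with Proposition \ref{critsubcrit} for (i)$\Leftrightarrow$(ii), and the reflection/complementation correspondence for the remaining equivalences. The only difference is that you supply the details the paper dismisses as ``straightforward,'' in particular the $\mathcal{Y}$-incompatibility argument ruling out flat segments of $W$ in (v)$\Rightarrow$(iv), which is a genuine and correctly handled point since $\mathcal{S}_K\not\subseteq\mathcal{S}_{critical}^*$.
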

\begin{proof}
From Lemma \ref{Npmlem}, (i) implies $P(W_n=W_{n-1}=0)=0$ for all $n$, and so (ii) follows directly from Lemma \ref{particleflatdensity}. Conversely (ii) implies $P(W_n=W_{n-1}=0)=0$ for all $n$, and so (i). Also, (i) and (ii) imply $T\eta= 1-\eta$, $\mathbf{P}$-a.s., and so (iii) follows. The condition $S \in \mathcal{S}_{critical}^*$, $\mathbf{P}$-a.s.\ in (iii) also implies $T\eta = 1-\eta$, $\mathbf{P}$-a.s., and so (i) follows. The equivalence between (iii), (iv) and (v) are straightforward.
\end{proof}

The following lemma will allow us to replace $\mathcal{S}_{critical}^*$ with $\mathcal{S}_{critical}$ in $\mathbf{P}$-a.s.\ statements for invariant measures.

\begin{lem} \label{critstarremove}
Suppose $\eta$ is a random particle configuration such that the distribution of the corresponding path encoding $S$ is supported on $\mathcal{S}_{critical}$. Then, $\eta \buildrel{d}\over{=} 1-\eta$ implies $S \in \mathcal{S}_{critical}^*$, $\mathbf{P}$-a.s.
\end{lem}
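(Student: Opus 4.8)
We have $\eta$ whose path encoding $S$ is supported on $\mathcal{S}_{critical}$, and we assume $\eta \stackrel{d}{=} 1-\eta$. We must show $S \in \mathcal{S}_{critical}^*$ almost surely. Recall from the excerpt that
\[\mathcal{S}_{critical}^*=\left\{ S \in \mathcal{S}^0 \::\: M_{-\infty}=M_{\infty} \in \mathbb{R},\: I_{-\infty}=I_{\infty} \in \mathbb{R} \right\},\]
whereas the definition of $\mathcal{S}_{critical}$ (via \eqref{skdef}) only guarantees that $S$ is bounded in both directions with $\sup_n(M_n-I_n)=K$ for some $K\in\mathbb{N}$, matching the oscillation at $\pm\infty$. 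So the gap between the two sets is precisely the possibility that the past maximum $M$ keeps climbing somewhere in the middle, i.e.\ $M_{-\infty}<M_{\infty}$ (equivalently $N_W^-\neq\infty$, a flat segment of $W$ occurs), and symmetrically for $I$.

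**The plan.** Let me work with the events $A_n=\{W_n=W_{n-1}=0\}$ from Section \ref{currentsec}. On $\mathcal{S}_{critical}$ we have $N_W^-\in\mathbb{Z}\cup\{\infty\}$ and $N_W^+\in\mathbb{Z}\cup\{-\infty\}$ by Lemma \ref{NWboundary}(b),(d). The set $\mathcal{S}_{critical}^*=\mathcal{S}_{critical}\cap\bigcap_{n}A_n^c$ is exactly the event that $W$ has \emph{no} flat segment at all, equivalently $N_W^-=\infty$ (and hence $N_W^+=-\infty$). So I must show that under $\eta\stackrel{d}{=}1-\eta$ we have $\mathbf{P}(\eta\in A_n)=0$ for every $n\in\mathbb{Z}$; a union bound over the countable index set then gives $S\in\mathcal{S}_{critical}^*$ a.s. First I would observe that $\eta\stackrel{d}{=}1-\eta$ forces $\mathbf{P}(\eta_n=1)=\frac12$ for all $n$, since $\eta_n\stackrel{d}{=}1-\eta_n$. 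Now on $\mathcal{S}_{critical}$ the relation $M-M_0=\ell(W)$ (Corollary \ref{corafter}) shows that $\eta\in A_n$ for some $n$ if and only if $M$ genuinely increases somewhere, and the key is to translate the reflection symmetry $\eta\stackrel{d}{=}1-\eta$ — equivalently $S\stackrel{d}{=}-S$ — into a statement about the carrier that rules out flat segments.

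**The main mechanism.** The cleanest route is through Lemma \ref{particleflatdensity}, but that lemma is stated under the assumptions of Theorem \ref{mra} (invariance under $T$), which I do \emph{not} yet have here — indeed this lemma is a stepping stone toward establishing that invariance. So instead I would reprove its conclusion directly from $\eta\stackrel{d}{=}1-\eta$. The transformation $1-\eta$ corresponds at the level of path encodings to $S\mapsto -S$, which swaps the roles of the past maximum $M$ and the past minimum $m_n:=\inf_{k\le n}S_k$, and correspondingly swaps up-jumps and down-jumps of $W$. Concretely, writing $\tilde\eta=1-\eta$ with carrier $\tilde W=\tilde M-\tilde S=(-m)-(-S)=S-m$, a direct check gives $\{\tilde W_n=\tilde W_{n-1}=0\}=\{S_n=S_{n-1}+1,\,S_{n-1}=m_{n-1}\}$, i.e.\ an up-step at a past-minimum, whereas $\{W_n=W_{n-1}=0\}=\{S_n=S_{n-1}+1,\,S_{n-1}=M_{n-1}\}$. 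The distributional identity $\eta\stackrel{d}{=}1-\eta$ then yields $\mathbf{P}(W_n=W_{n-1}=0)=\mathbf{P}(\tilde W_n=\tilde W_{n-1}=0)$. Combining this with the elementary decomposition $\mathbf{P}(\eta_n=0)=\mathbf{P}(\eta_n=0,\,\text{down-jump of }W)+\mathbf{P}(W_n=W_{n-1}=0)$ and the identity $\mathbf{P}(\eta_n=1)=\frac12$ forces $\mathbf{P}(W_n=W_{n-1}=0)=0$ for every $n$.

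**The obstacle and the finish.** The step I expect to require most care is the bookkeeping that converts $\eta\stackrel{d}{=}1-\eta$ into $\mathbf{P}(A_n)=0$ without circularly invoking Lemma \ref{particleflatdensity}. The honest way is to run the computation in Lemma \ref{particleflatdensity} with $1-\eta$ playing the role that $T\eta$ plays there: since $\eta_n$ has a down-jump of $W$ exactly when $\eta_n=0$ and $W_{n-1}>0$, and an up-jump of $W$ when $\eta_n=1$, one writes $\mathbf{P}(\eta_n=1)=\frac12\big(\mathbf{P}(\eta_n=1)+\mathbf{P}((1-\eta)_n=1)\big)=\frac12\big(\mathbf{P}(W_n-W_{n-1}=1)+\mathbf{P}(\tilde W_n-\tilde W_{n-1}=1)\big)$, and since $\tilde W_n-\tilde W_{n-1}=1$ corresponds to a down-step of $S$ at a non-minimum, which on $\mathcal{S}_{critical}$ pairs up with the down-jump event of the original $W$, the middle term collapses to $\frac12(1-\mathbf{P}(W_n=W_{n-1}=0))$. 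Hence $\frac12=\frac12(1-\mathbf{P}(A_n))$, giving $\mathbf{P}(A_n)=0$. A countable union over $n\in\mathbb{Z}$ then shows $\mathbf{P}(\eta\in\bigcup_n A_n)=0$, i.e.\ $S\in\bigcap_n A_n^c$ a.s.; intersecting with the almost-sure event $S\in\mathcal{S}_{critical}$ yields $S\in\mathcal{S}_{critical}^*$ a.s., as required.
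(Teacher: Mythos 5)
There is a genuine gap at the heart of your argument: the local computation that is supposed to yield $\mathbf{P}(W_n=W_{n-1}=0)=0$ does not work. Writing $\tilde{W}$ for the carrier of $1-\eta$, the correct dictionary is: $\tilde{W}_n-\tilde{W}_{n-1}=1$ if and only if $(1-\eta)_n=1$, i.e.\ $\eta_n=0$, i.e.\ $S$ takes an \emph{up}-step at $n$ --- not, as you claim, a down-step of $S$ at a non-minimum (that event is $\tilde{W}_n-\tilde{W}_{n-1}=-1$). Consequently $\mathbf{P}((1-\eta)_n=1)=\mathbf{P}(W_n-W_{n-1}=-1)+\mathbf{P}(W_n=W_{n-1}=0)$, because $(1-\eta)_n=1$ holds both on the down-jump event and on the flat-segment event $A_n$. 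Substituting this into your identity gives
\[
\mathbf{P}(\eta_n=1)=\tfrac12\left(\mathbf{P}(W_n-W_{n-1}=1)+\mathbf{P}(W_n-W_{n-1}=-1)+\mathbf{P}(A_n)\right)=\tfrac12,
\]
which is vacuously true and carries no information about $\mathbf{P}(A_n)$. The reason Lemma \ref{particleflatdensity} produces the crucial term $-\mathbf{P}(A_n)$ is precisely that $(T\eta)_n=1$ is equivalent to $W_n-W_{n-1}=-1$ \emph{exactly}, excluding $A_n$; replacing $T\eta$ by $1-\eta$ destroys this exclusion, the $\mathbf{P}(A_n)$ contributions cancel, and the identity collapses to a tautology.

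The paper's proof is global rather than local, and the hypothesis that $S$ is supported on $\mathcal{S}_{critical}$ is what does the work, not the density-$\frac12$ computation. For $S\in\mathcal{S}_{critical}\setminus\mathcal{S}_{critical}^*$ one has $-\infty<\limsup_{n\to-\infty}S_n=M_{-\infty}<M_\infty=\sup_nS_n<\infty$; since the path encoding of $1-\eta$ is $-S$, this gives $\liminf_{n\to-\infty}(-S_n)>\inf_n(-S_n)$, so that $-S\notin\mathcal{S}^{rev}$ by Theorem \ref{mr1}. But $\eta\buildrel{d}\over{=}1-\eta$ forces the path encoding of $1-\eta$ to lie in $\mathcal{S}_{critical}\subseteq\mathcal{S}^{rev}$ almost surely, whence $\mathbf{P}(S\in\mathcal{S}_{critical}\setminus\mathcal{S}_{critical}^*)=0$. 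Your overall framing (reduce to showing that flat segments of $W$ almost surely do not occur, then take a countable union) identifies a correct target, but the mechanism you propose for reaching it fails: the symmetry $S\buildrel{d}\over{=}-S$ has to be exploited at the level of whole paths, where it contradicts the boundary behaviour of the bad configurations, not at the level of one-site marginals.
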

\begin{proof}
For any $S \in \mathcal{S}_{critical} \setminus \mathcal{S}_{critical}^*$, we have \[-\infty < \limsup_{n \to -\infty}S_n=M_{-\infty} < M_{\infty}=\sup_{n}S_n < \infty.\]
Since $S^{1-\eta}=-S$, where $ S^{1-\eta}$ is the path encoding for $1-\eta$, it follows that
\[\liminf_{n \to -\infty}S^{1-\eta}_n=\liminf_{n \to -\infty}(-S_n) > \inf_{n}(-S_n)=I_{-\infty}^{1-\eta}.\]
Thus we obtain from Theorem \ref{mr1} that $S^{1-\eta} \notin \mathcal{S}^{rev}$. As $\eta \buildrel{d}\over{=} 1-\eta$ implies that $S^{1-\eta} \in \mathcal{S}^{rev}$, $\mathbf{P}$-a.s., we can conclude that $\mathbf{P}(S \in \mathcal{S}_{critical} \setminus \mathcal{S}_{critical}^*)= 0$, as desired.
\end{proof}

We proceed to turn our attention to invariant measures supported on $\mathcal{S}_{sub-critical}^*$. From Theorem \ref{bijectionthm}, we readily deduce the equivalence of the invariance of the configuration $\eta$ under $T$ and the invariance of the current $((T^kW)_0)_{k \in \mathbb{Z}}$ under the shift $\theta$.

\begin{prop}\label{subcritprop}
Suppose $\eta$ is a random particle configuration such that the distribution of the corresponding path encoding $S$ is supported on $\mathcal{S}_{sub-critical}^*$. It then holds that $T\eta\buildrel{d}\over{=}\eta$ if and only if $ ((T^kW)_0)_{k \in \mathbb{Z}} \buildrel{d}\over{=} \theta ((T^kW)_0)_{k \in \mathbb{Z}}$.
\end{prop}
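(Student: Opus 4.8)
The plan is to show that the map $\Lambda$ of Theorem \ref{bijectionthm} intertwines the BBS dynamics $T$ on configurations with the canonical shift $\theta$ on current sequences, and then to transport the relevant distributional identity across the measurable bijection $\Lambda$. Since the hard analytic work has already been done in establishing that $\Lambda:\mathcal{S}_{sub-critical}^*\to(\mathbb{Z}_+^{\mathbb{Z}})^*$ is a measurable bijection, the remaining argument is essentially formal.

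First I would record the \emph{intertwining relation}
\[\Lambda(T\eta)=\theta\left(\Lambda(\eta)\right),\qquad\eta\in\mathcal{S}_{sub-critical}^*.\]
This is a reindexing: writing $W^{T^k\eta}$ for the minimal carrier of $T^k\eta$, the notation $T^kW$ stands for $W^{T^k\eta}$, so that $\Lambda(\eta)=(W^{T^k\eta}_0)_{k\in\mathbb{Z}}$. Replacing $\eta$ by $T\eta$ and noting $T^k(T\eta)=T^{k+1}\eta$ gives $\Lambda(T\eta)_k=W^{T^{k+1}\eta}_0=\Lambda(\eta)_{k+1}=(\theta\Lambda(\eta))_k$, since $(\theta x)_k=x_{k+1}$. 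I would also observe here that $\mathcal{S}_{sub-critical}^*$ is invariant under $T$: the set $(\mathbb{Z}_+^{\mathbb{Z}})^*$ is plainly invariant under $\theta$ (having infinitely many odd gaps between zeros in both directions is a shift-invariant property), so the intertwining relation together with the defining identity $\mathcal{S}_{sub-critical}^*=\Lambda^{-1}((\mathbb{Z}_+^{\mathbb{Z}})^*)$ forces $T\eta\in\mathcal{S}_{sub-critical}^*$ whenever $\eta\in\mathcal{S}_{sub-critical}^*$. Hence both $\eta$ and $T\eta$ lie in the domain of the bijection, $\mathbf{P}$-a.s.

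Next I would transfer the laws. By Theorem \ref{bijectionthm}, $\Lambda$ is a measurable bijection between $\mathcal{S}_{sub-critical}^*$ and $(\mathbb{Z}_+^{\mathbb{Z}})^*$, both of which are standard Borel (being Borel subsets of $\{0,1\}^{\mathbb{Z}}$ and $\mathbb{Z}_+^{\mathbb{Z}}$ respectively), so its inverse is automatically measurable; alternatively, the explicit reconstruction performed in the proof of Lemma \ref{currenttoparticle} exhibits a measurable inverse directly. Consequently, for random elements supported in $\mathcal{S}_{sub-critical}^*$, equality in distribution is equivalent to equality in distribution of their $\Lambda$-images (pushing forward by $\Lambda$ gives one implication, pushing forward by $\Lambda^{-1}$ the other). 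Applying this to the pair $T\eta$ and $\eta$ yields
\[T\eta\buildrel{d}\over{=}\eta\quad\Longleftrightarrow\quad\Lambda(T\eta)\buildrel{d}\over{=}\Lambda(\eta),\]
and combining with $\Lambda(T\eta)=\theta(\Lambda(\eta))$ and the definition $\Lambda(\eta)=((T^kW)_0)_{k\in\mathbb{Z}}$, the right-hand side is exactly the assertion $((T^kW)_0)_{k\in\mathbb{Z}}\buildrel{d}\over{=}\theta((T^kW)_0)_{k\in\mathbb{Z}}$, which is the claim.

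The only point requiring care — and thus the main (if modest) obstacle — is the bimeasurability of $\Lambda$: one needs the inverse to be measurable, not merely set-theoretically defined, in order to run the distributional equivalence in \emph{both} directions. This is where the standard Borel structure (or the explicit inverse from Lemma \ref{currenttoparticle}) is invoked. The intertwining relation, though conceptually the heart of the matter, needs no work beyond unwinding the notation $T^kW=W^{T^k\eta}$.
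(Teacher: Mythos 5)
Your proposal is correct and follows essentially the same route as the paper, which likewise proves the statement by noting the intertwining relation $\theta\circ\Lambda=\Lambda\circ T$ and then invoking the measurable bijectivity of $\Lambda$ from Theorem \ref{bijectionthm}; your write-up merely spells out the reindexing, the $T$-invariance of $\mathcal{S}_{sub-critical}^*$ and the bimeasurability point that the paper leaves implicit.
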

\begin{proof} We clearly have by definition that $\theta \circ \Lambda=\Lambda \circ T$. Since Theorem \ref{bijectionthm} gives that $\Lambda$ is a measurable bijection on $\mathcal{S}_{sub-critical}^*$, the result follows.
\end{proof}

The preceding proposition (combined with Theorem \ref{bijectionthm}) implies that there is one-to-one relationship between the invariant measures for $T$ supported on $\mathcal{S}_{sub-critical}^*$ and the invariant measures for $\theta$ supported on $(\mathbb{Z}_+^{\mathbb{Z}})^*$. In particular, for any probability measure $Q$ on $\mathbb{Z}_+$ with $Q(\{0\}) >0$, $Q^{\otimes \mathbb{Z}} \circ \Lambda$ is invariant under $T$. Moreover, $Q^{\otimes \mathbb{Z}} \circ \Lambda$ is an example of a configuration distribution which is ergodic under $T$ (this is a consequence of Theorem \ref{mrc}, which we prove below).
Note that, if defined in this way, the configuration distribution is stationary under spatial shifts only when $Q$ is the geometric distribution (see Corollary \ref{iidmarkovcharacterization}). On the other hand, the next proposition shows that, under any invariant measure supported on $\mathcal{S}_{sub-critical}^*$, the density profile must be spatially stationary, namely a constant.

\begin{prop}\label{densityconstant}
Suppose $\eta$ is a random particle configuration such that the distribution of the corresponding path encoding $S$ is supported on $\mathcal{S}_{sub-critical}^*$, and $T\eta\buildrel{d}\over{=}\eta$ holds. Then there exists a constant $\rho \in [0, \frac{1}{2})$ such that $\mathbf{P}(\eta_n=1)=\rho$ for all $n \in \mathbb{Z}$.
\end{prop}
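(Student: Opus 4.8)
The plan is to reduce everything to a statement about the flat-segment events $A_n=\{W_n=W_{n-1}=0\}$. Since $\mathcal{S}_{sub-critical}^*\subseteq\mathcal{S}_{sub-critical}\subseteq\mathcal{S}^{rev}$ and $T\eta\buildrel{d}\over{=}\eta$, the hypotheses of Theorem \ref{mra} hold, so Lemma \ref{particleflatdensity} gives $\mathbf{P}(\eta_n=1)=\tfrac12(1-\mathbf{P}(A_n))$ for every $n$. Thus it suffices to prove that $\mathbf{P}(A_n)$ equals a constant $c$ independent of $n$, and that $c>0$; the density is then $\rho=\tfrac12(1-c)\in[0,\tfrac12)$. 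A useful preliminary observation is that, because $\mathcal{S}_{sub-critical}^*\subseteq\mathcal{S}^{inv}$ is $T$-invariant and $T\eta\buildrel{d}\over{=}\eta$, we have $T^k\eta\buildrel{d}\over{=}\eta$ for all $k\ge0$, and hence $\mathbf{P}(A_n^k)=\mathbf{P}(T^k\eta\in A_n)=\mathbf{P}(A_n)$, where $A_n^k=\{T^k\eta\in A_n\}=\{w_n^k=w_{n-1}^k=0\}$.

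The heart of the argument is a comparison of the number of flat segments seen at adjacent spatial sites across a large time window, exploiting the interlacing established in Corollary \ref{oddgapandflatpath}. Fixing $n$ and $K\in\mathbb{N}$, set $N_n(K):=\#\{0\le k\le K:\ w_n^k=w_{n-1}^k=0\}$, which equals the largest $i$ with $\tilde{\sigma}_n^i\le K$ (on $\mathcal{S}_{sub-critical}^*$ all the times $\tilde{\sigma}_n^i$ are finite, since the current has infinitely many odd gaps in the forward direction). Writing $m=N_n(K)$, the inequalities $\tilde{\sigma}_n^{i-1}\le\tilde{\sigma}_{n-1}^i<\tilde{\sigma}_n^{i+1}$ give $\tilde{\sigma}_{n-1}^{m+2}\ge\tilde{\sigma}_n^{m+1}>K$ and $\tilde{\sigma}_{n-1}^{m-1}<\tilde{\sigma}_n^{m}\le K$, whence $m-1\le N_{n-1}(K)\le m+1$; that is, $|N_n(K)-N_{n-1}(K)|\le1$ pointwise. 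Taking expectations and using the invariance identity $\mathbf{E}[N_n(K)]=\sum_{k=0}^K\mathbf{P}(A_n^k)=(K+1)\mathbf{P}(A_n)$, we obtain
\[(K+1)\,\bigl|\mathbf{P}(A_n)-\mathbf{P}(A_{n-1})\bigr|\le\mathbf{E}\bigl|N_n(K)-N_{n-1}(K)\bigr|\le1.\]
Dividing by $K+1$ and letting $K\to\infty$ forces $\mathbf{P}(A_n)=\mathbf{P}(A_{n-1})$ for every $n$, so $\mathbf{P}(A_n)\equiv c$ and $\mathbf{P}(\eta_n=1)\equiv\rho:=\tfrac12(1-c)$.

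Finally, to see $\rho<\tfrac12$ I would argue that $c>0$. On $\mathcal{S}_{sub-critical}$ one has $\lim_{n\to\infty}S_n=\infty$, hence $\ell(W)_n\to\infty$, so $W_n=W_{n-1}=0$ for infinitely many $n$ (this is exactly the observation opening the proof of Lemma \ref{infoftenflat}); in particular $\mathbf{P}\bigl(\bigcup_{n\ge1}A_n\bigr)=1$. Were $c=0$, countable subadditivity would give $\mathbf{P}\bigl(\bigcup_{n\ge1}A_n\bigr)\le\sum_{n\ge1}\mathbf{P}(A_n)=0$, a contradiction; hence $c>0$ and $\rho=\tfrac12(1-c)\in[0,\tfrac12)$. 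The main obstacle is the pointwise comparison $|N_n(K)-N_{n-1}(K)|\le1$: the whole argument hinges on the clean interlacing of flat-segment times from Corollary \ref{oddgapandflatpath}, which in turn relies on the infinitely-many-odd-gaps structure guaranteed by restricting to $\mathcal{S}_{sub-critical}^*$; once that pointwise bound is in hand, the rest is a routine averaging over the time window.
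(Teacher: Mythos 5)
Your proof is correct, and it shares the paper's key ingredient -- the interlacing $\tilde{\sigma}_{n}^{i-1}\le\tilde{\sigma}_{n-1}^{i}<\tilde{\sigma}_{n}^{i+1}$ from Corollary \ref{oddgapandflatpath}, which yields the pointwise bound $|N_n(K)-N_{n-1}(K)|\le 1$ (the paper records exactly this bound in the form $|\sum_i\mathbf{1}_{\{\tilde{\sigma}^i_{n-1}\le k\}}-\sum_i\mathbf{1}_{\{\tilde{\sigma}^i_n\le k\}}|\le 1$). Where you diverge is in how this bound is converted into $\mathbf{P}(A_n)=\mathbf{P}(A_{n-1})$: the paper first invokes the ergodic decomposition theorem to reduce to the ergodic case and then identifies $\mathbf{P}(A_n)$ as an almost-sure Ces\`{a}ro limit via the ergodic theorem, whereas you simply take expectations of $N_n(K)$, use $\mathbf{P}(A_n^k)=\mathbf{P}(A_n)$ (which follows from $T^k\eta\buildrel{d}\over{=}\eta$ on the $T$-invariant set $\mathcal{S}_{sub-critical}^*$), and divide by $K+1$. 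Your route is more elementary, avoiding both the ergodic decomposition and the ergodic theorem. Two further remarks: first, your parenthetical claim that every $\tilde{\sigma}_n^i$ is finite for \emph{all} $n\in\mathbb{Z}$ (not just $n\le 0$) needs the two-sided extension of Proposition \ref{infoddlem} recorded in the remark following Lemma \ref{currenttoparticle}; the paper glosses over this at exactly the same level, so this is acceptable but worth flagging. Second, you supply an explicit argument that $c=\mathbf{P}(A_n)>0$ (via $\mathbf{P}(\bigcup_n A_n)=1$ on $\mathcal{S}_{sub-critical}$ and countable subadditivity) to get the strict inequality $\rho<\tfrac12$; the paper's proof does not spell this out, so this is a genuine small improvement in completeness.
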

\begin{proof} By the ergodic decomposition theorem \cite[Theorem 4.2]{Varad}, we only need to show the result when $\eta$ is ergodic under $T$. Moreover, from Lemma \ref{particleflatdensity}, it will be sufficient to show that $\mathbf{P}(W_n=W_{n-1}=0)$ is constant for $n \in \mathbb{Z}$. Now, if $\eta$ is ergodic under $T$, then we have that
\[\lim_{k \to \infty}\frac{1}{k}\sum_{l=1}^k f_n(T^l \eta) = \mathbf{P}(W_n=W_{n-1}=0),\qquad \mathbf{P}\mbox{-a.s.},\]
where we define $f_n(\eta):=\mathbf{1}_{\{W_n=W_{n-1}=0\}}$. With the notation $\tilde{\sigma}^i_n$ introduced in Section \ref{currentsec}, since $(T^kW_n)_{k \ge0}$ has infinitely many odd gaps between zeros for any $n$, $\mathbf{P}$-a.s., Corollary \ref{oddgapandflatpath} yields that
\[\sum_{l=1}^k f_n(T^l \eta)=\sum_{i=1}^{\infty} \mathbf{1}_{\{\tilde{\sigma}^i_n \le k\}},\qquad\forall k\geq1,\:n\in\mathbb{Z},\qquad \mathbf{P}\mbox{-a.s.}\]
The latter result also gives
\[\left|\sum_{i=1}^{\infty} \mathbf{1}_{\{\tilde{\sigma}^i_{n-1} \le k\}} - \sum_{i=1}^{\infty} \mathbf{1}_{\{\tilde{\sigma}^i_n \le k\}}\right| \leq 1,\qquad\forall k\geq1,\:n\in\mathbb{Z},\qquad \mathbf{P}\mbox{-a.s.},\]
and so we deduce
\[\lim_{k \to \infty}\frac{1}{k}\sum_{l=1}^k f_n(T^l \eta)=\lim_{k \to \infty}\frac{1}{k}\sum_{l=1}^k f_{n-1}(T^l \eta),\qquad \mathbf{P}\mbox{-a.s.}\]
Thus we obtain $\mathbf{P}(W_n=W_{n-1}=0) =\mathbf{P}(W_{n-1}=W_{n-2}=0)$ for all $n \in \mathbb{Z}$, as desired.
\end{proof}

Together with Lemma \ref{critstarremove}, the following result will enable us to replace the set $\mathcal{S}_{sub-critical}^*$ by $\mathcal{S}_{sub-critical}$ when proving Theorem \ref{mrb}. In the proof, we use the notation $T^k\eta_n=\eta^k_n$ and $T^kW_n=w^k_n$ as in Section \ref{currentsec}.

\begin{lem}\label{subcritstarremove} Suppose $\eta$ is a random particle configuration such that the distribution of the corresponding path encoding $S$ is supported on $\mathcal{S}_{sub-critical}$. Then $ ((T^kW)_0)_{k \in \mathbb{Z}} \buildrel{d}\over{=} \theta ((T^kW)_0)_{k \in \mathbb{Z}}$ implies $S \in \mathcal{S}_{sub-critical}^*$, $\mathbf{P}$-a.s.
\end{lem}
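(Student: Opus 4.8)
The plan is to reduce the statement to a single recurrence assertion about the current at the origin, and then to settle that using the deterministic structure of flat segments. Throughout, write $y=(w^k_0)_{k\in\mathbb{Z}}=\Lambda(\eta)$ for the current across the origin, which is well-defined since $\mathcal{S}_{sub-critical}\subseteq\mathcal{S}^{inv}$; the hypothesis is precisely that $y$ is stationary under the shift $\theta$.

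First I would reduce everything to the origin. By the alternative description of $\mathcal{S}_{sub-critical}^*$ recorded in the remark following Lemma \ref{currenttoparticle} (and its two-sided form), for $\eta\in\mathcal{S}_{sub-critical}$ one has $\eta\in\mathcal{S}_{sub-critical}^*$ if and only if $y\in(\mathbb{Z}_+^{\mathbb{Z}})^*$, i.e.\ if and only if $y$ has infinitely many odd gaps between zeros in both directions. The value of this reduction is that all the spatial bookkeeping over positions $n\neq 0$ is absorbed into that equivalence, so that only the single sequence $y$ need be controlled. Thus it suffices to show that $y\in(\mathbb{Z}_+^{\mathbb{Z}})^*$, $\mathbf{P}$-a.s.

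Next I would run Poincar\'e recurrence on the stationary sequence $y$. By Corollary \ref{oddgapandflatpath}, the event $A_0^k=\{w^k_0=w^k_{-1}=0\}$ is exactly the event that time $k$ is the left endpoint of an odd gap of $y$, and hence is measurable with respect to $y$; moreover $A_0^k=\theta^{-k}A_0^0$. Since $\theta$ preserves the law of $y$, the two-sided Poincar\'e recurrence theorem shows that $\mathbf{P}$-a.e.\ point of $A_0^0$ returns to $A_0^0$ infinitely often as $k\to+\infty$ and as $k\to-\infty$. Taking the union over the translates $\theta^{-k}A_0^0$, the event $\{\exists\,k:\,A_0^k\}$ coincides, up to a $\mathbf{P}$-null set, with $\{A_0^k\text{ i.o.\ as }k\to+\infty\}\cap\{A_0^k\text{ i.o.\ as }k\to-\infty\}=\{y\in(\mathbb{Z}_+^{\mathbb{Z}})^*\}$. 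Consequently the whole lemma reduces to the statement that, $\mathbf{P}$-a.s.\ on $\mathcal{S}_{sub-critical}$, the current at the origin has at least one odd gap, that is, $A_0^k$ holds for some $k$.

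Finally, for this last step I would exploit the deterministic spatial recurrence of flat segments together with the transport of flats under $T$. On $\mathcal{S}_{sub-critical}$ one has $\mathcal{S}_{sub-critical}\subseteq\limsup_{n\to\infty}A_n\cap\limsup_{n\to-\infty}A_n$ (the deterministic input already used in the proof of Lemma \ref{infoftenflat}), so at time $0$ there are flat segments $A_n^0$ at arbitrarily large positive and negative $n$, and the same holds for every $T^k\eta$. It therefore remains to argue that the dynamics must at some time place a flat segment exactly at the origin. The natural tool here is Lemma \ref{NMlemma}, which shows that the boundary indices $N_W^{\pm}$ move strictly rightwards under each application of $T$: a flat present to the left of the origin is carried rightwards and should eventually reach it. I expect this to be the main obstacle: on $\mathcal{S}_{sub-critical}$ one has $N_W^{\pm}=\pm\infty$, so Lemma \ref{NMlemma} does not apply verbatim and a localised version of the transport statement is needed, and one must rule out the degenerate possibility that the carrier at the origin stays positive for all times, or performs only even-length excursions (equivalently, that $y$ has infinitely many zeros but no odd gaps). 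Showing that these behaviours are incompatible with the stationarity of $y$ and with membership of $\mathcal{S}_{sub-critical}$ is the crux of the argument.
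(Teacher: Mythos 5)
Your reduction is sound and in fact mirrors the paper's own first step: using stationarity of $y=((T^kW)_0)_{k\in\mathbb{Z}}$ under $\theta$ and a Poincar\'e-type recurrence argument (the paper phrases it as an argument ``similar to that of Lemma \ref{Npmlem}''), one deduces that $\mathbf{P}$-a.e.\ realisation falls into exactly one of three classes -- (i) infinitely many odd gaps between zeros in both directions, (ii) infinitely many zeros in both directions but no odd gap, (iii) no zeros -- and class (i) gives $S\in\mathcal{S}_{sub-critical}^*$ by the definition \eqref{ssubcritstar}. One caution about your version of this step: the event $A_0^k=\{w^k_0=w^k_{-1}=0\}$ is not measurable with respect to $y$ alone in general, since $w^k_{-1}$ is recoverable from $(w^l_0)_{l\geq k}$ only when that sequence has a later zero (cf.\ Remark \ref{aboverem}); you should instead run the recurrence on the genuinely $y$-measurable event ``$k$ is a zero of $y$ whose successor zero lies at odd distance'', which by Corollary \ref{oddgapandflatpath} is contained in $A_0^k$ and suffices for your purposes.

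The genuine gap is exactly where you locate it: you never rule out the degenerate classes (ii) and (iii), and that is the entire mathematical content of the lemma. Your suggested route via Lemma \ref{NMlemma} cannot work, as you suspect, because $N_W^{\pm}=\pm\infty$ on $\mathcal{S}_{sub-critical}$ and there is no localised transport statement guaranteeing that a spatial flat segment at some $n<0$ ever arrives at the origin as a temporal one. The paper's argument is different in kind and runs spatially \emph{leftward} rather than transporting flats rightward: if (ii) or (iii) holds, then $(\eta^k_0)_{k\in\mathbb{Z}}$ must be alternating (otherwise $y$ would have a zero and, given infinitely many zeros, an odd gap), and the update patterns \eqref{patterns} then force $w^k_{-1}=w^k_0+1$ and $w^k_{-1}=w^k_0-1$ alternately in $k$, so that $(w^k_{-1})_{k\in\mathbb{Z}}$ is stationary under $\theta^2$ and obeys the same trichotomy; class (i) at $n=-1$ is excluded because odd gaps there would propagate back to $n=0$ by the two-sided argument in the proof of Corollary \ref{keybijection}; iterating leftward shows $(\eta^k_n)_{k\in\mathbb{Z}}$ is alternating for every $n\le0$, hence $W_n\neq W_{n-1}$ for all $n\le 0$, hence $\ell(W)_n=0$ for $n\le0$ and $\limsup_{n\to-\infty}S_n>-\infty$, contradicting $S\in\mathcal{S}_{sub-critical}$. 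Without this (or an equivalent) argument your proposal does not establish the lemma.
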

\begin{proof}
Suppose $((T^kW)_0)_{k \in \mathbb{Z}} \buildrel{d}\over{=} \theta ((T^kW)_0)_{k \in \mathbb{Z}}$ holds. Applying this property in an argument similar to that of Lemma \ref{Npmlem}, it is possible to deduce that, for $\mathbf{P}$-a.e.\ $\eta$, precisely one of the following holds:
\begin{enumerate}
  \item[(i)] $(w^k_0)_{k \in \mathbb{Z}}$ has infinitely many odd gaps between zeros in both directions;
  \item[(ii)] $(w^k_0)_{k \in \mathbb{Z}}$ has no odd gap between zeros, but has infinitely many zeros in both directions;
  \item[(iii)] $(w^k_0)_{k \in \mathbb{Z}}$ has no zeros.
\end{enumerate}
Since the conditions (i), (ii) and (iii) are invariant under the shift operator $\theta$, and (i) implies $\eta \in \mathcal{S}_{sub-critical}^*$, we only need to show that there is no probability measure satisfying $\mathcal{S}_{sub-critical}$, $\mathbf{P}$-a.s., $((T^kW)_0)_{k \in \mathbb{Z}} \buildrel{d}\over{=} \theta ((T^kW)_0)_{k \in \mathbb{Z}}$, and either (ii) or (iii) holds, $\mathbf{P}$-a.s. Suppose such a probability measure exists. It must then be the case that $(\eta_0^k)_{k \in \mathbb{Z}}$ is alternating $\mathbf{P}$-a.s., since otherwise $(w^k_0)_{k \in \mathbb{Z}}$ must have at least one zero, and moreover at least one odd gap between zeros under the condition that it has infinitely many zeros. By considering the patterns \eqref{patterns}, it follows that we have $w^k_{-1}=w^k_0+1$ and $w^k_{-1}= w^k_0-1$ alternately as $k$ varies. In particular, we obtain that $(T^kW_{-1})_{k \in \mathbb{Z}} \buildrel{d}\over{=} \theta^2 (T^kW_{-1})_{k \in \mathbb{Z}}$. Hence we can conclude for the sequence $(w^k_{-1})_{k \in \mathbb{Z}}$ that (i), (ii) or (iii) holds, $\mathbf{P}$-a.s. However, if (i) holds and $S \in \mathcal{S}^{inv}$, then from the proof of Corollary \ref{keybijection}, we deduce that $(w^k_0)_{k \in \mathbb{Z}}$ also satisfies (i), and so we conclude that either (ii) or (iii) holds for $(w^k_{-1})_{k \in \mathbb{Z}}$, $\mathbf{P}$-a.s. By iterating the argument, we obtain that $(T^kW_{n})_{k \in \mathbb{Z}} \buildrel{d}\over{=} \theta^2 (T^kW_{n})_{k \in \mathbb{Z}}$ for all $n \le 0$ and $(\eta_n^k)_{k \in \mathbb{Z}}$ is alternating for all $n \le 0$, $\mathbf{P}$-a.s. In particular, this implies $W_n \neq W_{n-1}$ for all $n \le 0$, $\mathbf{P}$-a.s., and so $\ell(W)_n =0$ for all $n \le 0$, $\mathbf{P}$-a.s. As a consequence, we find that $\limsup_{n \to -\infty} S_n > -\infty$, and so $S \notin \mathcal{S}_{sub-critical}$, $\mathbf{P}$-a.s., which contradicts the assumption.
\end{proof}

We are now ready to complete the proofs of Theorem \ref{mra}, \ref{mrb} and \ref{mrc}. Apart from combining the various results we have already proved, we check the claims relating to ergodicity.

\begin{proof}[Proof of Theorem \ref{mra}] This a straightforward consequence of Propositions \ref{critsubcrit}, \ref{critprop} and \ref{densityconstant}.
\end{proof}

\begin{proof}[Proof of Theorem \ref{mrb}] Part (a) follows from Proposition \ref{critprop} and Lemma \ref{critstarremove}. We now move to part (b), and so assume that $S\in\mathcal{S}_{critical}$, $\mathbf{P}$-a.s., and also $T\eta\buildrel{d}\over{=}\eta$ holds. Since by Proposition \ref{critsubcrit}, $S\in\mathcal{S}_{critical}^*$, $\mathbf{P}$-a.s., we know that $T\eta=1-\eta$, $\mathbf{P}$-a.s. Hence, the support of $\eta$ must contain at least two points. Clearly, if the support of $\eta$ contains exactly two points, then this set must be of the form $\{\eta^{(0)},1-\eta^{(0)}\}$ for some $\eta^{(0)}\in\{0,1\}^\mathbb{Z}$. Moreover, for $\eta$ to be invariant under $T$, we must have
\[\mathbf{P}\left(\eta=\eta^{(0)}\right)=\frac12=\mathbf{P}\left(\eta=1-\eta^{(0)}\right).\]
Clearly the only two invariant sets in this setting are $\{\eta^{(0)},1-\eta^{(0)}\}$ and the empty set. Since these have probabilities $1$ and $0$, the system is ergodic. Next, suppose the support of $\eta$ contains three distinct sequences $\eta^{(0)},\eta^{(1)}=1-\eta^{(0)},\eta^{(2)}$. In particular, there exists an integer $n$ such that $(\eta^{(i)}_m)_{m=-n}^n$, $i=0,1,2$, are distinct, and also
\[\mathbf{P}\left((\eta_m)_{m=-n}^n=(\eta^{(i)}_m)_{m=-n}^n\right)>0,\qquad \forall i=0,1,2.\]
Now, for any function $f$ it $\mathbf{P}$-a.s.\ holds that
\[\frac{1}{k}\sum_{l=0}^{k-1}f\left(T^l\eta\right)\rightarrow
\frac{1}{2}\left(f(\eta)+f(1-\eta)\right).\]
Hence, if $f=\mathbf{1}_{\{(\eta^{(2)}_m)_{m=-n}^n\}}$, then the above limit is not $\mathbf{P}$-a.s.\ constant, and so $\eta$ is not ergodic under $T$, which completes the proof of (b).
\end{proof}

\begin{proof}[Proof of Theorem \ref{mrc}] Part (a) is a consequence of Propositions \ref{critsubcrit} and \ref{subcritprop}, and Lemma \ref{subcritstarremove}. For part (b), we assume $S\in\mathcal{S}_{sub-critical}$, $\mathbf{P}$-a.s., and also $T\eta\buildrel{d}\over{=}\eta$ holds. From part (a) and Lemma \ref{subcritstarremove}, we can in fact suppose $S\in\mathcal{S}_{sub-critical}^*$, $\mathbf{P}$-a.s., which allows us to apply Theorem \ref{bijectionthm}. Given the latter result and recalling the identity $\theta\circ\Lambda=\Lambda\circ T$, the proof is straightforward.
\end{proof}

Before continuing, we make an additional observation about the boundary conditions of ergodic measures. In particular, Theorems \ref{mrb} and \ref{mrc} give characterisations of all the ergodic measures that are supported on either $\mathcal{S}_{critical}$ or $\mathcal{S}_{sub-critical}$, and the following lemma confirms that for no other distributions on particle configurations can $T$ be ergodic.

{\lem Under the assumptions of Theorem \ref{mra}, if $\eta$ is ergodic under $T$, then the support of $\eta$ is contained within $\mathcal{S}_{sub-critical}$ or $\mathcal{S}_K=\mathcal{S}_{K}^-\cap\mathcal{S}_{K}^+$ for some $K\in\mathbb{N}$.}
\begin{proof} By Propositions \ref{critsubcrit} and \ref{critprop}, we know that the asymptotic behaviour of $\eta$ at $-\infty$ and $+\infty$ must be the same. Note that, for $*$ equal to `sub-critical' or some $K\in\mathbb{N}$, $f=\mathbf{1}_{\mathcal{S}_{*}}$ is a bounded measurable function of $\eta$. Hence, ergodicity implies
\[\frac{1}{k}\sum_{l=0}^{k-1}f\left(T^l\eta\right)\rightarrow\mathbf{P}\left(\eta\in\mathcal{S}_{*}\right).\]
On the other hand, Theorem \ref{characterizationinv} yields
\[\frac{1}{k}\sum_{l=0}^{k-1}f\left(T^l\eta\right)=f\left(\eta\right)\in\{0,1\}.\]
Since the collection of subsets considered is countable, the result follows.
\end{proof}

To complete the section, we establish Theorem \ref{mrd}. The notation $\Psi$, $\Phi$, $R$ and $\tilde{R}$ should be recalled from Section \ref{pathsec}.

\begin{proof}[Proof of Theorem \ref{mrd}] Suppose that $S \in \mathcal{S}^{rev}$, $\mathbf{P}$-a.s. Note that the three conditions of the theorem can be restated as the following:
\begin{equation}\label{cond1}
S\buildrel{d}\over{=}RS,
\end{equation}
\begin{equation}\label{cond2}
\Phi^{-1}S\buildrel{d}\over{=}\tilde{R} \Phi^{-1}S\qquad \Leftrightarrow\qquad \Phi^{-1}S\buildrel{d}\over{=} \Psi^{-1} RS,
\end{equation}
\begin{equation}\label{cond3}
S\buildrel{d}\over{=} \Psi \Phi^{-1}S \qquad\Leftrightarrow\qquad \Psi^{-1}S\buildrel{d}\over{=} \Phi^{-1}S,
\end{equation}
respectively.

Firstly, suppose (\ref{cond1}) and (\ref{cond2}) are satisfied, then
\[TS=\Psi \Phi^{-1}S\buildrel{d}\over{=} \Psi\Psi^{-1} RS= RS\buildrel{d}\over{=}S,\]
which is (\ref{cond3}). Secondly, suppose (\ref{cond1}) and (\ref{cond3}) are satisfied, then
\[\bar{W}=\tilde{R} \Phi^{-1}S=\Psi^{-1}RS\buildrel{d}\over{=}\Psi^{-1}S\buildrel{d}\over{=}\Phi^{-1}S=W,\]
which is condition (\ref{cond2}). Thirdly, suppose (\ref{cond2}) and (\ref{cond3}) are satisfied, then
\[RS\buildrel{d}\over{=} R\Psi \Phi^{-1}S=\Phi \tilde{R}\Phi^{-1}S\buildrel{d}\over{=}\Phi \Phi^{-1}S=S,\]
which is (\ref{cond1}). Hence, any two of the conditions implies the third. Moreover, since under any two of the three conditions we know that $TS\buildrel{d}\over{=}S$, it follows that  $T^kS\buildrel{d}\over{=}S$ for any $k\in \mathbb{Z}$. From this, we can conclude that $S \in \mathcal{S}^{inv}$, $\mathbf{P}$-a.s.
\end{proof}

{\rem\label{noninv} To highlight that the three conditions assumed in Theorem \ref{mrd} are independent, we present some simple examples for which only one of the three conditions is satisfied. We note the examples are stationary, ergodic configurations satisfying \eqref{p0}, and so Lemma \ref{slinrand} implies that the relevant path encodings meet the requirement that $S\in\mathcal{S}^{rev}$, $\mathbf{P}$-a.s. Firstly, consider $\eta$ to be uniformly distributed on the 16 distinct shifts of the repeated concatenations of  \[(1,1,0,1,0,0,0,0,1,0,1,1,0,0,0,0);\]
a section of the corresponding carrier process $W$ is shown in Figure \ref{noninvfig} (along with the other examples discussed here). Clearly $\eta$ satisfies, $\overleftarrow\eta\buildrel{d}\over{=}\eta$, but $\bar{W}\buildrel{d}\over{\neq}W$, and so it must also be the case that $TS\buildrel{d}\over{\neq}S$. Secondly, consider $\eta$ to be uniformly distributed on the 8 distinct shifts of
\[(1,1,0,1,0,0,0,0)\]
(i.e.\ the first half of the configuration described previously). Then $\bar{W}\buildrel{d}\over{=}W$, but $\overleftarrow\eta\buildrel{d}\over{\neq}\eta$, and so $TS\buildrel{d}\over{\neq}S$. Finally, suppose $\eta$ is uniformly distributed on the 9 distinct shifts of \[(1,0,1,0,0,1,0,0,0),\]
then $\eta$ is invariant under $T$, but neither $\overleftarrow\eta\buildrel{d}\over{=}\eta$ nor $\bar{W}\buildrel{d}\over{=}W$ are satisfied.}

\begin{figure}[!htb]
\vspace{0pt}
\centering
\scalebox{0.3}{\includegraphics{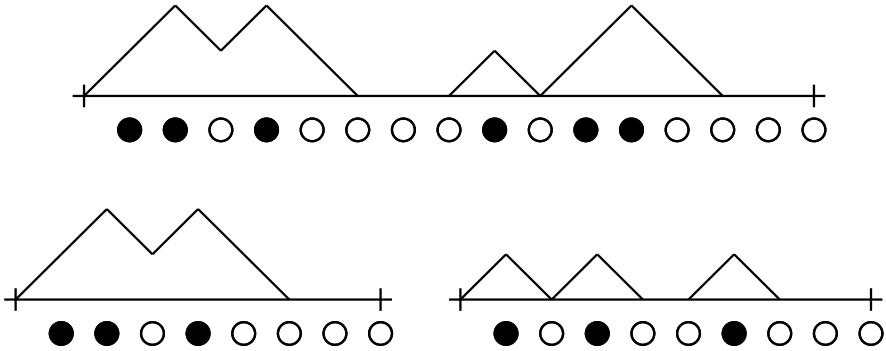}}
\vspace{0pt}
\caption{Configurations discussed in Remark \ref{noninv}.}\label{noninvfig}
\end{figure}

\subsection{Examples of invariant initial configurations}\label{examplessec}

In this section, we introduce the examples described in Theorem \ref{mre}. The proof of the latter result appears in Section \ref{proofsec}, which is where we also check the claims of Remark \ref{mrem}.

\subsubsection{Independent and identically distributed initial configuration}\label{iidsec}
Suppose  $\eta=(\eta_n)_{n\in\mathbb{Z}}$ is given by a sequence of i.i.d.\ Bernoulli($p$) random variables with $p\in[0,\frac12)$. It is then the case that \eqref{p0} is satisfied with $\rho=p$. Furthermore, $S$ is a two-sided simple random walk path satisfying $S_0=0$ and
\[\mathbf{P}\left(S_{n}-S_{n-1}=-1\right)=p=1-\mathbf{P}\left(S_{n}-S_{n-1}=+1\right),\qquad \forall n\in\mathbb{Z},\]
where the increments of $S$ are independent. NB.\ Figure \ref{bbsfig} actually shows $S$ and $TS$ for a (one-sided) realisation of such an $\eta$ with $p=0.45$. By Lemma \ref{slinrand}, we have that the carrier $W=M-S$ is well-defined, $\mathbf{P}$-a.s. Moreover, it is possible to describe the distribution of the carrier explicitly as a reflected random walk; this is the content of the following lemma. Note that, since $p=0$ trivially gives the empty configuration, the associated path encoding obviously satisfies $S\in \mathcal{S}^{rev}$, $\mathbf{P}$-a.s., and the configuration is invariant under $T$; we henceforth exclude this case.

{\lem\label{Wprops} If $\eta$ is given by a sequence of i.i.d.\ Bernoulli($p$) random variables with $p\in(0,\frac12)$, then $W$ is a two-sided stationary Markov chain with transition probabilities given by
\begin{equation}\label{Wprobs}
\mathbf{P}\left(W_{n}=W_{n-1}+j\:\vline\: W_{n-1}\right)=\left\{\begin{array}{ll}
                                                p, & \mbox{if }j=1,\\
                                                1-p, & \mbox{if }W_{n-1}>0\mbox{ and }j=-1,\\
                                                1-p, & \mbox{if }W_{n-1}=0\mbox{ and }j=0.\\
                                              \end{array}\right.
\end{equation}
The stationary distribution of this chain is given by $\pi=(\pi_x)_{x\in\mathbb{Z}_+}$, where
\begin{equation}\label{pidef}
\pi_x=\left(\frac{1-2p}{1-p}\right)\left(\frac{p}{1-p}\right)^x,\qquad \forall x\in\mathbb{Z}_+.
\end{equation}
In particular, the mean and variance of $\pi$ are computed to be equal to $\mu_p$ and $\sigma^2_p$ (see \eqref{meanvar}), respectively.}
\begin{proof} From our assumptions on $\eta$ and Lemmas \ref{twosided} and \ref{slinrand}, we have that $W^{[k]}\rightarrow W$ as $k\rightarrow-\infty$, $\mathbf{P}$-a.s. Moreover, it is clear that $W^{[k]}$ is a Markov chain on $\mathbb{Z}_+$ with transition matrix $P=(P(x,y))_{x,y\in\mathbb{Z}_+}$, as defined by (\ref{Wprobs}), started from $W_{k}^{[k]}=0$. Now, the stationary probability distribution $\pi=(\pi_x)_{x\in\mathbb{Z}_+}$ for $W^{[k]}$ is obtained by solving the detailed balance equations:
\[p \pi_x=(1-p)\pi_{x+1},\qquad \forall x\in\mathbb{Z}_+.\]
In particular, we immediately see that the solution of these equations is given by the formula at \eqref{pidef}. Hence, we obtain for any $x_{-n},\dots,x_n\in \mathbb{Z}_+$ that
\begin{eqnarray*}
\mathbf{P}\left(W_{-n}=x_{-n},\dots,W_n=x_n\right)
&=&\lim_{k\rightarrow-\infty}\mathbf{P}\left(W^{[k]}_{-n}=x_{-n},\dots,W^{[k]}_n=x_n\right)\\
&=&\lim_{k\rightarrow-\infty}P^{-k-n}(0,x_{-n})\prod_{i=-n+1}^nP(x_{i-1}x_i)\\
&=&\pi_{x_{-n}}\prod_{i=-n+1}^nP(x_{i-1}x_i),
\end{eqnarray*}
which yields that $W$ is indeed the relevant two-sided stationary Markov chain, with stationary probability measure given by $\pi$. Finally, the mean and variance of the geometric distribution $\pi$ are easily checked to be equal to the expressions at \eqref{meanvar} by direct computation.
\end{proof}

Since the Markov chain $W$ is reversible (indeed, one can readily verify it satisfies the detailed balance equations), we immediately obtain the following as a simple corollary of this lemma and Theorem \ref{mrd}.

{\cor \label{revcor} If $\eta$ is a sequence of i.i.d.\ Bernoulli($p$) random variables with $p\in(0,\frac12)$, then the three conditions of (\ref{threeconds}) are satisfied. In particular, $\eta$ is invariant in distribution under $T$.}

\subsubsection{Markov initial configuration}\label{markovsec}

Suppose $\eta=(\eta_n)_{n\in\mathbb{Z}}$ is given by a two-sided stationary Markov chain on $\{0,1\}$ with transition probabilities given by
\[\mathbf{P}\left(\eta_{n+1}=1\:\vline\:\eta_n=j\right)=p_j,\qquad j\in\{0,1\},\]
for some parameters $p_0\in (0,1)$, $p_1\in[0,1)$. Note that we recover the i.i.d.\ case of the previous section if $p_0=p_1=p<1/2$. An elementary computation yields that the stationary distribution of this chain is given by
\begin{equation}\label{markovdensity}
\rho=\mathbf{P}\left(\eta_0=1\right)=\frac{p_0}{1-p_1+p_0}.
\end{equation}
To ensure (\ref{p0}) is satisfied, we thus need to assume $p_0+p_1<1$. In particular, under this assumption, the conclusion of Lemma \ref{slinrand} holds, and so the evolution of the BBS is well-defined for all time, $\mathbf{P}$-a.s. As a result, we can define the process $W=M-S$. Whilst this is not a Markov process, we are still able to compute its one-dimensional marginal distribution.

{\lem\label{Wpropsmarkov} If $\eta$ is the two-sided stationary Markov chain described above with
$p_0\in (0,1)$, $p_1\in[0,1)$ satisfying $p_0+p_1<1$, then
\[\mathbf{P}\left(W_{0}=m\right)=\left\{\begin{array}{ll}
                                                \frac{1-p_0-p_1}{(1-p_0)(1+p_0-p_1)}, & \mbox{if }m=0,\\
                                                \frac{p_0(1-p_0+p_1)(1-p_0-p_1)}{(1-p_0)^2(1+p_0-p_1)}\left(\frac{p_1}{1-p_0}\right)^{m-1}, & \mbox{if }m\geq 1.\\
                                              \end{array}\right.\]
In particular, it follows that
\begin{equation}\label{wmarkovexpect}
\mathbf{E}W_0=\frac{p_0(1-p_0+p_1)}{(1+p_0-p_1)(1-p_0-p_1)}.
\end{equation}}
\begin{proof} It is easy to verify that the Markov chain $\eta$ is reversible, and so $\overleftarrow\eta\buildrel{d}\over{=}\eta$ holds. Hence $S\buildrel{d}\over{=}RS$. It follows that $W_0=M_0$ is distributed as $-I_0$, where $I_0$ is the future infimum of $S$. To compute the distribution of $I_0$, let us define
\[q_j:=\mathbf{P}\left(I_0\leq -1\:\vline\:\eta_0=j\right)\]
for $j=0,1$. By the strong Markov property, it is elementary to deduce that, for $m\geq 1$,
\begin{eqnarray*}
  \mathbf{P}\left(-I_0=m\right)&=&\rho\mathbf{P}\left(-I_0=m\:\vline\:\eta_0=1\right)+(1-\rho)\mathbf{P}\left(-I_0=m\:\vline\:\eta_0=0\right)\\
  &=&\rho q_1^m(1-q_1) +(1-\rho)q_0q_1^{m-1}(1-q_1).
\end{eqnarray*}
Hence, we need to compute $q_0$ and $q_1$. Observe that a first-step decomposition of the Markov chain yields
\[q_j=(1-p_j)q_0q_1+p_j\]
for $j=0,1$. Solving these equations gives
\[q_0=\frac{p_0}{1-p_1},\qquad q_1=\frac{p_1}{1-p_0}.\]
(Since $S\in\mathcal{S}_{sub-critical}$, $\mathbf{P}$-a.s., it is easy to exclude the solution $q_0=q_1=1$.) The result now follows by straightforward computations.
\end{proof}

Moreover, we have the following generalisation of Corollary \ref{revcor}.

{\cor\label{markovcor} If $\eta$ is the two-sided stationary Markov chain described above with
$p_0\in (0,1)$, $p_1\in[0,1)$ satisfying $p_0+p_1<1$, then the three conditions of (\ref{threeconds}) are satisfied. In particular, $\eta$ is invariant in distribution under $T$.}
\begin{proof} As noted in the previous proof, $\overleftarrow\eta\buildrel{d}\over{=}\eta$. Furthermore, we note that the process $W$ has the same law as the process $Q$ described in \cite{HMOC}. Hence the claim that $\bar{W}\buildrel{d}\over{=}W$ is \cite[Theorem 2]{HMOC}. The final condition is given by Theorem \ref{mrd}.
\end{proof}

\subsubsection{Conditioning the i.i.d.\ configuration to have bounded solitons}
\label{boundedsec}

In this section, we introduce a particle configuration with bounded solitons, obtained by conditioning the i.i.d.\ configuration of Section \ref{iidsec} to not have any solitons of size greater than $K$, for some fixed $K\in\mathbb{Z}_+$. Note that the event that $\eta$ forms no solitons of size strictly greater than $K$ can alternatively be expressed as the event that the carrier $W$ satisfies $\sup_{n\in\mathbb{Z}}W_n\leq K$. Of course, the latter is an event of 0 probability whenever $\eta$ is Bernoulli($p$), for any $p\in (0,1)$.  However, by taking limits of finite particle configurations, it is possible to make sense of the conditioning in terms of the classical theory of quasi-stationary distributions for Markov chains. In particular, we are able to show that the limiting configuration $\tilde{\eta}$ is stationary, ergodic, $\mathbf{P}$-a.s.\ satisfies (\ref{p0}), and the conditions at (\ref{threeconds}) hold.

We start by defining the limiting carrier process. Let $P=(P(x,y))_{x,y\in\mathbb{Z}_+}$ be the transition matrix of $W$, as defined in (\ref{Wprobs}) (where we now allow any $p\in (0,1)$).  For $K\in \mathbb{Z}_+$ fixed, let $P^{(K)}=(P^{(K)}(x,y))_{x,y\in\{0,\dots,K\}}$ be the restriction of $P$ to $\{0,\dots,K\}$. Since $P^{(K)}$ is an finite, irreducible, substochastic matrix, it admits (by the Perron-Frobenius theorem) a unique eigenvalue of largest magnitude, $\lambda_K$ say. Moreover, $\lambda_K\in(0,1)$ and has a unique (up to scaling) strictly positive eigenvector $h_K=(h_K(x))_{x\in\{0,\dots,K\}}$. Let $\tilde{P}^{(K)}=(\tilde{P}^{(K)}(x,y))_{x,y\in\{0,\dots,K\}}$ be defined by
\begin{equation}\label{tildepkdef}
\tilde{P}^{(K)}(x,y)=\frac{{P}^{(K)}(x,y)h_K(y)}{\lambda_Kh_K(x)},\qquad \forall x,y\in \{0,\dots,K\}.
\end{equation}
It is elementary to check that this is a stochastic matrix. Moreover, the associated Markov chain is reversible, with stationary probability measure given by $\tilde{\pi}^{(K)}=(\tilde{\pi}^{(K)}_x)_{x\in \{0,\dots,K\}}$, where
\begin{equation}\label{tildepi}
\tilde{\pi}^{(K)}_x = c_1 h_K(x)^2{\pi}_x
\end{equation}
for some constant $c_1\in(0,\infty)$ (which may depend on $K$), and $\pi$ is defined as at (\ref{pidef}). Thus the Markov chain in question admits a two-sided stationary version, and we will denote this by $\tilde{W}^{(K)}=(\tilde{W}^{(K)}_n)_{n\in\mathbb{Z}}$.

We will view $\tilde{W}^{(K)}$ as a random carrier process, and write the associated particle configuration $\tilde{\eta}^{(K)}=(\tilde{\eta}^{(K)}_n)_{n\in\mathbb{Z}}$. To justify the claim that $\tilde{\eta}^{(K)}$ is the i.i.d.\ configuration of Section \ref{iidsec} conditioned to have solitons of size no greater than $K$, we have the following result.

{\lem \label{condconst} Fix $K\in \mathbb{Z}_+$. Let ${\eta}=({\eta}_n)_{n\in\mathbb{Z}}$ be an i.i.d.\ Bernoulli($p$) particle configuration for some $p\in (0,1)$. Write $\eta^{[-N,N]}=(\eta^{[-N,N]}_n)_{n\in\mathbb{Z}}$ for the truncated configuration given by $\eta^{[-N,N]}_n=\eta_n\mathbf{1}_{\{-N<n\leq N\}}$. If $W^{[-N,N]}$ is the associated carrier process, then we have the following convergence of conditioned processes:
\[W^{[-N,N]}\:\vline\: \left\{\sup_{n\in\mathbb{Z}}W^{[-N,N]}_n\leq K\right\}\rightarrow\tilde{W}^{(K)}\]
in distribution as $N\rightarrow \infty$. In particular, this implies
\[\eta^{[-N,N]}\:\vline\: \left\{\sup_{n\in\mathbb{Z}}W^{[-N,N]}_n\leq K\right\}\rightarrow\tilde{\eta}^{(K)}\]
in distribution as $N\rightarrow \infty$.}
\begin{proof} The main ingredient for the proof are the following asymptotic result: for any $x,y\in\{0,\dots,K\}$, it holds that
\begin{equation}\label{exitasym}
\mathbf{P}\left(\sup_{0\leq n\leq N}W_n\leq K\:\vline\: W_0=x\right)\sim c_2\lambda_K^Nh_K(x)
\end{equation}
\begin{equation}\label{distasym}
\mathbf{P}\left(W_N=y\:\vline\: \sup_{0\leq n\leq N}W_n\leq K,\:W_0=x\right)\rightarrow c_1c_2^{-1}h_K(x)^{-1}\tilde{\pi}^{(K)}_x
\end{equation}
as $N\rightarrow\infty$, where $c_2\in(0,\infty)$ is a constant (which may depend on $K$), and $c_1$ is the constant from   \eqref{tildepi}. See \cite[Proposition 1]{GT}, for example. Indeed, for any $x_{-n},\dots,x_n\in \{0,\dots,K\}$, we can use this to deduce
\begin{eqnarray}
\lefteqn{\mathbf{P}\left(W^{[-N,N]}_i=x_i,\:i\in\{-n,\dots,n\}\:\vline\: \sup_{m\in\mathbb{Z}}W^{[-N,N]}_m\leq K\right)}\nonumber\\
&=&\mathbf{P}\left(W^{[-N,N]}_{-n}=x_{-n}\:\vline\: \sup_{m\in\mathbb{Z}}W^{[-N,N]}_m\leq K\right) \times \prod_{i=-n+1}^nP(x_{i-1},x_i)\nonumber\\
&&\hspace{-20pt}\times \frac{\mathbf{P}\left(\sup_{-N\leq m\leq -n}W^{[-N,N]}_m\leq K \:\vline\: W^{[-N,N]}_{-n}=x_{-n}\right)
\mathbf{P}\left(\sup_{n\leq m\leq N}W^{[-N,N]}_m\leq K \:\vline\: W_{n}^{[-N,N]}=x_{n}\right)}
{\mathbf{P}\left(\sup_{-N\leq m\leq N}W^{[-N,N]}_m\leq K\:\vline\: W_{-n}^{[-N,N]}=x_{-n}\right)}\label{aaa1}
\end{eqnarray}
Now, since $(W^{[-N,N]}_m)_{m\leq -n}$ and $(W^{[-N,N]}_m)_{m\geq -n}$ are conditionally independent given $W^{[-N,N]}_{-n}$, we have from \eqref{exitasym} and \eqref{distasym} that
\begin{eqnarray}
\lefteqn{\mathbf{P}\left(W^{[-N,N]}_{-n}=x_{-n}\:\vline\: \sup_{m\in\mathbb{Z}}W^{[-N,N]}_m\leq K\right)}\nonumber\\
&=&\mathbf{P}\left(W_{N-n}=x_{-n}\:\vline\: W_0=0,\:\sup_{0\leq m\leq N-n}W_m\leq K\right)\nonumber\\
&&\times \frac{\mathbf{P}\left(\sup_{0\leq m\leq N-n}W_m\leq K\:\vline\: W_0=0\right)
\mathbf{P}\left(\sup_{0\leq m\leq N+n}W_m\leq K\:\vline\: W_0=x_{-n}\right)}
{\mathbf{P}\left(\sup_{0\leq m\leq 2N}W_m\leq K\:\vline\: W_0=0\right)}\nonumber\\
&\rightarrow &\tilde{\pi}^{(K)}_{x_{-n}}.\label{aaa2}
\end{eqnarray}
Moreover, we similarly have that
\begin{eqnarray}
\lefteqn{ \frac{\mathbf{P}\left(\sup_{-N\leq m\leq -n}W^{[-N,N]}_m\leq K \:\vline\: W^{[-N,N]}_{-n}=x_{-n}\right)
\mathbf{P}\left(\sup_{n\leq m\leq N}W^{[-N,N]}_m\leq K \:\vline\: W_{n}^{[-N,N]}=x_{n}\right)}
{\mathbf{P}\left(\sup_{-N\leq m\leq N}W^{[-N,N]}_m\leq K\:\vline\: W_{-n}^{[-N,N]}=x_{-n}\right)}}\nonumber\\
&=&
 \frac{\mathbf{P}\left(\sup_{n\leq m\leq N}W_m\leq K \:\vline\: W_{n}=x_{n}\right)}
{\mathbf{P}\left(\sup_{-n\leq m\leq N}W_m\leq K\:\vline\: W_{-n}=x_{-n}\right)}\nonumber\\
&\rightarrow& \frac{h_K(x_n)}{\lambda_K^{2n}h_K(x_{-n})}.\hspace{280pt}\label{aaa3}
\end{eqnarray}
Combining \eqref{aaa1}, \eqref{aaa2} and \eqref{aaa3}, we thus obtain
\[\mathbf{P}\left(W^{[-N,N]}_i=x_i,\:i\in\{-n,\dots,n\}\:\vline\: \sup_{m\in\mathbb{Z}}W^{[-N,N]}_m\leq K\right)\rightarrow \tilde{\pi}^{(K)}_{x_{-n}}\prod_{i=-n+1}^n
\tilde{P}^{(K)}(x_{i-1},x_i),\]
which completes the proof.
\end{proof}

\begin{rem} For the i.i.d.\ configuration with $p\in(0,\frac{1}{2})$, we have from Lemma \ref{Wprops} that it is possible to define a two-sided stationary version of the carrier process $W$. Straightforward adaptations of the previous proof allow us to alternatively obtain:
\[\eta\:\vline\: \left\{\sup_{n\in[-N,N]}W_n\leq K\right\}\rightarrow\tilde{\eta}^{(K)}.\]
That is, obtain $\tilde{W}^{(K)}$ and $\tilde{\eta}^{(K)}$ by starting with the two-sided infinite configuration, and conditioning on a decreasing sequence of events. We choose to present Lemma \ref{condconst} as conditioning the truncated configuration, however, since this picture is valid throughout the range $p\in(0,1)$.
\end{rem}

As a further consequence of the construction of $\tilde{\eta}^{(K)}$, we have the following result. In the proof, we write $\tilde{S}^{(K)}$ for the path encoding of $\tilde{\eta}^{(K)}$.

{\cor\label{condcor} If $\tilde{\eta}^{(K)}$ and $\tilde{W}^{(K)}$ are as described above, then, for any $p\in (0,1)$, $K\in\mathbb{Z}_+$, $\tilde{\eta}^{(K)}$ is a stationary, ergodic process satisfying
\[\mathbf{P}\left(\tilde{\eta}^{(K)}_0=1\right)<\frac12,\]
and also the three conditions of (\ref{threeconds}). In particular, $\tilde{\eta}^{(K)}$ is invariant in distribution under $T$.}
\begin{proof} The stationarity and ergodicity of $\tilde{\eta}^{(K)}$ readily follow from the corresponding properties of $\tilde{W}^{(K)}$. Moreover, observe that $\mathbf{P}(\tilde{\eta}^{(K)}_0=1)$ is the probability that $\tilde{W}^{(K)}$ has an upcrossing at $0$, i.e.\ it is equal to $\mathbf{P}(\tilde{W}^{(K)}_0-\tilde{W}^{(K)}_{-1}=1)$. Since $\tilde{W}^{(K)}$ is reversible, this is also the probability that $\tilde{W}^{(K)}$ has a downcrossing at $0$, i.e.\ $\mathbf{P}(\tilde{W}^{(K)}_0-\tilde{W}^{(K)}_{-1}=-1)$. Hence we deduce
\begin{equation}\label{expdens}
\rho=\mathbf{P}\left(\tilde{\eta}^{(K)}_0=1\right)=\frac12\left(1-\mathbf{P}\left(\tilde{W}^{(K)}_0=\tilde{W}^{(K)}_1=0\right)\right)=\frac12\left(1-\tilde{\pi}^{(K)}_0\tilde{P}^{(K)}(0,0)\right),
\end{equation}
which is strictly less than $1/2$. For the reversibility of $\tilde{\eta}^{(K)}$, we note that
\begin{eqnarray*}
\tilde{S}^{(K)}&=&\lim_{N\rightarrow\infty}\left(S^{[-N,N]}\:\vline\: \left\{\sup_{n\in\mathbb{Z}}W^{[-N,N]}_n\leq K\right\}\right)\\
&=&\lim_{N\rightarrow\infty}\left(S^{[-N,N]}\:\vline\: \left\{\sup_{m\leq n}\left(S^{[-N,N]}_m-S^{[-N,N]}_n\right)\leq K\right\}\right)\\
&\buildrel{d}\over{=}&\lim_{N\rightarrow\infty}\left(R(S^{[-N,N]})\:\vline\: \left\{\sup_{m\leq n}\left(R(S^{[-N,N]})_m-R(S^{[-N,N]})_n\right)\leq K\right\}\right)\\
&=&\lim_{N\rightarrow\infty}\left(R(S^{[-N,N]})\:\vline\: \left\{\sup_{m\leq n}\left(S^{[-N,N]}_m-S^{[-N,N]}_n\right)\leq K\right\}\right)\\
&=&\lim_{N\rightarrow\infty}\left(R(S^{[-N,N]})\:\vline\: \left\{\sup_{n\in\mathbb{Z}}W^{[-N,N]}_n\leq K\right\}\right)\\
&=&R(\tilde{S}^{(K)}),
\end{eqnarray*}
where we write $S^{[-N,N]}$ for the path encoding of $\eta^{[-N,N]}$, and the limits are distributional ones. Hence we conclude that $\overleftarrow{\tilde{\eta}^{(K)}}\buildrel{d}\over{=}\tilde{\eta}^{(K)}$. The remaining claims are clear since, as already noted, $\tilde{W}^{(K)}$ is reversible, and Theorem \ref{mrd} gives the invariance result.
\end{proof}

As a simple, concrete example, consider the case when $K=1$, $p\in (0,1)$. We can then compute $\tilde{P}^{(1)}$ explicitly to be
\[\left(
    \begin{array}{cc}
      1-\tilde{p} & \tilde{p} \\
      1 & 0 \\
    \end{array}
  \right),\]
where
\begin{equation}\label{tildepdef}
\tilde{p}=1-\frac{2}{1+\sqrt{1+\frac{4p}{1-p}}}.
\end{equation}
Note that $\tilde{p}\rightarrow 0$ as $p\rightarrow 0$, and $\tilde{p}\rightarrow 1$ as $p\rightarrow 1$, and so any value of $\tilde{p}\in(0,1)$ can be obtained in this way. Moreover, observe that this case reduces to the Markov initial configuration example of Section \ref{markovsec} with $p_0=\tilde{p}$, $p_1=0$. For any $K\geq 2$, however, $\tilde{\eta}^{(K)}$ will not be Markov, and so the class of examples constructed in this section fall outside those introduced previously.

The comments of the previous paragraph imply that the processes $\tilde{W}^{(1)}$ obtained from values of $p\in (0,1)$ are the only two-sided stationary Markov chains on $\{0,1\}$ which represent a carrier process of a particle configuration satisfying (\ref{p0}) and which is invariant in distribution under $T$. In the remainder of this section, we aim to extend this remark. In particular, we will show that the processes $\tilde{W}^{(K)}$, $K\in\mathbb{Z}_+$, constructed here and $W$ from Section \ref{iidsec} are in fact the only two-sided stationary Markov carrier processes for which the associated particle configuration satisfies (\ref{p0}) and is reversible, or equivalently by Theorem \ref{mrd}, is invariant in distribution under $T$. See Proposition \ref{boundedonly} and Corollary \ref{unboundedonly} for the precise results. To this end, the following lemma concerning the behaviour of $\lambda_K$ will be useful.

{\lem\label{evaluebeh} For $K\geq 1$,
\[1-\frac{1-p}{\lambda_K}\rightarrow \left\{\begin{array}{cc}
                                       0, & \mbox{ as }p\rightarrow 0, \\
                                       1, & \mbox{ as }p\rightarrow 1.
                                     \end{array}\right. \]}
\begin{proof} As $p\rightarrow 0$, ${P}^{(K)}$ (defined near the start of this section) converges to a lower triangular matrix with diagonal elements $(1,0,\dots,0)$. Since eigenvalues are continuous functions of matrix entries, it follows that $\lambda_K\rightarrow 1$ in this case. This establishes the first limit result.

For $p\rightarrow 1$, we start by computing the determinant of ${P}^{(K)}$. In particular, for $K\geq 2$, we have
\begin{equation}\label{iii}
\det {P}^{(K)} =(1-p)\det Q_K -p(1-p)\det Q_{K-1},
\end{equation}
where $Q_i$ is the $i\times i$ tridiagonal matrix with entries $p$ in the row above the diagonal, 0 on the diagonal, and $1-p$ on the row below the diagonal. It is an elementary exercise to check that
\[\det Q_i = -p^{i/2}(1-p)^{i/2}\mathbf{1}_{\{i\mbox{ even}\}}.\]
Moreover, the identity at \eqref{iii} can be extended to $K=1$ if we, by convention, fix $\det Q_0 =1$. Hence
\[\det {P}^{(K)} = -p^{K/2}(1-p)^{1+K/2}\mathbf{1}_{\{K\mbox{ even}\}}
+p^{(K+1)/2}(1-p)^{(K+1)/2}\mathbf{1}_{\{K\mbox{ odd}\}}.\]
Since $\lambda_K\geq |\det {P}^{(K)}|^{1/(K+1)}$, it follows that
\[\frac{1-p}{\lambda_K}\leq C (1-p)^{\frac{1}{2}-\frac{1}{2(K+1)}}\rightarrow 0,\]
as $p\rightarrow 1$, which is enough to complete the proof.
\end{proof}

We next proceed to check the desired result in the bounded soliton case.

\begin{prop}\label{boundedonly} For each $K\in\mathbb{Z}_+$, the processes $\tilde{W}^{(K)}$ for $p\in (0,1)$ are the only two-sided stationary, irreducible Markov chains on $\{0,1,\dots,K\}$ that represent a carrier process for which the associate particle configuration satisfies (\ref{p0}) and $\overleftarrow{\tilde{\eta}^{(K)}}\buildrel{d}\over{=}\tilde{\eta}^{(K)}$.
\end{prop}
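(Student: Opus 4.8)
The plan is to reduce the statement to a one-parameter identity for the transition probabilities of $W$ and then to identify the resulting family with $\{\tilde{W}^{(K)}:p\in(0,1)\}$ using the Perron--Frobenius theorem. First I would record the structural constraints. Since $W$ represents a carrier process we have $W\in\mathcal{Y}$, so from any state $x\geq1$ the chain moves to $x\pm1$ and from $0$ it either moves to $1$ or holds; irreducibility on $\{0,\dots,K\}$ then forces $W$ to be a birth--death chain with up-probabilities $a_x:=\mathbf{P}(W_n=x+1\mid W_{n-1}=x)$ for $0\leq x\leq K-1$, down-probabilities $1-a_x$ from $x\geq1$, holding probability $1-a_0$ at $0$, and $a_K=0$. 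As a stationary birth--death chain, $W$ automatically satisfies detailed balance $\pi_xa_x=\pi_{x+1}(1-a_{x+1})$, and in particular $\bar{W}\buildrel{d}\over{=}W$. A direct computation from detailed balance (cf.\ Lemma \ref{particleflatdensity}) gives $\mathbf{P}(\eta_0=1)=\frac{1}{2}(1-\pi_0(1-a_0))$, so \eqref{p0} is equivalent to $a_0<1$; together with irreducibility this yields $a_x\in(0,1)$ for all $0\leq x\leq K-1$. Finally, $W$ bounded and recurrent gives $S\in\mathcal{S}_{sub-critical}\subseteq\mathcal{S}^{rev}$, $\mathbf{P}$-a.s.

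Next I would reformulate the hypothesis. Because $\bar{W}\buildrel{d}\over{=}W$ already holds, Theorem \ref{mrd} shows $\overleftarrow{\eta}\buildrel{d}\over{=}\eta$ is equivalent to $T\eta\buildrel{d}\over{=}\eta$, i.e.\ (as $\eta=\Phi W$ and $T\eta=\Psi W$) to equality in law of the up-crossing and down-crossing processes of $W$. The central claim is that this holds if and only if the products $a_x(1-a_{x+1})$ are independent of $x$. The sufficiency half, together with the identification, I would obtain cleanly: if $a_x(1-a_{x+1})\equiv r$, set $h(0)=1$ and $h(x+1)=\lambda a_xh(x)/p$, where $p\in(0,1)$ and $\lambda>0$ solve $a_0=1-(1-p)/\lambda$ and $r=p(1-p)/\lambda^2$ (explicitly $p=r/((1-a_0)^2+r)$). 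Using $1-a_x=r/a_{x-1}$, a row-by-row check shows the strictly positive vector $h$ satisfies $P^{(K)}h=\lambda h$, where $P^{(K)}$ is the restriction to $\{0,\dots,K\}$ of the carrier matrix \eqref{Wprobs} of parameter $p$. By Perron--Frobenius the only positive eigenvector is $h_K$ with eigenvalue $\lambda_K\in(0,1)$, so $\lambda=\lambda_K$, $h\propto h_K$, and comparison with \eqref{tildepkdef} gives $W=\tilde{W}^{(K)}$ for this $p$. Conversely, \eqref{tildepkdef} immediately yields $\tilde{P}^{(K)}(x,x+1)\tilde{P}^{(K)}(x+1,x)=p(1-p)/\lambda_K^2$, so $\tilde{W}^{(K)}$ has constant product, while Corollary \ref{condcor} confirms it satisfies the reversibility condition. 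Lemma \ref{evaluebeh}, via $a_0=1-(1-p)/\lambda_K$, confirms that $p\mapsto a_0$ exhausts $(0,1)$, so no admissible chain is omitted.

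The main obstacle is the necessity of the constant-product condition: showing that $T\eta\buildrel{d}\over{=}\eta$ forces $a_x(1-a_{x+1})=a_{x+1}(1-a_{x+2})$ for each $x$. Here I would argue by evaluating a family of cylinder events whose probabilities, equated between $\eta$ and $T\eta$ and simplified using detailed balance, produce these relations one at a time. The mechanism is that the boundary at $K$ pins trajectories: a run of $K$ consecutive up-crossings, $\eta_1=\cdots=\eta_K=1$, can only arise from the ladder $W_0=0,\dots,W_K=K$, and symmetrically $T\eta_1=\cdots=T\eta_K=1$ forces $W_0=K,\dots,W_K=0$. Appending to such a pinned run a controlled excursion that oscillates across a chosen edge $(x,x+1)$ isolates the corresponding relation; the representative case $K=2$ already illustrates this, where comparing $\mathbf{P}(\eta_0=\eta_1=1,\eta_3=1)$ (realised only by $0,1,2,1,2$) with $\mathbf{P}(T\eta_0=T\eta_1=1,T\eta_3=1)$ (realised only by $2,1,0,1,0$) yields exactly $a_0(1-a_1)=a_1=a_1(1-a_2)$. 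The technical heart is the bookkeeping for general $x$ and $K$: choosing the pinning run and the subsequent oscillation so that each comparison collapses, after cancelling the stationary weights, to a single clean identity. I would organise this as an induction that peels the relations off from the top edge downwards, thereby establishing the constant-product condition and completing the proof.
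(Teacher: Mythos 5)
Your setup and your sufficiency/identification half are sound, and the latter is in fact a cleaner route than the paper's: the paper proves only that the constant-product condition reduces everything to a one-parameter family indexed by $p_0=a_0$, checks that this map is injective, and then invokes Lemma \ref{evaluebeh} (plus an implicit continuity/intermediate-value argument) to see that $p\mapsto 1-(1-p)/\lambda_K$ sweeps out all of $(0,1)$. Your construction of an explicit positive eigenvector $h$ of $P^{(K)}$ with eigenvalue $\lambda$ solved from $a_0=1-(1-p)/\lambda$ and $r=p(1-p)/\lambda^2$, followed by Perron--Frobenius and comparison with \eqref{tildepkdef}, identifies the chain directly and avoids the range argument altogether; I checked the eigenvector relations row by row and they do close up, with the boundary row at $K$ using exactly $r=a_{K-1}(1-a_K)=a_{K-1}$.

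The genuine gap is in the necessity of the constant-product condition, which you yourself flag as the technical heart but do not carry out. Your $K=2$ computation is correct but degenerate: for $K=2$ the ``palindrome'' relations $a_{i-1}(1-a_i)=a_{K-i}(1-a_{K-i+1})$ coincide with the adjacent relations $a_{i-1}(1-a_i)=a_i(1-a_{i+1})$, so that single comparison does not test the difficulty. For $K\geq 3$ the natural pinned-run comparisons (a run forcing the carrier to climb the full ladder, followed by one dip) produce only the palindrome relations --- e.g.\ for $K=3$ the cylinder $\{\eta_1=\eta_2=\eta_3=1,\,\eta_5=1\}$ versus its $T$-image yields $a_0(1-a_1)=a_2(1-a_3)$, not an adjacent identity, and the relation for $i=2$ is vacuous. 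This is why the paper's proof uses a \emph{second} family of configurations, one step shorter, for which the starting height of the carrier is no longer pinned to $0$ but can be $0$ or $1$; the resulting two-term sums are then resolved using detailed balance $\pi^Z_0p_0=\pi^Z_1(1-p_1)$ \emph{together with} the already-established palindrome relations, and only this combination upgrades to $a_{i-1}(1-a_i)=a_i(1-a_{i+1})$. Your proposed ``controlled oscillation across a chosen edge'' also runs into the ambiguity that on the $T$-side the trajectory is not determined by the down-crossing pattern once it reaches state $0$ (where $T\eta_n=0$ permits either an up-step or a flat step), so the comparisons do not in general ``collapse to a single clean identity'' and some two-branch bookkeeping of exactly the paper's kind is unavoidable. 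As written, the induction ``peeling relations off from the top edge'' is therefore not established, and the most plausible repair is essentially the paper's two-stage argument.
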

\begin{proof} The result for $K=0$ is obvious, and so for the remainder of the proof we fix $K\geq 1$. Suppose $Z=(Z_n)_{n\in\mathbb{Z}}$ is a Markov chain satisfying the desired properties. We will show that $Z$ must be given by $\tilde{W}^{(K)}$ for some $p\in (0,1)$. Clearly, it will be enough to show equality of transition matrices. To check this, let us start by observing that the general form of the transition matrix of $Z$ is as follows:
\begin{equation}\label{vmatrix}
\left(
    \begin{array}{cccccc}
      1-p_0 & p_0 & 0 & \dots & \dots & 0 \\
      1-p_1 & 0 & p_1 & & & \vdots \\
      0 & 1-p_2 & 0 & \ddots & & \vdots\\
      \vdots & \ddots & \ddots & \ddots &\ddots& 0 \\
      \vdots && \ddots & 1-p_{K-1} & 0 & p_{K-1} \\
      0 & \dots &\dots & 0 & 1 & 0 \\
    \end{array}
  \right),
\end{equation}
where $p_0\in (0,1]$, $p_i\in (0,1)$ for $i=1,\dots,K-1$, and we will also write $p_K=0$.

To show that the transition matrix of $Z$ comes from a one-parameter family, we consider the probabilities of seeing certain particle configurations, as shown in Figure \ref{config}. In particular, the first particle configuration we consider is $\eta_1=1,\dots,\eta_i=1,\eta_{i+1}=0,\eta_{i+2}=1,\dots,\eta_{K+2}=1$, where $i\in\{1,\dots K\}$. Since $Z$ never takes a value greater than $K$, if this configuration appears, then it must be the case that $Z_0=0$. Hence the probability of it occurring is given by
\begin{equation}\label{ppp1}
\pi^Z_0p_0p_1\dots p_{i-1}(1-p_i)p_{i-1}p_i\dots p_{K-1},
\end{equation}
where $\pi^Z$ is the stationary probability measure for $Z$. A similar argument shows the probability of seeing the reverse configuration, $\eta_1=1,\dots,\eta_{K-i+1}=1,\eta_{K-i+2}=0,\eta_{K-i+3}=1,\dots,\eta_{K+2}=1$, is given by
\begin{equation}\label{ppp2}
\pi^Z_0p_0p_1\dots p_{K-i}(1-p_{K-i+1})p_{K-i}p_{K-i+1}\dots p_{K-1}.
\end{equation}
Under the assumption that the distribution of the particle configuration is reversible (and stationarity), the expressions at (\ref{ppp1}) and (\ref{ppp2}) must be equal. We thus find that
\begin{equation}\label{ppp35}
p_{i-1}(1-p_i)=p_{K-i}(1-p_{K-i+1}),\qquad \forall i\in\{1,\dots,K\}.
\end{equation}

\begin{figure}[!htb]
\vspace{0pt}
\centering
\scalebox{0.3}{\includegraphics{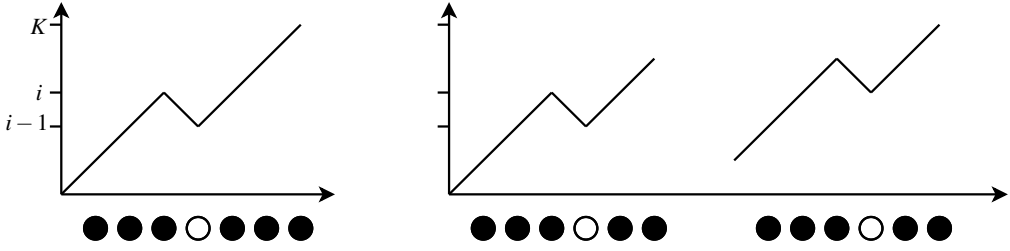}}
\scalebox{0.8}{
\rput(-16.4,1.95){$i-1$}
\rput(-16.2,2.45){$i$}
\rput(-16.2,3.55){$K$}}
\vspace{0pt}
\caption{The left figure shows the particle configuration and path segment of $Z$ considered in the proof of Proposition \ref{boundedonly} with $i=3$, $K=5$. The right figure shows the two possible path segments of $Z$ corresponding to the truncated configuration.}\label{config}
\end{figure}

Next, we consider the truncated configuration \[\eta_1=1,\dots,\eta_i=1,\eta_{i+1}=0,\eta_{i+2}=1,\dots,\eta_{K+1}=1\]
for $i\in \{1,\dots,K-1\}$. Again, see Figure \ref{config}. Since for this configuration we can only deduce that $Z_0\in\{0,1\}$, and so obtain that the probability of it occurring is
\begin{equation}\label{ppp3}
\pi^Z_0p_0p_1\dots p_{i-1}(1-p_i)p_{i-1}p_i\dots p_{K-2}+
\pi^Z_1p_1p_2\dots p_{i}(1-p_{i+1})p_{i}p_{i+1}\dots p_{K-1}.
\end{equation}
Similarly, the reverse configuration now occurs with probability
\begin{eqnarray}
&&\pi^Z_0p_0p_1\dots p_{K-i-1}(1-p_{K-i})p_{K-i-1}p_{K-i}\dots p_{K-2}\nonumber\\
&&+\pi^Z_1p_1p_2\dots p_{K-i}(1-p_{K-i+1})p_{K-i}p_{K-i+1}\dots p_{K-1}.\label{ppp4}
\end{eqnarray}
Now, $Z$ is a reversible Markov chain, and so we have from the detailed balance equations that $\pi^Z_0p_0=\pi^Z_1(1-p_1)$. Applying this, and equating (\ref{ppp3}) and (\ref{ppp4}), we find that
\[(1-p_1)p_{i-1}(1-p_i)+ p_{i}(1-p_{i+1})p_{K-1} = (1-p_1) p_{K-i-1}(1-p_{K-i})+ p_{K-i}(1-p_{K-i+1})p_{K-1}.\]
Appealing to \eqref{ppp35} and rearranging, we deduce from this that
\[(1-p_0)(1-p_1)p_{i-1}(1-p_i)=(1-p_0)(1-p_1) p_{i}(1-p_{i+1}).\]
By irreducibility, we must have that $1-p_1\neq 0$. Moreover, the assumption of \eqref{p0} implies that $1-p_0\neq 0$ (cf.\ the expression for particle density given at \eqref{expdens}). And so we conclude
\[p_{i-1}(1-p_i)= p_{i}(1-p_{i+1}),\qquad \forall i\in \{1,\dots,K-1\}.\]
In particular, this implies that $p_{i-1}(1-p_i)=p_{K-1}$ for $i\in\{1,\dots,K-2\}$, which yields in turn that the parameters $p_0,p_1,\dots,p_{K-2}$ are uniquely determined by $p_{K-1}$ through the relation
\begin{equation}\label{induction}
p_{i-1}=\frac{p_{K-1}}{1-p_i},
\end{equation}
giving us that $Z$ indeed comes from a one-parameter family of transition matrices.

Next, we note that (\ref{induction}) gives an injective map from $p_{K-1}$ to $p_0$. Indeed, suppose we have another chain $Z'$, with transition matrix determined by the parameters $p_0',\dots,p_{K-1}'$. If $p_{K-1}'<p_{K-1}$ and $p_{i}'<p_{i}$ for some $i$, then (\ref{induction}) implies that $p_{i-1}'<p_{i-1}$. Thus $p_{K-1}'<p_{K-1}$ implies $p_0'<p_0$, giving us the desired injectivity. So, the transition matrix of $Z$ is also uniquely determined by $p_0$.

Finally, since the construction of $\tilde{W}^{(K)}$ gives
\[\tilde{P}^{(K)}(0,1)=1-\frac{1-p}{\lambda_K},\]
Lemma \ref{evaluebeh} gives us that all the possible values of $p_0$ are obtained by varying $p\in(0,1)$. Hence, any chain $Z$ of the form described above must be equal to $\tilde{W}^{(K)}$ for some choice of $p\in (0,1)$.
\end{proof}

To complete the section, we extend the previous result to the unbounded case.

\begin{cor}\label{unboundedonly} The processes ${W}$ described in Section \ref{iidsec} for $p\in (0,\frac12)$ are the only two-sided stationary, irreducible Markov chains on $\{0,1,\dots\}$ that represent a carrier process for which the associate particle configuration satisfies (\ref{p0}) and $\overleftarrow{\eta}\buildrel{d}\over{=}{\eta}$.
\end{cor}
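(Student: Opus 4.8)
The plan is to mirror the proof of Proposition \ref{boundedonly}, the only genuinely new difficulty being that there is no longer an upper boundary to pin down the carrier height. First I would fix the structure of the chain. Writing $Z=(Z_n)_{n\in\mathbb{Z}}$ for the given chain, the requirement that $Z$ represent a carrier process forces it to be a birth--death chain on $\mathbb{Z}_+$ of the form $P(0,0)=1-p_0$, $P(0,1)=p_0$, and $P(x,x+1)=p_x=1-P(x,x-1)$ for $x\geq1$, where irreducibility gives $p_x\in(0,1)$ for $x\geq1$ and $p_0\in(0,1]$, while Lemma \ref{particleflatdensity} together with \eqref{p0} forces $p_0<1$ (otherwise $\mathbf{P}(W_n=W_{n-1}=0)=0$ and $\rho=\tfrac12$). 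Since every stationary, irreducible birth--death chain is reversible, $Z$ automatically satisfies detailed balance $\pi_x p_x=\pi_{x+1}(1-p_{x+1})$ for its stationary law $\pi$ (and hence, by Theorem \ref{mrd}, is also invariant under $T$, although this is not needed below). The whole problem therefore reduces to showing that the parameters $(p_x)_{x\geq0}$ are all equal to a common value $p$; for then $Z$ has exactly the transition probabilities \eqref{Wprobs}, so $Z\buildrel{d}\over{=}W$ by stationarity.

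To show $(p_x)$ is constant, I would re-run the finite-configuration comparisons from the proof of Proposition \ref{boundedonly}. There, the reversibility $\overleftarrow{\eta}\buildrel{d}\over{=}\eta$ was applied to patterns such as $\eta_1=\cdots=\eta_i=1,\eta_{i+1}=0,\eta_{i+2}=\cdots=1$ and their spatial reverses, and the key point was that the ceiling at $K$ restricted the initial height $Z_0$ to $\{0,1\}$, turning each pattern probability into a finite sum that, via detailed balance, yielded $p_{i-1}(1-p_i)=p_i(1-p_{i+1})$. In the unbounded setting the ceiling is gone, so instead I would anchor at the bottom: prepend a long run $\eta_1=\cdots=\eta_j=0$, which drives the carrier down to $0$ and then holds it there, so that conditionally on this run the law of $Z_j$ converges to the point mass at $0$ as $j\to\infty$. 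Comparing $\mathbf{P}(\eta_1=\cdots=\eta_j=0,\ \eta_{j+1},\dots=w)$ with the probability of the reversed pattern, and letting $j\to\infty$ after dividing out the common decay factor $(1-p_0)^{j}$, replaces the infinite sum over starting heights by a single term with $Z$ started from a floor. The same bookkeeping as in Proposition \ref{boundedonly} then gives $p_{i-1}(1-p_i)=p_i(1-p_{i+1})$ for every $i\geq1$, now with no upper restriction on $i$.

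Finally, the relation $p_{i-1}(1-p_i)=p_i(1-p_{i+1})$ states that $i\mapsto p_{i-1}(1-p_i)$ is a constant $a$, equivalently $p_{i+1}=1-a/p_i$. Since this forward map is strictly increasing on $(0,1)$, the only orbits that remain in $(0,1)$ for all $i$ are the constant ones sitting at a fixed point, so $p_x\equiv p$ with $p^2-p+a=0$; positive recurrence of the resulting reflected simple random walk (needed for a stationary $\pi$ to exist at all) then selects the root $p<\tfrac12$. Hence $Z$ coincides in law with the process $W$ of Section \ref{iidsec} for some $p\in(0,\tfrac12)$, as required. I expect the main obstacle to be exactly the step in the middle paragraph: making the bottom-anchoring limit rigorous, i.e.\ controlling the infinite sum over the starting height of the carrier and checking that the $(1-p_0)^{j}$-normalised limit isolates the clean finite-product expressions needed to extract the recursion. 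This is the analogue, at the lower boundary, of the height restriction that the ceiling supplied for free in the bounded case.
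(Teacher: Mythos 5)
Your overall strategy---reduce to the recursion $p_{i-1}(1-p_i)=p_i(1-p_{i+1})$ by re-running the configuration comparisons of Proposition \ref{boundedonly} with a ``floor'' in place of the ceiling, then show only constant parameter sequences survive---is genuinely different from the argument given in the paper, which instead conditions the given chain $Z$ to stay in $\{0,\dots,K\}$ via the Perron--Frobenius $h$-transform of Section \ref{boundedsec}, applies Proposition \ref{boundedonly} to identify each conditioned chain with some $\tilde{W}^{(K)}$ of parameter $p^{(K)}$, and then lets $K\to\infty$ using eigenvalue estimates. Unfortunately, the step you yourself flag as the main obstacle is not merely a technicality: it fails. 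Conditioning on a long run $\eta_1=\cdots=\eta_j=0$ does \emph{not} in general concentrate $Z_j$ at $0$. Writing $\pi$ for the stationary law, detailed balance gives $\pi_x\prod_{y=1}^{x}(1-p_y)=\pi_0\prod_{y=0}^{x-1}p_y$, so
\[
\mathbf{P}\left(Z_0=j,\:\eta_1=\cdots=\eta_j=0\right)=\pi_0\prod_{y=0}^{j-1}p_y,\qquad
\mathbf{P}\left(Z_0=0,\:\eta_1=\cdots=\eta_j=0\right)=\pi_0(1-p_0)^j,
\]
and the ratio of these is $\prod_{y=0}^{j-1}p_y/(1-p_0)^j$. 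For a candidate chain with, say, $p_0=0.7$ and $p_y=0.4$ for all $y\geq1$ (which is irreducible, positive recurrent, and yields $\rho<\frac12$), this ratio diverges: given the run of zeros, the carrier is overwhelmingly likely to have started at height of order $j$, the $(1-p_0)^{-j}$-normalised limit is $+\infty$, and no single ``floor'' term is isolated. Since the concentration property holds for the constant-$p$ chains you are trying to characterise but can fail for precisely the non-constant ones you are trying to exclude, you cannot assume it; you would have to first derive it from $\overleftarrow{\eta}\buildrel{d}\over{=}\eta$, which is essentially as hard as the corollary itself. This is exactly the difficulty that the truncation-plus-limit argument is designed to bypass, by restoring the upper boundary that made the comparisons in Proposition \ref{boundedonly} work.

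There is also a slip in the final step. Writing the recursion as $p_{i+1}=f(p_i)$ with $f(q)=1-a/q$ and $a=p_0(1-p_1)$, it is not true that the only orbits of this increasing map remaining in $(0,1)$ are the fixed points: for $a<\frac14$, any orbit started in $(q_-,q_+)$, where $q_\pm=\frac{1\pm\sqrt{1-4a}}{2}$, increases monotonically to $q_+$ and stays in $(0,1)$ forever. These non-constant solutions must be excluded separately---for instance, they force $p_y\to q_+>\frac12$, whence $\pi_x\propto\prod_{y=1}^{x}\frac{p_{y-1}}{1-p_y}$ is not summable and no stationary law exists---so positive recurrence is needed to kill the non-constant orbits as well, not only to choose between the two roots after constancy has been established.
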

\begin{proof} Our first claim is that if $\tilde{W}^{(K)}$ is constructed from parameter $p^{(K)}\in (0,1)$, and $p^{(K)}\rightarrow p\in (0,\frac12]$, then
\begin{equation}\label{plim}
\tilde{P}^{(K)}(x,y)\rightarrow P(x,y),\qquad \forall x,y\geq 0,
\end{equation}
where $\tilde{P}^{(K)}$ is the transition matrix of $\tilde{W}^{(K)}$, and $P$ is the transition matrix of $W$, the carrier for the i.i.d. initial configuration with particle density $p$. To prove this, we observe that \eqref{exitasym} implies
\[\log \lambda_K= \lim_{N\rightarrow \infty}\frac{\log\mathbf{P}_{p^{(K)}}\left(\tau(K)>2N\:\vline\:W_0=0\right)}{N},\]
where $\tau(K)=\inf\{n\geq 0\: :\:W_n=K\}$ is the first hitting time of $K$ by $W$, and we index the probability measure by $p^{(K)}$ to highlight the parameter being considered. Writing $\tau^+(K)=\inf\{n\geq 1\: :\:W_n=0\}$ for the first return time to 0, we thus obtain
\[\log \lambda_K\geq  \lim_{N\rightarrow \infty}\frac{\log\mathbf{P}_{p^{(K)}}\left(\tau^+(0)<\tau(K)\:\vline\:W_0=0\right)^N}{N}=\log\mathbf{P}_{p^{(K)}}\left(\tau^+(0)<\tau(K)\:\vline\:W_0=0\right),\]
Hence
\begin{eqnarray}
\liminf_{K\rightarrow\infty}\lambda_K
&\geq& \liminf_{K\rightarrow\infty}\mathbf{P}_{p^{(K)}}\left(\tau^+(0)<\tau(K)\:\vline\:W_0=0\right)\nonumber\\
&=&\liminf_{K\rightarrow\infty}\left\{1-\frac{1}{1+\left(\frac{1-p^{(K)}}{p^{(K)}}\right)+\dots+\left(\frac{1-p^{(K)}}{p^{(K)}}\right)^2+\dots+\left(\frac{1-p^{(K)}}{p^{(K)}}\right)^K}\right\}\nonumber\\
&=&1,\label{lamlim}
\end{eqnarray}
where the penultimate equality is an elementary gambler's ruin calculation. Next, by definition, we have that
\[\lambda_Kh_K(0)=\tilde{P}^{(K)}(0,0)h_K(0)+\tilde{P}^{(K)}(0,1)h_K(1),\]
which rearranges to give
\begin{equation}\label{h0lim}
\frac{h_K(1)}{h_K(0)}=\frac{\lambda_K-1+p^{(K)}}{p^{(K)}}\rightarrow 1,
\end{equation}
as $K\rightarrow \infty$. Similarly, for $x\geq 1$, we have that
\[\lambda_Kh_K(x)=\tilde{P}^{(K)}(x,x-1)h_K(x-1)+\tilde{P}^{(K)}(x,x+1)h_K(x+1),\]
which rearranges to give
\begin{equation}\label{hlim}
\frac{h_K(x+1)}{h_K(x)}=\frac{\lambda_K}{p^{(K)}}-\frac{(1-p^{(K)})h_K(x-1)}{p^{(K)}h_K(x)},
\end{equation}
and a simple induction argument implies that this also converges to 1. Combining (\ref{lamlim}), (\ref{h0lim}) and (\ref{hlim}) yields (\ref{plim}).

We next suppose that $p^{(K)}\rightarrow p\in(\frac12,1]$, and claim that
\begin{equation}\label{claim2}
\limsup_{K\rightarrow\infty}\lambda_K<1.
\end{equation}
Indeed, first observe that
\begin{eqnarray*}
\mathbf{P}_{p^{(K)}}\left(\tau(K)>2N\:\vline\:W_0=0\right)&\leq&\mathbf{P}_{p^{(K)}}\left(\tau_S(-K)>2N\:\vline \:S_0=0\right)\\
&=&\sum_{n>2N}
\mathbf{P}_{p^{(K)}}\left(\tau_S(-K)=n\:\vline \:S_0=0\right)\\
&=&\sum_{n>2N}\frac{K}{n}
\mathbf{P}_{p^{(K)}}\left(S_n=-K\:\vline \:S_0=0\right)\\
&=&\sum_{n>2N}\frac{K}{n}
\mathbf{P}_{p^{(K)}}\left(\mathrm{Bin}(n,p)=\frac{n+K}{2}\right)\\
&\leq& C_k \left(4(1-p^{(K)})p^{(K)}\right)^{N},
\end{eqnarray*}
where $\tau_S(-K)$ is the hitting time of $-K$ by the path encoding $S$, for the second equality we apply the hitting time theorem for random walks (originally proved in \cite{Otter} for $K=1$, and a modern elementary proof appears in \cite{vdh}), $\mathrm{Bin}(n,p)$ is a binomial random variable with parameters $n$ and $p$, and the final estimate is an elementary exercise to obtain. It follows that
\[\limsup_{K\rightarrow\infty}\lambda_K\leq
\limsup_{K\rightarrow\infty}\limsup_{N\rightarrow\infty}\:\exp\left\{\frac{\log \left(C_k \left(4(1-p^{(K)})p^{(K)}\right)^{N}\right)}{N}\right\}=4(1-p)p<1,\]
which establishes \eqref{claim2}.

Now, let $Z$ be a Markov chain satisfying the desired properties. For each $K\geq 1$, using the procedure described in Lemma \ref{condconst}, one can construct a conditioned version of the chain $Z^{(K)}$ on $\{0,1,\dots,K\}$ analogously to the construction of $\tilde{W}^{(K)}$ from $W$. (In particular, this has transition matrix given by a formula as at \eqref{tildepkdef}.) Observe that, by following the proof of \eqref{plim}, it holds that if $P^{Z,K}$ is the transition matrix of $Z^{(K)}$ and $P^Z$ is the transition matrix of $Z$, then
\begin{equation}\label{plimv}
{P}^{Z,K}(x,y)\rightarrow P^Z(x,y),\qquad \forall x,y\geq 0,
\end{equation}
Indeed, the only part of the proof that is not an immediate adaptation is \eqref{lamlim}, but this is straightforward since if $(\lambda^Z_K)_{K\geq 1}$ are the relevant eigenvalues then we can check that
\begin{equation}\label{vlamlim}
\liminf_{K\rightarrow\infty}\lambda_K^Z\geq \mathbf{P}\left(\tau_Z^+(0)<\tau_Z(K)\:\vline\:Z_0=0\right)=\mathbf{P}\left(\tau_Z^+(0)<\infty\:\vline\:Z_0=0\right)=1,
\end{equation}
where we write $\tau_Z^+(0)$ for the first return time to 0 by $Z$, $\tau_Z(K)$ is the first hitting time of $K$ by $Z$, and the final equality is a consequence of the recurrence of $Z$.

A further important observation is that one may check similarly to Corollary \ref{condcor} that $Z^{(K)}$ is a two-sided stationary carrier process for which the associate particle configuration satisfies (\ref{p0}) and is reversible. In particular, Proposition \ref{boundedonly} therefore tells us that $Z^{(K)}$ has transition matrix given by $\tilde{P}^{(K)}$ for some parameter $p^{(K)}\in (0,1)$. In the remainder of the proof, we let $(p^{(K_i)})_{i\geq 1}$ be a convergent subsequence with limit $p\in [0,1]$. If $p> \frac12$, then we know from (\ref{claim2}) that $\limsup_{i\rightarrow\infty}\lambda_{K_i}<1$. However, we also know that $\lambda_{K_i}=\lambda_{K_i}^Z\rightarrow 1$ by \eqref{vlamlim}. Hence we arrive at a contradiction, and so $p$ must take a value in $[0,\frac12]$. If $p=0$, then we see that
\[P^{Z,K_i}(0,0)=\tilde{P}^{(K_i)}(0,0)=\frac{1-p^{(K_i)}}{\lambda_{K_i}}=\frac{1-p^{(K_i)}}{\lambda^Z_{K_i}}\rightarrow 1-p=1.\]
Combined with \eqref{plimv}, this implies that $P^Z(0,0)=1$, which can not hold since $Z$ is irreducible. Thus we must have $p\in(0,\frac12]$, and \eqref{plim} implies $P^Z=P$. Note that if $p=\frac12$, then it would not be possible to construct a two-sided stationary chain with transition matrix $P$, and so $p\neq \frac12$. Hence, we conclude that the transition matrix of $Z$ is given by $P$ for some $p\in(0,\frac12)$, which completes the proof.
\end{proof}

\subsubsection{Proof of Theorem \ref{mre} and Remark \ref{mrem}}\label{proofsec} That the i.i.d.\ initial configuration ($p\in[0,\frac12)$), Markov initial configuration ($p_0\in(0,1)$, $p_1\in [0,1)$, $p_0+p_1<1$) and bounded soliton examples ($K\in\mathbb{Z}_+$, $p\in (0,1)$) described above have path encodings with distribution supported in $\mathcal{S}_{sub-critical}$ follows from Lemma \ref{slinrand}, \eqref{markovdensity} and Corollary \ref{condcor}. That they are invariant in distribution under $T$ was established in Corollaries \ref{revcor}, \ref{markovcor} and \ref{condcor}. This completes the proof of Theorem \ref{mre}.

For the claims of Remark \ref{mrem}, let us now suppose that $S\in\mathcal{S}^{rev}$, $\mathbf{P}$-a.s., and  $TS\buildrel{d}\over{=}S$ holds. One example of a two-sided stationary, irreducible Markov configuration satisfying these conditions is the empty configuration (as covered by the i.i.d.\ Bernoulli configuration with $p=0$), and to avoid trivialities we exclude this case from the remainder of the discussion. In particular, using the notation of Section \ref{markovsec} for the transition matrix of $\eta$ in the Markov configuration case, we may assume that $p_0>0$. To guarantee the density condition of Theorem \ref{mra}, we moreover require that $p_0+p_1\leq 1$ (recall \eqref{markovdensity}). Now, the case $p_0\in(0,1)$, $p_1\in [0,1)$, $p_0+p_1<1$ is dealt with by Corollary \ref{markovcor}. The only other case that fits the criteria $p_0\in(0,1]$, $p_1\in[0,1]$ and $p_0+p_1=1$ is when $p_0=1$, $p_1=0$. To ensure stationarity, we must take $\eta$ to be the random configuration that takes each of the values  $(\mathbf{1}_{\{n\mbox{ odd}\}})_{n\in\mathbb{Z}}$ and $(\mathbf{1}_{\{n\mbox{ even}\}})_{n\in\mathbb{Z}}$  with probability $\frac12$, which fits into the example of Remark \ref{K-Wrem} with $K=1$. Finally, we deal with the $W$ Markov case. Since $W=\Phi^{-1}S$ for some $S\in\mathcal{S}^{rev}$, it must be the case that $W$ is irreducible on $\{0,1,\dots,K\}$ for some $K=\{0,1,\dots\}\cup\{\infty\}$. Hence, under the further assumption that \eqref{p0} holds, the result follows from Proposition \ref{boundedonly} and Corollary \ref{unboundedonly}. On the other hand, Proposition \ref{critprop} gives us the result when the density is equal to $\frac12$.

\subsection{Particle current and ergodicity for example invariant configurations}\label{currentclt}

We recall from \eqref{icdef} that $C_k$ represents the integrated current, that is, the total number of particles crossing the origin after $k$ time steps of the BBS. It is natural to ask how this quantity behaves as $k\rightarrow\infty$. In this section we study this question for the various sub-critical examples that were introduced in the previous section. Specifically, Theorem \ref{mrf} is proved as Theorems \ref{currentcltthm}, \ref{lll2} and \ref{llll2}. Moreover, we appeal to Theorem \ref{mrc} to conclude that each of the examples is ergodic under $T$; Corollary \ref{ergcormrf} is split across Corollaries \ref{ergcor}, \ref{lll3} and \ref{ergcor3}.

\subsubsection{I.i.d.\ initial configuration}

For the case of an i.i.d.\ initial configuration, we can provide precise results about the current. Indeed, we are able to explicitly describe the distribution of the particles crossing the origin on each time step, $((T^kW)_0)_{k \in\mathbb{Z}}$, as an i.i.d.\ sequence. In conjunction with Lemma \ref{Wprops}, which gives the distribution of $W_0$, we thus immediately obtain from the classical probability theory the following result, which is a more explicit version of Theorem \ref{mrf} in the i.i.d.\ case. Of course, many other detailed properties of i.i.d.\ sequences are also well known. The notation $\mu_p$ and $\sigma^2_p$ should be recalled from \eqref{meanvar}.

{\thm\label{currentcltthm} Suppose $\eta$ is given by a sequence of i.i.d.\ Bernoulli($p$) random variables with $p\in(0,\frac12)$. It then holds that $((T^kW)_0)_{k \in\mathbb{Z}}$ form an i.i.d.\ sequence of random variables, each distributed according to $\pi$, as defined at (\ref{pidef}). In particular, the following conclusions hold.\\
(a) $\mathbf{P}$-a.s.,
\[k^{-1}C_k\rightarrow \mu_p.\]
(b) It holds that
\[\frac{C_k-k\mu_p}{\sqrt{\sigma^2_p k}}\rightarrow N(0,1)\]
in distribution, where $N(0,1)$ is a standard normal random variable;\\
(c) The sequence $(k^{-1}C_k)_{k\geq 1}$ satisfies a large deviations principle with rate function given by
\begin{equation}\label{Cratefunction}
I_C(x):=
\left\{
\begin{array}{ll}
  x\log\left(\frac{(1-p)x}{p(1+x)}\right)+\log\left(\frac{1-p}{(1-2p)(1+x)}\right), & \mbox{if }x\geq 0,\\
  \infty, & \mbox{otherwise.}
\end{array}\right.
\end{equation}
NB. For a definition of what it means for a sequence of random variables to satisfy a large deviations principle with respect to a given rate function, see \cite[Section 1.2]{DZ}.}
\medskip

We also have the following corollary of the above result, which we immediately obtain from Theorem \ref{mrc}.

{\cor\label{ergcor} If $\eta$ is given by a sequence of i.i.d.\ Bernoulli($p$) random variables with $p\in(0,\frac12)$, then the transformation $\eta\mapsto T\eta$ is ergodic.}
\medskip

The proof that $((T^kW)_0)_{k \in\mathbb{Z}}$ is an i.i.d.\ sequence depends crucially on the following result.

{\lem \label{indeplem} If $\eta$ is given by a sequence of i.i.d.\ Bernoulli($p$) random variables with $p\in(0,\frac12)$, then $(({T}S)_{-n})_{n\in\mathbb{Z}_+}$ and $W_0$ are independent.}
\begin{proof} From Theorem \ref{goodsetthm}, we know that
\[W_0=M_0=-\min_{n\geq 0}(TS)_n.\]
We also have from Theorem \ref{mre} that $TS\buildrel{d}\over{=}S$, which means that $(({T}S)_{-n})_{n\in\mathbb{Z}_+}$ is independent of $(({T}S)_{n})_{n\in\mathbb{Z}_+}$. On combining these two observations, the result follows.
\end{proof}

We are now ready to prove the main result of this section.

\begin{proof}[Proof of Theorem \ref{currentcltthm}] We start by checking the independence claim. Firstly, observe that
\[(T^kW)_0=\sup_{m\leq 0}(T^kS)_m.\]
Moreover, for $k\geq l$, $(({T}^kS)_n)_{n\leq 0}$ is a measurable function of $(({T}^lS)_n)_{n\leq 0}$. Hence, we have that $((T^kW)_0)_{k\geq l}$ is $(({T}^lS)_n)_{n\leq 0}$ measurable. From Lemma \ref{indeplem}, we have that $({T}S)_{n\leq 0}$ is independent of $W_0$. Thus we obtain that $W_0$ is independent of $((T^kW)_0)_{k\geq 1}$. To extend this to the full result, we will follow an inductive procedure. In particular, note from Theorem \ref{mre} that $S$ is invariant under ${T}$. Hence combining our previous observation with Proposition \ref{subcritprop} yields that $(T^lW)_0$ is independent of $((T^kW)_0)_{k>l}$ for any $l\geq 0$. This is enough to establish the independence claim for $((T^kW)_0)_{k \in\mathbb{Z}_+}$, and to extend to the two-sided case is straightforward given the invariance of $W$ under $T$. To establish the distributional part of the claim, we simply note that the observation ${T}^kS\sim S$ and the identity $\sup_{m\leq 0}({T}^kS)_m = (T^kW)_0$ imply that $(T^kW)_0\sim W_0$, as desired.

Given the conclusion of the previous paragraph, the result is now standard. For the large deviations principle, we note that Cramer's theorem \cite[Theorem 2.2.3]{DZ} gives the result with rate function
\[I_C(x):=\sup_{\theta\in\mathbb{R}}\left(\theta x-\log M_W(\theta)\right),\]
where
\[M_W(\theta):=\mathbf{E}\left(e^{\theta W_0}\right)=\left\{
\begin{array}{ll}
  \frac{1-2p}{1-p-pe^{\theta}}, & \mbox{if }\theta<\log((1-p)/p),\\
  \infty, & \mbox{otherwise.}
\end{array}\right. \]
It is an elementary computation to deduce from this the expression at \eqref{Cratefunction}.
\end{proof}

\subsubsection{I.i.d.\ initial configuration conditioned to have bounded solitons}

In this section, we show that under the measure given by conditioning the i.i.d.\ configuration with parameter $p\in(0,1)$ not to have solitons larger than $K\in\mathbb{Z}_+$, as made precise in Section \ref{boundedsec}, the current sequence $((T^kW)_0)_{k \in \mathbb{Z}}$ is an irreducible aperiodic reversible two-sided stationary Markov chain on $\{0,1,\dots,K\}$. As a consequence, we show that the integrated current $C_k$ satisfies the following law of large numbers, central limit theorem and large deviations principle, as per the relevant claims of Theorem \ref{mrf}. To state the result, we define
\[\mu_p^K:=\mathbf{E}\left(\tilde{W}_0^{(K)}\right),\qquad \left(\sigma_p^K\right)^2:=\mathrm{Var}\left(\tilde{W}^{(K)}_0\right)+2\sum_{k=1}^{\infty}\mathrm{Cov}\left(\tilde{W}^{(K)}_0,
\left(T^k\tilde{W}^{(K)}\right)_0\right),\]
where $\tilde{W}^{(K)}$ is the process defined in Section \ref{boundedsec}. Moreover, we obtain from another application of Theorem \ref{mrc} that the particle configuration is ergodic under $T$, see Corollary \ref{lll3}.

\begin{thm}\label{lll2} Suppose $\eta$ is the random particle configuration of Section \ref{boundedsec} with parameters $K\in\mathbb{N}$ and $p\in(0,1)$. (NB.\ We exclude $K=0$ to avoid trivialities.)  It is then the case that the current sequence $((T^kW)_0)_{k \in \mathbb{Z}}$ is an irreducible aperiodic reversible two-sided stationary Markov chain on $\{0,1,\dots,K\}$. Moreover, the following conclusions hold.\\
(a) $\mathbf{P}$-a.s.,
\[k^{-1}C_k\rightarrow \mu_p^K.\]
(b) It holds that $(\sigma_p^K)^2\in(0,\infty)$ and
\[\frac{C_k-k\mu_p^K}{\sqrt{(\sigma_p^K)^2 k}}\rightarrow N(0,1)\]
in distribution, where $N(0,1)$ is a standard normal random variable.\\
(c) Let $\tilde{\Pi}^{(K)}=(\tilde{\Pi}^{(K)}(x,y))_{x,y=0}^K$ be the transition matrix of $((T^kW)_0)_{k\in\mathbb{Z}}$, and $\tilde{\Pi}_\theta^{(K)}$ be the exponentially tilted version given by setting
\[\tilde{\Pi}^{(K)}_\theta(x,y)=\tilde{\Pi}^{(K)}(x,y)e^{\theta y},\qquad x,y=0,1,\dots,K.\]
The sequence $(k^{-1}C_k)_{k\geq 1}$ then satisfies a large deviations principle, with rate function given by:
\[\tilde{I}_C^{(K)}(x)=\sup_{\theta\in\mathbb{R}}\left(\theta x-\log \Upsilon\left(\tilde{\Pi}^{(K)}_\theta\right)\right),\]
where $\Upsilon\left(\tilde{\Pi}^{(K)}_\theta\right)$ is the largest eigenvalue of $\tilde{\Pi}^{(K)}_\theta$.
\end{thm}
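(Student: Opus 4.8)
The plan is to prove Theorem \ref{lll2} in two stages: first establish the structural claim that the current sequence $((T^kW)_0)_{k\in\mathbb{Z}}$ is an irreducible, aperiodic, reversible, stationary Markov chain on $\{0,1,\dots,K\}$, and then read off parts (a)--(c) from the classical limit theory for additive functionals of finite-state Markov chains. That the current takes values in $\{0,\dots,K\}$ is immediate: the bounded-soliton property $\sup_n W_n\le K$ is preserved by $T$ (the soliton sizes being conserved), so $\sup_n(T^kW)_n\le K$, and in particular $(T^kW)_0\le K$, for every $k$.

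The heart of the matter is the Markov property, for which the two key inputs are as follows. First, the carrier $W=\tilde W^{(K)}$ is a \emph{spatial} Markov chain, so the restrictions of the path $S$ to the left half-line $(S_m)_{m\le0}$ and the right half-line $(S_m)_{m\ge0}$ are conditionally independent given $W_0$; here $W_0$ is precisely the splitting variable of the spatial Markov property, and by Proposition \ref{phiproperty} each half-path and the corresponding half of the carrier generate the same information once $W_0$ is fixed. Second, $T$ is causal to the left: from $(TS)_m=2M_m-S_m-2M_0$ one sees that $((T^jS)_m)_{m\le0}$ is a deterministic function of $(S_m)_{m\le0}$, while $T^{-1}$ is causal to the right. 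Using these, I would argue that the present current is $(T^0W)_0=W_0=\sup_{m\le0}S_m$; that the entire future $((T^jW)_0)_{j\ge1}=(\sup_{m\le0}(T^jS)_m)_{j\ge1}$ is measurable with respect to the left half $(S_m)_{m\le0}$ by left-causality; and that the entire past $((T^jW)_0)_{j\le-1}$ is, via the identity $T^{-1}=RTR$ of Lemma \ref{rtrlem} together with $RS\buildrel{d}\over{=}S$ from Corollary \ref{condcor} and left-causality applied to $RS$, measurable with respect to the right half $(S_m)_{m\ge0}$. Since the two halves are conditionally independent given $W_0$, the future and past of the current are conditionally independent given the present, which is the Markov property. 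Stationarity under $\theta$ is Proposition \ref{subcritprop} together with $T$-invariance; reversibility follows from the symmetries $\overleftarrow\eta\buildrel{d}\over{=}\eta$ and $\bar W\buildrel{d}\over{=}W$ of Corollary \ref{condcor}; and irreducibility and aperiodicity follow from the irreducibility of $\tilde W^{(K)}$ on $\{0,\dots,K\}$ and the presence of a self-loop at the state $0$.

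With the Markov structure in hand, the three limit results are standard. Part (a) is the ergodic theorem for the additive functional $C_k=\sum_{l=0}^{k-1}(T^lW)_0$ of an irreducible finite chain, giving $k^{-1}C_k\to\mathbf{E}\tilde W^{(K)}_0=\mu_p^K$. Part (b) is the central limit theorem for such additive functionals: finiteness of $(\sigma_p^K)^2$ is automatic from the geometric mixing of a finite irreducible aperiodic chain, with the Green--Kubo series being exactly the one stated, while strict positivity follows because $W_0$ is non-degenerate and cannot be written as a coboundary for a non-constant function on a finite irreducible chain. Part (c) is the Gärtner--Ellis large deviations principle for additive functionals of finite Markov chains: the scaled cumulant generating function is $\log\Upsilon(\tilde\Pi^{(K)}_\theta)$, the logarithm of the Perron eigenvalue of the tilted matrix $\tilde\Pi^{(K)}_\theta$, and the rate function $\tilde I_C^{(K)}$ is its Legendre transform, as asserted.

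The main obstacle is the Markov property, and within it the precise bookkeeping showing that the \emph{past} of the current is a function of the right half-line. The asymmetry is what requires care: the future is transparently a left-half quantity by causality, whereas $\sup_{m\le0}(T^jS)_m$ for $j\le-1$ only becomes a right-half quantity after invoking the global identities of Theorem \ref{goodsetthm} and the reflection symmetry. Equally delicate is the translation of the spatial Markov property of the carrier into the conditional independence of the two half-paths of $S$ given $W_0$, which is where Proposition \ref{phiproperty} is needed to match the information in a half-path with that in the corresponding half of the carrier.
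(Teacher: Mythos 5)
Your architecture matches the paper's own proof: the Markov property of the current is obtained exactly as in Lemma \ref{markovequivalence} (conditional independence of the two half-lines of $S$ given $W_0$ from the spatial Markov property of the carrier, left-causality of $T$ for the future of the current, and the identity $(T^kW)_0=(T^{k+1}V)_0$ of Theorem \ref{goodsetthm} together with right-causality of $T^{-1}$ for the past), while stationarity, reversibility via $\bar W\buildrel{d}\over{=}W$, and parts (a) and (c) are handled the same way. One small caveat: the claim that the past of the current is measurable with respect to $(S_m)_{m\ge0}$ is a pathwise statement, so the distributional identity $RS\buildrel{d}\over{=}S$ is not the right tool there; what does the work is the deterministic identity $T^{-1}W=V$ on $\mathcal{S}^{inv}$, which you do invoke in your closing paragraph.

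There are, however, two steps you assert with justifications that do not hold up. First, irreducibility and aperiodicity of the \emph{temporal} chain do not follow formally from irreducibility of the \emph{spatial} chain $\tilde{W}^{(K)}$: a priori the current chain could be reducible or fail to visit some states. The paper's Lemma \ref{markovirreducible} closes this by exhibiting, for each $l\in\{0,\dots,K\}$, an explicit local configuration of positive probability (a soliton of size $l$ adjacent to the origin preceded by $K$ consecutive sites at which the carrier is empty) forcing $W_0=0$ and $(TW)_0=l$, together with $\mathbf{P}(W_0=W_{-1}=\cdots=W_{-K}=0)>0$ for the self-loop at $0$; reversibility then supplies the return transitions. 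Second, your justification of $(\sigma_p^K)^2>0$ --- that a non-degenerate function on a finite irreducible chain cannot be a coboundary --- is false as a general principle (non-constant coboundaries with vanishing asymptotic variance exist on finite irreducible aperiodic chains). The correct criterion is that the variance vanishes if and only if the Poisson-equation solution $g$ is constant on each one-step neighbourhood $\{y:\:p_1(x,y)>0\}$, and the paper deduces that $g$ would then be globally constant precisely because Lemma \ref{markovirreducible} shows every state is a one-step neighbour of $0$, whence $(I-P)g=0\neq f$, a contradiction. Both gaps are repaired by the same explicit construction, so your proposal is structurally sound but incomplete without it.
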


\begin{cor}\label{lll3} If $\eta$ is the random particle configuration of Section \ref{boundedsec} with parameters $K\in\mathbb{Z}_+$ and $p\in(0,1)$, then the transformation $\eta \to T\eta$ is ergodic.
\end{cor}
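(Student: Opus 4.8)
The plan is to obtain this as an essentially immediate consequence of the characterisation of ergodicity in Theorem \ref{mrc}(b), combined with the Markov structure of the current established in Theorem \ref{lll2}. Recall from Corollary \ref{condcor} that the path encoding of $\eta=\tilde{\eta}^{(K)}$ lies in $\mathcal{S}_{sub-critical}$, $\mathbf{P}$-a.s., and that $\eta$ is invariant in distribution under $T$. Thus the standing hypotheses of Theorem \ref{mrc}(b) are satisfied, and it suffices to check that the current sequence $((T^kW)_0)_{k\in\mathbb{Z}}$ is stationary and ergodic under the shift $\theta$.

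First I would dispose of the trivial case $K=0$, for which $\eta$ is the empty configuration and ergodicity is immediate, and so assume $K\geq 1$. For such $K$, Theorem \ref{lll2} tells us that $((T^kW)_0)_{k\in\mathbb{Z}}$ is a two-sided stationary, irreducible Markov chain on the finite state space $\{0,1,\dots,K\}$. Stationarity under $\theta$ is then part of this statement. For ergodicity, I would appeal to the classical fact that a two-sided stationary Markov chain that is irreducible on a finite state space is ergodic under the shift, i.e.\ every shift-invariant event has probability $0$ or $1$ (see, for example, \cite{Walters}). Consequently the current sequence is stationary and ergodic under $\theta$, and Theorem \ref{mrc}(b) yields that $\eta\mapsto T\eta$ is ergodic.

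There is no serious obstacle in this argument: the substantive content has already been carried out in deriving the Markov description of the current (Theorem \ref{lll2}) and the abstract equivalence between ergodicity of $\eta$ and of its current (Theorem \ref{mrc}(b)). The only point demanding a little care is to confirm that every hypothesis of Theorem \ref{mrc}(b)---in particular, that the distribution of $S$ is supported in $\mathcal{S}_{sub-critical}$ and that $\eta$ is invariant under $T$---has genuinely been verified for this example, both of which are provided by Corollary \ref{condcor}.
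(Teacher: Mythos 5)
Your proposal is correct and follows essentially the same route as the paper, which derives this corollary from Theorem \ref{mrc}(b) together with the stationarity and ergodicity of the current sequence supplied by Theorem \ref{lll2} (an irreducible stationary Markov chain on a finite state space being ergodic under the shift). Your separate handling of the trivial case $K=0$, which Theorem \ref{lll2} excludes, is a sensible extra precaution.
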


We start by proving a general lemma that establishes, for invariant BBSs, the Markov property of the carrier transfers to the Markov property of the current sequence.

\begin{lem} \label{markovequivalence}
Suppose $\eta$ is a random particle configuration such that the distribution of the corresponding path encoding $S$ is supported on $\mathcal{S}^{rev}$, and $T\eta\buildrel{d}\over{=}\eta$ holds. If $(W_n)_{n \in \mathbb{Z}}$ is a two-sided stationary Markov chain, then so is $((T^kW)_0)_{k\in\mathbb{Z}}$.
\end{lem}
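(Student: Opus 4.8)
The plan is to exploit the standard characterisation that a stationary process is a Markov chain precisely when its future and its past are conditionally independent given its present, and to produce this conditional independence from the spatial Markov property of $W$ combined with the \emph{opposite} causality directions of the forward and backward dynamics. First I would reduce to a single reference time. Since $T\eta\buildrel{d}\over{=}\eta$, the spatial carrier of $T^k\eta$ has the same law as $W$ for every $k$, hence is a spatial Markov chain, and moreover the current $((T^kW)_0)_{k\in\mathbb{Z}}$ is stationary under the time shift $\theta$. It therefore suffices to show that $\sigma\big((T^jW)_0:j\ge 1\big)$ and $\sigma\big((T^jW)_0:j\le -1\big)$ are conditionally independent given $W_0=(T^0W)_0$.

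The crucial structural inputs are two causality facts. Writing $\eta^k_n=(T^k\eta)_n$, the update rule $(T\eta)_n=\min\{1-\eta_n,W_{n-1}\}$ shows that $((T\eta)_n)_{n\le 0}$ depends only on $(\eta_m)_{m\le 0}$, so $T$ is causal to the left; dually, $(T^{-1}\eta)_n=\min\{1-\eta_n,V_{n+1}\}$ shows $T^{-1}$ is causal to the right. Consequently $\xi_0:=(\eta_n)_{n\le 0}$ determines $((T^k\eta)_n)_{n\le 0}$ for all $k\ge 0$, and since $(T^kW)_0=\sup_{m\le 0}(T^kS)_m$ is a function of $((T^k\eta)_n)_{n\le 0}$, the present and future currents $((T^kW)_0)_{k\ge 0}$ are all functions of the left half $\xi_0$. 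For the past currents I would invoke Theorem \ref{goodsetthm}, which gives $(T^kW)_0=(T^{k+1}V)_0$ on $\mathcal{S}^{rev}$; as the reverse carrier $V$ is causal to the right, $(T^kW)_0$ for $k\le -1$ is a function of the right half $\zeta_0:=(\eta_n)_{n\ge 1}$.

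Finally I would bring in the spatial Markov property of $W$: conditionally on $W_0$, the families $(W_n)_{n\le 0}$ and $(W_n)_{n\ge 0}$ are independent. Since $\eta_n=\mathbf{1}_{\{W_n=W_{n-1}+1\}}$, the left half $\xi_0$ is a function of $(W_n)_{n\le 0}$ and the right half $\zeta_0$ is a function of $(W_n)_{n\ge 0}$, whence $\xi_0\perp\zeta_0\mid W_0$. Because $W_0$ is itself a function of $\xi_0$ (indeed $W_0=\sup_{m\le0}S_m$), the future currents are functions of $\xi_0$, and the past currents are functions of $\zeta_0$, the conditional independence of future and past currents given the present $W_0=(T^0W)_0$ follows immediately. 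By stationarity this holds at every time, so $((T^kW)_0)_{k\in\mathbb{Z}}$ is a (two-sided stationary) Markov chain.

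The main obstacle will be the bookkeeping in the second step: one must verify carefully that each current $(T^kW)_0$ is genuinely measurable with respect to the appropriate half-configuration. This relies on the finiteness guaranteed by $S\in\mathcal{S}^{rev}$ (so that $M_0,I_0\in\mathbb{R}$ and $T^kS$ is well-defined for all $k$, and the half-restricted maps $\xi_k\mapsto\xi_{k+1}$ are genuinely deterministic), and on the identity $(T^kW)_0=(T^{k+1}V)_0$ to reroute the past currents through the space-reversed dynamics. Once these measurability claims are pinned down, the remainder is a routine manipulation of conditional independence, functions of conditionally independent $\sigma$-fields being conditionally independent.
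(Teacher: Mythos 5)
Your proposal is correct and follows essentially the same route as the paper's proof: reduce to conditional independence of past and future currents given $W_0$, show $((T^kW)_0)_{k\ge 0}$ is $(\eta_n)_{n\le 0}$-measurable and $((T^kW)_0)_{k\le -1}$ is $(\eta_n)_{n\ge 1}$-measurable via the identity $(T^kW)_0=(T^{k+1}V)_0$ from Theorem \ref{goodsetthm}, and then transfer the spatial conditional independence of the two halves of $W$ given $W_0$. The measurability bookkeeping you flag as the main obstacle is handled in the paper exactly as you anticipate.
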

\begin{proof}
Since $T\eta\buildrel{d}\over{=}\eta$, $((T^kW)_0)_{k\in\mathbb{Z}}$ is stationary. Thus, assuming the carrier is a two-sided stationary Markov chain, to establish the result it will suffice to check the Markov property for $((T^kW)_0)_{k\in\mathbb{Z}}$ at time $0$, i.e.\ show that, conditional on $W_0$, $((T^kW)_0)_{k\ge 0}$ and $((T^kW)_0)_{k\le 0}$ are independent. To this end, we first observe that the Markov property of the carrier yields that, conditional on $W_0$, $(W_n)_{n\ge0}$ and $(W_n)_{n\le 0}$ are independent. Since $(\eta_n)_{n \ge 1}$ is $(W_n)_{n \ge 0}$-measurable, and $(\eta_n)_{n \le 0}$ is $(W_n)_{n \le 0}$-measurable, it also holds that, conditional on $W_0$, $(\eta_n)_{n \ge 1}$ and $(\eta_n)_{n \le 0}$ are independent. Now, it is clear that $((T^kW)_0)_{k \ge 0}$ is $(\eta_n)_{n \le 0}$-measurable. By considering the reversed dynamics, we similarly deduce that $((T^kV)_0)_{k \le 0}$ is $(\eta_n)_{n \ge 1}$-measurable. Since $S \in \mathcal{S}^{inv}$, $\mathbf{P}$-a.s., we have from Theorem \ref{goodsetthm} that $(T^{k}V)_0=(T^{k-1}W)_0$, $\mathbf{P}$-a.s.\ for any $k\in\mathbb{Z}$. It follows that $((T^kW)_0)_{k \le -1}$ is $(\eta_n)_{n \ge 1}$-measurable. Thus we conclude that, conditional on $W_0$, $((T^kW)_0)_{k\ge0}$ and $((T^kW)_0)_{k\le 0}$ are independent, as desired.
\end{proof}

\begin{rem} By a similar argument to the proof of the preceding lemma, we can also deduce that under the condition $\eta \in \mathcal{S}^{inv}$, $\mathbf{P}$-a.s., if $((T^kW)_0)_{k\ge0}$ and $((T^kW)_0)_{k\le 0}$ are independent conditional on $W_0$, then $(W_n)_{n\ge0}$ and $(W_n)_{n\le 0}$ are also independent conditional on $W_0$. Hence, in addition, if $\eta$ is stationary under the spatial shifts, then the Markov property of the current sequence transfers to the Markov property of the carrier.
\end{rem}

\begin{cor}\label{iidmarkovcharacterization}
Suppose $\eta$ is a random particle configuration such that the distribution of the corresponding path encoding $S$ is supported on $\mathcal{S}^{inv}$, and $\eta$ is stationary under spatial shifts. If $((T^kW)_0)_{k\in\mathbb{Z}}$ is a two-sided stationary Markov chain, then $T\eta\buildrel{d}\over{=}\eta$ if and only if $\eta$ is given by the examples (a) or (c) in Theorem \ref{mre}. In particular, if $((T^kW)_0)_{k\in\mathbb{Z}}$ is an i.i.d. sequence, then $T\eta\buildrel{d}\over{=}\eta$ if and only if its distribution is given by (\ref{pidef}) for $p \in (0,\frac12)$.
\end{cor}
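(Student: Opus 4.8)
The plan is to reduce the hypothesis on the current to a hypothesis on the carrier, after which the statement becomes a repackaging of the carrier characterisations already established. The key input is the remark following Lemma \ref{markovequivalence}: since we assume $S\in\mathcal{S}^{inv}$, $\mathbf{P}$-a.s., the Markov property of the current at time $0$ --- that is, the conditional independence of $((T^kW)_0)_{k\ge 0}$ and $((T^kW)_0)_{k\le 0}$ given $(T^0W)_0=W_0$ --- transfers to the conditional independence of $(W_n)_{n\ge 0}$ and $(W_n)_{n\le 0}$ given $W_0$. Using the spatial stationarity of $\eta$ to propagate this conditional independence to every spatial site, I would deduce that $W=(W_n)_{n\in\mathbb{Z}}$ is itself a two-sided stationary Markov chain. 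As in the proof of Remark \ref{mrem} (Section \ref{proofsec}), since $W=\Phi^{-1}S$ with $S\in\mathcal{S}^{rev}$, $\mathbf{P}$-a.s., we may assume $W$ is irreducible on $\{0,1,\dots,K\}$ for some $K\in\{0,1,\dots\}\cup\{\infty\}$.

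For the implication that (a) or (c) suffices, I would simply note that these configurations are invariant under $T$ by Theorem \ref{mre}, and their currents are two-sided stationary Markov chains by Theorems \ref{currentcltthm} and \ref{lll2} respectively, so both hypotheses are met. For the converse, assume $T\eta\buildrel{d}\over{=}\eta$ and that the current is a two-sided stationary Markov chain, so that $W$ is Markov by the first paragraph. Theorem \ref{mra} furnishes a density $\rho\in[0,\frac12]$, with $\rho=\frac12$ precisely in the critical case. In the sub-critical case $\rho<\frac12$, Proposition \ref{boundedonly} (when $K<\infty$) identifies $W$ with $\tilde W^{(K)}$ for some $p\in(0,1)$, yielding example (c), while Corollary \ref{unboundedonly} (when $K=\infty$) identifies $W$ with the i.i.d.\ carrier of Section \ref{iidsec}, yielding example (a). The Markov initial configuration of example (b) with $p_1>0$ cannot occur here, since its carrier is not a Markov process (Section \ref{markovsec}); the boundary case $p_1=0$ already coincides with example (c) for $K=1$.

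The critical case is where I expect the only real subtlety. When $\rho=\frac12$, Proposition \ref{critprop} gives $S\in\mathcal{S}_{critical}^*$ and $T\eta=1-\eta$, $\mathbf{P}$-a.s.; since $M_n$ is then constant in $n$, a direct computation yields $TW=K-W$ deterministically on $\mathcal{S}_K$. Consequently the current degenerates to the period-two alternation $(T^kW)_0=W_0$ for $k$ even and $(T^kW)_0=K-W_0$ for $k$ odd, which, being non-constant by the irreducibility of $W$, is a periodic chain rather than the irreducible aperiodic current produced by examples (a) and (c) in Theorems \ref{currentcltthm} and \ref{lll2}; this, together with the fact that the critical density $\frac12$ differs from the density of (a) and (c), excludes the critical configurations and leaves only examples (a) and (c).

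Finally, the ``in particular'' statement follows by specialisation: an i.i.d.\ current is in particular Markov, so $\eta$ is (a) or (c), but the current of (c) is a genuine (non-i.i.d.) Markov chain on $\{0,\dots,K\}$ for $K\ge 1$, so independence forces $\eta$ to be the i.i.d.\ configuration of example (a), whose current is i.i.d.\ with law $\pi$ given by \eqref{pidef} (Theorem \ref{currentcltthm}). For the reverse direction, if the current is i.i.d.\ with law \eqref{pidef} then, since $\Lambda$ is a bijection on $\mathcal{S}_{sub-critical}^*$ (Theorem \ref{bijectionthm}) and example (a) realises exactly this current, $\eta$ agrees in distribution with example (a) and is hence invariant. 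The main obstacle throughout is the transfer step of the first paragraph --- justifying rigorously that the Markov property of the current forces the Markov property of the carrier --- together with the careful bookkeeping needed to separate the genuinely sub-critical examples (a) and (c) from the Markov $\eta$-example (b) and from the periodic critical configurations.
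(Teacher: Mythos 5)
Your overall route is the one the paper intends: the remark following Lemma \ref{markovequivalence} converts the Markov hypothesis on the current into the Markov property of the carrier (using $S\in\mathcal{S}^{inv}$ and spatial stationarity), after which Proposition \ref{boundedonly} and Corollary \ref{unboundedonly} identify the carrier in the sub-critical case, and the ``in particular'' clause follows by specialisation. Two points in the sub-critical part deserve more care than you give them. First, Proposition \ref{boundedonly} and Corollary \ref{unboundedonly} are stated under the hypothesis $\overleftarrow{\eta}\buildrel{d}\over{=}\eta$, not $T\eta\buildrel{d}\over{=}\eta$; to bridge the two you need to observe that a two-sided stationary carrier Markov chain is a birth--death chain and hence automatically reversible, so $\bar{W}\buildrel{d}\over{=}W$, and then Theorem \ref{mrd} upgrades $T\eta\buildrel{d}\over{=}\eta$ to $\overleftarrow{\eta}\buildrel{d}\over{=}\eta$. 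Second, your claim that the current of example (c) is genuinely non-i.i.d.\ for $K\ge1$ is asserted rather than proved; the cleanest justification is that an i.i.d.\ current together with $T\eta\buildrel{d}\over{=}\eta$ and spatial stationarity forces the marginal to be the geometric law \eqref{pidef}, which has unbounded support and so cannot arise from a carrier bounded by $K$.

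The genuine gap is the critical case. Your computation there is correct --- on $\mathcal{S}_K$ one has $TW=K-W$ deterministically, so the current is the period-two alternation $(W_0,K-W_0,W_0,\dots)$ --- but the conclusion you draw from it does not follow. That alternating sequence \emph{is} a two-sided stationary Markov chain whenever $W\buildrel{d}\over{=}K-W$: it is a deterministic function of its present state, and its stationarity is exactly the symmetry \eqref{strongsym}. Hence the critical examples of Remark \ref{K-Wrem} with Markov $W$ satisfy every hypothesis of the corollary and satisfy $T\eta\buildrel{d}\over{=}\eta$, yet are not examples (a) or (c). Showing that their current is periodic rather than irreducible and aperiodic, or that their density is $\frac12$ rather than strictly less than $\frac12$, only confirms that they differ from (a) and (c); it does not show that they violate the hypotheses, so it cannot ``exclude'' them. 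To close the argument you must either invoke an additional assumption that rules out the critical regime (for instance, restricting to sub-critical configurations, or requiring the current chain to be aperiodic and irreducible on its state space) or amend the conclusion to read ``(a), (c), or the critical examples of Remark \ref{K-Wrem} with Markov carrier,'' which is how the paper phrases the analogous characterisation in Remark \ref{mrem}.
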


We also have that, for invariant BBSs, spatial symmetry of the carrier process transfers to a temporal symmetry property.

\begin{lem}\label{markovreversible} Suppose $\eta$ is a random particle configuration such that the distribution of the corresponding path encoding $S$ is supported on $\mathcal{S}^{rev}$, and $T\eta\buildrel{d}\over{=}\eta$ holds. If $W\buildrel{d}\over{=}\bar{W}$, then $((T^kW)_0)_{k\in\mathbb{Z}}\buildrel{d}\over{=}((T^{-(k+1)}W)_0)_{k\in\mathbb{Z}}$.
\end{lem}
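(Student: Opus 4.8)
The plan is to reduce the asserted temporal symmetry of the current to the spatial reversal symmetry $S\buildrel{d}\over{=}RS$, and then to identify the reversed current with the current of the spatially reversed configuration. First I would invoke Theorem~\ref{mrd}: under the standing hypotheses the two conditions $\bar{W}\buildrel{d}\over{=}W$ and $T\eta\buildrel{d}\over{=}\eta$ both hold, so the theorem supplies the third condition of \eqref{threeconds}, namely $\overleftarrow{\eta}\buildrel{d}\over{=}\eta$, equivalently $S\buildrel{d}\over{=}RS$; moreover it guarantees that $S\in\mathcal{S}^{inv}$, $\mathbf{P}$-a.s. Since the current map $\Lambda:\mathcal{S}^{inv}\to\mathbb{Z}_+^{\mathbb{Z}}$, $\eta\mapsto((T^kW)_0)_{k\in\mathbb{Z}}$, is a deterministic (measurable) function of $S$, and $R$ preserves $\mathcal{S}^{inv}$ (because $T^k(RS)=R(T^{-k}S)$ and $R$ preserves $\mathcal{S}^{rev}$ by Lemma~\ref{dualityrel}), the identity in law $S\buildrel{d}\over{=}RS$ immediately yields $\Lambda(S)\buildrel{d}\over{=}\Lambda(RS)$, where $\Lambda(S)=((T^kW)_0)_{k\in\mathbb{Z}}$ is exactly the left-hand side of the asserted identity.

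The crux of the argument is then the deterministic identity $\Lambda(RS)_k=(T^{-(k+1)}W)_0$ for every $k$, valid for any $S\in\mathcal{S}^{inv}$. To prove it I would start from Lemma~\ref{rtrlem}, which gives $RT=T^{-1}R$ and hence $RT^{-k}=T^kR$, so that $T^k(RS)=R(T^{-k}S)$. Writing $W^{RS}$ for the carrier of the reversed configuration and using $(T^jW')_0=M^{T^jS'}_0=\sup_{m\le0}(T^jS')_m$ for $W'=\Phi^{-1}S'$, the reflection formula $(RS')_n=-S'_{-n}$ turns $(T^kW^{RS})_0=\sup_{m\le0}(R(T^{-k}S))_m$ into $-\inf_{j\ge0}(T^{-k}S)_j=-I^{T^{-k}S}_0$. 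Since every path in $\mathcal{S}^0$ vanishes at $0$, this equals the reverse carrier $V^{T^{-k}S}_0=(T^{-k}V)_0$, and Theorem~\ref{goodsetthm} (its identity $TV=W$, iterated) converts $(T^{-k}V)_0$ into $(T^{-(k+1)}W)_0$, as required.

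Combining the two steps gives $((T^kW)_0)_{k\in\mathbb{Z}}=\Lambda(S)\buildrel{d}\over{=}\Lambda(RS)=((T^{-(k+1)}W)_0)_{k\in\mathbb{Z}}$, which is the assertion. The routine parts — verifying that $\Lambda(RS)$ is well-defined and that the past-maximum/future-minimum identities of Theorem~\ref{goodsetthm} may be iterated on $\mathcal{S}^{inv}$ — are straightforward. I expect the only genuinely delicate point to be the bookkeeping in the deterministic identity of the second paragraph: one must track carefully how the spatial reversal $R$ intertwines $T$ with $T^{-1}$ and how it exchanges the roles of the forward carrier $W$ (built from the past maximum) and the reverse carrier $V$ (built from the future minimum), since it is precisely this exchange that produces the index shift $k\mapsto-(k+1)$ appearing in the statement.
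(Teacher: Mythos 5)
Your proposal is correct and follows essentially the same route as the paper: invoke Theorem \ref{mrd} to obtain $S\buildrel{d}\over{=}RS$, use Lemma \ref{rtrlem} to intertwine $R$ with $T^{-k}$, identify the current of the reversed path with the reverse carrier $V$ evaluated at $0$, and apply Theorem \ref{goodsetthm} ($TV=W$, iterated on $\mathcal{S}^{inv}$) to produce the index shift $k\mapsto -(k+1)$. The only cosmetic difference is that you unwind the duality $(\Phi^{-1}RS')_0=-I^{S'}_0=V^{S'}_0$ by a direct sup/inf computation where the paper cites Lemma \ref{dualityrel} abstractly.
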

\begin{proof}
Since $W\buildrel{d}\over{=}\bar{W}$, we have from Theorem \ref{mrd} that $S\buildrel{d}\over{=}RS$. Thus
\begin{align*}
\left((T^kW)_0\right)_{k\in\mathbb{Z}} & =\left((\Phi^{-1}(T^kS))_0\right)_{k\in\mathbb{Z}}\\
&\buildrel{d}\over{=}\left((\Phi^{-1}(T^kRS))_0\right)_{k\in\mathbb{Z}}\\
& =\left((\Phi^{-1}(RT^{-k}S))_0\right)_{k\in\mathbb{Z}}\\
&=\left((\tilde{R} \Psi^{-1}(T^{-k}S))_0\right)_{k\in\mathbb{Z}}\\
&=\left((\Psi^{-1}(T^{-k}S))_0\right)_{k\in\mathbb{Z}} \\
& =\left((T^{-k}V)_0\right)_{k\in\mathbb{Z}}\\
&=((T^{-(k+1)}W)_0)_{k\in\mathbb{Z}},
\end{align*}
where we have applied Lemma \ref{rtrlem} for the third equality, Lemma \ref{dualityrel} for the fourth, and Theorem \ref{goodsetthm} for the final equality.
\end{proof}

The following lemma will be useful when it comes to checking the irreducibility of the current process.

\begin{lem}\label{markovirreducible} Suppose $\eta$ is a random particle configuration such that the distribution of the corresponding path encoding $S$ is supported on $\mathcal{S}^{rev}$, and $T\eta\buildrel{d}\over{=}\eta$ holds. If $W$ is an irreducible two-sided stationary Markov chain on $\{0,1,\dots,K\}$ for some non-negative integer $K$, and satisfies $\mathbf{P}(W_0=W_1=0)>0$, then $\mathbf{P}(W_0=0,\:(TW)_0=l)>0$ for any $0 \le l \le K$.
\end{lem}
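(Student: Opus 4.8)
The plan is to exhibit, for each $0 \le l \le K$, an explicit finite cylinder event for the carrier on which $W_0=0$ and $(TW)_0=l$, and to argue that this event has positive probability. First I would reduce $(TW)_0$ to a quantity depending only on $(W_m)_{m\le 0}$. Since $(TW)_0 = M_0^{TS} = \sup_{m\le 0}(TS)_m$ and $(TS)_m = 2M_m - S_m - 2M_0$, and since $W_0=0$ forces $M_0 = W_0 + S_0 = 0$, on the event $\{W_0=0\}$ we have $(TS)_m = 2M_m - S_m = M_m + W_m$, whence $(TW)_0 = \sup_{m\le 0}(M_m+W_m)$. I would also recall that $M_m = M_0 + \ell(W)_m = \ell(W)_m \le 0$ for $m\le 0$, where $-\ell(W)_m$ counts the flat-at-$0$ steps of $W$ in $(m,0]$.

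Next I would build the carrier path on the window $\{-2l-K,\dots,0\}$ as follows: a block of $K$ consecutive flat steps at $0$ (that is, $W_m=0$ for $-2l-K\le m\le -2l$), followed by a single excursion rising from $0$ to $l$ and back to $0$, namely $W_{-2l+j}=j$ for $0\le j\le l$ and $W_{-l+j}=l-j$ for $0\le j\le l$. This uses only the flat transition $0\to 0$ together with the up/down transitions $j\leftrightarrow j\pm1$ for indices at most $K$; the flat transition has positive probability by the hypothesis $\mathbf{P}(W_0=W_1=0)>0$ (which also gives $\pi_0>0$), and each up/down transition has positive probability by irreducibility of the birth--death-type chain on $\{0,\dots,K\}$. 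Being a cylinder event assembled from positive-probability transitions of a stationary chain with $\pi_0>0$, it has positive probability.

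Finally I would verify the value. Clearly $W_0=0$. On the excursion $\{-2l,\dots,0\}$ there are no flat steps, so $M_m=0$ there and $M_m+W_m=W_m$, with maximum $l$ attained at $m=-l$; inside the flat block $M_m+W_m=M_m\le 0$. The one genuinely global point is the tail $m<-2l-K$: here all $K$ flat steps of the block lie in $(m,0]$, so $M_m\le -K$, and since $W_m\le K$ always, $M_m+W_m\le 0\le l$ irrespective of the random behaviour of the chain further to the left. Hence $\sup_{m\le 0}(M_m+W_m)=l$, i.e.\ $(TW)_0=l$, and the case $l=0$ is the degenerate version with no excursion.

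The step I expect to be the main obstacle is exactly this last control of the left tail: because $(TW)_0$ depends on the entire half-line $(W_m)_{m\le 0}$ rather than on a finite window, one must rule out the possibility that a large excursion of the carrier far to the left contributes more than $l$ to the supremum. The device that resolves this is the inserted block of $K$ flat steps, which deterministically forces $M_m\le -K$ to its left and thereby caps the tail contribution at $0$ using only the finite event already conditioned on; the remaining verifications are direct computations with the formulas $W=M-S$ and $TS=2M-S-2M_0$.
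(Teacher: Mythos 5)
Your proof is correct and takes essentially the same route as the paper's: both exhibit the cylinder event consisting of $K$ flat steps of the carrier at $0$ followed by a single up--down excursion of height $l$ on the window $\{-2l-K,\dots,0\}$, and both use those $K$ flat steps (equivalently, $S_{-2l-K}=M_{-2l-K}=-K$) to force $M_m+W_m\le 0$ for $m<-2l-K$, so that the supremum defining $(TW)_0$ is attained inside the window and equals $l$. Your write-up merely makes explicit the left-tail verification that the paper leaves implicit in its first displayed inequality.
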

\begin{proof} For $1 \le l \le K$, it holds that
\begin{eqnarray*}
\lefteqn{\mathbf{P}\left(W_0=0,\:(TW)_0=l\right)}\\
&\geq&
\mathbf{P}\left(W_0=0,\:\eta_{n}=\mathbf{1}_{n\in\{-2l+1,\dots,-l\}}\:\mbox{for }n\in\{-2l-K+1,\dots,0\},\:S_{-2l-K}=M_{-2l-K}=-K\right)\\
&\geq&
\mathbf{P}\left(W_0=0,\:W_n-W_{n-1}=-\mathbf{1}_{n\in\{-l+1,\dots,0\}}+\mathbf{1}_{n\in\{-2l+1,\dots,-l\}}
\:\mbox{for }n\in\{-2l-K+1,\dots,0\}\right),
\end{eqnarray*}
which is strictly positive by assumption. In the same way,
\[\mathbf{P}(W_0=0,\:(TW)_0=0) \ge \mathbf{P}(W_0=W_{-1}=\cdots=W_{-K}=0) >0.\]
\end{proof}

We next provide sufficient conditions for the current process to be a nice Markov chain.

\begin{prop}\label{lll1} Suppose $\eta$ is a random particle configuration such that the distribution of the corresponding path encoding $S$ is supported on $\mathcal{S}^{rev}$, and $T\eta\buildrel{d}\over{=}\eta$ holds. If $W$ is an irreducible reversible two-sided stationary Markov chain on $\{0,1,\dots,K\}$ for some non-negative $K$, and satisfies $\mathbf{P}(W_0=W_1=0)>0$, then $((T^kW)_0)_{k \in \mathbb{Z}}$ is an irreducible aperiodic reversible two-sided stationary Markov chain on $\{0,1,\dots,K\}$.
\end{prop}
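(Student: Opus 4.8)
The plan is to assemble the proposition from the three preceding results (Lemmas \ref{markovequivalence}, \ref{markovreversible} and \ref{markovirreducible}), treating the required properties — Markov, state space, reversible, irreducible, aperiodic — in turn, having first noted that stationarity of $((T^kW)_0)_{k\in\mathbb{Z}}$ is immediate from $T\eta\buildrel{d}\over{=}\eta$. Writing $Y_k:=(T^kW)_0$ for brevity, the Markov property of $(Y_k)_{k\in\mathbb{Z}}$ follows directly from Lemma \ref{markovequivalence}, since by hypothesis $W$ is a two-sided stationary Markov chain and the standing assumptions ($S$ supported on $\mathcal{S}^{rev}$, $T\eta\buildrel{d}\over{=}\eta$) are exactly those of that lemma. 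For the state space, I would observe that $T\eta\buildrel{d}\over{=}\eta$ together with $S\in\mathcal{S}^{rev}$ (hence $S\in\mathcal{S}^{inv}$), $\mathbf{P}$-a.s., gives $T^k\eta\buildrel{d}\over{=}\eta$ for every $k\in\mathbb{Z}$; since the carrier is a fixed measurable function of the configuration, this yields $T^kW\buildrel{d}\over{=}W$ as two-sided processes, and as $W$ takes values in $\{0,1,\dots,K\}$ so does each $T^kW$, whence $Y_k=(T^kW)_0\in\{0,1,\dots,K\}$.

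For reversibility, the key input is that a stationary reversible Markov chain is invariant in law under time reversal, i.e.\ $W\buildrel{d}\over{=}\bar{W}$ with $\bar{W}_n=W_{-n}$. This is precisely the hypothesis of Lemma \ref{markovreversible}, which then delivers $(Y_k)_{k\in\mathbb{Z}}\buildrel{d}\over{=}(Y_{-(k+1)})_{k\in\mathbb{Z}}$. I would then convert this into reversibility about $0$: the right-hand side is the time reversal of $(Y_k)_{k\in\mathbb{Z}}$ shifted by one step, so combining the identity with the stationarity of the reversed process gives $(Y_{-k})_{k\in\mathbb{Z}}\buildrel{d}\over{=}(Y_k)_{k\in\mathbb{Z}}$; a stationary Markov chain whose law is invariant under time reversal satisfies detailed balance, which is reversibility.

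Finally, irreducibility and aperiodicity both flow from Lemma \ref{markovirreducible}, which gives $\mathbf{P}(Y_0=0,\,Y_1=l)>0$ for every $0\le l\le K$, i.e.\ the transition matrix $\tilde{\Pi}$ of $(Y_k)_{k\in\mathbb{Z}}$ satisfies $\tilde{\Pi}(0,l)>0$ for all $l$ (here using $\mathbf{P}(Y_0=0)=\mathbf{P}(W_0=0)\ge\mathbf{P}(W_0=W_1=0)>0$). Since $Y_0=W_0$ carries the stationary law of the irreducible chain $W$, every state has positive stationary mass $\pi^Y_l>0$, so the detailed balance relation $\pi^Y_l\tilde{\Pi}(l,0)=\pi^Y_0\tilde{\Pi}(0,l)$ upgrades $\tilde{\Pi}(0,l)>0$ to $\tilde{\Pi}(l,0)>0$; hence any two states communicate through $0$ in two steps, giving irreducibility, while the case $l=0$ gives $\tilde{\Pi}(0,0)>0$ and therefore aperiodicity. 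Each step is short, the real content being contained in the cited lemmas; the one point needing genuine care is the reversibility deduction, since Lemma \ref{markovreversible} provides symmetry under $k\mapsto-(k+1)$ rather than the $k\mapsto-k$ that reversibility demands, so one must explicitly absorb the half-step shift using stationarity before concluding.
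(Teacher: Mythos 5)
Your proposal is correct and follows essentially the same route as the paper: Lemma \ref{markovequivalence} for the Markov property, Lemma \ref{markovreversible} plus stationarity to absorb the half-step shift and deduce reversibility, and Lemma \ref{markovirreducible} together with detailed balance to get $\tilde{\Pi}(0,l)>0$ and $\tilde{\Pi}(l,0)>0$, hence irreducibility and aperiodicity. The only difference is that you spell out a few routine verifications (positive stationary mass, $\mathbf{P}(W_0=0)>0$, the state-space claim) that the paper leaves implicit.
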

\begin{proof} By Lemma \ref{markovequivalence}, $((T^kW)_0)_{k \in \mathbb{Z}}$ is a two-sided Markov chain, and we also have by assumption that it has state space $\{0,1,\dots,K\}$ and is stationary. Supposing $W\buildrel{d}\over{=}\bar{W}$, from Lemma \ref{markovreversible} and invariance under $T$ we have that \[((T^kW)_0)_{k\in\mathbb{Z}}\buildrel{d}\over{=}((T^{-(k+1)}W)_0)_{k\in\mathbb{Z}}\buildrel{d}\over{=}((T^{-k}W)_0)_{k\in\mathbb{Z}}.\] It follows that $((T^kW)_0)_{k \in \mathbb{Z}}$ is a reversible Markov process. Finally, from Lemma \ref{markovirreducible}, we have that $\mathbf{P}((TW)_0=m \:|\: W_0=0)>0$ for any $m=0,1,\dots,K$, and by reversibility $\mathbf{P}((TW)_0=0 \:|\: W_0=m)>0$ for any $m=0,1,\dots,K$. This establishes the aperiodicity and irreducibility of the chain, and thus completes the proof.
\end{proof}

With the preceding result in place, we can now check the main result of the section. In the proof, for a function $f:\{0,1,\dots,K\}\rightarrow\mathbb{R}$, we use the notation $\tilde{\pi}^{(K)}(f):=\sum_{x=0}^K\tilde{\pi}^{(K)}_xf(x)$.

\begin{proof}[Proof of Theorem \ref{lll2}] By Theorem \ref{mre}, the conclusion of Proposition \ref{lll1} holds for the example of Section \ref{boundedsec} (with parameters $K\in\mathbb{Z}_+$ and $p\in(0,1)$), which establishes the first claim. Given this, part (a) is a straightforward application of the ergodic theorem.

For part (b), first observe that, since $((T^kW)_0)_{k \in \mathbb{Z}}$ is an irreducible aperiodic Markov chain on a finite state space, we have that
\[\max_{x,y\in\{0,1,\dots,K\}}\left|p_k(x,y)-\tilde{\pi}^{(K)}_y\right|\leq C\alpha^k,\qquad \forall k\geq 0,\]
for some $C\in (0,\infty)$, $\alpha\in (0,1)$, where we define $p_k(x,y):=\mathbf{P}((T^kW)_0=y\:|\: W_0=x)$ (see \cite[Theorem 4.9]{LPW}, for example). It follows that if $f:\{0,1,\dots,K\}\rightarrow\mathbb{R}$ is such that $\tilde{\pi}^{(K)}(f)=0$, then
\[g(x):=\sum_{k=0}^\infty P^kf(x),\]
where we write $P$ to be the transition matrix of $((T^kW)_0)_{k \in \mathbb{Z}}$, is well-defined (and finite) for each $x\in \{0,1,\dots,K\}$ (since the sum is absolutely convergent). Moreover, we see that $g$ is the solution of the Poisson equation, namely
\[(I-P)g(x)=f(x),\qquad \forall x\in \{0,1,\dots,K\}.\]
Hence, taking $f(x)=x-\mu_p^K$, we can apply \cite[Theorem 1.1]{KLO} to deduce that
\[\frac{C_k-k\mu_p^K}{\sqrt{k}}=\frac{1}{\sqrt{k}}\sum_{j=0}^{k-1}f\left((T^jW)_0\right)\rightarrow N\left(0,\sigma^2(f)\right),\]
where $\sigma^2(f):=\tilde{\pi}^{(K)}(g^2)-\tilde{\pi}^{(K)}((Pg)^2)$. Now, by considering when we have equality in Jensen's inequality, we see that $\sigma^2(f)=0$ if and only if $g$ is constant. Indeed, it is elementary to check that $\sigma^2(f)=0$ if and only if $g$ is constant on sets of the form $\{y \::\: p_1(x,y)>0\}$, i.e.\ the neighbours of $x$ from the point of view of the Markov chain. Note that, in the present setting, Lemma \ref{markovirreducible} gives that $\{y \::\: p_1(0,y)>0\}=\{0,1,\dots,K\}$, and so zero variance is equivalent to $g$ being constant everywhere, as claimed. However, if $g$ was constant, then we would have $(I-P)g=0$, which is not the case. Hence $\sigma^2(f)\in(0,\infty)$. To complete the proof, we note that $\sigma^2(f)$ can be rewritten as follows:
\begin{eqnarray*}
\sigma^2(f)&=&\tilde{\pi}^{(K)}\left(g^2\right)-\tilde{\pi}^{(K)}\left((g-f)^2\right)\\
&=&\tilde{\pi}^{(K)}\left(f(2g-f)\right)\\
&=&\tilde{\pi}^{(K)}\left(f(I+P)g\right)\\
&=&\tilde{\pi}^{(K)}\left(f^2\right)+2\sum_{k=1}^\infty\tilde{\pi}^{(K)}\left(fP^kf\right)\\
&=&(\sigma_p^K)^2.
\end{eqnarray*}

Finally, since the state space is finite, part (c) is an immediate consequence of the large deviations principle stated as \cite[Theorem 3.1.2]{DZ}, for example.
\end{proof}

\subsubsection{Markov initial condition} In this section, we show that for the Markov initial configuration of Section \ref{markovsec} with parameters $p_0\in (0,1)$, $p_1\in (0,1)$ satisfying $p_0+p_1<1$, the two-state process $(T^k\eta_0, (T^kW)_0)_{k \in \mathbb{Z}}$ is an irreducible aperiodic two-sided stationary Markov chain on $\Sigma$, where
\[\Sigma:=\left\{(a,b) : \:a \in \{0,1\},\: b \in \mathbb{Z}_+,\: (a,b) \neq (1,0)\right\}.\]
After undertaking some additional work compared with previous sections to handle the fact that the state space of the Markov chain is infinite, we are able to show that the integrated current satisfies the following law of large numbers, central limit theorem and large deviations principle. In the statement of the theorem, we use the notation $\mu_{p_0,p_1}$ and $\sigma^2_{p_0,p_1}$ from \eqref{mup0p1} and (\ref{var2}), respectively. We moreover obtain from this result and Theorem \ref{mrc} that the Markov initial configuration is ergodic under $T$ (see Corollary \ref{ergcor3}). Note that the results exclude the case $p_0\in(0,1)$, $p_1=0$ (which was included in Theorem \ref{mre}), since in this case the carrier $W$ is equal to the configuration $\eta$, and so only has state space $\{0,1\}$. Observe, however, that we already dealt with this case in the previous section, since this corresponds to the bounded soliton example with $K=1$ and $p$ determined by taking $\tilde{p}=p_0$ in (\ref{tildepdef}).

\begin{thm}\label{llll2} Suppose $\eta$ is the Markov initial configuration of Section \ref{markovsec} with parameters $p_0\in (0,1)$, $p_1\in (0,1)$ satisfying $p_0+p_1<1$. It is then the case that $(T^k\eta_0, (T^kW)_0)_{k \in \mathbb{Z}}$ is an irreducible aperiodic two-sided stationary Markov chain on $\Sigma$. Moreover, the following statements hold.\\
(a) $\mathbf{P}$-a.s.,
\[k^{-1}C_k\rightarrow \mu_{p_0,p_1}.\]
(b) It holds that
\[\frac{C_k-k\mu_{p_0,p_1}}{\sqrt{\sigma_{p_0,p_1}^2k}}\rightarrow N(0,1)\]
in distribution, where $N(0,1)$ is a standard normal random variable.\\
(c) The sequence $(k^{-1}C_k)_{k\geq 1}$ satisfies a large deviations principle, with rate function as described at \eqref{ratef}.
\end{thm}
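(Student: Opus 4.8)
The plan is to establish, in order, (i) the Markov property of the two-dimensional process $(T^k\eta_0,(T^kW)_0)_{k\in\mathbb{Z}}=(\eta^k_0,w^k_0)_{k\in\mathbb{Z}}$, (ii) its irreducibility and aperiodicity on $\Sigma$, and then to read off (a)--(c) from the standard ergodic, central limit and large deviations theory for additive functionals of Markov chains. The crucial new feature, relative to the bounded soliton case of Theorem \ref{lll2}, is that the relevant chain now lives on an infinite state space.

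For the Markov property, the point of departure is that $W$ itself is \emph{not} Markov (as noted after \eqref{markovdensity}), but the pair $(\eta_n,W_n)_{n\in\mathbb{Z}}$ is a spatial two-sided stationary Markov chain: indeed $\eta$ is Markov and the update rule \eqref{wupdate} makes $W_n$ a deterministic function of $(W_{n-1},\eta_n)$. Granting this, I would prove the exact analogue of Lemma \ref{markovequivalence} for the pair. Conditioning on $(\eta_0,W_0)$, the spatial Markov property gives that $(\eta_n,W_n)_{n\geq 0}$ and $(\eta_n,W_n)_{n\leq 0}$ are conditionally independent; one then checks, along the lines of that proof and using $(T^kS)_0=0$, the patterns \eqref{patterns}, and the identity $(T^kV)_0=(T^{k-1}W)_0$ from Theorem \ref{goodsetthm}, that $(\eta^k_0,w^k_0)_{k\geq 0}$ is measurable with respect to $(\eta_n)_{n\leq 0}$ while $(\eta^k_0,w^k_0)_{k\leq -1}$ is measurable with respect to $(\eta_n)_{n\geq 1}$. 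Conditional independence of the temporal past and future, hence the Markov property, follows. The state space is $\Sigma$ rather than $\{0,1\}\times\mathbb{Z}_+$ precisely because $\eta_n=1$ forces $W_n=W_{n-1}+1\geq 1$, so $(1,0)$ never occurs.

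For irreducibility and aperiodicity I would argue as in Lemma \ref{markovirreducible}, constructing explicit local configurations realising any prescribed transition with positive probability; the hypothesis $p_0,p_1\in(0,1)$ ensures that every increment of $\eta$, and hence every admissible local carrier excursion, has positive probability, and aperiodicity is immediate since $(0,0)$ can return to itself in one step. Part (a) is then routine: the current sequence is stationary and ergodic (ergodicity following from irreducibility and positive recurrence of the chain), so the ergodic theorem together with $\mathbf{E}W_0=\mu_{p_0,p_1}$ from \eqref{wmarkovexpect} yields $k^{-1}C_k\to\mu_{p_0,p_1}$; this also feeds, via Theorem \ref{mrc}, into the ergodicity of $\eta$ under $T$.

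The substantive work, and the main obstacle, lies in parts (b) and (c), where $\Sigma$ is infinite and the finite Perron--Frobenius arguments of Theorem \ref{lll2} are unavailable. For the central limit theorem I would first upgrade irreducibility to geometric ergodicity, for instance by verifying a Foster--Lyapunov drift condition with a test function growing in the $W$-coordinate; the stationary law has exponential tails in its second coordinate, as is visible from the geometric tail in Lemma \ref{Wpropsmarkov}. This control guarantees that the Poisson equation $(I-P)g=f$ for $f(a,b)=b-\mu_{p_0,p_1}$ admits a solution with the integrability required to apply \cite[Theorem 1.1]{KLO} (or a Kipnis--Varadhan argument), giving the central limit theorem with limiting variance identified, by the same rearrangement used at the end of the proof of Theorem \ref{lll2}, with $\sigma^2_{p_0,p_1}$ of \eqref{var2}. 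For the large deviations principle of part (c), I would instead establish the existence and essential smoothness of $\Lambda(\theta)=\lim_k k^{-1}\log\mathbf{E}[e^{\theta C_k}]$ by analysing the exponentially tilted transition kernel on $\Sigma$ and showing it possesses a dominant eigenvalue for $\theta$ near the origin, the exponential tails again being what makes the tilt controllable, and then invoke the G\"artner--Ellis theorem to obtain the principle with rate function \eqref{ratef}. The hardest points will be precisely these two infinite-dimensional steps: the spectral analysis of the tilted kernel on a countable state space and the uniform drift and tail estimates needed to carry the finite-state arguments over to $\Sigma$.
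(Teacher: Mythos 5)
Your treatment of the Markov property, irreducibility, aperiodicity and part (a) matches the paper's: the paper likewise conditions on $(\eta_0,W_0)$, uses that $((T^k\eta)_0,(T^kW)_0)_{k\geq 0}$ is $(\eta_n)_{n\leq 0}$-measurable while the negative-time half is $(\eta_n)_{n\geq 1}$-measurable via $(T^kV)_0=(T^{k-1}W)_0$, and then argues irreducibility by exhibiting explicit local configurations. Part (a) is the ergodic theorem in both cases.

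For (b) and (c), however, you are missing the one structural observation that the paper's proof actually runs on, and without it your plan for (c) does not close. After computing the transition matrix explicitly (Lemma \ref{tmatrix}), the paper shows that the chain on $\Sigma$ decomposes as a three-state skeleton chain $X$ on $\Sigma^*=\{\mathbf{00},\mathbf{0},\mathbf{1}\}$ with the explicit matrix $P^*$ of \eqref{pstar}, together with an \emph{independent} i.i.d.\ geometric sequence $(\xi_k)$ supplying the value of $(T^kW)_0$ whenever $X_k\neq\mathbf{00}$. This is what makes everything explicit: for (b) the Poisson equation $(I-P)g=f$ is solved in closed form (no Foster--Lyapunov estimate is needed, though your drift-condition route would also deliver a CLT), and the diagonalisability of $P^*$ with eigenvalues $0,-q_0,1$ gives the exact exponential decay of $\mathrm{Cov}(W_0,(T^kW)_0)$ needed to identify the limiting variance with the closed-form $\sigma^2_{p_0,p_1}$ of \eqref{var2}. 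More seriously, the rate function \eqref{ratef} that the theorem asserts is \emph{defined} through the tilted $3\times 3$ matrix $P^*_\theta$ built from this decomposition; a G\"artner--Ellis argument on the tilted kernel over the countably infinite state space $\Sigma$, even if you could carry out the spectral analysis, would not by itself produce \eqref{ratef} without rediscovering the skeleton structure. Finally, the boundary behaviour you gesture at under "essential smoothness" is a genuine issue the paper has to address head-on: the conditional moment generating function is finite only for $\theta<-\log q_1$, and the paper supplies the explicit lower bound $\mathbf{E}(e^{\theta C_k})\geq \mathbf{P}(\sum_{l<k}\mathbf{1}_{\{X_l=\mathbf{0}\}}\geq k\pi^X(\mathbf{0})/2)\,M_W^{\mathbf{0}}(\theta)^{k\pi^X(\mathbf{0})/2}$ to force $\Lambda(\theta)\to\infty$ at that boundary, which is what licenses the adaptation of \cite[Theorem 3.1.2]{DZ}. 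You should add the decomposition as the pivot of the whole argument; once it is in place, (b) and (c) reduce to finite-state computations and your infinite-dimensional machinery becomes unnecessary.
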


\begin{cor}\label{ergcor3} If $\eta$ is the Markov initial configuration of Section \ref{markovsec} with parameters $p_0\in (0,1)$, $p_1\in (0,1)$ satisfying $p_0+p_1<1$, then the transformation $\eta \to T\eta$ is ergodic.
\end{cor}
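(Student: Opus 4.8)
The plan is to deduce Corollary \ref{ergcor3} directly from Theorem \ref{llll2} and Theorem \ref{mrc}(b), exactly in the spirit of the proofs of Corollaries \ref{ergcor} and \ref{lll3}. First I would verify the hypotheses of Theorem \ref{mrc}: by Lemma \ref{slinrand} together with the density computation \eqref{markovdensity}, the path encoding of the Markov configuration (with $p_0\in(0,1)$, $p_1\in(0,1)$, $p_0+p_1<1$) has distribution supported on $\mathcal{S}_{sub-critical}$, and moreover $T\eta\buildrel{d}\over{=}\eta$ by Corollary \ref{markovcor}. Thus we are in the setting to which Theorem \ref{mrc}(b) applies.

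The core step is to upgrade the Markov structure provided by Theorem \ref{llll2} to ergodicity of the current sequence. By Theorem \ref{llll2}, the two-component process $X:=(T^k\eta_0,(T^kW)_0)_{k\in\mathbb{Z}}$ is an irreducible, aperiodic, two-sided stationary Markov chain on the countable state space $\Sigma$. A two-sided stationary, irreducible Markov chain on a countable state space is necessarily positive recurrent (its one-dimensional marginal being the stationary distribution), and, viewed as a measure-preserving system under the time-shift $\theta$, is ergodic, since its shift-invariant $\sigma$-field is trivial. Hence $X$ is stationary and ergodic under $\theta$. Since the current sequence $((T^kW)_0)_{k\in\mathbb{Z}}$ is the image of $X$ under the coordinate projection $(a,b)\mapsto b$, which commutes with $\theta$, it is a factor of the ergodic system $X$; as a factor of an ergodic system is ergodic, we conclude that $((T^kW)_0)_{k\in\mathbb{Z}}$ is stationary and ergodic under $\theta$.

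Applying Theorem \ref{mrc}(b) then yields that $\eta$ is invariant and ergodic under $T$, which is precisely the assertion that the transformation $\eta\mapsto T\eta$ is ergodic. The only (minor) obstacle is the passage from the Markov chain to ergodicity of the current in the presence of the infinite state space $\Sigma$; but this reduces entirely to the two standard facts invoked above, namely that a stationary irreducible countable-state Markov chain is ergodic and that factors preserve ergodicity, so no genuine difficulty arises beyond correctly citing Theorems \ref{llll2} and \ref{mrc}.
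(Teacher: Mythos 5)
Your proposal is correct and follows exactly the route the paper intends: Theorem \ref{llll2} gives that $(T^k\eta_0,(T^kW)_0)_{k\in\mathbb{Z}}$ is an irreducible aperiodic two-sided stationary Markov chain, whence the current sequence is stationary and ergodic under the shift (as a factor of an ergodic stationary chain), and Theorem \ref{mrc}(b) then yields ergodicity of $\eta\mapsto T\eta$. The paper treats this as an immediate consequence and gives no further argument, so your filling-in of the standard facts about countable-state stationary chains and factors is exactly the intended content.
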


We start by checking the Markov property of the relevant process.

\begin{lem} If $\eta$ is the Markov initial configuration of Section \ref{markovsec} with parameters $p_0\in (0,1)$, $p_1\in (0,1)$ satisfying $p_0+p_1<1$, then $(T^k\eta_0, (T^kW)_0)_{k \in \mathbb{Z}}$ is a two-sided stationary Markov chain on $\Sigma$.
\end{lem}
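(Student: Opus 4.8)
The plan is to follow the structure of the proof of Lemma \ref{markovequivalence}, but to replace the spatial Markov property of the carrier $W$ (which now fails) by the spatial Markov property of the \emph{pair} $(\eta_n,W_n)_{n\in\mathbb{Z}}$, carefully tracking the extra configuration coordinate $T^k\eta_0$ through the dynamics. First I would record the structural fact that $(\eta_n,W_n)_{n\in\mathbb{Z}}$ is itself a two-sided stationary Markov chain on $\Sigma$. Stationarity is inherited from that of $\eta$ together with the translation-equivariant definition $W=M-S$. The Markov property follows from the update rule \eqref{wupdate}: given $(\eta_n,W_n)$, the next increment $\eta_{n+1}$ depends only on $\eta_n$ (as $\eta$ is Markov), and $W_{n+1}$ is then a deterministic function of $(W_n,\eta_{n+1})$. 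Finally, $\eta_n=1$ forces $W_n=W_{n-1}+1\ge1$, so the state $(1,0)$ is never attained and the state space is exactly $\Sigma$.

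Next, since $T\eta\buildrel{d}\over{=}\eta$, the process $(T^k\eta_0,(T^kW)_0)_{k\in\mathbb{Z}}$ is stationary in $k$, so it suffices to verify the Markov property at time $0$; that is, to show that, conditional on the present state $(\eta_0,W_0)$, the future $((T^k\eta_0,(T^kW)_0))_{k\ge0}$ and the past $((T^k\eta_0,(T^kW)_0))_{k\le0}$ are independent. The heart of the argument is a pair of measurability claims. On the one hand, the whole future is measurable with respect to the left spatial data $\mathcal{G}^-:=\sigma((\eta_n,W_n)_{n\le0})$: for $k\ge0$ one has $(T^kW)_0=\sup_{m\le0}(T^kS)_m$ and $T^k\eta_0=\mathbf{1}_{\{(T^kS)_0=(T^kS)_{-1}-1\}}$, and because the forward map reflects in the past maximum, $(T^kS)_m$ for $m\le0$ is a function of $(S_j)_{j\le0}$, hence of $(\eta_n)_{n\le0}$. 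On the other hand, the whole past is measurable with respect to the right spatial data $\mathcal{G}^+:=\sigma((\eta_n,W_n)_{n\ge0})$: using $(T^kV)_0=(T^{k-1}W)_0$ from Theorem \ref{goodsetthm}, the past currents $((T^kW)_0)_{k\le-1}$ are $(\eta_n)_{n\ge1}$-measurable exactly as in the proof of Lemma \ref{markovequivalence}, while the past configuration coordinates are handled via the time-reversal identity $T^{-k}\eta_0=(T^k\overleftarrow{\eta})_1$, which follows by iterating \eqref{inverseaction} and using the definition \eqref{revcon}; its right-hand side equals $\mathbf{1}_{\{(T^kRS)_1=(T^kRS)_0-1\}}$, which for $k\ge0$ depends only on $(RS)_m=-S_{-m}$ for $m\le1$, i.e.\ on $(\eta_n)_{n\ge0}$, and so lies in $\mathcal{G}^+$.

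With both claims in hand, the conditional independence follows from the first step: conditionally on the state $(\eta_0,W_0)$ at site $0$, the spatial Markov property gives that the left data generating $\mathcal{G}^-$ and the right data generating $\mathcal{G}^+$ are independent, and the future and past of the current/configuration process are respectively measurable with respect to these (noting that the conditioning variables $\eta_0$ and $W_0$ belong to both $\mathcal{G}^-$ and $\mathcal{G}^+$, being the state at $n=0$). Together with stationarity this establishes the Markov property on $\Sigma$ at all times.

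I expect the main obstacle to be precisely the measurability of the past configuration coordinates $T^{-k}\eta_0$ with respect to the right spatial data. Unlike the carrier currents $((T^kW)_0)_{k\le-1}$, these are not covered directly by the argument in Lemma \ref{markovequivalence}, so verifying the reversal identity $T^{-k}\eta_0=(T^k\overleftarrow{\eta})_1$ and pinning down its dependence structure is the step that requires the most care; everything else is a faithful adaptation of the template provided by Lemma \ref{markovequivalence}.
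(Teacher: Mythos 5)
Your proposal is correct and follows essentially the same route as the paper's own proof: conditional independence of the left and right spatial data given $(\eta_0,W_0)$, measurability of the forward orbit with respect to $(\eta_n)_{n\le 0}$ and of the backward orbit with respect to $(\eta_n)_{n\ge 0}$ via the identity $(T^kV)_0=(T^{k-1}W)_0$, and then stationarity. The only (harmless) differences are presentational: you route the conditional independence through the spatial Markov property of the pair chain $(\eta_n,W_n)$ rather than deducing it directly from the Markov property of $\eta$ plus the $(\eta_n)_{n\le0}$-measurability of $W_0$, and you make explicit the reversal identity $T^{-k}\eta_0=(T^k\overleftarrow{\eta})_1$ that the paper invokes implicitly with ``by considering the reversed dynamics''.
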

\begin{proof} Since $(\eta_n)_{n \ge 0}$ and $(\eta_n)_{n \le 0}$ are independent conditional on $\eta_0$, and the random variable $W_0$ is $(\eta_n)_{n \le 0}$-measurable, $(\eta_n)_{n \ge 0}$ and $(\eta_n)_{n \le 0}$ are independent conditional on $(\eta_0,W_0)$. Now, observe that $(T^k\eta_0, (T^kW)_0)_{k \ge 0}$ is $(\eta_n)_{n \le 0}$-measurable, and $(T^k\eta_0, (T^kV)_0)_{k \le 0}$ is $(\eta_n)_{n \ge 0}$-measurable. Since $S \in \mathcal{S}^{inv}$, $\mathbf{P}$-a.s., we have from Theorem \ref{goodsetthm} that $T^{k}V_0=T^{k-1}W_0$ for any $k$, $\mathbf{P}$-a.s. Hence $(T^k\eta_0, (T^kW)_0)_{k \le -1}$ is $(\eta_n)_{n \ge 0}$-measurable. Thus we conclude that the sequences $(T^k\eta_0, (T^kW)_0)_{k \ge 0}$ and $(T^k\eta_0, (T^kW)_0)_{k \le 0}$ are independent conditional on $(\eta_0,W_0)$, which establishes the Markov property at $k=0$. Since $(T^k\eta_0, (T^kW)_0)_{k \in \mathbb{Z}}$ is stationary under the natural shift, it must therefore be a two-sided stationary Markov chain.
\end{proof}

In the next lemma, we calculate the transition probabilities of the Markov chain explicitly.

\begin{lem}\label{tmatrix} If $\eta$ is the Markov initial configuration of Section \ref{markovsec} with parameters $p_0\in (0,1)$, $p_1\in (0,1)$ satisfying $p_0+p_1<1$, then the transition matrix $P=(p_{(i,l),(j,m)})_{(i,l),(j,m) \in \Sigma}$ of the Markov chain $(T^k\eta_0, (T^kW)_0)_{k \in \mathbb{Z}}$ is given by the following:
\[p_{(i,l),(j,m)}=\left\{
\begin{array}{ll}
 1-q_0, & \mbox{if }(i,l)=(0,0),\:(j,m)=(0,0),\\
 (1-q_j)q_0(1-q_1)(1-q_0q_1)^{-1}q_1^{m-j},  &  \mbox{if }(i,l)=(0,0),\:j\in\{0,1\},\:m\geq 1,\\
  (q_0q_1)^{i}(1-q_1)q_1^{m-1}, & \mbox{if }i\in\{0,1\},\:j=1-i,\:l,m\geq 1,\\
  1-q_0q_1, & \mbox{if }i=1,\;l\geq 1,\:(j,m)=(0,0),\\
  0, & \mbox{otherwise}.
\end{array}
\right.
\]
\end{lem}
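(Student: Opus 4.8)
The plan is to use stationarity together with the Markov property just established to reduce the problem to the single one-step transition at $k=0$, namely
\[p_{(i,l),(j,m)}=\mathbf{P}\left((T\eta)_0=j,\:(TW)_0=m\:\vline\:\eta_0=i,\:W_0=l\right),\]
and then to evaluate the right-hand side by combining the local update rule with a first-passage decomposition of the Markov chain $\eta$. Since every quantity involved, namely $\eta_0$, $W_0=M_0$, $(T\eta)_0$ and $(TW)_0=\sup_{m\le 0}(TS)_m$, is measurable with respect to $(\eta_n)_{n\le 0}$, the entire computation is carried out on the left half-line; moreover, as $S\in\mathcal{S}_{sub-critical}$, $\mathbf{P}$-a.s., we have $M_m\to-\infty$ and $(TS)_m\to-\infty$ as $m\to-\infty$, so the relevant suprema are attained.

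First I would determine the output particle coordinate. Recalling that $(T\eta)_0=\mathbf{1}_{\{W_0=W_{-1}-1\}}$ and inspecting the three admissible patterns of \eqref{patterns}, one reads off that $W_0$ and $(T\eta)_0$ are deterministic functions of $(\eta_0,W_{-1})$: if $\eta_0=0$ then $W_0=(W_{-1}-1)_+$ with $(T\eta)_0=1$ exactly when $W_{-1}>0$, while if $\eta_0=1$ then $W_0=W_{-1}+1$ and $(T\eta)_0=0$. Inverting this, conditioning on a state $(i,l)$ pins down $W_{-1}$ (giving $W_{-1}=l+1$ when $i=0$, $l\ge1$, and $W_{-1}=l-1$ when $i=1$), so that the output value $j=1-i$ is forced whenever $l\ge1$; the only genuinely random case is the boundary state $(0,0)$, where $(T\eta)_0=j$ corresponds to $W_{-1}=j\in\{0,1\}$. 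This already accounts for the block structure of the matrix, and in particular for the vanishing entries (e.g.\ $j=i$ with $l\ge1$, and the excluded state $(1,0)\notin\Sigma$).

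The substantive step is the conditional law of $(TW)_0$. Here I would use reversibility: since $\overleftarrow{\eta}\buildrel{d}\over{=}\eta$ and hence $S\buildrel{d}\over{=}RS$, the leftward maximum $\sup_{m\le 0}(TS)_m$ can be recast as a forward quantity governed by the first-passage probabilities $q_0=p_0/(1-p_1)$ and $q_1=p_1/(1-p_0)$ of Lemma \ref{Wpropsmarkov}, where $q_j=\mathbf{P}(I_0\le-1\mid\eta_0=j)$. Using $(TS)_m=2M_m-S_m-2M_0$ together with the observation that $M_m=M_0$ (so that $(TS)_m=-S_m$) between $0$ and the last time the walk attains its leftward maximum, the event $\{(TW)_0=m\}$ decomposes into a chain of successive ``descents'' of the walk, each contributing a conditional factor $q_1$ by the strong Markov property of $\eta$, exactly as in the computation of the law of $-I_0$ in Lemma \ref{Wpropsmarkov}. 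This yields the geometric tail $q_1^{m-j}$ (the shift by $j$ recording whether the output site carries a particle), the prefactor $(q_0q_1)^i$ coming from the forced local structure when $i=1$, and, in the boundary case $(0,0)$, the additional initial-descent factor $q_0$ together with the renewal normalisation $(1-q_0q_1)^{-1}$.

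Finally I would assemble these ingredients, substitute the values of $q_0,q_1$, and read off the five cases; the entries $1-q_0$ and $1-q_0q_1$ are the complementary ``no further descent'' probabilities out of the states $(0,0)$ and $(1,l)$ respectively. As a consistency check I would verify that each row sums to $1$ (for instance the $(0,0)$ row telescopes to $(1-q_0)+q_0=1$ after summing the geometric series in $m$ and over $j\in\{0,1\}$). The main obstacle is this third step: making the excursion/first-passage decomposition of $(TW)_0=\sup_{m\le0}(TS)_m$ fully rigorous for the Markov (rather than i.i.d.) configuration—justifying the independence of successive descents, correctly bookkeeping the particle at the origin, and treating the empty boundary state $(0,0)$ separately—since once this is in place the remaining algebra is routine.
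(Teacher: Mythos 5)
Your reduction to the one-step transition, your reading of the three patterns in \eqref{patterns} (which pins down $W_{-1}$ and hence forces $j=1-i$ whenever $l\ge1$, and isolates $(0,0)$ as the only genuinely random input state), and your identification of $q_0,q_1$ as the relevant quantities are all correct, and your row-sum check is a sensible sanity test. However, the step you yourself flag as the main obstacle is where the argument has a real gap, and the mechanism you propose for it would not go through as described. You want the conditional law of $(TW)_0=\sup_{m\le0}(TS)_m$ given $(\eta_0,W_0)$, obtained by a renewal chain of descents ``exactly as in the computation of the law of $-I_0$''. But $\sup_{m\le0}(TS)_m=\sup_{m\le0}(M_m+W_m)-2M_0$ is not a first-passage functional of $S$: it records, over all excursions of $S$ below its running maximum on the left half-line, the maximum of (level at which the excursion starts) plus (depth of the excursion), and these contributions do not chain together as i.i.d.\ (or Markov-renewal) descent steps in the state of the walk alone. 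Worse, you are conditioning on $W_0=M_0=l$, which is another functional of the \emph{same} half-path $(\eta_n)_{n\le0}$ and is strongly dependent on $(TW)_0$; the conditioning ties together all the excursion levels and depths, so the ``each descent contributes $q_1$'' bookkeeping is not justified and, as written, does not even reproduce the correct exponents (the tail is $q_1^{m-1}$ for all rows with $l\ge1$, not $q_1^{m-j}$). Reversibility $S\buildrel{d}\over{=}RS$ converts the leftward supremum of $TS$ into a rightward infimum of $T^{-1}(RS)$, which is no easier.

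The paper's proof sidesteps all of this with one identity you have not used: for $S\in\mathcal{S}^{inv}$, Theorem \ref{goodsetthm} gives $TV=W$, i.e.\ $W_0=(TS)_0-I^{TS}_0$ is a \emph{forward} functional of the evolved configuration $T\eta$ (measurable with respect to $((T\eta)_n)_{n\ge0}$), while $(TW)_0=M^{TS}_0$ is a backward one. Writing $\mathbf{P}(W_0=l,\:T\eta_0=1-i,\:(TW)_0=m)=\mathbf{P}((TV)_0=l,\:T\eta_0=1-i,\:(TW)_0=m)$ and invoking invariance $T\eta\buildrel{d}\over{=}\eta$ turns this into $\mathbf{P}(V_0=l,\:\eta_0=1-i,\:W_0=m)$, which factorises exactly by the spatial Markov property of $\eta$ at $n=0$ into $\mathbf{P}(V_0=l\:|\:\eta_0=1-i)\,\mathbf{P}(W_0=m\:|\:\eta_0=1-i)\,\mathbf{P}(\eta_0=1-i)$; every factor was already computed in the proof of Lemma \ref{Wpropsmarkov} in terms of $q_0,q_1$, and dividing by $\mathbf{P}(\eta_0=i,W_0=l)$ gives the stated entries (the $(0,0)$ row is handled the same way). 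In short, the missing idea is the swap $W_0=(TV)_0$ together with invariance, which yields exact conditional independence of $W_0$ and $(TW)_0$ given $(T\eta)_0$ and reduces the whole lemma to known one-dimensional marginals; without it, your direct excursion decomposition would require substantial additional work and is not established by the sketch.
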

\begin{proof}
First suppose $j=1-i$. Note that $T\eta_0=1$ implies $\eta_0=0$, and $T\eta_0=0$, $W_0 \ge 1$ implies $\eta_0=1$. Then, since $(1,0) \notin \Sigma$,
\begin{eqnarray*}
\lefteqn{\mathbf{P}(T\eta_0=1-i,\:(TW)_0=m\:|\:\eta_0=i,\:W_0=l)  }\nonumber\\ &=&\frac{\mathbf{P}(\eta_0=i,\:W_0=l,\:T\eta_0=1-i,\:(TW)_0=m)}{\mathbf{P}(\eta_0=i,\:W_0=l)} \nonumber\\
& =&\frac{\mathbf{P}(W_0=l,\:T\eta_0=1-i,\:(TW)_0=m)}{\mathbf{P}(\eta_0=i,\:W_0=l)}\nonumber\\
&=& \frac{\mathbf{P}(TV_0=l,\:T\eta_0=1-i,\:(TW)_0=m)}{\mathbf{P}(\eta_0=i,\:W_0=l)}\nonumber\\
& =&\frac{\mathbf{P}(V_0=l,\:\eta_0=1-i,\:W_0=m)}{\mathbf{P}(\eta_0=i,\:W_0=l)}\nonumber\\
& =& \frac{\mathbf{P}(V_0=l \:|\: \eta_0=1-i) \mathbf{P}(W_0=m\:|\:\eta_0=1-i)\mathbf{P}(\eta_0=1-i)}{\mathbf{P}(W_0=l\:|\:\eta_0=i)\mathbf{P}(\eta_0=i)},
\end{eqnarray*}
where for the final equality, we use the Markov property of $(\eta_n)_{n \in \mathbb{Z}}$ at $n=0$. Now, using the notation of the proof of Lemma \ref{Wpropsmarkov}, we have that
\[
\frac{\mathbf{P}(\eta_0=1-i)}{\mathbf{P}(\eta_0=i)}=\left(\frac{\rho}{1-\rho}\right)^{1-2i}=q_0^{1-2i},\]
\[
\mathbf{P}(V_0=l \:|\: \eta_0=1-i)=q_{1-i}q_1^{l-1}(1-q_1)\mathbf{1}_{\{l\geq 1\}}+(1-q_{1-i})\mathbf{1}_{\{l=0\}},\]
\begin{eqnarray*}
\mathbf{P}(W_0=l \:|\: \eta_0=i)&=&\mathbf{P}(V_0=l \:|\: \eta_1=i)\nonumber\\
&=&\mathbf{P}(V_0=l+1-2i\:|\: \eta_0=i)+\mathbf{P}(V_0=0\:|\: \eta_0=0)\mathbf{1}_{\{i=l=0\}}\nonumber\\
&=&q_{i}q_1^{l-2i}(1-q_1)\mathbf{1}_{\{l\geq 2i\}}+(1-q_{i})\mathbf{1}_{\{i=l\}},
\end{eqnarray*}
and we can similarly compute $\mathbf{P}(W_0=m \:|\: \eta_0=1-i)$. Putting these together yields the relevant transition probabilities.

Next, we consider the $i=j=l=0$ case. Proceeding similarly to above, we deduce
\begin{eqnarray*}
\lefteqn{ \mathbf{P}(T\eta_0=0,\:(TW)_0=m\:|\eta_0=0,\:W_0=0)  }\\
& =&\frac{\mathbf{P}(W_0=0,\:T\eta_0=0,\:(TW)_0=m)}{\mathbf{P}(\eta_0=0,\:W_0=0)}\\
& = &\frac{\mathbf{P}(TV_0=0,\:T\eta_0=0,\:(TW)_0=m)}{\mathbf{P}(\eta_0=0,\:W_0=0)}\\
& =&\frac{\mathbf{P}(V_0=0,\:\eta_0=0,\:W_0=m)}{\mathbf{P}(\eta_0=0,\:W_0=0)}\\
& =& \frac{\mathbf{P}(V_0=0 \:| \:\eta_0=0) \mathbf{P}(W_0=m\:|\:\eta_0=0)\mathbf{P}(\eta_0=0)}{\mathbf{P}(W_0=0\:|\:\eta_0=0)\mathbf{P}(\eta_0=0)}\\
&=& \frac{\mathbf{P}(V_0=0 \:| \:\eta_0=0) \mathbf{P}(W_0=m\:|\:\eta_0=0)}{\mathbf{P}(W_0=0\:|\:\eta_0=0)}.
\end{eqnarray*}
From this, we obtain the remaining non-zero transition probabilities. Indeed, it is easy to check that the transition probability is $0$ in the other cases.
\end{proof}

Applying the explicit transition probabilities of the previous lemma, we now describe a decomposition of the Markov chain $(T^k\eta_0, (T^kW)_0)_{k \in \mathbb{Z}}$ into a simpler skeleton Markov chain that takes values in a finite state space, and an independent sequence of i.i.d.\ geometric random variables. More precisely, consider the three point set $\Sigma^*=\{\mathbf{00},\mathbf{0},\mathbf{1}\}$, where $\mathbf{00}=\{(0,0)\}$, $\mathbf{0}=\{(0,\ell):\:\ell \ge 1\}$ and $\mathbf{1}=\{(1,\ell):\:\ell \ge 1\}$. We then let $X=(X_k)_{k\in\mathbb{Z}}$ be a two-sided stationary Markov chain on $\Sigma^*$, with transition probability matrix
\begin{equation}\label{pstar}
P^*:=\left(
  \begin{array}{ccc}
    1-q_0 & \frac{(1-q_0)q_0q_1}{1-q_0q_1}   &\frac{(1-q_1)q_0}{1-q_0q_1} \\
    0  &  0 & 1 \\
   1-q_0q_1  &  q_0q_1 & 0 \\
  \end{array}
\right),
\end{equation}
and $\xi=(\xi_k)_{k \ge 1}$ be an  i.i.d.\ geometric sequence with support $\{1,2,\dots\}$ and parameter $q_1$, independent of $X$. It is then an elementary exercise to check that $(T^k\eta_0, (T^kW)_0)_{k \in \mathbb{Z}}$ can be coupled with these random variables in such a way that: for $k\in\mathbb{Z}$,
\[(T^k\eta)_0=\mathbf{1}_{\{X_k=\mathbf{1}\}},\qquad  (T^kW)_0= \mathbf{1}_{\{X_k \in \{\mathbf{0},\mathbf{1}\}\}}\xi_k.\]
We note that the invariant measure $\pi^X$ for the Markov chain $X$ is given by:
\begin{equation}\label{pixdef}
\pi^X(\mathbf{00})=\frac{1-q_0q_1}{1+q_0},\qquad \pi^X(\mathbf{0})=\frac{q_0q_1}{1+q_0},\qquad \pi^X(\mathbf{1})=\frac{q_0}{1+q_0}.
\end{equation}
Hence, the invariant measure $\pi$ for the Markov chain $(T^k\eta_0, (T^kW)_0)_{k \in \mathbb{Z}}$ satisfies:
\begin{equation}\label{invmeas11}
\pi((i,m))=\left\{
\begin{array}{ll}
  \frac{1-q_0q_1}{1+q_0}, & \mbox{if }(i,m)=(0,0),\\
  \frac{q_0(1-q_1)}{1+q_0}q_1^m,  & \mbox{if }i=0,\:m\geq 1,\\
  \frac{q_0(1-q_1)}{1+q_0}q_1^{m-1} & \mbox{if }i=1,\:m\geq 1.
\end{array}\right.
\end{equation}
With these preparations in place, we can now prove the main result of the section.

\begin{proof}[Proof of Theorem \ref{llll2}] Applying the preceding two lemmas, we readily obtain that the process $(T^k\eta_0, (T^kW)_0)_{k \in \mathbb{Z}}$ is an irreducible aperiodic two-sided stationary Markov chain on $\Sigma$. Part (a) is then a straightforward consequence of the ergodic theorem.

We now prove (b). To do this, we will apply \cite[Theorem 1.1]{KLO}. In particular, let $f:{\Sigma}\rightarrow \mathbb{R}$ be given by $f(i,l)=l-\mu_{p_0,p_1}$, so that $\pi(f)=0$ and, by (\ref{invmeas11}), $\pi(f^2)<\infty$ (where for a function $h$ on $\Sigma$, we write $\pi(h)$ for the expectation of $h$ with respect to $\pi$). Moreover, let $g:{\Sigma}\rightarrow \mathbb{R}$ be defined by
\[g(i,l)=l+\sum_{\mathbf{x}\in\{\mathbf{00},\mathbf{0},\mathbf{1}\}}\alpha_\mathbf{x}\mathbf{1}_{(i,l)\in\mathbf{x}},\]
where
\[\alpha_{\mathbf{00}}=\frac{-1+q_0}{(1+q_0)(1-q_1)},\qquad \alpha_{\mathbf{0}}=0,\qquad \alpha_{\mathbf{1}}=\frac{-1+q_0q_1}{(1+q_0)(1-q_1)}.\]
It is then an elementary, albeit slightly lengthy, exercise to check that $g$ is a solution of the Poisson equation for $f$, i.e.
\begin{equation}\label{Poisson}
(I-P)g=f,
\end{equation}
where $P=(p_{(i,l),(j,m)})_{(i,l),(j,m)\in\Sigma}$ is the transition matrix of $(T^k\eta_0, (T^kW)_0)_{k \in \mathbb{Z}}$, as given by Lemma \ref{tmatrix}. Since $\pi(g^2)<\infty$, we can thus immediately apply \cite[Theorem 1.1]{KLO} to deduce the desired central limit theorem, with limiting variance given by
\[\sigma^2=\pi\left(g^2\right)-\pi\left((Pg)^2\right)\in[0,\infty).\]
Thus, to complete the proof of (b), it remains to show that $\sigma^2$ can be written as at \eqref{var1} and \eqref{var2}. To do this, we first observe that iterating the Poisson equation (\ref{Poisson}) yields: for $k\geq 1$,
\begin{eqnarray*}
\sigma^2&=&\pi(f^2)+2\sum_{l=1}^{k-1}\pi(fP^lf)+2\pi(fP^kg)\\
&=&\mathrm{Var}\left(W_0\right)+2\sum_{l=1}^{k-1}\mathrm{Cov}\left(W_0,
\left(T^lW\right)_0\right)+2\pi(fP^kg).
\end{eqnarray*}
As a result, to deduce the expression at \eqref{var1}, it will be enough to show that $\pi(fP^kg)\rightarrow 0$ as $k\rightarrow\infty$. To this end, we start by noting that the matrix $P^*$ from \eqref{pstar} is diagonalisable, with eigenvalues $0,-q_0,1$. It follows that, for any pair of subsets $A,B\subseteq \{\mathbf{00},\mathbf{0},\mathbf{1}\}$, there exists a constant $C_{A,B}$ such that, for $k\geq 1$,
\begin{equation}\label{convtostat}
\mathbf{P}\left(X_0\in A,\:X_k\in B\right)=\pi^X(A)\pi^X(B)+C_{A,B}(-q_0)^k,
\end{equation}
where we recall the notation $\pi^X$ from \eqref{pixdef}. Hence, using the decomposition of the Markov chain $(T^k\eta_0, (T^kW)_0)_{k \in \mathbb{Z}}$ described prior to this proof,
\begin{eqnarray*}
\pi(fP^kg)&=&\mathrm{Cov}\left(W_0,\left(T^kW\right)_0+
\sum_{\mathbf{x}\in\{\mathbf{00},\mathbf{0},\mathbf{1}\}}\alpha_\mathbf{x}\mathbf{1}_{\left(\left(T^k\eta\right)_0,\left(T^kW\right)_0\right)\in\mathbf{x}}\right)\\
&=&\mathrm{Cov}\left(\xi_0\mathbf{1}_{\{X_0\neq \mathbf{00}\}},\xi_k\mathbf{1}_{\{X_k\neq \mathbf{00}\}}+\alpha_{X_k}\right)\\
&=&\left(\frac{1}{(1-q_1)^2}C_{\{\mathbf{0},\mathbf{1}\},\{\mathbf{0},\mathbf{1}\}}+
\frac{1}{1-q_1}\sum_{\mathbf{x}\in\{\mathbf{00},\mathbf{0},\mathbf{1}\}}C_{\{\mathbf{0},\mathbf{1}\},\{\mathbf{x}\}}
\alpha_\mathbf{x}\right)(-q_0)^k,
\end{eqnarray*}
which clearly converges to zero. This confirms $\sigma^2$ can be written as at \eqref{var1}. To evaluate the expression explicitly and thereby arrive at \eqref{var2}, we again appeal to (\ref{convtostat}) to deduce that
\[\mathrm{Cov}\left(W_0,\left(T^kW\right)_0\right)=\mathrm{Cov}\left(W_0,\left(TW\right)_0\right)(-q_0)^{k-1}.\]
Consequently,
\[\sigma^2=\mathrm{Var}\left(W_0\right)+\frac{2\mathrm{Cov}\left(W_0,\left(TW\right)_0\right)}{1+q_0}.\]
Elementary calculations yield that
\[\mathrm{Var}\left(W_0\right)=\frac{q_0(1+q_1)^2}{(1+q_0)^2(1-q_1)^2},\]
and
\[\mathrm{Cov}\left(W_0,\left(TW\right)_0\right)=\frac{q_0\left(q_1(1+q_0)^2-q_0(1+q_1)^2\right)}{(1+q_0)^2(1-q_1)^2}.\]
Putting the preceding three formulae together, we obtain \eqref{var2}, as desired.

For part (c), we apply the argument of Theorem 3.1.2 and Exercise 3.1.4 of \cite{DZ}. In particular, this yields
a large deviations principle for $(k^{-1}C_k)_{k\geq 1}$ with rate function $I^*_C$ described as follows. Let $M^\mathbf{x}_W$ be a function given by
\[M_W^\mathbf{x}(\theta):=\left\{
\begin{array}{ll}
  1, & \mbox{if }\mathbf{x}=\mathbf{00}, \\
  \frac{(1-q_1)e^\theta}{1-q_1e^\theta}\mathbf{1}_{\{\theta<-\log q_1\}}+\infty\mathbf{1}_{\{\theta\geq-\log q_1\}}, & \mbox{if }\mathbf{x}\in\{\mathbf{0},\mathbf{1}\};
\end{array}
\right.
\]
this is the moment generating function for $(T^kW)_0$ conditional on $X_k=\mathbf{x}$. Let $P^*_\theta$ be the matrix defined by setting
\[P^*_\theta(\mathbf{x},\mathbf{y}):=P^*(\mathbf{x},\mathbf{y})M_W^\mathbf{y}(\theta),\qquad \mathbf{x},\mathbf{y}\in\Sigma^*,\]
and $\Upsilon(P^*_\theta)$ be its largest eigenvalue. The rate function $I^*_C$ is then given by:
\begin{equation}\label{ratef}
I^*_C(x)=\sup_{\theta\in\mathbb{R}}\left(\theta x - \log\Upsilon\left(P^*_\theta\right)\right).
\end{equation}
The only adaptation to \cite{DZ} is due to the fact that $M_W^\mathbf{0}(\theta)=M_W^\mathbf{1}(\theta)$ is finite if and only if $\theta<-\log q_1$, whereas in \cite{DZ} it is assumed that the corresponding moment generating functions are finite everywhere. However, it is straightforward to show that the same argument applies under the following assumption:
\[\lim_{\theta \to (-\log q_1)^-}\lim_{k\rightarrow\infty}\frac{1}{k}\log\mathbf{E}\left(e^{\theta C_k}\right)=\infty.\]
(That the inner limit exists and is equal to $\log\Upsilon(P^*_\theta)$ is readily checked as in the proof of \cite[Theorem 3.1.2]{DZ}.) For this, we observe that, for $\theta\geq 0$,
\[\mathbf{E}\left(e^{\theta C_k}\right)\geq\mathbf{P}\left(\sum_{l=0}^{k-1}\mathbf{1}_{\{X_l=\mathbf{0}\}} \geq k\pi^X(\mathbf{0})/2\right)
 M_W^{\mathbf{0}}(\theta)^{k\pi^X(\mathbf{0})/2}.\]
Hence, from the law of large numbers, we obtain
\[\lim_{\theta \to (-\log q_1)^-}\lim_{k\rightarrow\infty}\frac{1}{k}\log\mathbf{E}\left(e^{\theta C_k}\right)
\geq \lim_{\theta \to (-\log q_1)^-}\frac{\pi^X(\mathbf{0})}{2}\log M_W^{\mathbf{0}}(\theta)=\infty,\]
as desired.
\end{proof}

\subsection{Distance travelled by tagged particle for i.i.d.\ initial configuration}\label{distancesec}

In this section, we study the progress of a single tagged particle in the BBS when the initial configuration is given by an i.i.d.\ sequence of Bernoulli($p$) random variables for some $p\in(0,\frac12)$. (See Remarks \ref{fiforem} and \ref{liforem} for some comments on the Markov initial configuration and bounded soliton examples from Theorem \ref{mre}, and Remark \ref{boundedrem} for discussion of the bounded Markov carrier example from Remark \ref{K-Wrem}.) We recall $X^F=(X_k^F)_{k\geq 0}$ is the position of the tagged particle after $k$ evolutions of the BBS under the FIFO scheme, and $X^L=(X_k^L)_{k\geq 0}$ for the corresponding position under the LIFO scheme, as described in the introduction. The main result we prove, which is a more detailed statement of Theorem \ref{mrg}, is as follows. In particular, this establishes laws of large numbers for $X^F$ and $X^L$, demonstrates $X^F_k$ admits fluctuations of $\sqrt{k}$ around $kv_p$, and gives a central limit theorem for $X^L_k$.

{\thm \label{distancethm} Suppose $\eta$ is given by a sequence of i.i.d.\ Bernoulli($p$) random variables with $p\in(0,\frac12)$.\\
(a) $\mathbf{P}$-a.s.,
\[\frac{X^F_k}{k}\rightarrow v_p,\qquad \frac{X^L_k}{k}\rightarrow v_p,\]
where $v_p$ is defined as at (\ref{vpdef}).\\
(b)(i) The sequence
\[\left(\left|\frac{X_k^F-kv_p}{\sqrt{k}}\right|\right)_{k\geq 0}\]
is tight under $\mathbf{P}$. Moreover, for any $x>0$,
\begin{equation}\label{tightsharp}
\liminf_{k\rightarrow \infty}\mathbf{P}\left(\left|\frac{X_k^F-kv_p}{\sqrt{k}}\right|>x\right)>0.
\end{equation}
(ii) It holds that
\[\frac{X_k^L-kv_p}{\sqrt{\sigma_L^2 k}}\rightarrow N(0,1),\]
in distribution under $\mathbf{P}$, where $N(0,1)$ is a standard normal random variable, and
\[\sigma_L^2:=\frac{4p(1-p)}{(1-2p)^3}.\]
(c) The sequence $(k^{-1}X^L_k)_{k\geq 0}$ satisfies a large deviations principle with rate function given by
\begin{equation}\label{ildef}
I_L(x):=\sup_{\theta\in\mathbb{R}}\left(\theta x-\log {M}_{L}(\theta)\right),
\end{equation}
where
\begin{equation}\label{mldef}
M_L(\theta):=\left\{
\begin{array}{ll}\frac{1-\sqrt{1-4p(1-p)e^{2\theta}}}{2pe^\theta}, & \mbox{if }\theta\leq -\frac12 \log(4p(1-p)),\\
  \infty, & \mbox{otherwise.}
\end{array}\right.
\end{equation}}

The proof strategy will be quite different for the results concerning $X^F$ and $X^L$. For the former process, $X^F$, we will appeal to the results of Theorem \ref{currentcltthm} concerning the current across the origin. Whilst part (b)(i) of the above result might suggest a central limit theorem holds for $X^F$, such a conclusion does not follow directly from the central limit theorem for the current due to the correlation between this and the particle configuration. The latter process, $X^L$, turns out to be an easier process to analyse. Indeed, under a suitable Palm measure, obtained by conditioning $\eta$ to have a particle at $0$, we show that the increments of $X^L$ are i.i.d., and we can even give the explicit distribution of these increments (see Lemma \ref{hatiid}).

\begin{proof}[Proof of the parts of Theorem \ref{distancethm} concerning $X^F$] As noted in Section \ref{currentclt}, $C_k$ represents the number of particles moved from $\{\dots,-1,0,\}$ to $\{1,2,\dots\}$ on the first $k$ evolutions of the BBS. Hence, since the FIFO scheme preserves particle ordering, $X^F_k$ is the $(C_k+1)$st particle to the right of the origin in $T^k\eta$, that is
\begin{equation}\label{xf}
X^F_k=\min\left\{m:\:\sum_{i=1}^m(T^k\eta)_i=C_k+1\right\}.
\end{equation}
Now, from Theorem \ref{currentcltthm}(a), we know that $k^{-1}C_k\rightarrow \mu_p$. Hence, given any $\varepsilon>0$, $\mathbf{P}$-a.s. for large $k$,
\begin{equation}\label{xfbounds}
\min\left\{m:\:\sum_{i=1}^m(T^k\eta)_i\geq k(\mu_p-\varepsilon)\right\}\leq X^F_k\leq \min\left\{m:\:\sum_{i=1}^m(T^k\eta)_i\geq k(\mu_p+\varepsilon)\right\}.
\end{equation}
Moreover, for any $c>0$, we have from the Azuma-Hoeffding inequality that
\begin{equation}\label{azuma}
\mathbf{P}\left(\left|\sum_{i=1}^{ck}(T^k\eta)_i-ckp\right|>\varepsilon k\right)\leq 2e^{-\varepsilon^2k/2c}.
\end{equation}
In particular, by Borel-Cantelli and countability, we obtain that $k^{-1}\sum_{i=1}^{qk}(T^k\eta)_i\rightarrow qp$ for any rational $q>0$, $\mathbf{P}$-a.s. Combining this and \eqref{xfbounds}, it is elementary to obtain the $\mathbf{P}$-a.s.\ limit $k^{-1}X_k^F\rightarrow \mu_p/p=v_p$, establishing the relevant limit in part (a).

For the tightness claim of part (b)(i), we again appeal to Theorem \ref{currentcltthm}. Indeed, for $x,\lambda>0$, $k\geq 1$, $K=kv_p+x\sqrt{k}$, we have from \eqref{xf} that
\begin{eqnarray*}
\mathbf{P}\left(\frac{X_k^F-kv_p}{\sqrt{k}}> x\right)
&=&\mathbf{P}\left(\sum_{i=1}^K(T^k\eta)_i<C_k+1\right)\\
&\leq& \mathbf{P}\left(\sum_{i=1}^K(T^k\eta)_i<Kp -\lambda \sqrt{K}\right)+\mathbf{P}\left( C_k+1>Kp-\lambda \sqrt{K}\right).
\end{eqnarray*}
Since $Kp-\lambda \sqrt{K}=k\mu_p +(xp-\lambda\sqrt{v_p})\sqrt{k}+o(\sqrt{k})$, we have that from Theorem \ref{currentcltthm}(b) that
\[\lim_{x\rightarrow\infty}\limsup_{k\rightarrow \infty}\mathbf{P}\left( C_k+1>Kp-\lambda \sqrt{K}\right)=0.\]
Hence, by applying \eqref{azuma},
\[\lim_{x\rightarrow\infty}\limsup_{k\rightarrow \infty}\mathbf{P}\left(\frac{X_k^F-kv_p}{\sqrt{k}}> x\right)\leq 2e^{-\lambda^2/2}.\]
Since $\lambda$ can be chosen arbitrarily large, this establishes the tightness of $X_k^F-kv_p/\sqrt{k}$. To establish the corresponding result for $(kv_p-X_k^F)/\sqrt{k}$ essentially the same argument can be applied, and so we omit the proof.

Finally, we will show \eqref{tightsharp}. Proceeding similarly to above, $x,\lambda>0$, $k\geq 1$, $K=kv_p+x\sqrt{k}$, we have
\begin{eqnarray*}
\mathbf{P}\left(\left|\frac{X_k^F-kv_p}{\sqrt{k}}\right|> x\right)
&\geq &\mathbf{P}\left(\sum_{i=1}^K(T^k\eta)_i<pK+\lambda\sqrt{K}<C_k+1\right)\\
&\geq &\mathbf{P}\left(C_k+1>pK+\lambda\sqrt{K}\right)-\mathbf{P}\left(\sum_{i=1}^K(T^k\eta)_i>pK+\lambda\sqrt{K}\right).
\end{eqnarray*}
Applying Theorem \ref{currentcltthm} and the fact that $\sum_{i=1}^K(T^k\eta)_i$ is simply a binomial random variable with parameters $K$ and $p$, it follows that
\begin{equation}\label{lamlower}
\liminf_{k\rightarrow\infty}\mathbf{P}\left(\left|\frac{X_k^F-kv_p}{\sqrt{k}}\right|> x\right)\geq \mathbf{P}\left(N(0,1) > \frac{px+\lambda\sqrt{v_p}}{\sqrt{\sigma_p^2}}\right)-
\mathbf{P}\left(N(0,1) > \frac{\lambda}{\sqrt{p(1-p)}}\right).
\end{equation}
Noting that
\[\frac{v_p}{\sigma_p^2}=\frac{(1-2p)}{p(1-p)}<\frac{1}{p(1-p)},\]
we see that the lower bound of (\ref{lamlower}) is strictly positive for large $\lambda$.
\end{proof}

To complete the proof of Theorem \ref{distancethm} we apply the following lemma, for which we define \[\hat{\mathbf{P}}:=\mathbf{P}(\cdot\:|\:\eta_0=1).\]

{\lem\label{hatiid} Suppose $\eta$ is given by a sequence of i.i.d.\ Bernoulli($p$) random variables with $p\in(0,\frac12)$. Under $\hat{\mathbf{P}}$,  $(X_n^L-X_{n-1}^L)_{n\geq 1}$ form an i.i.d.\ sequence with
\begin{equation}\label{distribution}
\hat{\mathbf{P}}\left(X_n^L-X_{n-1}^L=m\right)=\frac{1}{m}\binom{m}{\frac{m+1}{2}}(1-p)^{\frac{m+1}{2}}p^{\frac{m-1}{2}},\qquad m\geq 1,\:m\mbox{ odd}.
\end{equation}
In particular,
\[\hat{\mathbf{E}}\left(X_n^L-X_{n-1}^L\right)=v_p,\qquad \sigma_L^2:=\mathrm{Var}_{\hat{\mathbf{P}}}\left(X_n^L-X_{n-1}^L\right)=\frac{4p(1-p)}{(1-2p)^3}.\]}
\begin{proof} Defining the shift operator $\theta_m$ on particle configurations by setting $(\theta_m\eta)_n:=\eta_{m+n}$, observe that the $X_k^L-X_0^L$ is the position of the particle started from the origin in $\theta_{X_0^L}\eta$ after $k$ evolutions of the BBS. Since we clearly have $\theta_{X_0^L}\eta\buildrel{d}\over{=}\eta$ under $\hat{\mathbf{P}}$ (indeed, under the relevant measure we have i.i.d.\ geometric inter-particle distances), it will be sufficient to prove the result when the process $X^L$ is replaced by $\hat{X}^L=(\hat{X}^L_k)_{k\geq 0}$, which tracks the position of the particle started from the origin.

Importantly, we observe that under the LIFO scheme, the particle started at the origin shifts after the first evolution of the BBS to $\hat{\tau}_S(1):=\min\{m\geq 0:\:S_m=1\}$. Indeed, between $0$ and $\hat{\tau}_S(1)-1$, there is an equal number of particles and empty spaces, and it is easy to show that the carrier shifts the particles to the empty spaces. Hence, we have that, for any $m\geq 1$ odd and measurable subset $A$,
\begin{eqnarray*}
\lefteqn{\hat{\mathbf{P}}\left(\hat{X}_1^L=m,\:\left((TS)_{\hat{X}^L_1+n}-(TS)_{\hat{X}^L_1}\right)_{n\geq 0}\in A\right)}\\
&=&p^{-1}{\mathbf{P}}\left(\eta_0=1,\:\hat{\tau}_S(1)=m,\:\left((TS)_{\hat{X}^L_1+n}-(TS)_{\hat{X}^L_1}\right)_{n\geq 0}\in A\right)\\
&=&p^{-1}{\mathbf{P}}\left((T\eta)_0=0,\:\hat{\tau}_{TS}(-1)=m,\:\left((TS)_{\hat{\tau}_{TS}(-1)+n}-(TS)_{\hat{\tau}_{TS}(-1)}\right)_{n\geq 0}\in A\right),
\end{eqnarray*}
where we note that $\eta_0=1$ is $\mathbf{P}$-a.s.\ equivalent to $(T\eta)_0=0$ and $\hat{\tau}_{TS}(-1)<\infty$ both holding, and moreover that on the intersection of the latter events $\hat{\tau}_S(1)=\hat{\tau}_{TS}(-1)$, $\mathbf{P}$-a.s. By the strong Markov property, the three events in the above probability are independent, and so
\begin{eqnarray*}
\lefteqn{\hat{\mathbf{P}}\left(\hat{X}_1^L=m,\:\left((TS)_{\hat{X}^L_1+n}-(TS)_{\hat{X}^L_1}\right)_{n\geq 0}\in A\right)}\\
&=&p^{-1}{\mathbf{P}}\left((T\eta)_0=0\right)\mathbf{P}\left(\hat{\tau}_{TS}(-1)=m\right)\\
&&\times\mathbf{P}\left(\left((TS)_{\hat{\tau}_{TS}(-1)+n}-(TS)_{\hat{\tau}_{TS}(-1)}\right)_{n\geq 0}\in A\:\vline\:\hat{\tau}_{TS}(-1)<\infty\right).
\end{eqnarray*}
Now, since $TS$ is a simple random walk distributed the same as $S$ (under $\mathbf{P}$), we see that
\begin{equation}\label{finitehit}
p^{-1}{\mathbf{P}}\left((T\eta)_0=0\right)=\frac{1-p}{p}=\frac{1}{\mathbf{P}\left(\hat{\tau}_{TS}(-1)<\infty\right)}.
\end{equation}
Hence, using also that $(S_n)_{n\geq 0}$ is independent of $\eta_0$, this implies
\begin{eqnarray*}
\lefteqn{\hat{\mathbf{P}}\left(\hat{X}_1^L=m,\:\left((TS)_{\hat{X}^L_1+n}-(TS)_{\hat{X}^L_1}\right)_{n\geq 0}\in A\right)}\\
&=&\mathbf{P}\left(\hat{\tau}_{TS}(-1)=m\:\vline\:\hat{\tau}_{TS}(-1)<\infty,\:(T\eta)_0=0\right)\\
&&\qquad\times\:\mathbf{P}\left(\left(S_{\hat{\tau}_S(-1)+n}-S_{\hat{\tau}_{S}(-1)}\right)_{n\geq 0}\in A\:\vline\:\hat{\tau}_{S}(-1)<\infty\right)\\
&=&\hat{\mathbf{P}}\left(\hat{X}_1^L=m\right)\hat{\mathbf{P}}\left(\left(S_n\right)_{n\geq 0}\in A\right).
\end{eqnarray*}
In words, this means that under $\hat{\mathbf{P}}$, the random variable $\hat{X}_1^L$ is independent of $((TS)_{\hat{X}^L_1+n}-(TS)_{\hat{X}^L_1})_{n\geq 0}$, and the latter is distributed as $(S_n)_{n\geq 0}$. As a consequence, we can iterate the argument to obtain that the sequence $(\hat{X}_n^L-\hat{X}_{n-1}^L)_{n\geq 0}$ is i.i.d.\ under $\hat{\mathbf{P}}$, as desired.

Since $\hat{X}_1^L=\hat{\tau}_S(1)$, the precise distribution at \eqref{distribution} is a consequence of the hitting time theorem for random walks (as can be found in \cite{Otter, vdh}, for instance). Moreover, the expressions for the mean and variance can be deduced by elementary calculations (either from the formula directly, or a first-step decomposition of the random walk).
\end{proof}

\begin{proof}[Proof of the parts of Theorem \ref{distancethm} concerning $X^L$] Observe that, for any bounded, measurable function $f$ on particle configurations, we have that
\begin{equation}\label{unidecomp}
{\mathbf{E}}\left(f(\eta)\right)=\frac{\hat{\mathbf{E}}\left(\sum_{m=0}^{X_0^L-1}f(\theta_m\eta)\right)}{\hat{\mathbf{E}}\left({X_0^L}\right)},
\end{equation}
where we again denote by $\theta_m$ the shift of the configuration $\eta$ given by $(\theta_m\eta)_n=\eta_{n+m}$. That is, it is possible to construct the law of $\eta$ under $\mathbf{P}$ by first selecting $\eta$ from $\hat{\mathbf{P}}$, size-biased by $X_0^L$, and then shifting according to $\theta_U$, where $U$ is uniform on $\{0,1,\dots,X_0^L-1\}$. (For a proof, see \cite[Theorem 1]{KZ}.)

Now, by Lemma \ref{hatiid}, we know that $X^L$ satisfies the targeted law of large numbers and central limit theorem under $\hat{\mathbf{P}}$. Moreover, for $m\in\{0,1,\dots,X_0^L-1\}$,
\[X_n^L(\theta_m\eta)=X_n^L(\eta)-m.\]
Hence \eqref{unidecomp} allows us to conclude the relevant results also hold under $\mathbf{P}$. Indeed, for the law of large numbers we have
\[\mathbf{P}\left(k^{-1}X_k^L\rightarrow v_p\right)
=\frac{\hat{\mathbf{E}}\left(\sum_{m=0}^{X_0^L-1}\mathbf{1}_{\{k^{-1}(X_k^L-m)\rightarrow v_p\}}\right)}{\hat{\mathbf{E}}\left({X_0^L}\right)}=\frac{\hat{\mathbf{E}}\left(\sum_{m=0}^{X_0^L-1}\mathbf{1}_{\{k^{-1}X_k^L\rightarrow v_p\}}\right)}{\hat{\mathbf{E}}\left({X_0^L}\right)}=1,\]
since $\mathbf{1}_{\{k^{-1}X_k^L\rightarrow v_p\}}=1$, $\hat{\mathbf{P}}$-a.s. Similarly, for the central limit theorem, we have
\begin{eqnarray*}
\mathbf{P}\left(\frac{X_k^L-kv_p}{\sqrt{\sigma_L^2k}}\in(a,b)\right)
=\frac{\sum_{l=0}^\infty\hat{\mathbf{E}}\left(\mathbf{1}_{\{X_0^L=l\}}\sum_{m=0}^{l-1}\mathbf{1}_{\left\{({X_k^L-X_0^L+l-m-kv_p})/{\sqrt{\sigma_L^2k}}\in(a,b)\right\}}\right)}{\hat{\mathbf{E}}\left({X_0^L}\right)}.\\
\end{eqnarray*}
Now, from the proof of Lemma \ref{hatiid}, we know that $X_0^L$ and $X^L_k-X^L_0$ are independent under $\hat{\mathbf{P}}$. This means that
\begin{eqnarray*}
\mathbf{P}\left(\frac{X_k^L-kv_p}{\sqrt{\sigma_L^2k}}\in(a,b)\right)
=\frac{\sum_{l=0}^\infty\sum_{m=0}^{l-1}\hat{\mathbf{P}}\left(X_0^L=l\right)\hat{\mathbf{P}}\left(\frac{X_k^L-X_0^L+l-m-kv_p}{\sqrt{\sigma_L^2k}}\in(a,b)\right)}{\hat{\mathbf{E}}\left({X_0^L}\right)}.\\
\end{eqnarray*}
Since for each fixed $l,m$, we have that
\[\hat{\mathbf{P}}\left(\frac{X_k^L-X_0^L+l-m-kv_p}{\sqrt{\sigma_L^2k}}\in(a,b)\right)\rightarrow {\mathbf{P}}\left(N(0,1)\in(a,b)\right),\]
the dominated convergence theorem thus yields that
\[\mathbf{P}\left(\frac{X_k^L-kv_p}{\sqrt{\sigma_L^2k}}\in(a,b)\right)\rightarrow{\mathbf{P}}\left(N(0,1)\in(a,b)\right).\]

Finally, for the large deviations principle of part (c) we first note that Lemma \ref{hatiid} and Cramer's theorem \cite[Theorem 2.2.3]{DZ} immediately give a large deviations principle for the sequence $(k^{-1}(X^L_k-X^L_0))_{k\geq 0}$ under $\hat{\mathbf{P}}$. Moreover, the rate function is given by the Legendre transform of $\hat{\mathbf{E}}\left(e^{\theta (X^L_n-X^L_{n-1})}\right)$. A first step decomposition for $\hat{\tau}_S(1)$ (as defined in the proof of Lemma \ref{hatiid}) readily allows us to deduce that this moment generating function is equal to $M_L(\theta)$, as defined at \eqref{mldef}, and so the relevant rate function is given by $I_L(x)$, as defined at (\ref{ildef}). To transfer this to a large deviations principle for $(k^{-1}X^L_k)_{k\geq 0}$ under $\mathbf{P}$, we can proceed similarly to the central limit theorem. In particular, we deduce from (\ref{unidecomp}) and apply the inequalities
\[-\log\mathbf{P}(X_k^L \in A)\leq
\frac{\sum_{l=0}^\infty\sum_{m=0}^{l-1}\hat{\mathbf{P}}\left(X_0^L=l\right)\left(-\log \hat{\mathbf{P}}\left({X_k^L-X_0^L+l-m}\in A\right)\right)}{\hat{\mathbf{E}}\left({X_0^L}\right)},\]
which is consequence of the convexity of $x\mapsto -\log (x)$, and
\[-\log\mathbf{P}(X_k^L \in A)\geq
-\log\left(\frac{ \hat{\mathbf{P}}\left(X_0^L=0\right)\hat{\mathbf{P}}\left({X_k^L-X_0^L}\in A\right)}{\hat{\mathbf{E}}\left({X_0^L}\right)}\right).\]
\end{proof}

\begin{rem}\label{fiforem} Note that, in establishing the law of large numbers for $X^F$ for the i.i.d.\ configuration, the key inputs were a law of large numbers for the integrated current across the origin and a concentration inequality for the density (i.e.\ \eqref{azuma}). For the Markov initial configuration and bounded soliton examples of Theorem \ref{mre}(b),(c), we have the law of large numbers for the current (with limit $\mathbf{E}W_0$) from Theorem \ref{mrf}. Moreover, for these two examples, the Azuma-Hoeffding inequality of \eqref{azuma} can be replaced with the Markov chain version of \cite[Theorem 2]{GO}, for example. Hence, we conclude that they both satisfy $X^F_k/k\rightarrow \mathbf{E}(W_0)/\mathbf{E}(\eta_0)$, $\mathbf{P}$-a.s.
\end{rem}

\begin{rem}\label{liforem} For $X^L$, via essentially the same argument as for the i.i.d.\ case, we can also establish a law of large numbers, central limit theorem and large deviations principle in the case when the initial configuration is the Markov initial configuration from Theorem \ref{mre}(b). Indeed, the only additional input needed to the above argument is at \eqref{finitehit}, which should be replaced by
\[\rho^{-1}{\mathbf{P}}\left((T\eta)_0=0\right)=\frac{1-\rho}{\rho}=\frac{1-p_1}{p_0}=
\frac{1}{\mathbf{P}\left(\hat{\tau}_{TS}(-1)<\infty\:\vline\:(T\eta)_0=0\right)},\]
where the final equality is established in the proof of Lemma \ref{Wpropsmarkov} (note that, in the notation of the latter proof, the expression on the right-hand side is $1/q_0$). The remaining changes are straightforward. Moreover, as in the i.i.d.\ case, the limiting speed is given by $v_{p_0,p_1}=\hat{\mathbf{E}}\hat{\tau}_{S}(1)$. Writing $t_j:={\mathbf{E}}(\hat{\tau}_{S}(1)|\eta_0=j)$ for $j=0,1$, a first-step decomposition yields
\[t_j=p_j(1+t_0+t_1)+1-p_j.\]
for $j=0,1$. These equations can be solved to give $v_{p_0,p_1}=t_1=(1-p_0+p_1)/(1-p_0-p_1)$, and thus the law of large numbers is of the form, $\mathbf{P}$-a.s.,
\[\frac{X^L_k}{k}\rightarrow v_{p_0,p_1}=\frac{1-p_0+p_1}{1-p_0-p_1}.\]
Note that, by \eqref{markovdensity} and \eqref{wmarkovexpect}, the limiting speed is of the form $\mathbf{E}W_0 / \mathbf{E} \eta_0$, matching the formula arrived at heuristically at \eqref{anticipateddistance}, and the limiting speed under the FIFO scheme, as discussed in the previous remark. Moreover, we can rewrite the above expression as follows:
\[v_{p_0,p_1}=\frac{1}{1-2\rho}\left(\frac{2\rho}{p_0}-1\right),\]
showing that the speed is equal to that of the tagged particle in an i.i.d.\ configuration with the same density if and only if $p_0=p_1=\rho$ (i.e.\ the configuration is i.i.d.). Note that, for a fixed density $\rho$, the monotonicity of the above formula in $p_0$ can be interpreted in the following way: as $p_0$ decreases (or equivalently $p_1$ increases), the configuration $\eta$ will typically contain longer strings of consecutive particles, which create larger solitons, and this leads in turn to an increased rate of escape. The variance in the central limit theorem can also be computed explicitly using a first-step decomposition.
\end{rem}

\begin{rem}\label{boundedrem} Let $W$ be a two-sided stationary Markov process that is irreducible on the space $\{0,1,\dots,K\}$ and satisfies (\ref{strongsym}). As discussed in Remark \ref{K-Wrem}, the associated path encoding is supported on $\mathcal{S}_K$ and is invariant under $T$. Moreover, it is clear that the current at the origin is given by the alternating sequence $(W_0,K-W_0,W_0,K-W_0\dots)$, and so
\[\frac{C_k}{k}\rightarrow \frac{K}{2},\]
as $k\rightarrow\infty$, $\mathbf{P}$-a.s. From this, the fact that the density of particles is $\frac12$, and (\ref{xf}), we deduce that
\[\frac{X_k^F}{k}\rightarrow K,\]
as $k\rightarrow\infty$, $\mathbf{P}$-a.s. Hence, under the FIFO scheme, the tagged particle moves at the speed of an isolated soliton of size $K$. This is not matched by the behaviour of the tagged particle under the LIFO scheme, however. Indeed, it is an elementary exercise to check from the definition of the process that, in this case,
\[X_k^L=\min\left\{n:\:\#\left\{\mbox{crossings of $\{W_{X_0^L}-1,W_{X_0^L}\}$ by $W$ in the interval $[X_0^L,n]$}\right\}=k\right\},\:\: \forall k\geq 1,\]
where crossings of the relevant interval can be up or down. Since the ergodicity of $W$ implies
\begin{eqnarray*}
\lefteqn{n^{-1}\#\left\{\mbox{crossings of $\{W_{X_0^L}-1,W_{X_0^L}\}$ by $W$ in the interval $[0,n]$}\right\}}\\
&\rightarrow &\pi_{W_{X_0^L}-1}p_{W_{X_0^L}-1} + \pi_{W_{X_0^L}}\left(1-p_{W_{X_0^L}}\right)=2\pi_{W_{X_0^L}}\left(1-p_{W_{X_0^L}}\right),
\end{eqnarray*}
where we write $\pi$ for the stationary probability measure of $W$ and suppose the transition matrix of $W$ is given by \eqref{vmatrix}, it follows that
\[\frac{X_k^L}{k}\rightarrow \left(2\pi_{W_{X_0^L}}\left(1-p_{W_{X_0^L}}\right)\right)^{-1},\]
as $k\rightarrow\infty$, $\mathbf{P}$-a.s. In particular, the limit is not constant in general.
\end{rem}

\section{Connections with Pitman's theorem and exclusion processes}\label{pitmantasepsec}

\subsection{One-sided random initial configurations and Pitman's theorem}\label{pitmansec} Those familiar with stochastic processes will immediately recognise the path transformations $S\mapsto M-S$ and $S\mapsto 2M-S$ used in this article from well-known works of L\'{e}vy and Pitman. In this section, we draw some explicit connections between the results of the previous section with some classical results in the area. Since the aim is to highlight what we consider interesting observations, rather than develop new theory, we restrict technical details to a minimum. Moreover, since the literature mainly focuses on the one-sided case, we also concentrate here one-sided particle configurations $\eta=(\eta_n)_{n\geq1}$.

To begin with, there is a strong parallel between Proposition \ref{Wprops} and a famous result of L\'{e}vy from \cite{Levy}. In particular, in the latter work, it was shown that if $B=(B_t)_{t\geq 0}$ is Brownian motion and $M^B=(M^B_t)_{t\geq 0}$ its running maximum (i.e.\ $M^B_t:=\sup_{s\leq t}B_s$), then $M^B-B$ is equal in distribution to reflected Brownian motion, or equivalently the process $|B|$. In the case when Brownian motion has a linear drift, similar results are also known (see \cite{Peskir}), with the limit being reflected Brownian motion with the opposite drift. Moreover, explicit formulae are known for the one-dimensional marginals of the latter process when started from 0, as well as its invariant distribution in the case when the original Brownian motion has strictly positive drift. In the case when $\eta=(\eta_n)_{n\geq1}$ is an i.i.d.\ Bernoulli($p$) sequence, the proof of Proposition \ref{Wprops} yields the distribution of $W=M-S$ as the reflected random walk with drift, or, more specifically, the Markov process started from $W_0=0$, with transition probabilities given by \eqref{Wprobs}. This is clearly the discrete analogue of L\'{e}vy's result, and we note it applies to any $p\in(0,1)$, not just the case when $S$ has strictly positive drift. Of course, whilst this process is defined for any $p\in(0,1)$, it does not admit a stationary probability distribution for $p\geq 1/2$, and so can not be extended to a two-sided stationary process for this range of $p$. Clearly Proposition \ref{Wprops} can be seen as the corresponding result for a two-sided random walk with strictly positive drift.

Another illustrious result in the area is the representation theorem of Pitman \cite{Pitman}, which shows that the process $2M^B-B$ has a BES(3) distribution. (See \cite{RP} for further related results concerning the relevant transformation.) To prove this, Pitman first derived a discrete version of the result, and then took scaling limits. His approach gives the distribution of $TS=2M-S$ in the one-sided, zero drift ($p=1/2$), i.i.d.\ configuration case. Specifically, this is the Markov chain with transition probabilities given by:
\[\mathbf{P}\left((TS)_{n}=x+1\:\vline\:(TS)_{n-1}=x\right)=\frac{x+2}{2(x+1)}=1-
\mathbf{P}\left((TS)_{n}=x-1\:\vline\:(TS)_{n-1}=x\right),\]
for all $x\in\mathbb{Z}_+$. In the one-sided i.i.d.\ configuration case with strictly positive drift (i.e.\ for any value of $p\in(0,1/2))$, the distribution of $TS$ is shown in \cite{HMOC} to be equal to the law of $S$ conditioned to be non-negative, which can also be expressed explicitly in terms of a Doob transform. In particular, for any value of $p\in(0,\frac12)$, we have from standard arguments, e.g.\ \cite[Section 17.6.1]{LPW}, that
\begin{equation}\label{doob}
\mathbf{P}\left((TS)_{n}=x+1\:\vline\:(TS)_{n-1}=x\right)=\left(1-p\right)\frac{1-\left(\frac{p}{1-p}\right)^{x+2}}{1-\left(\frac{p}{1-p}\right)^{x+1}}=1-
\mathbf{P}\left((TS)_{n}=x-1\:\vline\:(TS)_{n-1}=x\right),
\end{equation}
for all $x\in\mathbb{Z}_+$. We observe this conditioning has little effect away from the origin, with the transition probability of an up-jump being asymptotically equal to $1-p$ as $x\rightarrow\infty$. In fact, the work of \cite{HMOC} also applies to the Markov initial configuration case, but then $S$ is not a Markov process and the relevant Doob transform has to be defined for the two-dimensional Markov chain $((S_n,\eta_n))_{n\geq0}$. Related to these one-sided results, we remark that the invariance in distribution of $S$ under the transformation $S\mapsto {T}S$ was essentially established in the two-sided i.i.d.\ and Markov configuration cases in \cite[Corollary 3]{HMOC}. However, the invariance under $T$ of the two-sided, conditioned process $\tilde{S}^{(k)}$ from Section \ref{boundedsec} is apparently a new result.

Finally, whilst the results described in the previous paragraph give a complete characterisation of the state of the BBS after one time step when we have a one-sided i.i.d.\ Bernoulli starting configuration with parameter $p\leq 1/2$ (and in the two-sided case for $p\in(0,\frac12)$), it is also natural to ask what happens in the one-sided case when $p>1/2$, since the carrier is then still well-defined. In this setting, by undertaking a relatively straightforward path decomposition (conditioning on the position of the maximum of $S$), it is possible to check that, just as in the $p\leq 1/2$ case, $TS$ is distributed as $S$ conditioned to never hit $-1$. Of course, the latter conditioning is not well-defined, but we can make sense of it as a Doob transform. More specifically, $TS$ is the Markov process started from $0$ with transition probabilities given by (\ref{doob}). Whilst one might expect to have to make the exchange $p\leftrightarrow 1-p$, note that the latter formula is in fact invariant under this transposition. In particular, this observation yields that $TS\buildrel{d}\over{=} T(-S)$. That is, the action of the carrier reverses the action of the drift. Or, to state it another way, this conclusion tells us that in the high density ($p>1/2$) regime: the first step of the BBS transports most particles out of the system (to $\infty$, say); from then on, the system evolves exactly like the low density system with particle density $1-p$.

\subsection{BBS versus totally asymmetric simple exclusion processes}

To put our results for the BBS into further context, we briefly compare and contrast them with those known to hold for the totally asymmetric simple exclusion process (TASEP), which is one of the most widely studied interacting particle systems. More specifically, the exclusion process on $\mathbb{Z}$ is a continuous time Markov process on $\{0,1\}^{\mathbb{Z}}$ describing an evolution of interacting continuous time random walks on $\mathbb{Z}$ with an exclusion rule that prohibits there from being more than one particle per site. When each particle can move only to its right-hand neighbouring site, and the mean of the waiting time is constant and equal to $1$, the model is referred to as the TASEP. At the most basic level, we thus immediately see a connection with the BBS, for which the state space is also $\{0,1\}^{\mathbb{Z}}$, meaning we have an exclusion rule, and each particle can only move in the rightwards direction. Moreover, the order of particles is preserved by the TASEP, which is also the case for the BBS if we suppose the dynamics are given by the FIFO scheme. Given such similarities, it seems interesting to compare more detailed characteristics of the two systems. We will not give a comprehensive survey of the results for the TASEP, but describe some of the well-known results on invariant measures, as well as the asymptotic behaviour of currents and the tagged particle.

For the TASEP, the set of extremal invariant measures is completely characterized as the union of i.i.d.\ Bernoulli product measures with any density $p \in [0,1]$, and the blocking measures indexed by $N \in \mathbb{Z}$, which are the delta measures on the configurations $\eta=(\eta_n)_{n\in\mathbb{Z}}=(\mathbf{1}_{\{n \ge N\}})_{n\in\mathbb{Z}}$, $N\in\mathbb{Z}$ \cite[VIII.3.23]{Liggett}. Theorem \ref{mre} shows that the BBS admits a richer class of invariant measures. This can be understood to be a consequence of the deterministic dynamics of the BBS preserving solitons, whereas any large scale structures are destroyed by the random dynamics of the TASEP. In some sense the blocked dynamics for the TASEP parallel the dynamics of the particle configurations with path encodings in $\mathcal{S}_{critical}^*$, in that, in both cases, these are the most trivial dynamics possible for each system. On the other hand, all the invariant measures for the BBS satisfy that $\mathbf{P}(\eta_n=1)$ is constant (see Theorem \ref{mra}), which holds for a general class of symmetric exclusion processes \cite[VIII.1.44]{Liggett}, but is not the case for the TASEP, for which we clearly have $\mathbf{P}(\eta_n=1)=0$ for $n < N$ and $\mathbf{P}(\eta_n=1)=1$ for $n \ge N$ under the relevant blocking measure. This difference comes from the fact BBS is ``reversible'' in the sense of dynamical systems. Namely, for the BBS, if $\eta$ has path encoding supported in $\mathcal{S}^{rev}$ and $T\eta \buildrel{d}\over{=} \eta$, then $T^{-1} \eta \buildrel{d}\over{=} \eta$, and so the blocking measures can not be invariant.

Under the i.i.d.\ Bernoulli product measure with parameter $p$, the integrated current at the origin in the TASEP satisfies the following law of large numbers and central limit theorem:
\[\frac{J_{Nt}}{N} \to \mu_p^{TASEP}t,\qquad \mathbf{P}\mbox{-a.s.},\]
\[\frac{J_{Nt}-\mu_p^{TASEP}Nt}{\sqrt{N}} \buildrel{d}\over{\to} N\left(0, (\sigma_p^{TASEP})^2t\right),\]
where $J_{t}$ is the integrated current at the bond $\{0,1\}$ for the time interval $[0,t]$, and
\[\mu_p^{TASEP}=p(1-p),\qquad (\sigma_p^{TASEP})^2=p(1-p)|1-2p|,\]
see \cite{FerFont}, and also \cite[Section 4.1]{Gon} for a survey of results in this direction. These constants satisfy the relation that
\begin{equation}\label{isitremarkable}
\left(\chi(p) \frac{d}{dp}\mu_p^{TASEP}\right)^2=\left(\sigma_p^{TASEP}\right)^2,
\end{equation}
where
$\chi(p):=\mathrm{Var}(\eta_0)=p(1-p)$. This relation is known to hold for more general interacting particle systems satisfying the Boltzmann-Gibbs principle, such as totally asymmetric zero range process \cite[Theorem 4.2.1]{Gon}. From Theorem \ref{currentcltthm}, we also have a law of large numbers and central limit theorem for the current in the BBS started from a Bernoulli product measure. We observe that the relation at (\ref{isitremarkable}) holds in this case as well, since $\mu_p=\frac{p}{1-2p}$, $\sigma_p^2=\frac{p(1-p)}{(1-2p)^2}$ and $\chi(p)=\mathrm{Var}(\eta_0)=p(1-p)$. The behaviours of the two systems as $p\rightarrow\frac12$ are very different, however. In particular, in the $p=\frac12$ case, $\sigma_p^{TASEP}=0$ and the proper time scaling is not $Nt$, but $N^{3/2}t$, and the fluctuation is not Gaussian \cite{FerSpo}. For the BBS, $\mu_p\rightarrow\infty$ as $p\rightarrow\frac12$, and, as we will show in the next section, diffusive scaling of the entire system is needed to understand the dynamics.

In the TASEP, a law of large numbers and central limit theorem is also know to hold for the tagged particle under the i.i.d.\ Bernoulli product measure, and can be stated as
\[\frac{X_{Nt}}{N} \to v_p^{TASEP}t,\qquad \mathbf{P}\mbox{-a.s.},\]
\[\frac{X_{Nt}-v_p^{TASEP}t}{\sqrt{N}} \buildrel{d}\over{\rightarrow} N\left(0, (\sigma_p^{TASEP,tag})^2t\right),\]
where
\[v_p^{TASEP}=\frac{\mu_p^{TASEP}}{p}=1-p,\qquad (\sigma_p^{TASEP,tag})^2=1-p,\]
see \cite[Section 4.1]{Gon}, for example. For the FIFO scheme of the BBS, we similarly have the law of large numbers, with mean satisfying $v_p=\frac{\mu_p}{p}=\frac{1}{1-2p}$  (see Theorem \ref{distancethm}), but we were unable to establish the corresponding central limit theorem.

\section{BBS on $\mathbb{R}$}\label{contsec}

In this section, we consider a generalisation of the BBS, which is defined for continuous functions on $\mathbb{R}$. In particular, in Section \ref{pathsec} the dynamics of the BBS was expressed as the operator $T$ on piecewise linear functions with derivative $\pm 1$ (recall \eqref{wtsdef}). From this explicit expression for the operator $T$, it is natural to generalize the domain of the operator to continuous functions on $\mathbb{R}$. One motivation for doing this is that it provides a natural framework for studying the scaling limit of the discrete system. As an illustrative example, we check that if $S$ is the asymmetric simple random walk representing an i.i.d.\ particle configuration with density $p_n=\frac12-\frac{c}{2n}$, then under appropriate scaling as $n\rightarrow\infty$, we arrive at a Brownian motion with drift $c$; this can be considered the high density regime for the BBS. Moreover, it readily follows from the invariance of the simple random walks under $T$ that the limiting process is also invariant under $T$. Specifically, we prove Theorem \ref{mrh}.

\subsection{Operator $T$ for continuous functions} Unlike the discrete case, we can not describe the particle configuration $\eta$ directly, and so we consider the dynamics for the path encoding $S$ only. By analogy with the relevant discrete objects, let
\[\mathcal{S}_{c}^0=\left\{S:\mathbb{R}\rightarrow \mathbb{R}\::\: S_0=0,\:S\mbox{ continuous}\right\},\]
define the domain of $T$ by setting
\[\mathcal{S}_{c}^T=\left\{S \in \mathcal{S}_c^0\::\: \limsup_{x \to -\infty} S_x < \infty \right\},\]
and, for $S \in \mathcal{S}^T_c$, define
\[M_x=\sup_{y \le x} S_y, \qquad W_x=M_x-S_x, \qquad (TS)_x=2M_x-S_x-2M_0.\]
Note that the operator $T$ is the two-sided version of Pitman's transform as already discussed. The corresponding inverse operator $T^{-1}$ has domain
\[\mathcal{S}_{c}^{T^{-1}}=\left\{S \in \mathcal{S}_c^0\::\:  \liminf_{x \to \infty} S_x > -\infty \right\}.\]
and we further define, for $S \in \mathcal{S}_{c}^{T^{-1}}$,
\[I_x=\inf_{y \ge x} S_y, \qquad V_x=S_x-I_x, \qquad (T^{-1}S)_x=2I_x-S_x-2I_0.\]

\subsection{Reversible set and invariant set} As in the discrete case, it is natural to seek to characterise the sets
\[\mathcal{S}^{rev}_c:=\left\{S \in \mathcal{S}_c^0\::\:TS,T^{-1}S,T^{-1}TS,TT^{-1}S\mbox{ well-defined},\:T^{-1}TS=S,\:TT^{-1}S=S \right\},\]
and
\[\mathcal{S}^{inv}_c:=\left\{S \in \mathcal{S}_c^0\::\: T^kS \in \mathcal{S}^{rev}_c,\: \forall k \in \mathbb{Z} \right\},\]
i.e.\ the set where the one-step dynamics (forward and backward) are well-defined are reversible, and the set where the (forwards and backwards) dynamics are well-defined and consistent for all time. To this end, we recall that the functions $\Phi$ and $\Psi$ were useful in the discrete setting. Here, we will introduce the continuous analogues of these functions, however a notable difference is that in this setting $M-M_0$ is not a function of $W$ in general. (In the discrete case, we always have $M-M_0=\ell(W)$.)

Let the spaces $\mathcal{Y}_c$, $\mathcal{Y}_c^{\pm}$, $\mathcal{A}_c$ and $\mathcal{A}_c^0$ be given by:
\[\mathcal{Y}_c=\left\{ Y :\mathbb{R} \to \mathbb{R}_+ \::\:  Y\mbox{ continuous}\right\}, \qquad \mathcal{Y}_c^{\pm}:=\left\{ Y \in \mathcal{Y}_c \::\: \liminf_{x \to \pm \infty}Y_x=0 \right\},\]
\[ \mathcal{A}_c:=\left\{ A :\mathbb{R} \to \mathbb{R} \::\: A\mbox{ continuous, non-decreasing}\right\}, \qquad \mathcal{A}_c^0:=\left\{ A \in \mathcal{A}_c \::\: A_0=0 \right\}.\]
We then define $\Phi$ and $\Psi$ by setting
\begin{eqnarray*}
\begin{array}{rcl}
  \Phi:\mathcal{Y}_c \times \mathcal{A}^0_c &\to& \mathcal{S}_c^0\\
  (Y,A) & \mapsto & A-Y+Y_0,
\end{array}&&
\begin{array}{rcl}
  \Psi:\mathcal{Y}_c \times \mathcal{A}^0_c &\to& \mathcal{S}_c^0\\
  (Y,A) & \mapsto & A+Y-Y_0,
\end{array}
\end{eqnarray*}
and introduce the corresponding inverses as follows:
\begin{eqnarray*}
\begin{array}{rcl}
\Phi^{-1}: \mathcal{S}^T_c &\to& \mathcal{Y}_c \times \mathcal{A}^0_c\\
 S&\mapsto&(M-S,M-M_0),
 \end{array}&&
 \begin{array}{rcl}
\Psi^{-1}: \mathcal{S}^{T^{-1}}_c &\to& \mathcal{Y}_c \times \mathcal{A}^0_c\\
 S&\mapsto&(S-I,I-I_0),
 \end{array}
 \end{eqnarray*}
We readily see that $\Phi\Phi^{-1}S=S$ and $\Psi\Psi^{-1}S=S$. Also, the relations $TS=\Psi \Phi^{-1}S$ and $T^{-1}S=\Phi \Psi^{-1}S$ are obviously satisfied. To characterize $\Phi^{-1}( \mathcal{S}^T_c)$ and $\Psi^{-1}(\mathcal{S}^{T^{-1}}_c)$ (cf. Propositions \ref{phiproperty} and \ref{adapprop}), we introduce the following sets of pairs of functions.
\[(\mathcal{Y}_c \times \mathcal{A}^0_c)^{SK}=\left\{(Y,A) \in \mathcal{Y}_c \times \mathcal{A}^0_c\::\: \int_0^x Y_u dA_u=0,\:\forall x \in \mathbb{R} \right\}.\]
and
\[(\mathcal{Y}_c^{\pm} \times \mathcal{A}^0_c)^{SK} = \left\{ (Y,A) \in (\mathcal{Y}_c \times \mathcal{A}^0_c)^{SK} \::\: Y \in \mathcal{Y}_c^{\pm} \right\}.\]
It is clear that $\Phi^{-1}( \mathcal{S}^T_c) \subseteq (\mathcal{Y}_c^{-} \times \mathcal{A}^0_c)^{SK}$ and $\Psi^{-1}( \mathcal{S}^{T^{-1}}_c) \subseteq (\mathcal{Y}_c^{+} \times \mathcal{A}^0_c)^{SK}$. To show that these sets are respectively equal, we show the following theorem, which is a two-sided version of the Skorohod problem (see \cite[Lemma VI.2.1]{RY} for a statement of the classical result).

{\thm\label{twosidedskor} If $S \in \mathcal{S}^T_c$, then there exists a unique pair $(Y,A)$ satisfying the following conditions.\\
(i) $Y \in \mathcal{Y}_c^{-}$.\\
(ii) $A \in \mathcal{A}$.\\
(iii) The support of $dA$ is contained in $\{x \in \mathbb{R}:\: Y_x=0\}$, or equivalently $\int_0^x Y_u dA_u=0$ for all $x \in \mathbb{R}$.\\
(iv) $S=A-Y$.\\
Moreover, the pair is given by $(Y,A)=(M-S,M)$ and for any $(\tilde{Y},\tilde{A})\in (\mathcal{Y}_c \times \mathcal{A}^0_c)^{SK}$ satisfying $S=A-Y$, $Y_x \le \tilde{Y}_x$ and $A_x \le \tilde{A}_x$ for all $x \in \mathbb{R}$.}

\begin{proof} It is easy to see that $(M-S,M)$ satisfies the desired conditions. We only need to show the uniqueness. Suppose that $(\tilde{Y},\tilde{A})$ also satisfies the same condition but $(\tilde{Y}, \tilde {A})\neq (Y,A)$, where $(Y,A)=(M-S,M)$. Note that $A-\tilde{A}=S+Y-(S+\tilde{Y})=Y-\tilde{Y}$. First, we show that $\tilde{A}_x \ge A_x$ for all $x \in \mathbb{R}$. Suppose that $\tilde{A}_{x_0} < A_{x_0}$ for some $x_0$. Then, since $A=M$, there exists $y \le  x_0$ such that $\tilde{A}_{x_0} < S_y \le A_{x_0}$. However, this implies $\tilde{Y}_y=\tilde{A}_y -S_y <\tilde{A}_y -\tilde{A}_{x_0} \le \tilde{A}_{x_0} -\tilde{A}_{x_0} =0$, which gives a contradiction. Next we suppose that there exists $\tilde{A}_{x_0} > A_{x_0}$ for some $x_0$. Then, for any $x \le x_0$,
\begin{align*}
(Y-\tilde{Y})^2_{x}-(Y-\tilde{Y})^2_{x_0}  =-2 \int_{x}^{x_0}( Y-\tilde{Y})_u d (A_u-\tilde{A}_u) =2\int_{x}^{x_0} Y_u d\tilde{A}_u + 2\int_{x}^{x_0}\tilde{Y}_u dA_u \ge 0.
\end{align*}
So, $(Y-\tilde{Y})^2_{x} \ge (Y-\tilde{Y})^2_{x_0} > 0$ for any $x \le x_0$. Since $\tilde{Y}-Y$ is positive, we have $\tilde{Y}_x \ge c >0$ for all $x \le x_0$, where $c=\tilde{Y}_{x_0}-Y_{x_0}$. On the other hand, $\displaystyle \liminf_{x \to -\infty}\tilde{Y}_x=0$ by assumption, and so we have arrived at a contradiction.

For the final claim, we simply repeat the first part of the argument.
\end{proof}

We can also prove the following version in the same manner.

\begin{prop} If $S \in \mathcal{S}^{T^{-1}}_c$, then there exists a unique pair $(Y,A)$ satisfying the following conditions.\\
(i) $Y \in \mathcal{Y}_c^{-}$.\\
(ii) $A \in \mathcal{A}$.\\
(iii) The support of $dA$ is contained in $\{x \in \mathbb{R}:\: Y_x=0\}$, or equivalently $\int_0^x Y_u dA_u=0$ for all $x \in \mathbb{R}$.\\
(iv) $S=A+Y$.\\
Moreover, the pair is given by $(Y,A)=(S-I,I)$ and for any $(\tilde{Y},\tilde{A})\in (\mathcal{Y}_c \times \mathcal{A}^0_c)^{SK}$ satisfying $S=A+Y$, $Y_x \le \tilde{Y}_x$ and $A_x \ge \tilde{A}_x$ for all $x \in \mathbb{R}$.
\end{prop}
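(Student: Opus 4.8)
The plan is to mirror the proof of Theorem~\ref{twosidedskor}, replacing the past maximum $M$ throughout by the future infimum $I_x=\inf_{y\ge x}S_y$ and all one-sided limits at $-\infty$ by limits at $+\infty$. For existence I would simply verify that the explicit pair $(Y,A)=(S-I,I)=(V,I)$ meets the four conditions: since $S\in\mathcal{S}^{T^{-1}}_c$, the future infimum $I$ is finite, continuous and non-decreasing, which gives (ii) and, together with $V=S-I\ge 0$ and the fact that $S$ returns arbitrarily close to its future infimum along a sequence tending to $+\infty$, the membership $Y\in\mathcal{Y}^{+}_c$ needed in (i). Condition (iv) is the identity $S=I+V$, and (iii) is the standard property that $I$ increases only on the set $\{x:S_x=I_x\}=\{x:V_x=0\}$.

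For uniqueness, suppose $(\tilde Y,\tilde A)$ also satisfies (i)--(iv) and write $(Y,A)=(V,I)$, so that $S=A+Y=\tilde A+\tilde Y$ gives $\tilde Y-Y=A-\tilde A$. As in Theorem~\ref{twosidedskor} the argument splits into two steps. In Step~1 I claim $\tilde A_x\le A_x=I_x$ for every $x$: if instead $\tilde A_{x_0}>I_{x_0}$ for some $x_0$, then by the definition of the future infimum there is a point $y\ge x_0$ with $I_{x_0}\le S_y<\tilde A_{x_0}$, and since $\tilde A$ is non-decreasing with $y\ge x_0$ this forces $\tilde Y_y=S_y-\tilde A_y\le S_y-\tilde A_{x_0}<0$, contradicting $\tilde Y\ge 0$.

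In Step~2 it remains to rule out $\tilde A_{x_0}<A_{x_0}$ for some $x_0$. Setting $D:=A-\tilde A=\tilde Y-Y$, which is continuous with $D_{x_0}>0$, I would integrate to obtain, for $x\ge x_0$,
\[
D_x^2-D_{x_0}^2=2\int_{x_0}^x D_u\,dD_u=2\int_{x_0}^x \tilde Y_u\,dA_u+2\int_{x_0}^x Y_u\,d\tilde A_u\ge 0,
\]
where the middle equality uses condition (iii) for both pairs (namely $\int Y\,dA=0$ and $\int \tilde Y\,d\tilde A=0$) to cancel two of the four Stieltjes terms, and the inequality uses $Y,\tilde Y\ge 0$ together with $A,\tilde A$ non-decreasing. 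Hence $D_x^2\ge D_{x_0}^2>0$ for all $x\ge x_0$, and by continuity $D_x\ge D_{x_0}>0$ there, so $\tilde Y_x=Y_x+D_x\ge D_{x_0}>0$ for all $x\ge x_0$; this contradicts $\liminf_{x\to+\infty}\tilde Y_x=0$. Combining the two steps gives $\tilde A=A$ and thus $\tilde Y=Y$.

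The minimality assertion for an arbitrary $(\tilde Y,\tilde A)\in(\mathcal{Y}_c\times\mathcal{A}^0_c)^{SK}$ with $S=\tilde A+\tilde Y$ then follows by re-running Step~1 alone---which uses only $\tilde Y\ge 0$---to conclude $\tilde A_x\le I_x=A_x$ for all $x$, whence $\tilde Y_x=S_x-\tilde A_x\ge S_x-I_x=Y_x$. The main obstacle I anticipate is the Stieltjes computation in Step~2: one must justify $D_x^2-D_{x_0}^2=2\int_{x_0}^x D_u\,dD_u$ for the merely continuous (not differentiable) non-decreasing integrators $A,\tilde A$, and orient the integration over $[x_0,x]$ correctly so that the support condition (iii) is what produces the sign. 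I also note that condition (i) should here be read in its dual form $Y\in\mathcal{Y}^{+}_c$, i.e.\ $\liminf_{x\to+\infty}Y_x=0$, for the contradiction at $+\infty$ to bite; everything else is a direct transcription of the proof of Theorem~\ref{twosidedskor}.
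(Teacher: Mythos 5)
Your proposal is correct and is exactly what the paper intends: the paper omits the proof, stating only that it is obtained ``in the same manner'' as Theorem \ref{twosidedskor}, and your argument is precisely that dualization (past maximum $\to$ future infimum, $-\infty \to +\infty$, $S=A-Y \to S=A+Y$), with the Stieltjes identity $D_x^2-D_{x_0}^2=2\int_{x_0}^xD_u\,dD_u$ being standard for continuous functions of bounded variation, just as it is used without comment in the proof of Theorem \ref{twosidedskor}. You are also right that condition (i) in the statement should read $Y\in\mathcal{Y}_c^{+}$ (a typo in the paper), since $V=S-I$ satisfies $\liminf_{x\to+\infty}V_x=0$ and it is this condition that the contradiction in your Step~2 requires.
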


Applying the above results, we have the continuous counterpart of Propositions \ref{phiproperty} and \ref{adapprop}.

\begin{prop} The map
\[\Phi|_{(\mathcal{Y}_c^{-} \times \mathcal{A}^0_c)^{SK}} : (\mathcal{Y}_c^{-} \times \mathcal{A}^0_c)^{SK}  \to \mathcal{S}^T \]
is a bijection with inverse operator $\Phi^{-1}$. Also,
\[\Psi|_{(\mathcal{Y}_c^{+} \times \mathcal{A}^0_c)^{SK}} : (\mathcal{Y}_c^{+} \times \mathcal{A}^0_c)^{SK}  \to \mathcal{S}^{T^{-1}}\]
is a bijection with inverse operator $\Psi^{-1}$. Moreover,
\[\Phi^{-1} \Phi((\mathcal{Y}_c^{+} \times \mathcal{A}^0_c)^{SK}) \subseteq (\mathcal{Y}_c^{+} \times \mathcal{A}^0_c)^{SK},\qquad \Psi^{-1} \Psi((\mathcal{Y}_c^{-} \times \mathcal{A}^0_c)^{SK}) \subset (\mathcal{Y}_c^{-} \times \mathcal{A}^0_c)^{SK}.\]
\end{prop}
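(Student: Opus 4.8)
The plan is to deduce the proposition almost entirely from the two-sided Skorohod problem (Theorem \ref{twosidedskor}) and its dual, treating it as the continuous counterpart of Propositions \ref{phiproperty} and \ref{adapprop}; the only genuinely new ingredient is careful bookkeeping of the centering constants that appear in the definitions of $\Phi$, $\Phi^{-1}$, $\Psi$ and $\Psi^{-1}$. First I would check that the two maps go between the claimed sets. For $S \in \mathcal{S}^T_c$, the running maximum $M$ is finite (since $\limsup_{x\to-\infty} S_x < \infty$), continuous and non-decreasing, so $M - M_0 \in \mathcal{A}^0_c$ and $W = M - S \in \mathcal{Y}_c$; the identity $\limsup_{x\to-\infty} S_x = \lim_{x\to-\infty} M_x = M_{-\infty}$ forces $\liminf_{x\to-\infty} W_x = 0$, so $W \in \mathcal{Y}_c^{-}$, while the fact that $dM$ charges only $\{W=0\}$ gives the Skorohod condition $\int_0^x W_u\, d(M-M_0)_u = 0$. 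Hence $\Phi^{-1}(\mathcal{S}^T_c) \subseteq (\mathcal{Y}_c^{-} \times \mathcal{A}^0_c)^{SK}$. Conversely, for $(Y,A)$ in this set, $S := \Phi(Y,A) = A - Y + Y_0$ satisfies $S_x \le Y_0$ for all $x \le 0$ (as $A_x \le A_0 = 0$ and $Y \ge 0$), so $\limsup_{x\to-\infty}S_x \le Y_0 < \infty$ and $S \in \mathcal{S}^T_c$.

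Next I would establish the two composition identities. The relation $\Phi\Phi^{-1}S = S$ is the direct computation already recorded before the statement. For $\Phi^{-1}\Phi = \mathrm{id}$ on $(\mathcal{Y}_c^{-} \times \mathcal{A}^0_c)^{SK}$, the idea is to feed the decomposition $S = (A + Y_0) - Y$ into the uniqueness clause of Theorem \ref{twosidedskor}: the pair $(Y, A + Y_0)$ satisfies conditions (i)--(iv) there, with $A + Y_0 \in \mathcal{A}$, so it must coincide with the canonical pair $(M-S, M)$; unwinding the shift by $Y_0$ and using $S_0 = 0$ then yields $Y = M - S$ and $A = M - M_0$, i.e.\ $(Y,A) = \Phi^{-1}(S)$. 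Together with surjectivity (immediate from $\Phi\Phi^{-1} = \mathrm{id}$), this shows $\Phi$ restricts to a bijection onto $\mathcal{S}^T_c$ with inverse $\Phi^{-1}$. The $\Psi$ statement follows verbatim from the dual Skorohod proposition immediately preceding the one to be proved.

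Finally, for the two inclusions I would mimic the proof of Lemma \ref{phipsiy+y-}. Given $(Y,A) \in (\mathcal{Y}_c^{+} \times \mathcal{A}^0_c)^{SK}$, set $(\tilde Y, \tilde A) := \Phi^{-1}\Phi(Y,A) = (M - S, M - M_0)$ with $S = A - Y + Y_0$; this already lies in $(\mathcal{Y}_c^{-} \times \mathcal{A}^0_c)^{SK}$ by the first part, so only $\tilde Y \in \mathcal{Y}_c^{+}$ remains to be verified. The key point is the minimality of $M - S$: since $A$ is non-decreasing and $Y \ge 0$, one has $M_x = \sup_{y \le x}(A_y - Y_y + Y_0) \le A_x + Y_0$, whence $\tilde Y_x = M_x - S_x \le Y_x$ for every $x \in \mathbb{R}$; then $0 \le \liminf_{x\to\infty} \tilde Y_x \le \liminf_{x\to\infty} Y_x = 0$ gives $\tilde Y \in \mathcal{Y}_c^{+}$. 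The second inclusion is dual. I expect the main (though still modest) obstacle to be precisely this reconciliation of the $\mathcal{A}$ versus $\mathcal{A}^0_c$ centering conventions between the definitions of $\Phi,\Phi^{-1}$ and the form of Theorem \ref{twosidedskor}, together with the verification of the one-sided $\liminf$ conditions at $\pm\infty$; once Theorem \ref{twosidedskor} is granted, no further analytic input is required.
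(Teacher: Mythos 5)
Your proposal is correct and follows essentially the same route as the paper: the bijectivity is reduced to the uniqueness clause of the two-sided Skorohod problem (Theorem \ref{twosidedskor}) after absorbing the centering constant $Y_0$ into $A$, and the final inclusions follow from the pointwise minimality $M-S\le Y$ exactly as in the discrete Lemma \ref{phipsiy+y-}. In fact you are somewhat more explicit than the paper, whose proof only records the injectivity step and leaves the domain/codomain checks and the two inclusions implicit.
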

\begin{proof} We only give a proof for $\Phi$. The proof for $\Psi$ is similar. Suppose there exist $(Y,A), (\tilde{Y},\tilde{A}) \in (\mathcal{Y}_c^{-} \times \mathcal{A}^0_c)^{SK}$ satisfying $\Phi(Y,A)=A-Y+Y_0 =\tilde{A}-\tilde{Y}+\tilde{Y}_0= \Phi(\tilde{Y},\tilde{A})$, and denote this element of $\mathcal{S}_c^0$ by $S$. Then $M_0 \le Y_0$, and so $M_0 \in \mathbb{R}$. We can thus apply Theorem \ref{twosidedskor} to deduce that $(Y,A+Y_0)$ and $(\tilde{Y},\tilde{A}+\tilde{Y}_0)$ are the same, since they both solve the relevant Skorohod problem for $S$. Hence we obtain $Y=\tilde{Y}$ and $A=\tilde{A}$.
\end{proof}

With these observations, one can conclude the same characterization of the set $\mathcal{S}^{rev}_c$ in the continuous case as was given in the discrete case in Theorem \ref{mr1}.

\begin{thm} It holds that
\[\mathcal{S}^{rev}_c=\left\{S\in \mathcal{S}^0_c\::\:M_0<\infty,\:I_0>-\infty,\: \limsup_{x\rightarrow\infty}S_x=M_\infty,\;\liminf_{x\rightarrow-\infty}S_x=I_{-\infty}\right\},\]
where the limits $M_\infty =\lim_{x\rightarrow\infty}M_x=\sup_{x \in\mathbb{R}}S_x$ and $I_{-\infty}=\lim_{x\rightarrow-\infty}I_x=\inf_{x\in\mathbb{R}}S_x$ are well-defined by monotonicity.
\end{thm}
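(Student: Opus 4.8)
The plan is to follow the structure of the proof of Theorem \ref{goodsetthm} in the discrete setting, exploiting the continuous analogues of $\Phi$, $\Psi$ and their inverses that have just been established. First I would observe, exactly as in the discrete case, that $\mathcal{S}^{rev}_c = \mathcal{S}^{T^{-1}T}_c \cap \mathcal{S}^{TT^{-1}}_c$, where
\[\mathcal{S}^{T^{-1}T}_c := \left\{S \in \mathcal{S}^T_c : T^{-1}TS = S\right\}, \qquad \mathcal{S}^{TT^{-1}}_c := \left\{S \in \mathcal{S}^{T^{-1}}_c : TT^{-1}S = S\right\}.\]
Here the requirement $M_0 < \infty$ is precisely $S \in \mathcal{S}^T_c$ (so that $TS$ is well-defined), and $I_0 > -\infty$ is precisely $S \in \mathcal{S}^{T^{-1}}_c$. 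Moreover, for $S \in \mathcal{S}^T_c$ one has $(TS)_x = M_x + W_x - 2M_0 \ge -M_0$, so that $TS \in \mathcal{S}^{T^{-1}}_c$ automatically; dually $(T^{-1}S)_x = I_x - V_x - 2I_0 \le -I_0$, so $T^{-1}S \in \mathcal{S}^T_c$ whenever $S \in \mathcal{S}^{T^{-1}}_c$. Thus all four maps appearing in the definition of $\mathcal{S}^{rev}_c$ are well-defined as soon as $M_0 < \infty$ and $I_0 > -\infty$, and the stated set equality reduces to identifying $\mathcal{S}^{T^{-1}T}_c$ and $\mathcal{S}^{TT^{-1}}_c$.

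The key step is to show
\[\mathcal{S}^{T^{-1}T}_c = \left\{S \in \mathcal{S}^T_c : W \in \mathcal{Y}_c^+\right\} = \left\{S \in \mathcal{S}^0_c : M_0 < \infty,\ \limsup_{x\to\infty} S_x = M_\infty\right\}.\]
The second equality is elementary: since $W_x = M_x - S_x \ge 0$ and $M_x \uparrow M_\infty$, the condition $\liminf_{x\to\infty} W_x = 0$ is equivalent to $\limsup_{x\to\infty} S_x = M_\infty$ (both holding automatically when $M_\infty = \infty$, by continuity of $S$ and the existence of points at which $S$ attains new running maxima). For the first equality I would argue as in the discrete case, using $T = \Psi\Phi^{-1}$ and $T^{-1} = \Phi\Psi^{-1}$, together with the bijectivity of $\Phi$ and $\Psi$ on $(\mathcal{Y}_c^- \times \mathcal{A}_c^0)^{SK}$ and $(\mathcal{Y}_c^+ \times \mathcal{A}_c^0)^{SK}$ respectively, and the inclusion $\Phi^{-1}\Phi((\mathcal{Y}_c^+ \times \mathcal{A}_c^0)^{SK}) \subseteq (\mathcal{Y}_c^+ \times \mathcal{A}_c^0)^{SK}$. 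Concretely: if $S \in \mathcal{S}^{T^{-1}T}_c$, then $\Phi^{-1}S = \Phi^{-1}\Phi(\Psi^{-1}TS)$ with $\Psi^{-1}TS \in (\mathcal{Y}_c^+ \times \mathcal{A}_c^0)^{SK}$, so the inclusion forces $\Phi^{-1}S \in (\mathcal{Y}_c^+ \times \mathcal{A}_c^0)^{SK}$, i.e.\ $W \in \mathcal{Y}_c^+$; conversely, if $W \in \mathcal{Y}_c^+$ then $\Phi^{-1}S$ lies in $(\mathcal{Y}_c^+ \times \mathcal{A}_c^0)^{SK}$, on which $\Psi^{-1}\Psi$ is the identity, whence $T^{-1}TS = \Phi\Psi^{-1}\Psi\Phi^{-1}S = \Phi\Phi^{-1}S = S$.

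A symmetric argument — either by appealing to the dual statement for $\Psi$, or by reflecting via $S_x \mapsto -S_{-x}$ — gives $\mathcal{S}^{TT^{-1}}_c = \{S \in \mathcal{S}^0_c : I_0 > -\infty,\ \liminf_{x\to-\infty} S_x = I_{-\infty}\}$, and intersecting the two descriptions yields the claimed characterisation. I expect the main obstacle to be essentially bookkeeping rather than a genuinely new difficulty, since all the heavy lifting — in particular the continuous, two-sided Skorohod decomposition of Theorem \ref{twosidedskor} and the resulting bijectivity of $\Phi$ and $\Psi$ — is already in place. The one point requiring care is the passage between the analytic boundary conditions and the statements about $\mathcal{Y}_c^\pm$: unlike the discrete setting, where $M - M_0 = \ell(W)$ is a function of $W$ alone, here the $\mathcal{A}$-component must be tracked separately, so one should verify that membership of $\Phi^{-1}S$ in $(\mathcal{Y}_c^+ \times \mathcal{A}_c^0)^{SK}$ is governed solely by $W \in \mathcal{Y}_c^+$ (the $SK$ and $\mathcal{A}_c^0$ constraints being automatic from $S \in \mathcal{S}^T_c$), and that no pathology arises from the possibility $M_\infty = \infty$.
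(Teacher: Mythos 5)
Your proposal is correct and is essentially the argument the paper intends: the paper omits the proof of this theorem, asserting that "with these observations" (the two-sided Skorohod problem, the bijectivity of $\Phi|_{(\mathcal{Y}_c^{-}\times\mathcal{A}_c^0)^{SK}}$ and $\Psi|_{(\mathcal{Y}_c^{+}\times\mathcal{A}_c^0)^{SK}}$, and the inclusion $\Phi^{-1}\Phi((\mathcal{Y}_c^{+}\times\mathcal{A}_c^0)^{SK})\subseteq(\mathcal{Y}_c^{+}\times\mathcal{A}_c^0)^{SK}$) one concludes exactly as in the discrete Theorem \ref{goodsetthm}, which is precisely the route you take. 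Your explicit attention to the point that the $\mathcal{A}$-component and the $SK$ constraint are automatic for $\Phi^{-1}S=(M-S,M-M_0)$, so that membership in $(\mathcal{Y}_c^{+}\times\mathcal{A}_c^0)^{SK}$ reduces to $W\in\mathcal{Y}_c^{+}$, correctly identifies the only genuinely new bookkeeping in the continuous setting.
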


We can also obtain a continuous version of Lemma \ref{dualityrel}. To state this, we define a map $\tilde{R}: \mathcal{Y}_c \times \mathcal{A}^0_c \to \mathcal{Y}_c \times \mathcal{A}^0_c$ by setting $\tilde{R}(Y,A)=(\tilde{R}Y,\tilde{R}A)$, where $\tilde{R}Y_x=Y_{-x}$ and $\tilde{R}A_x=-A_{-x}$.

\begin{lem} It holds that
\[ R\Psi=\Phi\tilde{R}, \qquad R\Phi=\Psi\tilde{R}.\]
Moreover, $\tilde{R}((\mathcal{Y}_c^{-} \times \mathcal{A}^0_c)^{SK})=(\mathcal{Y}_c^{+} \times \mathcal{A}^0_c)^{SK}$, $\tilde{R}((\mathcal{Y}_c^{+} \times \mathcal{A}^0_c)^{SK})=(\mathcal{Y}_c^{-} \times \mathcal{A}^0_c)^{SK}$ and the maps in the following diagram are all bijections and commutative.
\[\xymatrix@C+30pt@R+10pt
{      (\mathcal{Y}_c^{-} \times \mathcal{A}^0_c)^{SK} \ar@<.5ex>[r]^-\Phi \ar[d]  &  \mathcal{S}^T_c \ar[d]^-{R} \ar@<.5ex>[l]^-{\Phi^{-1}}\\
                  (\mathcal{Y}_c^{+} \times \mathcal{A}^0_c)^{SK} \ar[u]^-{\tilde{R}} \ar@<.5ex>[r]^-\Psi   &  \mathcal{S}^{T^{-1}}_c \ar[u] \ar@<.5ex>[l]^-{\Psi^{-1}}
                  }\]
Also, the following diagram satisfies the same property.
\[\xymatrix@C+30pt@R+10pt
{      (\mathcal{Y}_c^{rev} \times \mathcal{A}^0_c)^{SK} \ar@<.5ex>[r]^-\Phi \ar[d]  &  \mathcal{S}^{rev}_c \ar[d]^-{R} \ar@<.5ex>[l]^-{\Phi^{-1}}\\
                   (\mathcal{Y}_c^{rev} \times \mathcal{A}^0_c)^{SK}   \ar[u]^-{\tilde{R}} \ar@<.5ex>[r]^-\Psi   &  \mathcal{S}^{rev}_c \ar[u] \ar@<.5ex>[l]^-{\Psi^{-1}}
                  }\]
where $(\mathcal{Y}_c^{rev} \times \mathcal{A}^0_c)^{SK} =(\mathcal{Y}_c^{+} \times \mathcal{A}^0_c)^{SK} \cap (\mathcal{Y}_c^{-} \times \mathcal{A}^0_c)^{SK}$.
\end{lem}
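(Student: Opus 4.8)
The plan is to prove the lemma in three stages: the two intertwining identities, the involutive set-preservation properties of $R$ and $\tilde{R}$, and finally the commutativity and bijectivity of the two diagrams. The computational heart is the pair of identities $R\Psi=\Phi\tilde{R}$ and $R\Phi=\Psi\tilde{R}$, which I would obtain by direct substitution. Evaluating on a pair $(Y,A)$ at an arbitrary $x$ gives $(R\Psi(Y,A))_x=-(\Psi(Y,A))_{-x}=-(A_{-x}+Y_{-x}-Y_0)$, whereas $(\Phi\tilde{R}(Y,A))_x=(\tilde{R}A)_x-(\tilde{R}Y)_x+(\tilde{R}Y)_0=-A_{-x}-Y_{-x}+Y_0$, and the two agree; the second identity is the same computation up to signs. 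As these are pointwise identities of functions, no analysis is needed at this step.

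Next I would record that $R$ (on $\mathcal{S}_c^0$) and $\tilde{R}$ (on $\mathcal{Y}_c\times\mathcal{A}_c^0$) are involutions, hence bijections, and that each carries the relevant decorated sets onto one another. For $\tilde{R}$, continuity and nonnegativity of $Y$ are immediate, the normalisation $(\tilde{R}A)_0=-A_0=0$ holds, and monotonicity of $\tilde{R}A$ follows from reversing the order of points; moreover the boundary conditions swap, $\liminf_{x\to-\infty}Y_x=0\Leftrightarrow\liminf_{x\to+\infty}(\tilde{R}Y)_x=0$, giving $\tilde{R}((\mathcal{Y}_c^{-}\times\mathcal{A}_c^0)^{SK})=(\mathcal{Y}_c^{+}\times\mathcal{A}_c^0)^{SK}$ and conversely. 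The point needing a short argument is that the Skorohod constraint is preserved: since $Y\geq 0$ and $A$ is non-decreasing, the condition $\int_0^x Y_u\,dA_u=0$ for all $x$ is equivalent to the nonnegative measure $Y\,dA$ vanishing identically, a property manifestly invariant under the reflection $u\mapsto-u$ underlying $\tilde{R}$. A parallel computation shows $R$ is an involution exchanging $\mathcal{S}_c^{T}$ and $\mathcal{S}_c^{T^{-1}}$, using $M_0^{RS}=-I_0^{S}$ and $I_0^{RS}=-M_0^{S}$.

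With these in hand the first diagram follows at once: $\Phi$ and $\Psi$ are bijections onto $\mathcal{S}_c^T$ and $\mathcal{S}_c^{T^{-1}}$ by the preceding proposition, $R$ and $\tilde{R}$ are the bijections just described, and commutativity of the square is precisely the identity $R\Phi=\Psi\tilde{R}$ together with its inverse form (which holds because all four maps are invertible and $R,\tilde{R}$ are involutions). For the second diagram I would first check that $R$ preserves $\mathcal{S}_c^{rev}$ and that $\tilde{R}$ preserves $(\mathcal{Y}_c^{rev}\times\mathcal{A}_c^0)^{SK}$: the former is read off from the symmetric four-part characterisation of $\mathcal{S}_c^{rev}$, under which each condition at $+\infty$ maps to its counterpart at $-\infty$; the latter holds because $\tilde{R}$ interchanges the two $\liminf$ conditions defining $\mathcal{Y}_c^{rev}$ and so fixes their intersection. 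The commutativity then transfers verbatim by restriction.

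The step requiring the most care, and the one I would flag as the main obstacle, is verifying that $\Phi$ and $\Psi$ genuinely restrict to \emph{bijections onto} $\mathcal{S}_c^{rev}$. The subtlety is that $\Phi$ applied to a pair in $(\mathcal{Y}_c^{rev}\times\mathcal{A}_c^0)^{SK}$ forces only the two $M$-side conditions of $\mathcal{S}_c^{rev}$ (namely $M_0<\infty$ and $\limsup_{x\to\infty}S_x=M_\infty$), and does not on its own force the $I$-side conditions; the correct domain is the common part $\Phi^{-1}(\mathcal{S}_c^{rev})$, which matches the intersection characterisation $\mathcal{S}_c^{rev}=\Phi((\mathcal{Y}_c^{rev}\times\mathcal{A}_c^0)^{SK})\cap\Psi((\mathcal{Y}_c^{rev}\times\mathcal{A}_c^0)^{SK})$, the continuous analogue of the discrete remark. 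I would resolve this by showing that for $S\in\mathcal{S}_c^{rev}$ the pair $\Phi^{-1}S$ lies in $(\mathcal{Y}_c^{rev}\times\mathcal{A}_c^0)^{SK}$ and that $\tilde{R}\Phi^{-1}S=\Psi^{-1}RS$, a direct consequence of the intertwining identities; this makes $\tilde{R}$ a bijection from $\Phi^{-1}(\mathcal{S}_c^{rev})$ onto $\Psi^{-1}(\mathcal{S}_c^{rev})$, and combined with the bijectivity of $\Phi$, $\Psi$ and $R$ established for the first diagram it closes the argument and confirms that the second diagram consists of bijections compatible with $R$ and $\tilde{R}$.
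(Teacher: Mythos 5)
Your proof is correct, and the direct computations in your first two paragraphs are exactly the ``straightforward'' argument the paper has in mind: the discrete analogue (Lemma \ref{dualityrel}) is stated with its proof explicitly omitted, and the continuous version is given without proof, so there is nothing to compare against beyond the intended pointwise verification of $R\Psi=\Phi\tilde{R}$ and $R\Phi=\Psi\tilde{R}$, which you carry out correctly. The one substantive contribution of your write-up is the subtlety you flag for the second diagram, and it is a genuine one: $\Phi$ maps $(\mathcal{Y}_c^{rev}\times\mathcal{A}_c^0)^{SK}$ bijectively onto $\left\{S\::\:M_0<\infty,\ \limsup_{x\to\infty}S_x=M_\infty\right\}$ (the continuous analogue of $\mathcal{S}^{T^{-1}T}$), which strictly contains $\mathcal{S}_c^{rev}$ in general -- take $S$ tending to $+\infty$ on the right while oscillating boundedly on the left with $\liminf_{x\to-\infty}S_x>I_{-\infty}$. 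This is consistent with the paper's own remark following Theorem \ref{goodsetthm} that $\mathcal{S}^{rev}=\Phi(\mathcal{Y}^{rev})\cap\Psi(\mathcal{Y}^{rev})$, so the second diagram must indeed be read with $\Phi$ and $\Psi$ restricted to the preimages of $\mathcal{S}_c^{rev}$, exactly as you propose; your chain $\tilde{R}\Phi^{-1}S=\Psi^{-1}\Psi\tilde{R}\Phi^{-1}S=\Psi^{-1}R\Phi\Phi^{-1}S=\Psi^{-1}RS$, valid because $\tilde{R}\Phi^{-1}S\in(\mathcal{Y}_c^{+}\times\mathcal{A}_c^0)^{SK}$ where $\Psi^{-1}\Psi$ is the identity, is the right way to close that square. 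Your treatment of the only other point needing care -- that the Skorohod constraint is equivalent to the vanishing of the nonnegative measure $Y\,dA$ on all of $\mathbb{R}$ and is therefore reflection-invariant -- is also correct.
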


For the characterization of $\mathcal{S}^{inv}_c$, we only need to make minor changes of the argument used in the discrete case to establish the corresponding result, and so we omit the details and just give a statement. To this end, let us introduce some notation. For any strictly increasing function $F : \mathbb{R} \to \mathbb{R}$ satisfying $\lim_{x \rightarrow \infty} F(x)=\infty$, define
\[\mathcal{S}^+_{c,F}:=\left\{S\in\mathcal{S}^0_c:\:\:\lim_{x\rightarrow \infty}\frac{S_x}{F(x)}=1 \right\},\]
and for any strictly increasing function $F : \mathbb{R} \to \mathbb{R}$ satisfying $\lim_{x \rightarrow -\infty} F(x)=-\infty$,
\[\mathcal{S}^-_{c,F}:=\left\{S\in\mathcal{S}^0_c:\:\:\lim_{x\rightarrow -\infty}\frac{S_x}{F(x)}=1 \right\}.\]
Also, for any nonnegative real number $K$, let
\[\mathcal{S}^+_{c,K}:=\left\{S \in \mathcal{S}^0_c\::\:  \sup_{x \in \mathbb{R}} (M_x -I_x) =K, \: \limsup_{x\to \infty}S_x - \liminf_{x \to \infty}S_x=K \right\},\]
\[\mathcal{S}^+_{c,K}:=\left\{S \in \mathcal{S}^0_c\::\:  \sup_{x \in \mathbb{R}} (M_x -I_x) =K, \: \limsup_{x\to -\infty}S_x - \liminf_{x \to -\infty}S_x=K \right\}.\]

\begin{thm} For $S \in \mathcal{S}^0_c$, $S \in \mathcal{S}^{inv}_c$ if and only if $S \in \mathcal{S}^-_{c,*_1} \cap \mathcal{S}^+_{c,*_2}$, where $*_1$ and $*_2$ are some $F$ or $K$. Moreover, if the condition holds, then $T^kS \in \mathcal{S}^-_{c,*_1} \cap \mathcal{S}^+_{c,*_2}$ for any $k\in \mathbb{Z}$.
\end{thm}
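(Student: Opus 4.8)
The plan is to follow the discrete argument of Theorem~\ref{characterizationinv} essentially verbatim, splitting the proof into the four steps embodied by Lemmas~\ref{cha:goodset}, \ref{cha:invariant}, \ref{cha:key1} (with \ref{maxlem}) and \ref{cha:key2}. The backbone of all four is the collection of identities recorded in Theorem~\ref{goodsetthm}; in the continuous setting these are supplied by the $\Phi$/$\Psi$--Skorohod framework just developed. Indeed, since $TS=\Psi\Phi^{-1}S$ and $T^{-1}S=\Phi\Psi^{-1}S$, the same computation as in the discrete case yields $I^{TS}_x=M_x-2M_0$ and $TV=W$ for $S\in\mathcal{S}^{T^{-1}T}_c$, and dually $M^{T^{-1}S}_x=I_x-2I_0$, $T^{-1}W=V$ for $S\in\mathcal{S}^{TT^{-1}}_c$, where $\mathcal{S}^{rev}_c=\mathcal{S}^{T^{-1}T}_c\cap\mathcal{S}^{TT^{-1}}_c$. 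I would establish these first, together with the already-stated characterisation of $\mathcal{S}^{rev}_c$.

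For the ``if'' direction and the final (``moreover'') claim, I would prove the continuous analogues of Lemmas~\ref{cha:goodset} and \ref{cha:invariant}: namely that $\mathcal{S}^-_{c,*_1}\cap\mathcal{S}^+_{c,*_2}\subseteq\mathcal{S}^{rev}_c$ and that each such set is invariant under $T$ and $T^{-1}$. In the sub-critical direction one uses that $S\in\mathcal{S}^+_{c,F}$ forces $\lim_{x\to\infty}M_x/F(x)=\lim_{x\to\infty}I_x/F(x)=1$, whence $\lim_{x\to\infty}(TS)_x/F(x)=\lim_{x\to\infty}(2M_x-S_x-2M_0)/F(x)=1$, and similarly for $T^{-1}S$. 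In the critical direction, $S\in\mathcal{S}^+_{c,K}$ gives $M_x\equiv M_\infty$, $I_x\equiv I_\infty$ for all large $x$, so the computations of $\limsup(TS)_x-\liminf(TS)_x=K$ and $\sup_x(M^{TS}_x-I^{TS}_x)=K$ carry over. The only adjustment to the discrete proofs is that the step ``$S_n\le S_{n_0}-1$'' used to show $\limsup_{x\to\infty}S_x=M_\infty$ in the critical case must be replaced by ``$S_x\le S_{x_0}-\varepsilon$'' for a suitable $\varepsilon>0$; the conclusion is unchanged.

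For the ``only if'' direction, I would split according to the boundary behaviour at each infinity. When $\limsup_{x\to\infty}S_x=\infty$, the argument of Lemma~\ref{cha:key1} first upgrades this to $\lim_{x\to\infty}S_x=\infty$ (else $\inf_{x\ge0}(T^{-1}S)_x=-\infty$ and $T^{-1}S\notin\mathcal{S}^{T^{-1}}_c$), and then reduces to the analogue of Lemma~\ref{maxlem}, i.e.\ $\lim_{x\to\infty}M_x/I_x=1$, which places $S$ in $\mathcal{S}^+_{c,F}$ with $F=M$. The recursive contradiction there, built on $\delta_0=\delta$ and $1+\delta_k=(1-\delta_{k-1})^{-1}$, is a statement about real numbers and transfers unchanged once the identity $M^{T^{-1}S}_x=I_x-2I_0$ is in hand. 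When $\limsup_{x\to\infty}S_x<\infty$, I would run the analogue of Lemma~\ref{cha:key2}: set $K=M_\infty-I_\infty$, $L=\sup_x(M_x-I_x)$, and show $L\le K$, so that $S\in\mathcal{S}^+_{c,K}$.

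The main obstacle is this last step, where the discrete proof exploits the integer $\pm1$ increments in two ways that have no literal continuous counterpart: to produce a definite gain of $+1$ per time step in the excess $W^{T^kS}-\tilde M^{T^kS}$ (with $\tilde M=M_\infty-M$), and to attain suprema and infima exactly at finite points. Both are resolved by working with the strictly positive gap $\delta:=L-K>0$ in place of the integer $1$: tracing the construction shows that if $L>K$ one obtains points $x_k$ with $W^{T^kS}_{x_k}-\tilde M^{T^kS}_{x_k}\ge(W_{x_0}-\tilde M_{x_0})+k\delta\to\infty$, contradicting the bound $\sup_x(W^{T^kS}_x-\tilde M^{T^kS}_x)\le K$ coming from $T^kS\in\mathcal{S}^{rev}_c$. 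The requisite extrema need only be attained to within an arbitrarily small error, which is guaranteed at finite points because the reversibility conditions force $M_0,I_0\in\mathbb{R}$ with $\liminf_{x\to-\infty}S_x=I_{-\infty}$ and $\limsup_{x\to\infty}S_x=M_\infty$, so the sup and inf defining $M_x$ and $I_x$ are not escaping to $\mp\infty$. These are exactly the ``minor changes'' required, and with them the four continuous lemmas combine precisely as in the proof of Theorem~\ref{characterizationinv}.
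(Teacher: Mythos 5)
Your proposal is correct and follows exactly the route the paper prescribes: the paper itself omits the proof of this theorem, stating only that ``we only need to make minor changes of the argument used in the discrete case,'' and your adaptation of Lemmas \ref{cha:goodset}--\ref{cha:key2} supplies precisely those changes. In particular, you correctly identify the two genuine uses of integrality in the discrete argument (the $+1$ gain in Lemma \ref{cha:key2} and the exact attainment of extrema) and your fixes — replacing the unit gain by the positive gap $\delta=L-K$ with small $\epsilon$-losses per iteration, and working with approximate maximisers/minimisers — are sound.
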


\subsection{Invariance in distribution} Given the set-up in the previous section, it is now straightforward to check the continuous counterpart of Theorem \ref{mrd}. Since the proof is identical to the latter result, we omit it.

\begin{thm}\label{mr2c} Suppose $S$ is a random process supported on $\mathcal{S}^{rev}_c$. It is then the case that any two of the three following conditions imply the third:
\[RS\buildrel{d}\over{=}S,\qquad \tilde{R}(W,M-M_0)\buildrel{d}\over{=}(W,M-M_0),\qquad TS\buildrel{d}\over{=}S.\]
Moreover, in the case that two of the above conditions are satisfied, then the distribution of $S$ is actually supported on $\mathcal{S}^{inv}_c$.
\end{thm}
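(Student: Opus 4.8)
The plan is to mirror the discrete argument for Theorem \ref{mrd} exactly, using the continuous versions of the $\Phi$, $\Psi$, $R$, $\tilde{R}$ maps and their duality relations that were just established. The statement asserts the equivalence of three symmetry conditions "any two imply the third," and the author explicitly says the proof is identical to the discrete case. So I would first recast the three conditions in terms of the path maps, then verify the three cyclic implications by direct substitution using the identities $TS = \Psi\Phi^{-1}S$, $T^{-1}S = \Phi\Psi^{-1}S$, and the duality relations $R\Psi = \Phi\tilde{R}$, $R\Phi = \Psi\tilde{R}$.

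First I would restate the three conditions as statements about distributions of paths. The condition $RS \buildrel{d}\over{=} S$ is already in the desired form. For the middle condition, note that $\Phi^{-1}S = (W, M-M_0)$ by definition of $\Phi^{-1}$ in the continuous setting, and so $\tilde{R}(W,M-M_0) \buildrel{d}\over{=} (W,M-M_0)$ is exactly $\tilde{R}\Phi^{-1}S \buildrel{d}\over{=} \Phi^{-1}S$; applying the duality $R\Psi = \Phi\tilde{R}$ (equivalently $\tilde{R}\Phi^{-1} = \Psi^{-1}R$) this rewrites as $\Psi^{-1}RS \buildrel{d}\over{=} \Phi^{-1}S$. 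The third condition $TS \buildrel{d}\over{=} S$ unfolds via $TS = \Psi\Phi^{-1}S$, and since $\Psi$ is a bijection on the relevant set this is equivalent to $\Psi^{-1}S \buildrel{d}\over{=} \Phi^{-1}S$. This gives me the three restated conditions, labelled say \eqref{cond1}, \eqref{cond2}, \eqref{cond3}, in precise analogy with the discrete proof.

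Then I would carry out the three verifications. Assuming the first two conditions, I compute $TS = \Psi\Phi^{-1}S \buildrel{d}\over{=} \Psi\Psi^{-1}RS = RS \buildrel{d}\over{=} S$, yielding the third. Assuming the first and third, I compute $\tilde{R}\Phi^{-1}S = \Psi^{-1}RS \buildrel{d}\over{=} \Psi^{-1}S \buildrel{d}\over{=} \Phi^{-1}S$ (using $RS\buildrel{d}\over=S$ for the middle step and the third condition for the last), yielding the second. Assuming the second and third, I compute $RS \buildrel{d}\over{=} R\Psi\Phi^{-1}S = \Phi\tilde{R}\Phi^{-1}S \buildrel{d}\over{=} \Phi\Phi^{-1}S = S$, yielding the first. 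Each uses one of the duality relations from the continuous analogue of Lemma \ref{dualityrel} together with the fact that $\Phi\Phi^{-1} = \Psi\Psi^{-1} = \mathrm{id}$ on the appropriate domains. For the final claim, once any two conditions hold we have $TS \buildrel{d}\over{=} S$, hence $T^kS \buildrel{d}\over{=} S$ for all $k\in\mathbb{Z}$ by iteration; since the support is in $\mathcal{S}^{rev}_c$ and $T^kS$ is thus well-defined and lies in $\mathcal{S}^{rev}_c$ for every $k$, the distribution is supported on $\mathcal{S}^{inv}_c$ by definition of the latter set.

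The one genuine point requiring care — and the step I expect to be the main obstacle — is justifying the manipulations "$\buildrel{d}\over{=}$ composed with a fixed measurable map preserves the distributional identity" while ensuring all maps are applied on the correct domains. In the continuous setting $\Phi^{-1}$ and $\Psi^{-1}$ are only defined on $\mathcal{S}^T_c$ and $\mathcal{S}^{T^{-1}}_c$ respectively, and $\tilde{R}$ acts on pairs in $\mathcal{Y}_c\times\mathcal{A}^0_c$ rather than on single paths, so I must confirm that the hypothesis "$S$ supported on $\mathcal{S}^{rev}_c$" guarantees $S$, $RS$, $TS$, $T^{-1}S$ all lie almost surely in the domains where the relevant bijections and duality relations hold. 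This is precisely the content of the commutative-diagram lemma (the continuous version of Lemma \ref{dualityrel}), which establishes $\mathcal{S}^{rev}_c$ is preserved and all maps restrict to bijections on $(\mathcal{Y}_c^{rev}\times\mathcal{A}^0_c)^{SK}$; invoking that lemma resolves the domain bookkeeping, after which the distributional equalities follow because composing a random element with a fixed measurable map commutes with equality in distribution.
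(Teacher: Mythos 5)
Your proposal is correct and is essentially the paper's own argument: the paper explicitly omits the proof of this theorem as being identical to that of Theorem \ref{mrd}, and your restatement of the three conditions via $\Phi^{-1}$, $\Psi^{-1}$, the three cyclic implications using the duality relations, and the iteration $T^kS\buildrel{d}\over{=}S$ to land in $\mathcal{S}^{inv}_c$ reproduce that discrete proof line by line in the continuous setting. Your attention to the domain bookkeeping (that $\mathcal{S}^{rev}_c$ keeps everything inside $(\mathcal{Y}_c^{rev}\times\mathcal{A}_c^0)^{SK}$ where the maps are bijections) is exactly the point the continuous commutative-diagram lemma is there to supply.
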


The second condition in the previous result, $\tilde{R}(W,M-M_0)\buildrel{d}\over{=}(W,M-M_0)$, is more complicated than the corresponding condition for the discrete setting. The following proposition provides a condition under which we can revert to the simpler requirement.

\begin{prop}\label{localtimed}
Suppose $S$ is a random process supported on $\mathcal{S}^{rev}_c$. If there exists a measurable function $L : \mathcal{Y}_c \to \mathcal{A}^0_c$ such that, $\mathbf{P}$-a.s.,
\[L(W)=M-M_0, \qquad L(\tilde{R}W)=\tilde{R}(M-M_0), \]
then the conditions $\tilde{R}(W,M-M_0)\buildrel{d}\over{=}(W,M-M_0)$ and $\bar{W}\buildrel{d}\over{=}W$ are equivalent, where, as in the discrete setting, we write $\bar{W}=\tilde{R}W$.
\end{prop}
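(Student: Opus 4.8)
The plan is to exploit the fact that, under the hypothesis on $L$, the second coordinate of the pair $(W,M-M_0)$ is almost surely a fixed measurable image of the first coordinate, so that the joint law of the pair is entirely determined by the marginal law of the carrier $W$. First I would record the two almost-sure identities $M-M_0=L(W)$ and $\tilde{R}(M-M_0)=L(\tilde{R}W)=L(\bar{W})$ supplied by the assumption, together with the fact that $\tilde{R}$ acts coordinatewise on pairs, so that $\tilde{R}(W,M-M_0)=(\bar{W},\tilde{R}(M-M_0))$, $\mathbf{P}$-a.s.

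For the implication $\tilde{R}(W,M-M_0)\buildrel{d}\over{=}(W,M-M_0)\Rightarrow \bar{W}\buildrel{d}\over{=}W$, I would simply project onto the first coordinate: the first marginal of $\tilde{R}(W,M-M_0)$ is $\bar{W}$ while that of $(W,M-M_0)$ is $W$, and equality in distribution of the pairs forces equality of the first marginals. This direction requires nothing beyond measurability of the projection map, and does not even use the existence of $L$.

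For the converse I would introduce the measurable map $G:\mathcal{Y}_c\to\mathcal{Y}_c\times\mathcal{A}^0_c$ given by $G(Y):=(Y,L(Y))$. Assuming $\bar{W}\buildrel{d}\over{=}W$ and using the standard fact that a measurable map preserves equality in distribution, we obtain $G(\bar{W})\buildrel{d}\over{=}G(W)$, that is $(\bar{W},L(\bar{W}))\buildrel{d}\over{=}(W,L(W))$. Substituting the almost-sure identities then finishes the argument: $(W,L(W))=(W,M-M_0)$ and $(\bar{W},L(\bar{W}))=(\bar{W},\tilde{R}(M-M_0))=\tilde{R}(W,M-M_0)$, both $\mathbf{P}$-a.s., so the distributional equality transfers to $\tilde{R}(W,M-M_0)\buildrel{d}\over{=}(W,M-M_0)$, as desired. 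The only point needing a little care — and the closest thing to an obstacle in an otherwise short argument — is the clean passage between the almost-sure identities and the distributional equalities: because $L(W)=M-M_0$ and $L(\bar{W})=\tilde{R}(M-M_0)$ hold $\mathbf{P}$-a.s., the random pairs $(W,L(W))$ and $(W,M-M_0)$ have identical laws (being almost surely equal), and likewise for their reflected counterparts, which is exactly what allows the two ends of the argument to be glued together.
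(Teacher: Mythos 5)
Your argument is correct and is exactly the routine argument the paper leaves implicit (Proposition \ref{localtimed} is stated without proof, being regarded as immediate): one direction is projection onto the first coordinate, and the other pushes $\bar{W}\buildrel{d}\over{=}W$ through the measurable map $Y\mapsto(Y,L(Y))$ and uses the almost-sure identities to identify the resulting pairs with $(W,M-M_0)$ and $\tilde{R}(W,M-M_0)$. Nothing further is needed.
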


\subsection{Brownian motion with drift}

As an example of a continuous invariant measure for $T$, we consider the process $S=(S_x)_{x \in \mathbb{R}}$ given by a two-sided Brownian motion with positive drift $c$. Namely, for $x\geq 0$, we define $S_x=B^1_x+cx$, $S_{-x}=-(B^2_x+cx)$, where $B^1,B^2$ are independent Brownian motions. We will write $\nu_c$ for the law of this process.

The main theorem of this subsection is the following. We will give two different proofs. The first uses the classical result on the scaling limit of simple random walks. The second is a direct application of Theorem \ref{mr2c}.

{\thm\label{invbm} If $S$ is the two-sided Brownian motion with positive drift $c$, then $TS \buildrel{d}\over{=}S$.}

\subsubsection{Proof of Theorem \ref{invbm} via simple random walk scaling limit} To begin with, we introduce the notation $\mu^{p}$ to represent the probability measure on $\mathcal{S}^0_c$ given by the linear interpolation of the two-sided random walk with one step distribution given by $P(S_n-S_{n-1}=1)= 1-p$ and $P(S_n-S_{n-1}=-1)=p$. As shown in Section \ref{examplessec}, we have the invariance of $\mu^{p}$ under $T$ for $p < \frac{1}{2}$. We will work on the high density limit of the random walk ($p\rightarrow\frac12$) to transfer the latter result to the Brownian motion with drift.

We start by presenting two lemmas. For a probability measure $\mu$ on $\mathcal{S}^0_c$ and $a, b >0$, we write $\mu_{a,b}$ to be the scaled measure given by
\[\mu_{a,b}\left(S \in A\right)= \mu\left( aS_{b \cdot} \in A\right).\]

\begin{lem} Let $a, b >0$. If $\mu$ is invariant under $T$, then $\mu_{a,b}$ is also invariant under $T$.
\end{lem}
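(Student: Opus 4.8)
The plan is to exploit the exact covariance of Pitman's transformation $T$ under the spatial scaling that defines $\mu_{a,b}$. I would introduce the scaling operator $\Theta_{a,b}:\mathcal{S}^0_c\to\mathcal{S}^0_c$ given by $(\Theta_{a,b}S)_x=aS_{bx}$, and observe that by definition $\mu_{a,b}$ is precisely the pushforward $(\Theta_{a,b})_*\mu$. Since invariance of $\mu$ under $T$ is the statement $T_*\mu=\mu$ (with $T$ defined $\mu$-a.s., which requires $\mu$ to be supported on $\mathcal{S}^T_c$), the whole lemma reduces to the single algebraic identity
\[
T\circ\Theta_{a,b}=\Theta_{a,b}\circ T,
\]
valid on the domain $\mathcal{S}^T_c$.

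First I would verify that $\Theta_{a,b}$ preserves the relevant domains: since $a,b>0$, the change of variables $z=bx$ is an increasing bijection of $\mathbb{R}$, so $\limsup_{x\to-\infty}(\Theta_{a,b}S)_x=a\limsup_{z\to-\infty}S_z$, and hence $\Theta_{a,b}S\in\mathcal{S}^T_c$ if and only if $S\in\mathcal{S}^T_c$. The key computation is then that the past maximum scales in the obvious way: writing $M^{\Theta_{a,b}S}_x=\sup_{y\le x}(\Theta_{a,b}S)_y$ and using that $y\mapsto by$ is increasing together with $a>0$, one gets $M^{\Theta_{a,b}S}_x=a\,\sup_{y\le x}S_{by}=a\,M_{bx}$. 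Substituting this into the defining formula $(TS)_x=2M_x-S_x-2M_0$ yields
\[
(T\Theta_{a,b}S)_x=2aM_{bx}-aS_{bx}-2aM_0=a(TS)_{bx}=(\Theta_{a,b}TS)_x,
\]
which is exactly the claimed commutation.

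With the commutation relation in hand, the conclusion is immediate from functoriality of pushforwards:
\[
T_*\mu_{a,b}=T_*(\Theta_{a,b})_*\mu=(T\circ\Theta_{a,b})_*\mu=(\Theta_{a,b}\circ T)_*\mu=(\Theta_{a,b})_*(T_*\mu)=(\Theta_{a,b})_*\mu=\mu_{a,b},
\]
where the fourth equality uses the commutation identity and the fifth uses the invariance $T_*\mu=\mu$. Equivalently, in the language of random paths, if $S\sim\mu_{a,b}$ then $S=\Theta_{a,b}S'$ for some $S'\sim\mu$, whence $TS=\Theta_{a,b}(TS')$ with $TS'\sim\mu$, so that $TS\sim\mu_{a,b}$.

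There is no serious obstacle here: the content is entirely the scaling covariance of the past-maximum functional, which is robust precisely because $a,b>0$ keep all suprema and the orientation of the time axis intact. The only points requiring a word of care are (i) checking that $\Theta_{a,b}$ maps the support of $\mu$ into $\mathcal{S}^T_c$, so that the pushforward manipulations are legitimate, and (ii) noting that the same identity $T\Theta_{a,b}=\Theta_{a,b}T$ (together with its analogue for $T^{-1}$, obtained by the symmetric computation with the future minimum) extends the argument from $\mathcal{S}^T_c$ to $\mathcal{S}^{rev}_c$, so that $\mu_{a,b}$ is supported on $\mathcal{S}^{rev}_c$ whenever $\mu$ is.
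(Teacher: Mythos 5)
Your proof is correct and takes essentially the same approach as the paper: the paper's one-line proof is exactly the observation $TS^{a,b}=(TS)^{a,b}$, which is the commutation identity $T\circ\Theta_{a,b}=\Theta_{a,b}\circ T$ that you verify in detail via the scaling of the past maximum. Your additional care about domains and the pushforward bookkeeping fills in what the paper leaves implicit.
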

\begin{proof}
Let $S^{a,b}_x=aS_{bx}$ for $a,b >0$ and $x \in \mathbb{R}$.
The claim follows from the simple observation that $TS^{a,b}=(TS)^{a,b}$.
\end{proof}

\begin{lem}
Suppose $\{\mu_n\}$ is a sequence of probability measures on $\mathcal{S}^0_c$, each of which is invariant under $T$, and $\mu_n$ converges weakly to $\mu$. Moreover, suppose that $\mu_n$ satisfies for any $y \in \mathbb{R}$,
\[\lim_{x \to -\infty} \limsup_{n \to \infty} \mu_n(M_x > S_y)=0\]
and $\mu$ satisfies for any $y \in \mathbb{R}$,
\[\lim_{x \to -\infty} \mu(M_x > S_y)=0.\]
It then holds that $\mu$ is also invariant under $T$.
\end{lem}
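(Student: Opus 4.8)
The plan is to verify $TS\buildrel{d}\over{=}S$ under $\mu$ by testing against bounded continuous functionals of finitely many coordinates, exploiting the invariance of the $\mu_n$ together with an approximation of the past-maximum that makes the relevant map continuous. The essential difficulty, and what I expect to be the main obstacle, is that $T$ is \emph{not} continuous for the topology of locally uniform convergence on $C(\mathbb{R},\mathbb{R})$: the value $(TS)_y=2M_y-S_y-2M_0$ depends on $M_y=\sup_{u\le y}S_u$ and $M_0$, which are suprema over the entire past and hence are not determined by the restriction of $S$ to any compact interval. The two tail hypotheses are precisely what is needed to control this dependence and to interchange the limits $n\to\infty$ and $a\to-\infty$ below. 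Throughout I assume, as is implicit in the statement, that $T$ is $\mu$- and $\mu_n$-a.s.\ well-defined (i.e.\ the measures are supported on $\mathcal{S}^T_c$), which is guaranteed by the tail conditions since they force $\lim_{x\to-\infty}M_x<S_y$ almost surely.

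First I would introduce, for $a\le 0$, the truncated transformation
\[
(T^aS)_y:=2M^a_y-S_y-2M^a_0,\qquad M^a_y:=\sup_{a\le u\le y}S_u\quad(y\ge a).
\]
Since $S\mapsto\sup_{a\le u\le y}S_u$ is $1$-Lipschitz for the supremum norm on $[a,y]$, the map $S\mapsto\big((T^aS)_{y_1},\dots,(T^aS)_{y_k}\big)$ is continuous on $\mathcal{S}^0_c$ for any fixed $a\le\min\{0,y_1,\dots,y_k\}$. Consequently $F\circ T^a$ is bounded and continuous whenever $F$ is a bounded continuous function of the coordinates $S_{y_1},\dots,S_{y_k}$, so that $\mathbf{E}_{\mu_n}[F(T^aS)]\to\mathbf{E}_{\mu}[F(T^aS)]$ by weak convergence.

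The key comparison is the pointwise identity $M_y=\max(M_a,M^a_y)$ valid for $a\le y$, which, using $M^a_y\ge S_y$, yields the event inclusion
\[
\{M_y\neq M^a_y\}=\{M_a>M^a_y\}\subseteq\{M_a>S_y\}.
\]
Hence on the complement of $E_a:=\{M_a>S_0\}\cup\bigcup_i\{M_a>S_{y_i}\}$ we have $M_0=M^a_0$ and $M_{y_i}=M^a_{y_i}$ for all $i$, so $F(TS)=F(T^aS)$, giving for any $\nu\in\{\mu\}\cup\{\mu_n\}$ the bound
\[
\big|\mathbf{E}_\nu[F(TS)]-\mathbf{E}_\nu[F(T^aS)]\big|\le\|F\|_\infty\Big(\nu(M_a>S_0)+\sum_i\nu(M_a>S_{y_i})\Big).
\]
Writing $\delta_1(a)$ and $R_n(a)$ for the right-hand side with $\nu=\mu$ and $\nu=\mu_n$ respectively, and using $\mathbf{E}_{\mu_n}[F(TS)]=\mathbf{E}_{\mu_n}[F(S)]$ (invariance of $\mu_n$) to cancel the $\mu_n$ terms, I would telescope to
\[
\big|\mathbf{E}_\mu[F(TS)]-\mathbf{E}_\mu[F(S)]\big|\le\delta_1(a)+R_n(a)+\big|\mathbf{E}_{\mu_n}[F(T^aS)]-\mathbf{E}_\mu[F(T^aS)]\big|+\big|\mathbf{E}_{\mu_n}[F(S)]-\mathbf{E}_\mu[F(S)]\big|.
\]

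To finish, I take $\limsup_{n\to\infty}$ for fixed $a$: the last two terms vanish by weak convergence (the former since $F\circ T^a$ is bounded continuous, the latter since $F$ is), leaving $\big|\mathbf{E}_\mu[F(TS)]-\mathbf{E}_\mu[F(S)]\big|\le\delta_1(a)+\limsup_{n\to\infty}R_n(a)$. Letting $a\to-\infty$, the hypothesis on $\mu$ gives $\delta_1(a)\to0$ and the hypothesis on the $\mu_n$ gives $\limsup_{n\to\infty}R_n(a)\to0$, whence $\mathbf{E}_\mu[F(TS)]=\mathbf{E}_\mu[F(S)]$. Since this holds for all bounded continuous $F$ of finitely many coordinates, the finite-dimensional distributions of $TS$ and $S$ under $\mu$ coincide, and therefore $TS\buildrel{d}\over{=}S$, i.e.\ $\mu$ is invariant under $T$. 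The only genuinely delicate point is the limit interchange forced by the discontinuity of $T$, which the truncation $T^a$ and the two tail estimates are tailored to resolve; the remaining steps are routine.
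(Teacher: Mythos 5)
Your proof is correct and follows essentially the same route as the paper: truncate the past maximum at a finite level $a$ so that the truncated transformation becomes continuous, use weak convergence for the truncated functional, and control the discrepancy between $T^aS$ and $TS$ uniformly in $n$ via the two tail hypotheses (the paper tests against continuous functionals of $S|_{[-L,L]}$ rather than finite-dimensional marginals, and bounds the error by $2\|f\|_\infty\mu_n(M_{-L'}>S_{-L})$, but these are cosmetic differences). The only nitpick is that your comparison bound should carry a factor $2\|F\|_\infty$ rather than $\|F\|_\infty$, which is immaterial to the limit argument.
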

\begin{proof}
We need to show that for any $L>0$ and continuous bounded function $f: C([-L,L],\mathbb{R}) \to \mathbb{R}$,
\[\mu\left(f \left(S|_{[-L,L]}\right)\right) = \mu\left(f \left(TS|_{[-L,L]}\right)\right).\]
Let
\[
M^{L'}_x:=\left\{\begin{array}{ll}
                   M^{L'}_x=S_{-L'}, & \mbox{if }x < -L', \\
                   \sup_{-L' \le y \le x} S_y, & \mbox{if }-L' \le x \le L',\\
                   \sup_{-L' \le y \le L'} S_y, & \mbox{otherwise.}
                 \end{array}\right.\]
Also, let $(T^{L'}S)_x:=2M^{L'}_x-S_x-2M^{L'}_0$. Then, $T^{L'} :\mathcal{S}^0_c \to \mathcal{S}^0_c $ is continuous, and so
\begin{equation}\label{argxxx}
\lim_{n \to \infty} \mu_n \left(f \left((T^{L'}S)|_{[-L,L]}\right)\right) = \mu\left(f \left((T^{L'}S)|_{[-L,L]}\right)\right),
\end{equation}
for any $L,L'$. Moreover, if $L < L'$ and $M_{-L'} \le S_{-L}$, then $(T^{L'}S)|_{[-L,L]}=(TS)|_{[-L,L]}$. Therefore, for any $L' >L$,
\[\left|\mu_n\left(f \left((T^{L'}S)|_{[-L,L]}\right)\right) - \mu_n\left(f \left((TS)|_{[-L,L]}\right)\right)\right| \le 2 \|f\|_{\infty} \mu_n \left(M_{-L'} > S_{-L}\right).\]
Hence, by assumption, we have that
\[\lim_{L' \to \infty} \lim_{n \to \infty}\left|\mu_n\left(f \left((T^{L'}S)|_{[-L,L]}\right)\right) - \mu_n\left(f \left((TS)|_{[-L,L]}\right)\right)\right|=0,\]
which in conjunction with \eqref{argxxx} implies
\begin{equation}\label{klim}
\lim_{n \to \infty}\mu_n\left(f \left((TS)|_{[-L,L]}\right)\right)=
\lim_{L' \to \infty}  \mu(f (T^{L'}S|_{[-L,L]})).
\end{equation}
Finally observe that since we also have
\[
\left|\mu\left(f \left(T^{L'}S|_{[-L,L]}\right)\right) - \mu\left(f \left(TS|_{[-L,L]}\right)\right)\right| \le 2 \|f\|_{\infty} \mu \left(M_{-L'} > S_{-L}\right),\]
the assumption $\lim_{x \to -\infty}  \mu(M_x > S_y)=0$ for any $x$ implies that the right-hand side of \eqref{klim} is equal to $\mu(f((TS|_{[-L,L]}))$, as desired.
\end{proof}

We next check the assumptions of the previous result for the specific processes of interest.

\begin{lem} Let $c>0$, $p_n=\frac{1}{2}-\frac{c}{2n}$, and $\nu_n:=\mu^{p_n}_{n^{-1},n^{-2}}$. For any $y \in \mathbb{R}$,
\[\lim_{x \to -\infty} \limsup_{n \to \infty} \nu_n\left(M_x > S_y\right)=0\]
and
\[\lim_{x \to -\infty} \nu_c\left(M_x > S_y\right)=0.\]
\end{lem}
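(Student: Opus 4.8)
The plan is to treat both assertions through a single reduction. For any $x<y$, writing $W_x=M_x-S_x$ gives the identity $\{M_x>S_y\}=\{W_x>S_y-S_x\}$, so it suffices to control $\mathbf{P}(W_x>S_y-S_x)$. Two structural features make this tractable: the carrier $W$ is stationary in space, so that $W_x\buildrel{d}\over{=}W_0$, and $W_x=\sup_{u\le x}(S_u-S_x)$ depends only on the increments of $S$ on $(-\infty,x]$, whereas $D:=S_y-S_x$ depends only on the increments on $(x,y]$; by independence of increments, $W_x$ and $D$ are independent. Hence $\mathbf{P}(W_x>S_y-S_x)=\mathbf{P}(W_0>D)$ with $W_0$ and $D$ independent, and for any threshold I would use
\[\mathbf{P}(W_0>D)\le \mathbf{P}\!\left(W_0\ge\tfrac12\mathbf{E}D\right)+\mathbf{P}\!\left(D<\tfrac12\mathbf{E}D\right).\]
The whole proof then reduces to showing each term tends to $0$ as $y-x\to\infty$, with the appropriate uniformity in $n$ in the discrete case.

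For $\nu_c$, the drift is positive so $\mathbf{E}D=c(y-x)>0$ and $D\sim N(c(y-x),y-x)$, while $W_0=\sup_{t\ge0}(S_{-t}-S_0)=\sup_{t\ge0}(\tilde B_t-ct)$ for a standard Brownian motion $\tilde B$, which is exponentially distributed with rate $2c$. Substituting into the displayed bound gives $\mathbf{P}(W_0\ge\tfrac12 c(y-x))=e^{-c^2(y-x)}$ and, by Gaussian concentration, $\mathbf{P}(D<\tfrac12 c(y-x))=\mathbf{P}(N(0,1)<-\tfrac12 c\sqrt{y-x})\to0$; both vanish as $x\to-\infty$, which proves the second statement.

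For the discrete family $\nu_n$, recall that $\nu_n$ is the law of $S^{(n)}_u=n^{-1}\hat S_{n^2u}$ with $\hat S\sim\mu^{p_n}$ and $p_n=\tfrac12-\tfrac{c}{2n}<\tfrac12$ for $n>c$, so that Lemma \ref{Wprops} applies to $\hat S$. Setting $m=\lceil n^2x\rceil$ and $\ell=\lfloor n^2y\rfloor$, a short argument bounding the piecewise-linear interpolant by its values at integer times yields the implication $\{M_x>S_y\}\subseteq\{\hat W_m\ge \hat S_\ell-\hat S_m\}$, where $\hat W_m=\max_{k\le m}\hat S_k-\hat S_m$ is the stationary carrier. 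By Lemma \ref{Wprops}, $\hat W_m$ has the geometric law with $\mathbf{P}(\hat W_m\ge j)=r_n^{\,j}$, $r_n=\tfrac{p_n}{1-p_n}=\tfrac{n-c}{n+c}$, and it is independent of $D:=\hat S_\ell-\hat S_m$, a sum of $\ell-m$ i.i.d.\ increments of mean $c/n$ and variance at most $1$. Applying the displayed bound with $a=\tfrac12\mathbf{E}D=\tfrac{(\ell-m)c}{2n}$, the geometric tail contributes $\mathbf{P}(\hat W_m\ge a)=r_n^{\lceil a\rceil}\le e^{a\log r_n}$, and using $\log r_n\le -\tfrac{2c}{n+c}$ together with $\ell-m\ge n^2(y-x)-2$ gives $\limsup_{n\to\infty}\mathbf{P}(\hat W_m\ge a)\le e^{-c^2(y-x)}$; Chebyshev's inequality gives $\mathbf{P}(D<a)\le \tfrac{4\,\mathrm{Var}(D)}{(\mathbf{E}D)^2}\le\tfrac{4n^2}{(\ell-m)c^2}$, whence $\limsup_{n\to\infty}\mathbf{P}(D<a)\le \tfrac{4}{c^2(y-x)}$. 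Summing the two contributions, $\limsup_{n\to\infty}\nu_n(M_x>S_y)\le e^{-c^2(y-x)}+\tfrac{4}{c^2(y-x)}$, which tends to $0$ as $x\to-\infty$, proving the first statement.

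I expect the main obstacle to be bookkeeping rather than anything conceptual: keeping the tail estimate for $\hat W_m$ uniform as $p_n\uparrow\tfrac12$ (so that the geometric parameter $r_n\to1$ while the exponent $a$ grows like $n$, producing the nondegenerate limit $e^{-c^2(y-x)}$), and correctly passing from the continuous-time supremum defining $M_x$ to the integer-time maximum of $\hat S$ through the linear interpolation. Both are routine, and I would isolate the interpolation estimate as a one-line inequality so that the stationarity-and-independence argument is identical in the discrete and continuous settings.
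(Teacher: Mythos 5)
Your argument is correct. It rests on the same two pillars as the paper's proof -- the past-supremum statistic $W_x=M_x-S_x$ is stationary with an explicitly geometric (resp.\ exponential) law, and it is independent of the forward increment $S_y-S_x$ -- but you execute the estimate differently. The paper first shifts so that the event becomes $\mu^{p_n}(M_{-\ell}\ge 0)$, conditions on $S_{-\ell}=k$ to write the probability as $\mu^{p_n}(S_{-\ell}\ge0)+\sum_{k\le-1}\mu^{p_n}(S_{-\ell}=k)\left(\tfrac{p_n}{1-p_n}\right)^{-k}$, and then observes the combinatorial identity that the sum equals $\mu^{p_n}(S_{-\ell}\ge1)$; everything thus collapses to two Chebyshev bounds on $S_{-\ell}$ and the single clean estimate $2n^2/(\ell c^2)\le 2/(|x|c^2)$. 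Your threshold split at $a=\tfrac12\mathbf{E}D$ avoids that identity at the cost of carrying an extra exponential term $e^{-c^2(y-x)}$, and it requires the uniform control $r_n^{a}\to e^{-c^2(y-x)}$ as $r_n\uparrow1$, which you handle correctly via $\log r_n\le-\tfrac{2c}{n+c}$. Your treatment is arguably more transparent about where the geometric/exponential tail of the carrier enters, and it has the aesthetic advantage of treating the discrete and continuous cases by one and the same argument; the paper instead dispatches the $\nu_c$ statement in one line from $\nu_c(\limsup_{x\to-\infty}S_x=-\infty)=1$ (so $M_x\downarrow-\infty$ a.s.\ and dominated convergence applies), which is shorter than your quantitative Gaussian computation but gives no rate. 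The only points needing care in your write-up are the ones you already flag: the interpolation inclusion $\{M_x>S_y\}\subseteq\{\hat W_m\ge\hat S_\ell-\hat S_m\}$ uses integrality to absorb the boundary error of one lattice step, and one should note $m\le\ell$ for $n$ large; both are routine.
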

\begin{proof} Since $\nu_c (\limsup_{x -\infty}S_x=-\infty)=1$, the second claim of the lemma is obvious. To estimate the probability $\nu_n(M_x > S_y)$, first note that, for any $x<y$,
\begin{align*}
\nu_n \left(M_x > S_y\right) & \le \mu^{p_n} \left(M_{[x n^2]+1} > \min\left\{S_{[yn^2]},S_{[yn^2]+1} \right\}\right)\\
& = \mu^{p_n}\left(M_{[x n^2]+1-[yn^2]} > \min\left\{S_0,S_1\right\} \right)\\
& \le \mu^{p_n} \left(M_{[x n^2]+1-[yn^2]} \ge 0 \right),
\end{align*}
where $[z]$ is the maximum integer not greater than $z$. Thus we only need to show that
\[\lim_{x \to -\infty} \limsup_{n \to \infty} \mu^{p_n} \left(M_{[x n^2]} \ge 0 \right)=0.\]
For any $\ell \ge 1$, we have
\begin{align*}
\mu^{p_n} \left(M_{-\ell} \ge 0 \right) &\leq  \mu^{p_n} \left(S_{-\ell} \ge 0\right) + \sum_{k  \le -1}\mu^{p_n} \left(S_{-\ell}=k\right)\left(\frac{p_n}{1-p_n}\right)^{-k}.
\end{align*}
Now, since $S_{-\ell}\buildrel{d}\over{=}-S_\ell=-\sum_{k=1}^{\ell}(1-2\eta_k)$, we have
\begin{align*}
\mu^{p_n} \left(S_{-\ell} \ge 0\right) &= \mu^{p_n}\left(\sum_{k=1}^{\ell}(1-2\eta_k) \le 0\right)\\
& \le \mu^{p_n}\left(\frac{1}{\ell}\left|\sum_{k=1}^{\ell}(1-2\eta_k) - \frac{\ell c}{n}\right| \ge \frac{c}{n}\right)\\
& \le \frac{n^2}{\ell c^2} E\left(\left(\left(1-2\eta_k\right)-\frac{c}{n}\right)^2\right)\\
& \le \frac{n^2}{\ell c^2}.
\end{align*}
Moreover,
\begin{align*}
 \sum_{k  \le -1}\mu^{p_n} \left(S_{-\ell}=k\right)\left(\frac{p_n}{1-p_n}\right)^{-k}&= \sum_{-\ell \le k  \le -1} \binom{\ell}{\frac{\ell+k}{2}} p_n^{\frac{\ell+k}{2}}(1-p_n)^{\frac{\ell-k}{2}}\left(\frac{p_n}{1-p_n}\right)^{-k}\\
 &=\sum_{-\ell \le k  \le -1} \binom{\ell}{\frac{\ell+k}{2}} p_n^{\frac{\ell-k}{2}}(1-p_n)^{\frac{\ell+k}{2}} \\
& =  \sum_{-\ell \le k  \le -1}\mu^{p_n} (S_{-\ell}=-k)\\
&=\mu^{p_n} (S_{-\ell} \ge 1)\\
& \le \mu^{p_n} (S_{-\ell} \ge 0),
\end{align*}
where $\binom{\ell}{q} \equiv 0$ for $q \notin \mathbb{N}$. Therefore, we have
\[\lim_{x \to -\infty} \limsup_{n \to \infty} \mu^{p_n} \left(M_{[x n^2]} \ge 0\right) \le \lim_{x \to -\infty} \limsup_{n \to \infty}  \frac{2n^2}{[x n^2] c^2} =\lim_{x \to -\infty} \frac{2}{|x|c^2} =0.\]
\end{proof}

Now, keeping the notation $p_n=\frac{1}{2}-\frac{c}{2n}$, the classical invariance principle shows that $\mu^{p_n}_{n^{-1},n^{-2}}$ converges weakly to $\nu^c$, as $n$ goes to infinity. Thus combining the above lemmas yields Theorem \ref{invbm}.

\subsubsection{Proof of Theorem \ref{invbm} via Theorem \ref{mr2c}} We now give our second proof of Theorem \ref{invbm}, which will be via Theorem \ref{mr2c}. To this end, we only need to show that
\[RS\buildrel{d}\over{=}S,\qquad \tilde{R}(W,M-M_0)\buildrel{d}\over{=}(W,M-M_0)\]
under $\nu^c$. By definition, $RS\buildrel{d}\over{=}S$ is obvious. Also, by Proposition \ref{localtimed}, we only need to show that the existence of a measurable function $L:\mathcal{Y}_c\rightarrow\mathcal{A}_c^0$ with the relevant properties, and then check the condition $\bar{W} \buildrel{d}\over{=}W$. This is the aim of the next two lemmas, which complete the proof of Theorem \ref{invbm}.

\begin{lem} Let
\[L(Y)_x:=\limsup_{\epsilon \to 0} \frac{1}{\epsilon} \int_0^x \mathbf{1}_{\{Y_u \le \epsilon\}} du.\]
It is possible to suppose that, $\nu^c$-a.s.,
\[\left(W,L(W)\right) = \left(M-S,M-M_0\right),\qquad L(\tilde{R}W)=\tilde{R}L(W).\]
\end{lem}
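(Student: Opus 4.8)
The plan is to recognise the functional $L(W)$ as the semimartingale local time of the reflected process $W=M-S$ at level $0$, and then to identify this local time with the increasing process $M-M_0$. Under $\nu^c$ the process $S$ is a two-sided Brownian motion with drift, so $W=M-S$ is a continuous semimartingale whose martingale part is (minus) a Brownian motion and whose bounded-variation part is $M$ shifted by the drift; in particular $\langle W\rangle_u=u$. Moreover $W\ge 0$, and, by the Skorohod characterisation established in Theorem \ref{twosidedskor} (equivalently, the very definition of $M$), the measure $dM$ is carried by the zero set $\{u:\ W_u=0\}$, and $M-M_0\in\mathcal{A}_c^0$.

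First I would show that the $\limsup$ in the definition of $L$ is in fact a limit equal to the local time. By the occupation times formula for the continuous semimartingale $W$ (see \cite[Chapter VI]{RY}), there is a jointly measurable family $(L^a_x(W))_{a\ge0,\,x\in\mathbb{R}}$, right-continuous in $a$, such that, $\nu^c$-a.s.\ and simultaneously for all $x$ and all $\epsilon>0$,
\[\int_0^x \mathbf{1}_{\{W_u\le\epsilon\}}\,du=\int_0^x \mathbf{1}_{\{W_u\le\epsilon\}}\,d\langle W\rangle_u=\int_0^\epsilon L^a_x(W)\,da.\]
Dividing by $\epsilon$ and using right-continuity at $a=0$ shows that $L(W)_x$ is a genuine limit equal to the right local time $L^0_x(W)$, for all $x$ at once. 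Next I would identify $L^0(W)$ with $M-M_0$ via Tanaka's formula. Applying the Tanaka--Meyer formula to $(W)^+=W$ at level $0$ gives
\[W_x=W_0+\int_0^x \mathbf{1}_{\{W_u>0\}}\,dW_u+\tfrac12 L^0_x(W),\]
while writing $dW_u=dM_u-dS_u$, using that $dM$ is supported on $\{W_u=0\}$ and that $\{u:\ W_u=0\}$ is Lebesgue-null (so that $\int_0^x \mathbf{1}_{\{W_u=0\}}\,dS_u$ has zero quadratic variation and hence vanishes), gives $\int_0^x \mathbf{1}_{\{W_u=0\}}\,dW_u=M_x-M_0$. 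Comparing the two displays yields $M_x-M_0=\tfrac12 L^0_x(W)$. Combined with the previous paragraph, this identifies $M-M_0$ with the local time of $W$ at $0$ up to the normalising constant recorded in the definition of $L$; checking that this constant is exactly the one making $L(W)=M-M_0$ hold on the nose (the one-sided band $\{0\le W\le\epsilon\}$ accounts for a factor $\tfrac12$ relative to the symmetric local time) is the single genuinely delicate bookkeeping step, and is where I expect the main care to be required.

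Finally, the reversal identity $L(\tilde{R}W)=\tilde{R}L(W)$ is a formal change of variables that holds once the defining limits exist. Substituting $v=-u$,
\[L(\tilde{R}W)_x=\lim_{\epsilon\to0}\frac1\epsilon\int_0^x \mathbf{1}_{\{W_{-u}\le\epsilon\}}\,du=\lim_{\epsilon\to0}\frac1\epsilon\int_{-x}^0 \mathbf{1}_{\{W_v\le\epsilon\}}\,dv=-\lim_{\epsilon\to0}\frac1\epsilon\int_0^{-x}\mathbf{1}_{\{W_v\le\epsilon\}}\,dv=-L(W)_{-x},\]
which is precisely $(\tilde{R}L(W))_x$ by the definition $\tilde{R}A_x=-A_{-x}$; here the genuine convergence established above is what lets one pass the sign through the limit (a bare $\limsup$ would turn into a $\liminf$ under the reflection). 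Measurability of $L:\mathcal{Y}_c\to\mathcal{A}_c^0$ is routine, since for each rational $\epsilon$ and each $x$ the map $Y\mapsto \epsilon^{-1}\int_0^x\mathbf{1}_{\{Y_u\le\epsilon\}}\,du$ is measurable and the $\limsup$ along rational $\epsilon$ preserves measurability; this justifies the phrase ``it is possible to suppose'' in the statement, the point being that one fixes a single measurable version of $L$ agreeing $\nu^c$-a.s.\ with the above identifications for both $W$ and $\tilde{R}W$.
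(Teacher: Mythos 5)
Your route is genuinely different from the paper's: the paper disposes of the first identity in a single line by citing the construction in \cite{Peskir} and declares the reversal identity obvious from the definition, whereas you give a self-contained derivation via the occupation-times formula and the Tanaka--Meyer formula. That derivation is the standard one, and its two main steps are both correct: the occupation-density argument shows the $\limsup$ is a genuine limit equal to the right semimartingale local time, $L(W)_x=L^0_x(W)$, and the Tanaka--Meyer computation (using that $dM$ is carried by $\{W_u=0\}$ and that this zero set is Lebesgue-null) gives $M_x-M_0=\tfrac12 L^0_x(W)$. Your treatment of the reversal identity (where genuine convergence is indeed needed to pass the sign through the limit) and of measurability is also fine.

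The problem is precisely the step you defer. Your two displayed identities combine to give $L(W)=2(M-M_0)$, not $L(W)=M-M_0$, and no bookkeeping removes the factor $2$: with the normalisation $\epsilon^{-1}$ and the one-sided band $\{W_u\le\epsilon\}$ one recovers the \emph{right} local time $L^0(W)$, while the Skorokhod term $M-M_0$ equals the \emph{symmetric} local time $\tfrac12L^0(W)$. A sanity check in the stationary regime confirms this: under $\nu^c$ one has $W_0\sim\mathrm{Exp}(2c)$, so
\[\frac{1}{\epsilon}\,\mathbf{E}\int_0^x\mathbf{1}_{\{W_u\le\epsilon\}}du=\frac{x\left(1-e^{-2c\epsilon}\right)}{\epsilon}\rightarrow 2cx,\]
whereas $\mathbf{E}(M_x-M_0)=\mathbf{E}(W_x-W_0)+\mathbf{E}(S_x)=cx$. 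So the identity as printed requires $\frac{1}{2\epsilon}$ in place of $\frac{1}{\epsilon}$ in the definition of $L$ (a harmless erratum, since Proposition \ref{localtimed} only needs the existence of \emph{some} measurable $L$ with the stated properties); once that is corrected, your argument closes up completely. As written, however, you have flagged the decisive constant as ``delicate'' and then asserted the conclusion without checking it --- and the check goes the other way.
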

\begin{proof} See the proof of \cite[Proof of Theorem 3.1]{Peskir} for a construction of the relevant random variables in such a way that the first equality holds, $\nu^c$-a.s. The second equality is obvious from the definition of $L$.
\end{proof}

\begin{lem}
Under $\nu^c$,
\[\bar{W} \buildrel{d}\over{=}W. \]
\end{lem}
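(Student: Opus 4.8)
The plan is to prove the statement directly by identifying $W$ as a stationary, reversible one-dimensional diffusion and then appealing to the time-reversal invariance of such processes. First I would record the law of $W$ under $\nu^c$. Since $S_y\to-\infty$ as $y\to-\infty$, $\nu^c$-a.s., the past maximum $M_x=\sup_{y\le x}S_y$ is finite for every $x$, and the Skorohod decomposition gives $dW_x=dM_x-dS_x=dL_x-dB_x-c\,dx$, where $L=M-M_0$ increases only on $\{x:W_x=0\}$ and $-B$ is a standard Brownian motion. Thus $W$ is a reflecting Brownian motion on $[0,\infty)$ with constant drift $-c$, reflected at the origin; in particular it is a time-homogeneous strong Markov process. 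Moreover, $W$ is stationary under $\nu^c$: writing $S^{(h)}_y:=S_{y+h}-S_h$, the stationarity of the increments of two-sided Brownian motion with drift gives $S^{(h)}\buildrel{d}\over{=}S$, and since $W_{x+h}=\sup_{y\le x}(S^{(h)}_y-S^{(h)}_x)$ is the same functional of $S^{(h)}$ as $W$ is of $S$, we obtain $(W_{x+h})_{x}\buildrel{d}\over{=}(W_x)_{x}$ for every $h\in\mathbb{R}$. The one-dimensional marginal is the exponential distribution $\pi(dx)=2c\,e^{-2cx}\,dx$ (see \cite{Peskir}).

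Next I would invoke reversibility. The generator of $W$ is $\mathcal{L}=\tfrac12\partial_x^2-c\,\partial_x$ on $(0,\infty)$ with Neumann condition at the origin, and a short integration by parts (using $(\mathcal{L}f)e^{-2cx}=\tfrac{d}{dx}(\tfrac12 f'e^{-2cx})$ together with $f'(0)=0$) shows that $\mathcal{L}$ is self-adjoint on $L^2(\pi)$. Equivalently, the transition semigroup satisfies the detailed balance relation $\pi(dx)\,p_t(x,dy)=\pi(dy)\,p_t(y,dx)$, so $W$ is a reversible stationary Markov process.

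Finally, the conclusion follows from the standard fact that a two-sided stationary reversible Markov process is invariant in law under time reversal. Concretely, for $x_1<\dots<x_n$ the finite-dimensional law of $W$ factorises as $\pi(dy_1)\prod_{i}p_{x_{i+1}-x_i}(y_i,dy_{i+1})$, and detailed balance shows this expression is symmetric under reversing the order of the points; since $\bar{W}_{s}=W_{-s}$ samples $W$ at the reversed arguments, we conclude $\bar{W}\buildrel{d}\over{=}W$.

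The only delicate points are the justification that $W$ is genuinely the stationary two-sided reflecting diffusion (which rests on the finiteness of $M$ and the stationary-increments argument above) and the passage from detailed balance to equality of the full two-sided laws under $x\mapsto-x$. Both are routine once the Markov property and stationarity are in hand, so I do not anticipate a serious obstacle; the main care needed is simply to set up the Skorohod/Markov description of $W$ cleanly so that the classical reversibility of reflecting Brownian motion with drift can be applied.
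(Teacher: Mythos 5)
Your argument is correct and follows essentially the same route as the paper: both identify $W$ as the two-sided stationary reflecting Brownian motion with drift $-c$ whose one-dimensional marginal is the exponential law with parameter $2c$, and then conclude by the reversibility of this diffusion (the paper cites Kent for this last step, whereas you verify detailed balance directly from self-adjointness of the generator on $L^2(\pi)$). The only point to set up carefully, as you note, is the Markov property/starting distribution of $W$, which the paper handles via the decomposition $W_x=\max\{M_y-S_y,\,M^y_x-S_y\}-(S_x-S_y)$ and the independence of $M_y-S_y$ from the future increments — equivalent to your Skorohod-decomposition argument.
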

\begin{proof}
Let $M^y_x:=\sup_{y \le z \le x} S_z$. Then,
\[W_x=\max\{M_y, M^y_x\}-S_x=\max\{M_y-S_y, M^y_x-S_y\}-(S_x-S_y)\]
for $x \ge y$. Since $M_y-S_y$ and $(S_x-S_y)_{x \ge y}$ are independent, from \cite{Peskir}, $(W_x)_{x \ge y}$ is the reflected Brownian motion with negative drift $c>0$ starting from $M_y-S_y$ at time $y$. Also, the distribution of $M_y-S_y$ is exponential with parameter $2c$, which is the stationary probability measure of the reflected Brownian motion with negative drift $c>0$. From \cite[Sections 8 and 9]{Kent}, for example, it follows that $\bar{W} \buildrel{d}\over{=}W$.
\end{proof}

\begin{rem} By exactly the same argument as for the discrete case in Theorem \ref{currentcltthm}, it is possible to check that, under $\nu_c$, the sequence $((T^kW)_0)_{k \in \mathbb{Z}}$ is i.i.d. Moreover, as we noted in the previous proof, $W_0$ is exponentially distributed with parameter $2c$. Thus the integrated current also satisfies a law of large numbers, central limit theorem and large deviations principle in this setting (cf.\ Theorem \ref{currentcltthm}).
\end{rem}

\section{Open questions}\label{oq}

In this section, we collect some of the questions that arise from the present work.

\begin{enumerate}
  \item As part of Theorem \ref{mrc}, we show that the invariance in distribution of a sub-critical random configuration $\eta$ under $T$ is equivalent to the stationarity of the current sequence $((T^kW)_0)_{k\in\mathbb{Z}}$ under the canonical shift. Whilst this result does give a characterisation of sub-critical invariant measures, it is slightly unsatisfactory, as the latter condition might not be straightforward to verify in examples. Ideally, we would like to give a complete characterisation of the invariant measures of $T$ in terms of basic properties of the initial configuration $\eta$ and carrier process $W$. In Theorem \ref{mrd} we give sufficient conditions in terms of the symmetry of $\eta$ and $W$, which are verifiable in the sub-critical examples of Theorem \ref{mre}. To what extent is it possible to go beyond this? One might further consider how the answer to this question is related to the soliton decomposition of \cite{Ferrari}.
  \item Similarly to the previous question, one might also hope to provide a complete characterisation of measures for which the distribution of $\eta$ is ergodic under $T$. Again, in the sub-critical case, Theorem \ref{mrc} provides something of an answer, establishing that ergodicity of the configuration is equivalent to the ergodicity of the current sequence. We also show that this criterion is applicable in the sub-critical examples of Theorem \ref{mre} (see Corollary \ref{ergcormrf} in particular). To what extent can the ergodicity of $((T^kW)_0)_{k \in\mathbb{Z}}$ be established more generally? In particular, is it always ergodic when $\eta$ is a stationary, ergodic sequence (under spatial shifts) satisfying $T\eta\buildrel{d}\over=\eta$ and whose path encoding $S$ is supported on $\mathcal{S}_{sub-critical}$? (We recall that, when $S$ is supported on $\mathcal{S}_{critical}$, Theorem \ref{mrb} gives that $T$ is only ergodic in the trivial case.)
  \item Following on from Remark \ref{gibbs}, one might ask more about the representation of invariant measures of the BBS as Gibbs measures. In particular, using the notation of the remark, is there a convenient way to express the functions $f_k$ for $k \ge 2$? What are the necessary and sufficient conditions on $(\beta_k)_{k \ge 0}$ for the associated Gibbs measure to exist and be invariant? Among invariant measures, how are the Gibbs measures distinguished from others?
  \item In the discussion preceding Proposition \ref{densityconstant}, it was observed that the distributions of one class of invariant configurations are given by $Q^{\otimes \mathbb{Z}} \circ \Lambda$, i.e.\ distributions for which the current forms an i.i.d.\ sequence with law $Q$. Beyond the i.i.d.\ case (or periodic generalisations of this, as described in Remark \ref{nonsrem}), where $Q$ is the distribution of a geometric random variable (or multiple thereof), is it possible to characterise these particle configurations more explicitly?
  \item Apart from establishing a central limit theorem and large deviations principle  for the tagged particle process in the i.i.d.\ case under the FIFO scheme, which was not achieved in Section \ref{distancesec}, and also checking further properties of the tagged particle for the other example configurations from Theorem \ref{mre} beyond those discussed in Remarks \ref{fiforem} and \ref{liforem}, it would be natural to study the distance travelled by a tagged particle in a more general setting. In particular, one might consider the environment viewed from the particle for either the FIFO or LIFO scheme. What are the invariant measures (on particle configurations such that $\eta_0=1$) for this process? Is the environment process ergodic with respect to these?
  \item In the continuous case (the BBS on $\mathbb{R}$ of Section \ref{contsec}), many of the same questions are relevant. Is it possible to completely characterise the invariant and ergodic measures of the BBS dynamics? Is there a soliton decomposition for these, cf.\ \cite{Ferrari}? How does invariance and ergodicity of configurations relate to stationarity and ergodicity of the current sequence? Moreover, when is the current sequence stationary and ergodic?
  \item A range of other ultradiscrete integrable systems have been studied, including variants of the BBS with multi-valued box capacities \cite{Taka}, carrier capacities \cite{TakaMatsu}, and so on. Is it possible to study these by arguments similar to those applied in this article?
  \item An unexpected structural similarity has been discovered between the BBS and the lamplighter group from the spectral view point \cite{KTZ}. Is it possible to study automata groups such as the lamplighter group via the approach of this article?
  \item In the high-density regime considered in Section \ref{contsec}, the limiting dynamics remained in discrete time. It would be of interest to explore whether, under a suitable scaling regime, it is possible to obtain a continuous time dynamical system from the BBS, and determine what its evolution rules and properties are.
  \item  As noted in the introduction, the box-ball system can be obtained from a tropicalisation/ ultradiscretisation of the discrete KdV equation, as presented at (\ref{discretekdv}) \cite{HT:ukdv, TTMS}. It can also be obtained from an ultradiscretisation of a discrete Toda equation \cite{MT:udToda,NTT:BBS-sorting}.
      To what extent do the results of this work yield insights into the rational dynamics governed by the latter equations?
\end{enumerate}

\section*{Acknowledgements}
We thank Pablo Ferrari for helpful discussions. DC would like to thank MS for her generous support and kind hospitality during two visits to the University of Tokyo in 2017, which is when the majority of their contribution to the article was completed. The research of TK was supported by JSPS KAKENHI (grant number 17K18725), that of ST by JSPS KAKENHI (grant number 16K13761), and that of MS by JSPS KAKENHI (grant number 16KT0021).

\providecommand{\bysame}{\leavevmode\hbox to3em{\hrulefill}\thinspace}
\providecommand{\MR}{\relax\ifhmode\unskip\space\fi MR }
\providecommand{\MRhref}[2]{%
  \href{http://www.ams.org/mathscinet-getitem?mr=#1}{#2}
}
\providecommand{\href}[2]{#2}


\begin{thebibliography}{10}

\bibitem{DZ}
A.~Dembo and O.~Zeitouni, \emph{Large deviations techniques and applications},
  Stochastic Modelling and Applied Probability, vol.~38, Springer-Verlag,
  Berlin, 2010, Corrected reprint of the second (1998) edition.

\bibitem{Ferrari}
P.~A. Ferrari, \emph{Ball box system in $\mathbb{Z}$}, slides from Information
  and Randomness, Santiago, 2016.

\bibitem{FerFont}
P.~A. Ferrari and L.~R.~G. Fontes, \emph{Current fluctuations for the
  asymmetric simple exclusion process}, Ann. Probab. \textbf{22} (1994), no.~2,
  820--832.

\bibitem{FerSpo}
P.~L. Ferrari and H.~Spohn, \emph{Scaling limit for the space-time covariance
  of the stationary totally asymmetric simple exclusion process}, Comm. Math.
  Phys. \textbf{265} (2006), no.~1, 1--44.

\bibitem{GO}
P.~W. Glynn and D.~Ormoneit, \emph{Hoeffding's inequality for uniformly ergodic
  {M}arkov chains}, Statist. Probab. Lett. \textbf{56} (2002), no.~2, 143--146.

\bibitem{GT}
P.~W. Glynn and H.~Thorisson, \emph{Two-sided taboo limits for {M}arkov
  processes and associated perfect simulation}, Stochastic Process. Appl.
  \textbf{91} (2001), no.~1, 1--20.

\bibitem{Gon}
P.~Gon\c{c}alves, \emph{Equilibrium fluctuations for totally asymmetric
  particle systems}, VDM Verlag Dr.\ M\"{u}ller e.K., 2010.

\bibitem{HMOC}
B.~M. Hambly, J.~B. Martin, and N.~O'Connell, \emph{Pitman's {$2M-X$} theorem
  for skip-free random walks with {M}arkovian increments}, Electron. Comm.
  Probab. \textbf{6} (2001), 73--77.

\bibitem{HW}
J.~M. Harrison and R.~J. Williams, \emph{On the quasireversibility of a
  multiclass {B}rownian service station}, Ann. Probab. \textbf{18} (1990),
  no.~3, 1249--1268.

\bibitem{hirota}
R.~Hirota, \emph{Nonlinear partial difference equations {I}}, Journal of Phys.
  Soc. Japan \textbf{43} (1977), 1424--1433.

\bibitem{vdh}
R.~van~der Hofstad and M.~Keane, \emph{An elementary proof of the hitting time
  theorem}, Amer. Math. Monthly \textbf{115} (2008), no.~8, 753--756.

\bibitem{IKT}
R.~Inoue, A.~Kuniba, and T.~Takagi, \emph{Integrable structure of box-ball
  systems: crystal, {B}ethe ansatz, ultradiscretization and tropical geometry},
  J. Phys. A \textbf{45} (2012), no.~7, 073001, 64.

\bibitem{Kbook}
T.~Kato, \emph{Dynamical scale transform in tropical geometry}, World
  Scientific Publishing Co. Pte. Ltd., Hackensack, NJ, 2017.

\bibitem{KTZ}
T.~Kato, S.~Tsujimoto and A.~Zuk, \emph{Spectral analysis of transition operators, automata groups and translation in {BBS}}, Commun. Math. Phys. \textbf{350} (2017), 205--229.


\bibitem{Kent}
J.~Kent, \emph{Time-reversible diffusions}, Adv. in Appl. Probab. \textbf{10}
  (1978), no.~4, 819--835.

\bibitem{KLO}
T.~Komorowski, C.~Landim, and S.~Olla, \emph{Fluctuations in {M}arkov
  processes}, Grundlehren der Mathematischen Wissenschaften [Fundamental
  Principles of Mathematical Sciences], vol. 345, Springer, Heidelberg, 2012,
  Time symmetry and martingale approximation.

\bibitem{KZ}
T.~Konstantopoulos and M.~Zazanis, \emph{A discrete-time proof of {N}eveu's
  exchange formula}, J. Appl. Probab. \textbf{32} (1995), no.~4, 917--921.

\bibitem{KdV}
D.~J. Korteweg and G.~de~Vries, \emph{On the change of form of long waves
  advancing in a rectangular canal, and on a new type of long stationary
  waves}, Philos. Mag. (5) \textbf{39} (1895), no.~240, 422--443.

\bibitem{LPW}
D.~A. Levin, Y.~Peres, and E.~L. Wilmer, \emph{Markov chains and mixing times},
  American Mathematical Society, Providence, RI, 2009, With a chapter by James
  G. Propp and David B. Wilson.

\bibitem{Lev}
L.~Levine, H.~Lyu, and J.~Pike, \emph{Phase transition in a random soliton
  cellular automaton}, preprint appears at arXiv:1706.05621, 2017.

\bibitem{Levy}
P.~L\'evy, \emph{Processus stochastiques et mouvement brownien}, Suivi d'une
  note de M. Lo\`eve. Deuxi\`eme \'edition revue et augment\'ee,
  Gauthier-Villars \& Cie, Paris, 1965.

\bibitem{Liggett}
T.~M. Liggett, \emph{Interacting particle systems}, Grundlehren der
  Mathematischen Wissenschaften [Fundamental Principles of Mathematical
  Sciences], vol. 276, Springer-Verlag, New York, 1985.

\bibitem{LM}
G.~L. Litvinov and V.~P. Maslov, \emph{The correspondence principle for
  idempotent calculus and some computer applications}, Idempotency ({B}ristol,
  1994), Publ. Newton Inst., vol.~11, Cambridge Univ. Press, Cambridge, 1998,
  pp.~420--443.

\bibitem{MaT}
J.~Mada and T.~Tokihiro, \emph{Correlation functions for a periodic box-ball
  system}, J. Phys. A \textbf{43} (2010), no.~13, 135205.


\bibitem{MT:udToda}
K.~Maeda and S.~Tsujimoto, \emph{Box-ball systems related to the nonautonomous ultradiscrete {T}oda equation on the finite lattice}, JSIAM Lett. \textbf{2} (2010), 95--98.

\bibitem{NTT:BBS-sorting}
A.~Nagai, D.~Takahashi and T.~Tokihiro, \emph{Soliton cellular automaton, {T}oda molecule equation and sorting algorithm}, Phys. Lett. A \textbf{255} (1999), 265--271.

\bibitem{OC}
N.~O'Connell, \emph{Directed polymers and the quantum Toda lattice},
  Ann. Probab. \textbf{40} (2012), no.~2, 437--458.

\bibitem{OCslides}
N.~O'Connell, \emph{From Pitman’s $2M-X$ theorem to random polymers and integrable systems}, slides from Stochastic Processes and their Applications, Boulder, Colorado, 2013. Available at: www.maths.ucd.ie/~noconnell/doob.pdf.

\bibitem{OCY}
N.~O'Connell and M.~Yor, \emph{Brownian analogues of {B}urke's theorem},
  Stochastic Process. Appl. \textbf{96} (2001), no.~2, 285--304.

\bibitem{Otter}
R.~Otter, \emph{The multiplicative process}, Ann. Math. Statist. \textbf{20}
  (1949), no.~2, 206--224.

\bibitem{Peskir}
G.~Peskir, \emph{On reflecting {B}rownian motion with drift}, Proceedings of
  the 37th {ISCIE} {I}nternational {S}ymposium on {S}tochastic {S}ystems
  {T}heory and its {A}pplications, Inst. Syst. Control Inform. Engrs. (ISCIE),
  Kyoto, 2006, pp.~1--5.

\bibitem{Pitman}
J.~W. Pitman, \emph{One-dimensional {B}rownian motion and the three-dimensional
  {B}essel process}, Advances in Appl. Probability \textbf{7} (1975), no.~3,
  511--526.

\bibitem{RY}
D.~Revuz and M.~Yor, \emph{Continuous martingales and {B}rownian motion}, third
  ed., Grundlehren der Mathematischen Wissenschaften [Fundamental Principles of
  Mathematical Sciences], vol. 293, Springer-Verlag, Berlin, 1999.

\bibitem{RP}
L.~C.~G. Rogers and J.~W. Pitman, \emph{Markov functions}, Ann. Probab.
  \textbf{9} (1981), no.~4, 573--582.

\bibitem{Taka}
D.~Takahashi, \emph{On a fully discrete soliton system}, Nonlinear evolution
  equations and dynamical systems ({B}aia {V}erde, 1991), World Sci. Publ.,
  River Edge, NJ, 1992, pp.~245--249.

\bibitem{TakaMatsu}
D.~Takahashi and J.~Matsukidaira, \emph{Box and ball system with a carrier and
  ultradiscrete modified {K}d{V} equation}, J. Phys. A \textbf{30} (1997),
  no.~21, L733--L739.

\bibitem{takahashi1990}
D.~Takahashi and J.~Satsuma, \emph{A soliton cellular automaton}, J. Phys. Soc.
  Japan \textbf{59} (1990), 3514--3519.

\bibitem{TS1991}
\bysame, \emph{On cellular automata as a simple soliton system}, Transactions
  of the Japan Society for Industrial and Applied Mathematics \textbf{1}
  (1991), no.~1, 41--60.

\bibitem{T}
T.~Tokihiro, \emph{Ultradiscrete systems (cellular automata)}, Discrete
  integrable systems, Lecture Notes in Phys., vol. 644, Springer, Berlin, 2004,
  pp.~383--424.

\bibitem{TT}
\bysame, \emph{The mathematics of box-ball systems}, Asakura Shoten, 2010.

\bibitem{TTMS}
T.~Tokihiro, D.~Takahashi, J.~Matsukidaira, and J.~Satsuma, \emph{From soliton
  equations to integrable cellular automata through a limiting procedure},
  Phys. Rev. Lett. \textbf{76} (1996), no.~18, 3247--3250.

\bibitem{HT:ukdv}
S.~Tsujimoto and R.~Hirota, \emph{Ultradiscrete {K}d{V} equation}, J. Phys.
  Soc. Japan \textbf{67} (1998), 1809--1810.

\bibitem{Varad}
V.~S. Varadarajan, \emph{Groups of automorphisms of {B}orel spaces}, Trans.
  Amer. Math. Soc. \textbf{109} (1963), 191--220.

\bibitem{Walters}
P.~Walters, \emph{An introduction to ergodic theory}, Graduate Texts in
  Mathematics, vol.~79, Springer-Verlag, New York-Berlin, 1982.

\bibitem{YYT}
D.~Yoshihara, F.~Yura, and T.~Tokihiro, \emph{Fundamental cycle of a periodic
  box-ball system}, J. Phys. A \textbf{36} (2003), no.~1, 99--121.

\bibitem{YT}
F.~Yura and T.~Tokihiro, \emph{On a periodic soliton cellular automaton}, J.
  Phys. A \textbf{35} (2002), no.~16, 3787--3801.

\end{thebibliography}
\end{document}